\documentclass{amsart}
\usepackage{amsmath,amssymb,amsfonts, amsthm}
\usepackage{stmaryrd}
\usepackage{bbold}
\usepackage{mathtools}
\usepackage{comment}
\usepackage{xcolor}
\definecolor{darkgreen}{rgb}{0,0.45,0} 
\definecolor{darkishgreen}{rgb}{0,0.65,0} 
\definecolor{darkred}{rgb}{0.75,0,0}
\definecolor{darkblue}{rgb}{0,0,0.6} 
\usepackage[pdfborder=0,pagebackref,colorlinks,citecolor=darkgreen,linkcolor=darkgreen,urlcolor=darkblue]{hyperref}
\renewcommand*{\backref}[1]{}
\renewcommand*{\backrefalt}[4]{({%
    \ifcase #1 Not cited.%
          \or On p.~#2%
          \else On pp.~#2%
    \fi%
    })}
\usepackage{mathpartir}
\usepackage[all,2cell]{xy}\UseAllTwocells
\usepackage{tikz-cd}
\usepackage{color}
\usepackage{enumitem}
\usepackage{cleveref}
\crefname{thm}{Theorem}{Theorems}
\crefname{cor}{Corollary}{Corollaries}
\crefname{prop}{Proposition}{Propositions}
\crefname{defn}{Definition}{Definitions}
\crefname{rmk}{Remark}{Remarks}
\crefname{fig}{Figure}{Figures}
\crefname{ex}{Example}{Examples}
\crefname{lem}{Lemma}{Lemmas}
\crefname{ax}{Axiom}{Axioms}

\crefformat{section}{\S#2#1#3}
\Crefformat{section}{Section~#2#1#3}
\crefrangeformat{section}{\S\S#3#1#4--#5#2#6}
\Crefrangeformat{section}{Sections~#3#1#4--#5#2#6}
\crefmultiformat{section}{\S\S#2#1#3}{ and~#2#1#3}{, #2#1#3}{ and~#2#1#3}
\Crefmultiformat{section}{Sections~#2#1#3}{ and~#2#1#3}{, #2#1#3}{ and~#2#1#3}
\crefrangemultiformat{section}{\S\S#3#1#4--#5#2#6}{ and~#3#1#4--#5#2#6}{, #3#1#4--#5#2#6}{ and~#3#1#4--#5#2#6}
\Crefrangemultiformat{section}{Sections~#3#1#4--#5#2#6}{ and~#3#1#4--#5#2#6}{, #3#1#4--#5#2#6}{ and~#3#1#4--#5#2#6}

\crefformat{appendix}{Appendix~#2#1#3}
\Crefformat{appendix}{Appendix~#2#1#3}
\crefrangeformat{appendix}{Appendices~#3#1#4--#5#2#6}
\Crefrangeformat{appendix}{Appendices~#3#1#4--#5#2#6}
\crefmultiformat{appendix}{Appendices~#2#1#3}{ and~#2#1#3}{, #2#1#3}{ and~#2#1#3}
\Crefmultiformat{appendix}{Appendices~#2#1#3}{ and~#2#1#3}{, #2#1#3}{ and~#2#1#3}
\crefrangemultiformat{appendix}{Appendices~#3#1#4--#5#2#6}{ and~#3#1#4--#5#2#6}{, #3#1#4--#5#2#6}{ and~#3#1#4--#5#2#6}
\Crefrangemultiformat{appendix}{Appendices~#3#1#4--#5#2#6}{ and~#3#1#4--#5#2#6}{, #3#1#4--#5#2#6}{ and~#3#1#4--#5#2#6}

\crefname{figure}{Figure}{Figures}

\ifx\color@rgb\@undefined\else
  \definecolor{maroon}{rgb}{0.5,0,0}
\fi

\setcounter{tocdepth}{1}

\theoremstyle{plain}
\newtheorem{thm}{Theorem}[section] 
\newtheorem{cor}[thm]{Corollary}
\newtheorem{prop}[thm]{Proposition}
\newtheorem{lem}[thm]{Lemma}

\newtheorem{ax}[thm]{Axiom}

\theoremstyle{definition}
\newtheorem{defn}[thm]{Definition}

\theoremstyle{remark}
\newtheorem{ex}[thm]{Example}
\newtheorem{rmk}[thm]{Remark}

\makeatletter
\let\c@equation\c@thm
\makeatother
\numberwithin{equation}{section}

\newcommand{\tprod}{\textstyle\prod}
\newcommand{\tsum}{\textstyle\sum}
\newcommand{\exten}[4]{\left\langle\mathchoice{\textstyle\prod_{#1}}{\textstyle\prod_{#1}}{\scriptstyle\prod_{#1}}{\scriptscriptstyle\prod_{#1}} #2 \middle|^{#3}_{#4}\right\rangle}
\newcommand{\ndexten}[4]{\left\langle #1 \to #2 \middle|^{#3}_{#4}\right\rangle}
\newcommand{\ccexten}[4]{\langle\Pi[#1][#2] |^{#3}_{#4}\rangle}
\newcommand{\extfn}[1]{{#1}_\#}
\newcommand{\jdeq}{\equiv}
\newcommand{\defeq}{\coloneqq}

\newcommand{\types}{\vdash}

\newcommand{\type}{\;\mathsf{type}}

\newcommand{\unittype}{\ensuremath{\mathbf{1}}}
\newcommand{\booltype}{\ensuremath{\mathbf{2}}}
\newcommand{\univtype}{\mathcal{U}}

\newcommand{\cube}{\;\mathsf{cube}}
\newcommand{\tope}{\;\mathsf{tope}}
\newcommand{\shape}{\;\mathsf{shape}}
\newcommand{\ctx}{\;\mathsf{ctx}}
\newcommand{\sh}[2]{\{#1\mid #2\}}
\newcommand{\restr}[1]{\overline{#1}}
\newcommand{\pair}[1]{\langle #1\rangle}
\newcommand{\rec}{\mathsf{rec}}
\newcommand{\refl}{\mathsf{refl}}
\newcommand{\idtoiso}{\mathsf{idtoiso}}
\newcommand{\idtoarr}{\mathsf{idtoarr}}

\newcommand{\evid}{\mathsf{evid}}
\newcommand{\yon}{\mathsf{yon}}

\newcommand{\connmax}[1]{\mathsf{V}_{#1}}
\newcommand{\connmin}[1]{\mathsf{\Lambda}_{#1}}

\newcommand{\ct}{%
  \mathchoice{\mathbin{\raisebox{0.5ex}{$\displaystyle\centerdot$}}}%
             {\mathbin{\raisebox{0.5ex}{$\centerdot$}}}%
             {\mathbin{\raisebox{0.25ex}{$\scriptstyle\,\centerdot\,$}}}%
             {\mathbin{\raisebox{0.1ex}{$\scriptscriptstyle\,\centerdot\,$}}}
}

\def\homtwoshort#1(#2,#3,#4,#5,#6,#7){\hom_{#1}^2(#5,#6;#7)}
\def\homtwo#1{\hom_{#1}^2\homtwoarg1}

\def\homtwoarg#1(#2,#3,#4,#5,#6,#7){\left(
    \begin{tikzpicture}[baseline,xscale=#1]
      \node (a) at (0,0) {$\scriptstyle #2$};
      \node (b) at (1,.3) {$\scriptstyle #3$};
      \node (c) at (2,0) {$\scriptstyle #4$};
      \draw (a) -- node [auto] {$\scriptstyle #5$} (b);
      \draw (b) -- node [auto] {$\scriptstyle #6$} (c);
      \draw (a) -- node [auto,swap] {$\scriptstyle #7$} (c);
    \end{tikzpicture}\right)}

\def\nat#1#2{\underset{#1\to#2}{\hom}}
\def\nattwo#1#2{\underset{#1\to#2}{\hom}^2\homtwoarg1}

\newcommand{\lam}[1]{\lambda #1.\,}

\let\extlam\lam
\newcommand{\extapp}[2]{{#1}({#2})}

\newcommand{\idfunc}[1]{\mathsf{Id}_{#1}}

\newcommand{\two}{\mathbb{2}}
\newcommand{\idarr}[1]{\mathsf{id}_{#1}}
\newcommand{\iscomp}[2]{\mathsf{comp}_{#1,#2}}
\newcommand{\isiso}[1]{\mathsf{isiso}(#1)}

\newcommand{\isequiv}[1]{\mathsf{isEquiv}\Parens{#1}}

\newcommand{\transport}[3]{\mathsf{transport}^{#1}(#2,#3)}
\newcommand{\ap}{\mathsf{ap}}                             

\newcommand{\covtr}[1]{{#1}_*}  
\newcommand{\istrans}[2]{\mathsf{trans}_{#1,#2}}

\newcommand{\Parens}[1]{\Bigl(#1\Bigr)}
\let\xto\xrightarrow


\DeclareMathAlphabet{\mathbbe}{U}{bbold}{m}{n}

\newcommand{\Set}{\mathsf{Set}}
\newcommand{\sSet}{\mathsf{sSet}}
\newcommand{\ssSet}{\mathsf{ssSet}}
\newcommand{\DDelta}{\mathbbe{\Delta}}
\newcommand{\op}{\mathrm{op}}
\newcommand{\C}{\mathcal{C}}
\newcommand{\T}{\mathcal{T}}
\newcommand{\M}{\mathcal{M}}
\newcommand{\N}{\mathcal{N}}
\newcommand{\V}{\mathcal{V}}

\newcommand{\join}{\star}
\newcommand{\pojoin}{\mathbin{\widehat{\join}}}

\newcommand{\dI}{\mathbb{I}}
\newcommand{\fT}{\mathfrak{T}}

\newcommand{\maybe}[1]{\text{?`}\qquad #1 \qquad \text{?}}

\hyphenation{co-var-iant}
\hyphenation{dia-gram-mat-ic}

\title{A type theory for synthetic $\infty$-categories}
\author{Emily Riehl and Michael Shulman}

\address{Dept.~of Mathematics\\Johns Hopkins U.\\ 3400 N Charles St.\\ Baltimore, MD 21218}
\email{eriehl@math.jhu.edu}

\address{Dept.~of Mathematics\\University of San Diego\\5998 Alcal\'{a} Park\\San Diego, CA 92110}
\email{shulman@sandiego.edu}

\thanks{This material is based on research sponsored by The United States Air Force Research Laboratory under agreement number FA9550-15-1-0053.  The U.S. Government is authorized to reproduce and distribute reprints for Governmental purposes notwithstanding any copyright notation thereon.  The views and conclusions contained herein are those of the author and should not be interpreted as necessarily representing the official policies or endorsements, either expressed or implied, of the United States Air Force Research Laboratory, the U.S. Government, or Carnegie Mellon University.  The first-named author is also grateful for support from the National Science Foundation via grants DMS-1551129 and DMS-1619569, the latter supporting a conference after which some of this work took place. A valuable discussion of the results of this paper took place at the 2017 Mathematics Research Community in Homotopy Type Theory coordinated by the AMS and supported by the National Science Foundation under grant number DMS-1321794. The post-publication v5 corrects typos pointed out by Bastiaan Cnossen, Nikolai Kudasov, Dan Licata, and Christian Sattler.}
\begin{document}
\maketitle

\begin{abstract}
  We propose foundations for a synthetic theory of $(\infty,1)$-categories within homotopy type theory.
  We axiomatize a directed interval type, then define higher simplices from it and use them to probe the internal categorical structures of arbitrary types.
  We define \emph{Segal types}, in which binary composites exist uniquely up to homotopy; this automatically ensures composition is coherently associative and unital at all dimensions.
  We define \emph{Rezk types}, in which the categorical isomorphisms are additionally equivalent to the type-theoretic identities --- a ``local univalence'' condition.
  And we define \emph{covariant fibrations}, which are type families varying functorially over a Segal type, and prove a ``dependent Yoneda lemma'' that can be viewed as a directed form of the usual elimination rule for identity types.
  We conclude by studying homotopically correct adjunctions between Segal types, and showing that for a functor between Rezk types to have an adjoint is a mere proposition.

  To make the bookkeeping in such proofs manageable, we use a three-layered type theory with shapes, whose contexts are extended by polytopes within directed cubes, which can be abstracted over using ``extension types'' that generalize the path-types of cubical type theory.
  In an appendix, we describe the motivating semantics in the Reedy model structure on bisimplicial sets, in which our Segal and Rezk types correspond to Segal spaces and complete Segal spaces.
\end{abstract}

\tableofcontents

\section{Introduction}
\label{sec:introduction}

Homotopy type theory~\cite{hottbook} is a new subject that augments Martin-L\"{o}f constructive dependent type theory with additional rules and axioms enabling it to be used as a formal language for reasoning about homotopy theory.
These rules and axioms are motivated by homotopy-theoretic models such as Voevodsky's simplicial set model~\cite{simplicial-model}.
In the latter, the \emph{types} of type theory are interpreted as simplicial sets in the Quillen model structure, which are a presentation of $\infty$-groupoids.
Thus, homotopy type theory can be viewed as a ``synthetic theory of $\infty$-groupoids'' and a foundational system for higher-categorical mathematics.

Of course, higher category theory is not just about $\infty$-groupoids, but also $n$-categories, $(\infty,1)$-categories, $(\infty,n)$-categories, and so on.
But a \emph{directed} type theory that could serve as a synthetic theory of such objects has proven somewhat elusive.
In particular, one of the advantages of homotopy type theory is that the single simple rule of identity-elimination automatically generates all the higher structure of $\infty$-groupoids, whereas (for instance) the 2-dimensional type theory of~\cite{2dtt} has to put in the categorical structure by hand, and thereby lacks as much advantage over explicit definitions of categories inside set theory.
Moreover, interpreting types directly as (higher) categories runs into various problems, such as the fact that not all maps between categories are exponentiable (so that not all $\Pi$-types exist), and that there are numerous different kinds of ``fibrations'' given the various possible functorialities and dimensions of categories appearing as fibers.

There is no reason in principle to think these problems insurmountable, and many possible solutions have been proposed.
However, in this paper we pursue a somewhat indirect route to a synthetic theory of higher categories, which has its own advantages, and may help illuminate some aspects of what an eventual more direct theory might look like.
Our approach is based on the following idea, which was also suggested independently by Joyal.

Homotopy type theory admits semantics not only in simplicial sets (hence $\infty$-groupoids), but in many other model categories.
In particular, as shown in~\cite{elreedy}, it can be interpreted in the Reedy model structure on \emph{bisimplicial} sets, also called \emph{simplicial spaces}.
This model structure, in turn, admits a left Bousfield localization called the \emph{complete Segal space} model structure~\cite{css}, which presents the homotopy theory and indeed also the category theory \cite{RV4} of $(\infty,1)$-categories.
We cannot interpret homotopy type theory (in its usual form) in the complete Segal space model structure directly (due to its lack of right properness among other things), but we can interpret it in the Reedy model structure and identify internally \emph{some} types that correspond to complete Segal spaces.
That is, in contrast to ordinary homotopy type theory where the basic objects (types) are exactly the ``synthetic $\infty$-groupoids'', in our theory the basic objects (types) are something more general, inside of which we identify a class that we regard as ``synthetic $(\infty,1)$-categories''.

The identification of these ``category-like types'', and the study of their properties, depends on adding certain structure to homotopy type theory that is characteristic of the bisimplicial set model.
The fundamental such structure is a ``directed interval'' type, which (thinking categorically) we denote $\two$.
The homotopy theoretic analysis of Joyal and Tierney \cite{JT} suggests that it is productive to think of bisimplicial sets as having a  ``spatial'' direction and ``categorical'' direction; simplicial sets can then be embedded in the categorical direction as \emph{discrete} simplicial spaces or in the spatial direction as \emph{constant} simplicial spaces. The semantics of the ``directed interval'' $\two$ as a bisimplicial set is the simplicial interval $\Delta^1$, placed in the ``categorical'' direction rather than the ``spatial'' direction.\footnote{Note that the ``spatial'' $\Delta^1$ is (weakly) contractible, whereas the ``categorical'' $\Delta^1$ is not.}
As it does in ordinary category theory, the directed interval detects arrows representably: that is, for any type $A$ the function type $\two\to A$ is the ``type of arrows in $A$''.

The directed interval $\two$ possesses a lot of useful structure.
The \emph{internal} incarnation of this structure, which is what is visible in the homotopy type theory of bisimplicial sets, is nicely summarized by saying that it is a \emph{strict interval}: a totally ordered set with distinct bottom and top elements (called $0$ and $1$).
In fact, there is a sense in which it possesses ``exactly this structure and no more'': the topos of simplicial sets is the \emph{classifying topos} for such strict interval objects.\footnote{This result was apparently first announced by Joyal at the Isle of Thorns; proofs can be found in~\cite{top-topos,maclane-moerdijk}.}
If we regard this classifying topos as sitting inside bisimplicial sets in the categorical direction (discrete in the spatial direction), then it is not hard to show that bisimplicial sets similarly present the ``classifying $(\infty,1)$-topos'' of strict intervals; but we will have no need of this.

The strict interval structure on $\two$ (i.e.\ $\Delta^1$) allows us to define the higher simplices from it internally, and hence the higher categorical structure of types.
For instance, \[\Delta^2 = \{ (s,t) : \two\times \two \mid t\le s \}.\] We regard a map $\alpha:\Delta^2\to A$ as a ``commutative triangle'' in $A$ witnessing that the composite of $\lam{t}\alpha(t,0):\Delta^1\to A$ and $\lam{t}\alpha(1,t):\Delta^1\to A$ is $\lam{t}\alpha(t,t):\Delta^1 \to A$.

Importantly, for a general type $A$, two given composable arrows --- i.e.\ two functions $f,g:\two\to A$ with $f(1)=g(0)$ --- may not have any such ``composite'', or they may have more than one.
If any two composable arrows have a unique composite in the homotopical sense that the type of such composites with their witnesses is contractible, we call $A$ a \emph{Segal type}.

Classically, a \emph{Segal space} is defined as a bisimplicial set $X$ for which \emph{all} the Segal maps $X_n \to X_1\times_{X_0}\cdots \times_{X_0} X_1$ are equivalences, thereby saying not only that any two composable arrows have a unique composite, but that any \emph{finite string} of composable arrows has a unique composite.
This ensures that composition is associative and unital up to all higher homotopies.
Our definition of Segal type appears to speak only about composable \emph{pairs}, but because it is phrased in the internal type theory of bisimplicial sets, semantically it corresponds to asserting not just that the Segal map $X_2 \to X_1\times_{X_0} X_1$ is an equivalence of \emph{simplicial sets}, but that the analogous map $X^{\Delta^2} \to X^{\Delta^1} \times_X X^{\Delta^1}$ is an equivalence of \emph{bisimplicial sets}.
Joyal conjectured that this is equivalent to the usual definition of a Segal space; in an appendix we prove this conjecture, justifying our terminology.
We can also prove internally in the type theory that composition in a Segal type is automatically associative and so on, so that it behaves just like a category.

Note the strong similarity to how ordinary homotopy type theory functions as a synthetic language for $\infty$-groupoids.
An explicit $\infty$-groupoid is a very complicated structure, but when working ``internally'' it suffices to equip every type with the single operation of identity-elimination.
It then automatically follows, as a meta-theorem, that every type \emph{internally} admits all the structure of an $\infty$-groupoid, as shown in~\cite{vdbg:oogpds,pll:oogpds}; but in practical applications we rarely need more than one or two levels of this structure, and we can just ``define it as we go''.
Similarly, a Segal space or $(\infty,1)$-category is a complicated structure with all higher coherences, but when working ``internally'' it suffices to assume a single contractibility condition to define a Segal type.
We do not prove an analogue of~\cite{vdbg:oogpds,pll:oogpds} for Segal types, but we conjecture that it should be possible; while in practice we generally seem to only need one or two levels that we can ``define as we go''.

If a Segal type satisfies a further condition analogous to Rezk's ``completeness'' condition for Segal spaces, we call it a \emph{Rezk type}.
These are the ones that semantically model $(\infty,1)$-categories.
However, for much of the theory it suffices to work with Segal types, which also have an $(\infty,1)$-categorical interpretation: they correspond to an $(\infty,1)$-category $\mathcal{A}$ equipped with a functor $\mathcal{G}\to \mathcal{A}$ where $\mathcal{G}$ is an $\infty$-groupoid.
In~\cite{fibrations} this is called a \emph{flagged $(\infty,1)$-category}; it can also be thought of as an ``$(\infty,1)$-double category'' with ``connections'' and one direction invertible.
The Rezk types correspond to the flagged $(\infty,1)$-categories for which $\mathcal{G}$ is the \emph{core} of $\mathcal{A}$, the locally full sub-$(\infty,1)$-category of invertible morphisms.
Note that the need for a ``completeness'' condition, or equivalently the fact that flagged categories must be defined before unflagged ones, also arises when \emph{defining} categories and higher categories inside homotopy type theory; see~\cite[Chapter 9]{hottbook} or~\cite{aks}.

The goal of this paper is to develop the basic category theory of Segal and Rezk types.
We discuss the behavior of ``functors'', which internally are simply functions between such types, and ``natural transformations'', which are simply functions $A\times \two\to B$.
We define what it means for a type family $C:A\to \univtype$ to be \emph{covariant} or \emph{contravariant}, and we prove a ``dependent Yoneda lemma'' that generalizes the usual Yoneda lemma and has the form of a ``directed'' version of the usual identity-elimination rule.

Many of the theorems are very similar to their versions in ordinary category theory and/or other forms of $(\infty,1)$-category theory.
In particular, when interpreted in the simplicial spaces model, our synthetic Yoneda lemma provides new proofs of the results that \cite{RV4,kv:yoneda-css,boavida:segr,rasekh:yoneda-ss} achieve semantically by working with simplicial spaces.
But often there is a significant ``internalization'' benefit, arising from the fact that all type-theoretic functions between Segal types are automatically ``functorial'' or ``natural''.
In this sense our theory achieves much of the expected benefit of a ``directed homotopy type theory'' for studying $(\infty,1)$-categories synthetically, with the added advantage that we have the full power of ordinary homotopy type theory to work with (including, for instance, all $\Pi$-types) and can draw on all of its results.
The presence of non-Segal types, whose category-theoretic meaning is somewhat unclear but which we can ignore whenever we wish, seems a small price to pay.

As a ``capstone'' application, we study \emph{adjunctions} between Segal and Rezk types, proving the equivalence of a diagrammatic ``unit and counit'' definition with an ``equivalence of homs'' definition.
As shown in~\cite{RVadj}, while a complete homotopy-coherent (diagrammatic) adjunction contains infinitely much data, it is uniquely determined by many finite subcollections of data {whenever the adjoint relationship exists,} such as: a single functor; both functors and the counit; both the unit and counit and a witness of one triangle identity; or witnesses of both triangle identities and a coherence between them (in the last case, no further existence assumptions are required).
We show that when transferred to the ``equivalence of homs'' definition, such subcollections correspond to the finitary coherent definitions of equivalence in homotopy type theory from~\cite[Chapter 4]{hottbook}.
Transferring the ``bi-invertibility'' definition of equivalence back across this comparison leads to a new way to fully characterize a homotopy coherent adjunction (with no further assumptions): two functors, a unit, and \emph{two} counits, one equipped with a witness that it satisfies one triangle identity and the other equipped with a witness of the other triangle identity.

There is one further technical device we will use, which is of some interest in its own right.
In principle, all of the above theory could be developed within ordinary homotopy type theory, simply by axiom\-atically assuming the type $\two$ and its strict interval structure.
However, we often want to talk about, given two points $x,y:A$, the ``type of arrows from $x$ to $y$'', i.e.\ the type of functions $f:\two\to A$ such that $f(0)=x$ and $f(1)=y$.
If we define this type internally in ordinary homotopy type theory, these latter equalities can only be points of the identity type, so we would have to define
\[ \hom_A(x,y) \defeq \sum_{f:\two\to A} (x=f(0)) \times (f(1)=y). \]
These equalities are then data, which have to be carried around everywhere.
This is quite tedious, and the technicalities become nearly insurmountable when we come to define commutative triangles, let alone commutative tetrahedra.

Intuitively, we would like $\hom_A(x,y)$ to be the type of functions $f:\two\to A$ such that $f(0)$ and $f(1)$ are \emph{strictly}, or \emph{judgmentally}, equal to $x$ and $y$ respectively.
Ordinary intensional type theory does not allow us to assert judgmental equalities as data, and the semantic reason for this is that it would not preserve fibrancy: judgmental equality on $A$ is interpreted by the diagonal $A\to A\times A$, which is not a fibration (unlike the path-object $P A \to A\times A$, which interprets the identity type).

However, in our motivating model of bisimplicial sets, the ``object of functions $f:\two\to A$ such that $f(0)\jdeq x$ and $f(1)\jdeq y$ strictly'' \emph{is} fibrant, because the inclusion $\booltype\to\two$ is a \emph{cofibration} and the Reedy model structure is cartesian monoidal.
The latter ensures that for any cofibration $A\to B$ and fibration $C\to D$, the ``pullback corner map'' or ``Leibniz hom''
\[ C^B \to C^A \times_{D^A} D^B \]
is a fibration.
Applied to the cofibration $\booltype\to \two$ and the fibration $A\to \unittype$ we obtain a fibration $A^\two \to A\times A$ representing the desired type family $\hom_A : A\times A \to \univtype$.

It is therefore natural to try to ``internalize'' this argument.
There are many possible ways to do this.
One ``brute force'' approach is to use a two-level type theory~\cite{hts,ack:two-level} in which there are both ``fibrant types'' and ``non-fibrant types'', with a non-fibrant ``strict equality type'' that reifies judgmental equality.
We could then define
\[ \hom_A(x,y) \defeq \sum_{f:\two\to A} (x\jdeq f(0)) \times (f(1)\jdeq y). \]
using strict equalities, and assert axiomatically that it is fibrant, since in general it would not be.

We will use instead a more refined approach that eliminates the need for strict equality and non-fibrant types.
We have a judgmental notion of \emph{cofibration}, and a new type former called an \emph{extension type}: if $i:A\rightarrowtail B$ is a cofibration and $C:B\to\univtype$ is a type family over its codomain with a section $d:\prod_{x:A} C(i(x))$ over its domain, then there is a type $\exten{y:B}{C(y)}{i}{d}$ of ``dependent functions $f:\prod_{y:B} C(y)$ such that $f(i(x))\jdeq d(x)$ for all $x:A$''.
This idea is due to Lumsdaine and the second author (unpublished).

We then have to give rules for what counts as a cofibration, in which we have to be careful to respect the semantics: it cannot simply be a map in any context that becomes a cofibration in the semantic slice category, since arbitrary slice categories are no longer cartesian monoidal model categories.
However, we need not only $\booltype\to\two$ to be a cofibration, but also the inclusion of the boundary of any simplex $\partial \Delta^n \to \Delta^n$, and we would like these to be constructible in a sensible and uniform way rather than axiomatically asserted.
One approach would be to keep the non-fibrant types with a notion of ``strict pushout'', and rules that cofibrations are closed under operations such as the ``pushout product'' or ``pushout join''.

We instead choose to keep all types fibrant (and hence all proofs more clearly homotopy-invariant), introducing rather a syntax for specifying cofibrations entirely separately from the rest of the type theory.
Pleasingly, this separate syntax is exactly the coherent theory of a strict interval.
We have a judgmental notion of \emph{shape}, representing the polytopes embedded in directed cubes that can be constructed in the theory of a strict interval, and we take the cofibrations to be the ``inclusions of sub-shapes''.
For instance, the boundary of $\Delta^2$ is the shape
\[ \partial\Delta^2 \defeq \sh{\pair{s,t}:\two\times\two}{(t\le s) \land (t\jdeq 0 \lor s\jdeq t \lor s\jdeq 1)} \]

This choice also makes the setup more flexible, since in principle any other suitable theory could be used instead.
For instance, using Joyal's theory of disks~\cite{disks} would presumably yield a type theory in which to study $(\infty,n)$-categories in the style of~\cite{rezk-theta}.
In an appendix we sketch how our setup should be interpretable semantically in bisimplicial sets.

\begin{rmk}
  Formally, our theory is very similar to the recent ``cubical type theories'' studied by~\cite{CCHM} and others, whose basic setup can also be regarded as an instance of ours, using the theory of a de Morgan algebra.
  The most substantial difference is that our interval $\two$ describes extra structure in an ``orthogonal'' direction to the native ``homotopy theory'' of homotopy type theory, whereas the cubical interval is rather a different way of describing that exact same native homotopy theory.
  This is why cubical type theory also includes the cubical Kan operations as rules of type theory; the closest analogue of this in our theory is the category structure of a Segal type induced by the contractibility of its composition spaces.
\end{rmk}

We introduce our basic type theory with shapes in \cref{sec:shape-type-theory}, and specialize to the simplicial type theory using the strict interval in \cref{sec:simplices}.
In \cref{sec:equiv-exten} we prove some basic results about extension types and how they commute with each other and with the other type constructors.

Then in \cref{sec:Segal-types} we give the basic definition of a Segal type and study is structure as a sort of ``category'', while in \cref{sec:2cat-segal} we study the corresponding behavior of ``functors'' and ``natural transformations''.
\Cref{sec:discrete-types} is devoted to a special kind of Segal type that we call ``discrete''; semantically these correspond to homotopically constant simplicial spaces; if Segal and Rezk types are the ``categories'', discrete types are the ``groupoids''.
Then in \cref{sec:covariant} we study covariant and contravariant type families, which are families of discrete types that vary functorially over a Segal type; these are the synthetic analogue of covariant and contravariant fibrations or presheaves.
In particular, they satisfy the Yoneda lemma, as we show in \cref{sec:yoneda-lemma}.

In \cref{sec:Rezk-types} we define Rezk types, which are Segal types satisfying a ``completeness'' or ``univalence'' condition identifying the type-theoretic identity type with the categorical isomorphisms.
And in \cref{sec:adjunctions} we study homotopy coherent adjunctions between Segal and Rezk types.

Finally, in \cref{sec:semantics} we briefly discuss the motivating semantics in bisimplicial sets and other ``model categories with shapes'', and show that our Segal and Rezk types correspond to Segal spaces and complete Segal spaces.
The analogous correspondence for covariant fibrations follows from recent work of~\cite{RV4,kv:yoneda-css,boavida:segr,rasekh:yoneda-ss}.

The authors with to thank the anonymous referee for a lengthy list of cogent suggestions and Arthur Azevedo de Amorim for catching a number of typos and imprecisions in the syntax for our type theory with shapes.

\section{Type theory with shapes}
\label{sec:shape-type-theory}

Our type theory has three layers.
The first two are basically ordinary coherent first-order logic, in which we express the theory of a strict interval; the third layer is then a homotopy type theory over the first two.
For clarity and generality, in this section we describe only the formal apparatus of the type theory; in \cref{sec:simplices} we will then add to it the axioms of a strict interval that we will use in the rest of the paper.

\subsection{Cubes, topes, shapes, and types}

The first layer is a simple intuitionistic type theory with finite product types and nothing else.
We call the types in this layer \textbf{cubes}; in our theory they will be finite powers of the one ``generating cube'' $\two$.
The formal rules for the cube layer are shown in \cref{fig:cubes}; here $\Xi$ is a context of variables belonging to cubes.

\begin{figure}
  \centering
  \begin{mathpar}
    \inferrule{ }{\unittype \cube}\and
    \inferrule{I\cube \\ J\cube}{I\times J\cube}\and
    \inferrule{(t:I)\in \Xi}{\Xi\types t:I}\and
    \inferrule{ }{\Xi\types\star:\unittype}\\
    \inferrule{\Xi\types s:I \\ \Xi\types t:J}{\Xi\types \pair{s,t}:I\times J}\and
    \inferrule{\Xi\types t:I\times J}{\Xi\types \pi_1(t):I}\and
    \inferrule{\Xi\types t:I\times J}{\Xi\types \pi_2(t):J}\and
  \end{mathpar}
  \caption{The cube layer}
  \label{fig:cubes}
\end{figure}

The second layer is an intuitionistic logic over the first.
We refer to its types as \textbf{topes}, thinking of them as polytopes embedded in a cube (namely, the ``cube context'' $\Xi$).
Topes admit operations of finite conjunction and disjunction, but not negation, implication, or either quantifier.\footnote{We could probably include the existential quantifier to obtain a full ``coherent logic'', and possibly even go beyond this, but for the theory of a strict interval we only need conjunction and disjunction.}
There is also a basic ``equality tope'', which we write with the symbol $\jdeq$, since it will be visible to the third layer as a ``strict'' or ``judgmental'' equality.
(In the theory of a strict interval introduced in \cref{sec:strict-interval}, there will also be an \emph{inequality} tope.)

\begin{figure}
  \centering
  \begin{mathpar}
    \inferrule{\phi\in \Phi}{\Xi\mid\Phi\types \phi}\and
    \inferrule{ }{\Xi\types \top\tope}\and
    \inferrule{ }{\Xi\mid \Phi \types \top}\and
    \inferrule{ }{\Xi\types \bot\tope}\and
    \inferrule{\Xi\mid\Phi\types \bot}{\Xi\mid\Phi\types \psi}\and
    \inferrule{\Xi\types \phi\tope \\ \Xi\types \psi\tope}{\Xi\types (\phi\land\psi)\tope}\and
    \inferrule{\Xi\mid\Phi\types \phi \\ \Xi\mid\Phi\types \psi}{\Xi\mid\Phi\types \phi\land\psi}\and
    \inferrule{\Xi\mid\Phi\types \phi\land\psi}{\Xi\mid\Phi\types \phi}\and
    \inferrule{\Xi\mid\Phi\types \phi\land\psi}{\Xi\mid\Phi\types \psi}\and
    \inferrule{\Xi\types \phi\tope \\ \Xi\types \psi\tope}{\Xi\types (\phi\lor\psi)\tope}\and
    \inferrule{\Xi\mid\Phi\types \phi}{\Xi\mid\Phi\types \phi\lor\psi}\and
    \inferrule{\Xi\mid\Phi\types \psi}{\Xi\mid\Phi\types \phi\lor\psi}\and
    \inferrule{\Xi\mid\Phi,\phi\types \chi \\ \Xi\mid\Phi,\psi\types\chi \\ \Xi\mid\Phi\types \phi\lor\psi}{\Xi\mid\Phi\types\chi}\and
    \inferrule{\Xi\types s:I \\ \Xi\types t:I}{\Xi\types (s\jdeq t)\tope}\and
    \inferrule{\Xi\types s:I}{\Xi\mid\Phi\types (s\jdeq s)}\and
    \inferrule{\Xi\mid\Phi\types (s\jdeq t)}{\Xi\mid\Phi\types (t\jdeq s)}\and
    \inferrule{\Xi\mid\Phi\types (s\jdeq t) \\ \Xi\mid\Phi\types (t\jdeq v)}{\Xi\mid\Phi\types (s\jdeq v)}\and
    \inferrule{\Xi\mid\Phi\types (s\jdeq t) \\ \Xi,x:I \types \psi\tope \\ \Xi\mid\Phi\types \psi[s/x]}{\Xi\mid\Phi\types \psi[t/x]}\and
    \inferrule{\Xi\types t:\unittype}{\Xi\mid\Phi\types t\jdeq\star}\and
    \inferrule{\Xi\types s:I \\ \Xi\types t:J}{\Xi\mid\Phi\types \pi_1(\pair{s,t}) \jdeq s}\and
    \inferrule{\Xi\types s:I \\ \Xi\types t:J}{\Xi\mid\Phi\types \pi_2(\pair{s,t}) \jdeq t}\and
    \inferrule{\Xi\types t:I\times J}{\Xi\mid\Phi\types t \jdeq \pair{\pi_1(t),\pi_2(t)}}\and
  \end{mathpar}
  \caption{The tope layer}
  \label{fig:topes}
\end{figure}

The formal rules of the tope layer are shown in \cref{fig:topes}; here $\Phi$ is a list of topes.
Note that we include the $\beta$ and $\eta$ rules for finite product cubes as introductions for equality topes.
We state all the rules in ``natural deduction style'', ensuring the admissibility of the usual structural rules like weakening, contraction, substitution, and cut.
For instance, here are the substitution rules:
\begin{mathpar}
  \inferrule{\Xi\types t:I \\ \Xi,x:I\types \phi\tope}{\Xi \types \phi[t/x] \tope}\and
  \inferrule{\Xi\types t:I \\ \Xi,x:I\mid\Phi\types \psi}{\Xi\mid\Phi[t/x] \types \psi[t/x]}\and
\end{mathpar}
and here is the cut rule for topes:
\[ \inferrule{\Xi\mid\Phi\types \psi \\ \Xi\mid\Phi,\psi\types\chi}{\Xi\mid \Phi\types \chi} \]

By a \textbf{shape} we will mean a cube together with a tope in the corresponding singleton context.
We could formalize this with a judgment and introduction rule such as the following:
\begin{equation}
  \inferrule{ I\cube \\ t:I \types \phi \tope}{\sh{t:I}{\phi} \shape}\label{eq:shape-judgment}
\end{equation}
The most important shapes for us will be the $n$-simplices and their boundaries and partial boundaries (such as horns).

Finally, the third layer is an ordinary intensional dependent type theory in which every judgment has additional contexts of cubes and topes.
All the usual type formers and rules leave these cube and tope contexts unchanged.
As in~\cite{hottbook}, we include $\Sigma$-types, $\Pi$-types with judgmental $\eta$-conversion, coproduct types, identity types $x : A, y : A \types x = y \type$, a universe $\univtype$ (but see \cref{sec:notation}), and so on.
We assume function extensionality as in~\cite[\S 2.9]{hottbook}, but we will not need any higher inductive types, nor the univalence axiom (although we expect that it, or at least directed analogues of it, will become important as the theory is developed further).

In addition, we have various rules that relate the first two layers to the third.
Firstly, we state all the rules in such a way that the following substitution/cut rules are admissible:
\begin{mathpar}
  \inferrule{\Xi\types t:I \\ \Xi,x:I\mid\Phi\mid \Gamma \types a:A}{\Xi\mid\Phi[t/x]\mid\Gamma[t/x] \types a[t/x] :A[t/x]}\and
  \inferrule{\Xi\mid\Phi\types \psi \\ \Xi\mid\Phi,\psi\mid \Gamma \types a:A}{\Xi\mid\Phi\mid \Gamma \types a:A}\and
\end{mathpar}
along with the obvious rules like weakening and contraction for the cube and tope contexts.

Secondly, we have rules ensuring that the type theory respects the ``tope logic'' in a strict judgmental way.
The appropriate sort of respect for $\top$ and $\land$ is already ensured by the cut and weakening rules.
For instance, we have the following derivations for $\land$:
\begin{mathpar}
  \inferrule*[Right=cut]{\inferrule*[Right=cut]{\inferrule*[Right=weak]{
        \Xi\mid\Phi,\phi,\psi\mid\Gamma\types a:A}
      {\Xi\mid\Phi,\phi\land\psi,\phi,\psi\mid\Gamma\types a:A}
    }{\Xi\mid\Phi,\phi\land\psi,\phi\mid\Gamma\types a:A}}
  {\Xi\mid\Phi,\phi\land\psi\mid\Gamma\types a:A}\and
  \inferrule*[Right=cut]{\inferrule*[Right=weak]{\inferrule*[Right=weak]{
      \Xi\mid\Phi,\phi\land\psi\mid\Gamma\types a:A}
    {\Xi\mid\Phi,\phi,\phi\land\psi\mid\Gamma\types a:A}}
  {\Xi\mid\Phi,\phi,\psi,\phi\land\psi\mid\Gamma\types a:A}}
{\Xi\mid\Phi,\phi,\psi\mid\Gamma\types a:A}
\end{mathpar}
But in the case of $\bot$ and $\lor$, we have to assert elimination and computation rules, as shown in \cref{fig:tope-or}.
Note that the rules for $\lor$ say that $\phi\lor\psi$ is a (strict) \emph{pushout} of $\phi$ and $\psi$ under $\phi\land\psi$, as is always the case in a coherent category.

\begin{figure}
  \centering
  \begin{mathpar}
    \inferrule{\Xi\mid\Phi\types \bot}{\Xi\mid\Phi\mid\Gamma\types \rec_\bot : A}\and
    \inferrule{\Xi\mid\Phi\types \bot \\ \Xi\mid\Phi\mid\Gamma\types a:A}{\Xi\mid\Phi\mid\Gamma\types a\jdeq \rec_\bot}\and
    \inferrule{
      \Xi\mid\Phi\types \phi\lor\psi \\
      \Xi\mid\Phi\mid\Gamma\types A\type \\
      \Xi\mid\Phi,\phi\mid\Gamma\types a_\phi:A \\
      \Xi\mid\Phi,\psi\mid\Gamma\types a_\psi:A \\
      \Xi\mid\Phi,\phi\land\psi\mid\Gamma\types a_\phi \jdeq a_\psi
    }{\Xi\mid\Phi\mid\Gamma\types \rec_\lor^{\phi,\psi}(a_\phi,a_\psi) : A}\and
    \inferrule{
      \Xi\mid\Phi\types \phi\lor\psi \\
      \Xi\mid\Phi\mid\Gamma\types A\type \\
      \Xi\mid\Phi,\phi\mid\Gamma\types a_\phi:A \\
      \Xi\mid\Phi,\psi\mid\Gamma\types a_\psi:A \\
      \Xi\mid\Phi,\phi\land\psi\mid\Gamma\types a_\phi \jdeq a_\psi
    }{\Xi\mid\Phi,\phi\mid\Gamma\types \rec_\lor^{\phi,\psi}(a_\phi,a_\psi) \jdeq a_\phi}\and
    \inferrule{
      \Xi\mid\Phi\types \phi\lor\psi \\
      \Xi\mid\Phi\mid\Gamma\types A\type \\
      \Xi\mid\Phi,\phi\mid\Gamma\types a_\phi:A \\
      \Xi\mid\Phi,\psi\mid\Gamma\types a_\psi:A \\
      \Xi\mid\Phi,\phi\land\psi\mid\Gamma\types a_\phi \jdeq a_\psi
    }{\Xi\mid\Phi,\psi\mid\Gamma\types \rec_\lor^{\phi,\psi}(a_\phi,a_\psi) \jdeq a_\psi}\and
    \inferrule{\Xi\mid\Phi\types \phi\lor\psi \\ \Xi\mid\Phi\mid\Gamma\types a:A}{\Xi\mid\Phi\mid\Gamma\types a \jdeq \rec_\lor^{\phi,\psi}(a,a)}\and
  \end{mathpar}
  \caption{Type elimination for tope disjunction}
  \label{fig:tope-or}
\end{figure}

We also require the following compatibility rule, saying that tope equality behaves like judgmental equality:
\begin{equation}
\inferrule{\Xi\mid\Phi\types (s\jdeq t) \\ \Xi,x:I\mid\Phi\mid\Gamma\types a:A}
{\Xi\mid\Phi\mid\Gamma[s/x] \types a[s/x]\jdeq a[t/x]}\label{eq:tope-eq-jdeq}
\end{equation}
Note that in the premise, $\jdeq$ refers to the equality tope in the second layer, while in the conclusion it refers to the judgmental equality of the third layer.
Also, inductively we have $\Gamma[s/x] \jdeq \Gamma[t/x]$, so both terms $a[s/x]$ and $a[t/x]$ in the conclusion are well-typed in the same context.

Unfortunately, because rule~\eqref{eq:tope-eq-jdeq} as stated has a not-fully-general context $\Gamma[s/x]$ in the conclusion, it breaks admissibility of substitution into judgmental equality:
\begin{equation}
  \inferrule{\Xi\mid\Phi\mid \Gamma \types a:A \\ \Xi\mid\Phi\mid\Gamma,x:A,\Delta\types b\jdeq b'}{\Xi\mid\Phi\mid\Gamma,\Delta[a/x]\types b[a/x]\jdeq b[a'/x]}\tag{$*$}\label{eq:sub-jdeq}
\end{equation}
There are various potential ways to resolve this, but since judgmental equality is proof-irrelevant and we are not considering implementation questions in this paper, the simplest is to take~\eqref{eq:sub-jdeq} as a primitive rule.
We thank Dan Licata for pointing this out.

\subsection{Extension types along cofibrations}

Finally, we come to the reason for introducing this whole three-layer theory: extension types along cofibrations.
As our notion of ``cofibration'' we use a \emph{shape inclusion}, i.e.\ a pair of shapes $\sh{t:I}{\phi}$ and $\sh{t:I}{\psi}$ in the same cube such that $t:I\mid\phi\types\psi$.
We will sometimes abbreviate this as $\sh{t:I}{\phi}\subseteq \sh{t:I}{\psi}$.

\begin{figure}
  \centering
  \begin{mathpar}
    \inferrule{\sh{t:I}{\phi} \shape \\ \sh{t:I}{\psi} \shape \\ t:I \mid \phi \types \psi \\
      \Xi\mid\Phi \types \Gamma \ctx \\
      \Xi,t:I \mid \Phi,\psi \mid \Gamma \types A\type \\
      \Xi,t:I \mid \Phi,\phi \mid \Gamma \types a:A
    }{\Xi\mid\Phi\mid\Gamma \types \exten{t:I \mid \psi}{A}{\phi}{a} \type}\and
    \inferrule{\sh{t:I}{\phi} \shape \\ \sh{t:I}{\psi} \shape \\ t:I \mid \phi \types \psi \\
      \Xi\mid\Phi \types \Gamma \ctx \\
      \Xi,t:I \mid \Phi,\psi \mid \Gamma \types A\type \\
      \Xi,t:I \mid \Phi,\phi \mid \Gamma \types a:A \\\\
      \Xi,t:I \mid \Phi,\psi \mid \Gamma \types b:A \\
      \Xi,t:I \mid \Phi,\phi \mid \Gamma \types b\jdeq a
    }{\Xi\mid\Phi\mid\Gamma \types \lam{t^{I\mid\psi}} b : \exten{t:I \mid \psi}{A}{\phi}{a}}\and
    \inferrule{\sh{t:I}{\phi} \shape \\ \sh{t:I}{\psi} \shape \\ t:I \mid \phi \types \psi \\\\
      \Xi\mid\Phi\mid\Gamma \types f:\exten{t:I \mid \psi}{A}{\phi}{a} \\
      \Xi\types s:I \\ \Xi\mid\Phi\types \psi[s/t]
    }{\Xi\mid\Phi\mid\Gamma \types f(s) : A}\and
    \inferrule{\sh{t:I}{\phi} \shape \\ \sh{t:I}{\psi} \shape \\ t:I \mid \phi \types \psi \\\\
      \Xi\mid\Phi\mid\Gamma \types f:\exten{t:I \mid \psi}{A}{\phi}{a} \\
      \Xi\types s:I \\ \Xi\mid\Phi\types \phi[s/t]
    }{\Xi\mid\Phi\mid\Gamma \types f(s) \jdeq a[s/t]}\and
    \inferrule{\sh{t:I}{\phi} \shape \\ \sh{t:I}{\psi} \shape \\ t:I \mid \phi \types \psi \\
      \Xi\mid\Phi \types \Gamma \ctx \\
      \Xi,t:I \mid \Phi,\psi \mid \Gamma \types A\type \\
      \Xi,t:I \mid \Phi,\phi \mid \Gamma \types a:A \\\\
      \Xi,t:I \mid \Phi,\psi \mid \Gamma \types b:A \\
      \Xi,t:I \mid \Phi,\phi \mid \Gamma \types b\jdeq a\\
      \Xi\types s:I \\ \Xi\mid\Phi\types \psi[s/t]
    }{\Xi\mid\Phi\mid\Gamma \types (\lam{t^{I\mid\psi}} b)(s) \jdeq b[s/t]}\and  
    \inferrule{\sh{t:I}{\phi} \shape \\ \sh{t:I}{\psi} \shape \\ t:I \mid \phi \types \psi \\\\
      \Xi\mid\Phi\mid\Gamma \types f:\exten{t:I \mid \psi}{A}{\phi}{a} \\
    }{\Xi\mid\Phi\mid\Gamma \types f \jdeq \lam{t^{I\mid\psi}} f(t)}\and
  \end{mathpar}
  \caption{Extension types}
  \label{fig:exten}
\end{figure}

The rules for extension types are shown in \cref{fig:exten}.
In the formation rule, the judgment $\Xi\mid\Phi \types \Gamma \ctx$ means that $\Gamma$ is a well-formed context of types relative to $\Xi\mid\Phi$.
The point is that $\Gamma$ is not allowed to depend on $t$ or $\psi$, and (implicitly) that $\Phi$ is also not allowed to depend on $t$.
The type $A$, however, is allowed to depend on $t$ and $\psi$, i.e.\ we allow ``dependent extensions''.
The rest of the rules say essentially that an extension type behaves like an ordinary dependent function type, with $\beta$ and $\eta$ rules, except that all its elements act like the supplied section $a:A$ whenever $\phi$ holds.

As with ordinary dependent function types, if the codomain type $A$ does not actually depend on the domain shape $\sh{t:I}{\psi}$, instead of $\exten{t:I \mid \psi}{A}{\phi}{a}$ we write $\ndexten{\sh{t:I}{\psi}}{A}{\phi}{a}$.

A different special case is when $\phi$ is $\bot$.
Then the section $a$ might as well be $\rec_\bot$, while all the required judgmental equalities also hold automatically by $\rec_\bot$, so the extension type behaves just like an ordinary (possibly dependent) function type whose domain is a shape and whose codomain is a type.
Thus, we omit the angle brackets, writing
\begin{align}
  \Parens{\tprod_{t:I|\psi} A} &\defeq \exten{t:I|\psi}{A}{\bot}{\rec_\bot}\label{eq:exten-bot}\\
  \Parens{\sh{t:I}{\psi} \to A} &\defeq \ndexten{\sh{t:I}{\psi}}{A}{\bot}{\rec_\bot}.\label{eq:ndexten-bot}
\end{align}

Having just introduced extension types and their notation, we now proceed to introduce an abuse of that notation.
The rules in \cref{fig:exten} are written in the usual formal type-theoretic way, with the dependent type $A$, tope $\phi$, and term $a:A$ being expression metavariables containing the variable $t:I$.
Note that the variable $t$ is bound in all three, i.e.\ its binding in $\prod_{t:I|\psi}$ scopes over the rest of the expression.

However, once we \emph{have} extension types, and when writing informally and internally to type theory (as we will do for most of the paper), it is more readable and natural to regard $A$ as a function into the universe and $a$ as a dependent function
\begin{mathpar}
  A:\sh{t:I}{\psi} \to \univtype\and
  a:\tprod_{t:I\mid\phi} A(t).
\end{mathpar}
(The types of $A$ and $a$ here are actually also extension types, with $\bot$ implicit as noted above.)
It is then natural to write the extension type as
\[ \exten{t:I\mid\psi}{A(t)}{\phi}{a} \qquad\text{or}\qquad \exten{t:I\mid\psi}{A(t)}{\phi}{\lam{t}a(t)}. \]
Once we introduce notations for important shapes, such as the simplices $\Delta^n$ (see \cref{sec:sub-simplices}), it will be natural to also use these in place of the tope $\phi$:
\[ \exten{t:I\mid\psi}{A(t)}{\sh{t:I}{\phi}}{a} \qquad\text{or}\qquad \exten{t:I\mid\psi}{A(t)}{\sh{t:I}{\phi}}{\lam{t}a(t)}. \]

\begin{rmk}\label{sec:notation}
This notation does technically require a universe type, to be the codomain of $A$.
Our primary motivating model of bisimplicial sets does have a universe by~\cite{elreedy}, but as we will see in \cref{sec:models-type-theory} there are other interesting models where universes are not known to exist.
However, our use of universes in this paper will be only for notational convenience; all the results could equally well be formulated in a type theory without universes.\footnote{For further development of the theory of synthetic $(\infty,1)$-categories, we expect various universes to be necessary though.}
\end{rmk}

\section{Simplicial type theory}
\label{sec:simplices}

This completes the specification of our general type theory with shapes and extension types.
As a special case, we now formulate the theory that we will work in for the rest of the paper, in which the cube and tope layers form the coherent theory of a strict interval. The simplices are then defined as particular shapes in this coherent theory.
In \cref{sec:Segal-types} we will define hom types by using these simplex shapes as ``probes''.

\subsection{The strict interval}
\label{sec:strict-interval}
To define the coherent theory of the strict interval we begin with the axiomatic cubes and terms:
\begin{mathpar}
  \two \cube \and
  0:\two \and
  1:\two
\end{mathpar}
and an axiomatic inequality tope
\begin{mathpar}
  x:\two, y:\two \types (x\le y) \tope
\end{mathpar}
together with the following strict interval axioms:
\begin{align*}
  x:\two \mid\cdot &\types (x\le x)\\
  x:\two, y:\two, z:\two \mid (x\le y), (y\le z) &\types (x\le z)\\
  x:\two, y:\two \mid (x\le y), (y\le x) &\types (x\jdeq y)\\
  x:\two, y:\two \mid\cdot &\types (x\le y) \lor (y\le x)\\
  x:\two \mid\cdot &\types (0\le x)\\
  x:\two \mid\cdot &\types (x\le 1)\\
  \cdot\mid (0\jdeq 1) &\types \bot
\end{align*}
(Technically, to maintain admissibility of cut, such axioms should be formulated as inference rules, but this version is much more readable.)

\begin{rmk}\label{rmk:duality}
  Note that this theory has a ``duality'' involution obtained by interchanging $0$ with $1$ and reversing the order of $\le$.
  Since the rules of our type theory are independent of the particular tope theory, it follows that our entire three-layer type theory has a duality involution.
  This is a \emph{syntactic} and ``meta-theoretic'' involution, transforming every proof into a dual proof; but it also corresponds semantically to the categorical duality involution on bisimplicial sets obtained by precomposing, in the ``categorical'' but not also in the ``spatial'' direction, with the functor $(-)^\circ \colon \DDelta \to \DDelta$ that reverses the direction on each ordinal.
  However, there can also be other models that lack such an involution, since it is not internalized in the syntax.
\end{rmk}

\begin{rmk}\label{rmk:cubical}
  The cubical type theory of~\cite{CCHM} can (at least approximately) be regarded as a different instance of our type theory with shapes, together with added ``Kan operations''.
  Instead of the coherent theory of a strict interval,~\cite{CCHM} uses the coherent theory of a nondegenerate de Morgan algebra, having the following axiomatic cubes and terms:
  \def\meet(#1,#2){#1\mathbin{\pmb{\land}}#2}
  \def\join(#1,#2){#1\mathbin{\pmb{\lor}}#2}
  \begin{mathpar}
    \dI\cube \and 0:\dI \and 1:\dI \\ t:\dI,s:\dI \types \join(t,s):\dI \and t:\dI,s:\dI \types \meet(t,s):\dI \and t:\dI \types \neg t:\dI.
  \end{mathpar}
  Note that here $\pmb{\land}$ and $\pmb{\lor}$ are cube term constructors denoting lattice operations, not to be confused with the logical conjunction and disjunction $\land$ and $\lor$ that act on topes.
  They are subject to the axioms of a distributive lattice:
  \begin{align*}
    t:\dI \mid\cdot &\types \join(t,t)\jdeq t\\
    t:\dI,s:\dI \mid\cdot &\types \join(t,s)\jdeq \join(s,t)\\
    t:\dI,s:\dI,u:\dI \mid\cdot &\types \join(t,(\join(s,u)))\jdeq \join((\join(t,s)),u)\\
    t:\dI \mid\cdot &\types \join(t,0)\jdeq t\\
    t:\dI \mid\cdot &\types \meet(t,t)\jdeq t\\
    t:\dI,s:\dI \mid\cdot &\types \meet(t,s)\jdeq \meet(s,t)\\
    t:\dI,s:\dI,u:\dI \mid\cdot &\types \meet(t,(\meet(s,u)))\jdeq \meet((\meet(t,s)),u)\\
    t:\dI \mid\cdot &\types \meet(t,1)\jdeq t\\
    t:\dI,s:\dI \mid \cdot &\types \join(t,(\meet(t,s))) \jdeq t\\
    t:\dI,s:\dI \mid \cdot &\types \meet(t,(\join(t,s))) \jdeq t\\
    t:\dI,s:\dI,u:\dI \mid\cdot &\types \join(t,(\meet(s,u))) \jdeq \meet((\join(t,s)),(\join(t,u)))\\
    \intertext{plus those of a de Morgan algebra:}
    t:\dI,s:\dI \mid\cdot &\types \neg(\meet(t,s)) \jdeq \join(\neg t, \neg s)\\
    t:\dI \mid\cdot &\types \neg\neg t\jdeq t\\
    \intertext{and finally distinctness of the top and bottom elements:}
    \cdot \mid (0\jdeq 1) &\types \bot.
  \end{align*}
  The cubical path-type $\mathsf{Path}_A(x,y)$ is the extension type $\exten{t:\dI}{A}{t\jdeq 0 \lor t\jdeq 1}{\rec_\lor(x,y)}$ analogous to our hom-types (see \cref{sec:hom-types}), and the ``face lattice'' $\mathbb{F}$ of~\cite{CCHM} corresponds to the coherent logic of topes, while the ``systems'' of~\cite{CCHM} correspond to the rules in \cref{fig:tope-or}.
  The de Morgan negation $\neg$ on $\dI$ yields a ``path reversal'' operation, while the minima and maxima operations yield path operations constructing ``connection squares'' such as
  \[
  \begin{tikzcd}
    x \arrow[r, "f"] \arrow[dr, "f" description] \arrow[d, "f"'] & y \arrow[d, equals] \\
    y \arrow[r, equals]  \arrow[ur, phantom, "\cdot" very near start, "\cdot" very near end] & y
  \end{tikzcd}
  \qquad\text{and}\qquad
  \begin{tikzcd}
    x \arrow[r, equals] \arrow[dr, "f" description] \arrow[d, equals] & x \arrow[d, "f"] \\
    x \arrow[r, "f"']  \arrow[ur, phantom, "\cdot" very near start, "\cdot" very near end] & y
  \end{tikzcd}
  \]
  The composition and Kan operations, however, are something extra in cubical type theory without any analogue in our general type theory with shapes; they force the cubical path-types to represent the ``internal'' homotopy theory of types, rather than moving in an ``orthogonal'' direction like our directed hom-types will.

  Of course, in our theory we do not want a negation, since it is explicitly supposed to be \emph{directed}.
  We do not include binary minima and maxima explicitly, but we could do so without changing the coherent theory, since a total order always has minima and maxima.
  However, as we will see, this same argument enables us to construct connection squares using $\rec_\lor$, so there is no need for minimum and maximum operations.

  There are many possible variations of ``cubical type theory'', corresponding to variations in the coherent theory chosen (for instance, leaving out negation and/or maxima and minima).
  There are also many possible variations of our directed type theory; for instance, the theory of \emph{discs} from~\cite{disks} should yield synthetic theories of $(\infty,n)$- or $(\infty,\infty)$-categories, corresponding semantically to the $\Theta$-spaces of~\cite{rezk-theta}.
  Moreover, we can combine theories: for instance, with a strict interval $\two$ and an unrelated de Morgan algebra $\dI$, we could obtain a ``directed cubical type theory'' in which the intrinsic homotopy theory is cubical but there is an additional orthogonal directed structure.
  Our type theory with shapes supplies a general context in which to investigate a large class of such theories.
\end{rmk}

\subsection{Simplices and their subshapes}
\label{sec:sub-simplices}

The interval type $\two$ allows us to define the \textbf{simplices} as the following shapes:\footnote{Formally, this should really be something like \[\sh{t:(\cdots((\two\times \two)\times \two)\cdots)}{\pi_2(t) \le \pi_2(\pi_1(t)) \le \cdots \le (\pi_1)^n(t)},\] but no problems will arise from this sort of abuse of notation.}
\[ \Delta^n \defeq \sh{\pair{t_1,\dots,t_n}:\two^n}{t_n\le\cdots\le t_1} \]
We note that this is a meta-theoretic definition, as there is internally no ``natural numbers'' that we can use to parametrize a ``family of shapes'' (there is a natural numbers \emph{type}, but shapes are not allowed to depend on types).
However, we will only need it for very small concrete values of $n$, in which case the meaning is clear:
\begin{align*}
  \Delta^0 &\defeq \sh{t:\unittype}{\top}\\
  \Delta^1 &\defeq \sh{t:\two}{\top}\\
  \Delta^2 &\defeq \sh{\pair{t_1,t_2}:\two\times\two}{t_2\le t_1}\\
  \Delta^3 &\defeq \sh{\pair{t_1,t_2,t_3}:\two\times\two\times\two}{t_3\le t_2 \le t_1}
\end{align*}
Moreover, as we shall discover in \cref{sec:join}, the higher dimensional simplices may be inductively defined from these lower dimensional ones by means of the join operation (though, again, the induction is external to the type theory).

\begin{rmk}
  The perhaps-surprising reversal of order in the coordinates is chosen so that $t_i$ parametrizes the $i^{\mathrm{th}}$ arrow in the spine of a simplex.
  For instance, in a 3-simplex $f:\Delta^3 \to A$ with the following boundary:
  \[
  \begin{tikzcd}
    & \cdot\ar[dr,"f_{23}"] \ar[dd,"f_2" description] &&&& \cdot \ar[dr,"f_{23}"]\\
    \cdot\ar[ur,"f_{1}"] \ar[dr,"f_{12}"'] && \cdot & \Rightarrow & \cdot\ar[ur,"f_{1}"] \ar[dr,"f_{12}"'] \ar[rr,"f_{123}" description] && \cdot\\
    & \cdot\ar[ur,"f_3"'] &&&& \cdot\ar[ur,"f_3"']
  \end{tikzcd}
  \]
  we have
  \begin{align*}
    f_1(t) &\jdeq f(t,0,0)\\
    f_2(t) &\jdeq f(1,t,0)\\
    f_3(t) &\jdeq f(1,1,t).
  \end{align*}
  The other three 1-simplices are given by
  \begin{align*}
    f_{12}(t) &\jdeq f(t,t,0)\\
    f_{23}(t) &\jdeq f(1,t,t)\\
    f_{123}(t) &\jdeq f(t,t,t).
  \end{align*}
\end{rmk}

The other face and degeneracy operations between simplices can be defined in analogous ways.
For instance, the four 2-simplex faces of a 3-simplex are obtained by requiring $0\jdeq t_3$, $t_3\jdeq t_2$, $t_2\jdeq t_1$, and $t_1\jdeq 1$ respectively.
These yield operations on extension types:
\begin{align*}
  \lam{f} \lam{\pair{t_1,t_2}} f\pair{t_1,t_2,0} &: (\Delta^3 \to A) \to (\Delta^2 \to A)\\
  \lam{f} \lam{\pair{t_1,t_2}} f\pair{t_1,t_2,t_2} &: (\Delta^3 \to A) \to (\Delta^2 \to A)\\
  \lam{f} \lam{\pair{t_1,t_2}} f\pair{t_1,t_1,t_2} &: (\Delta^3 \to A) \to (\Delta^2 \to A)\\
  \lam{f} \lam{\pair{t_1,t_2}} f\pair{1,t_1,t_2} &: (\Delta^3 \to A) \to (\Delta^2 \to A).
\end{align*}
Similarly, the two degenerate 2-simplices associated to a 1-simplex are given by ignoring one variable:
\begin{align*}
  \lam{f} \lam{\pair{t_1,t_2}} f(t_1) &: (\Delta^1 \to A) \to (\Delta^2 \to A)\\
  \lam{f} \lam{\pair{t_1,t_2}} f(t_2) &: (\Delta^1 \to A) \to (\Delta^2 \to A)
\end{align*}
and so on.

We will also use various sub-shapes of the simplices, particularly their \textbf{boundaries}:
\begin{align*}
  \partial\Delta^1 &\defeq \sh{t:\two}{(0\jdeq t) \lor (t\jdeq 1)}\\
  \partial\Delta^2 &\defeq \sh{\pair{t_1,t_2}:\two\times\two}{(0\jdeq t_2\le t_1) \lor (t_2\jdeq t_1) \lor (t_2\le t_1 \jdeq 1)}
\end{align*}
The elimination rules in \cref{fig:tope-or} ensure that terms depending on such a boundary can be ``glued together'' from terms depending on lower-dimensional simplices in the expected way.
For instance, to define a term $a:A$ in context $\partial\Delta^1$ (i.e.\ in context $t:\two \mid t\jdeq 0 \lor t\jdeq 1$), it is necessary and sufficient to give a term $a_0:A$ in context $t:\two \mid t\jdeq 0$ and a term $a_1:A$ in context $t:\two\mid t\jdeq 1$, such that if we assume $t\jdeq 0 \land t\jdeq 1$ then $a_0\jdeq a_1$.
But the last requirement is vacuous, since $t\jdeq 0 \land t\jdeq 1 \types \bot$ so that in that context both reduce to $\rec_\bot$.
Moreover, since tope equality acts like judgmental equality, assuming $t:\two$ and $t\jdeq 0$ is equivalent to assuming nothing at all, and similarly {for assuming $t\jdeq 1$.}

Thus, a term $a:A$ in context $\partial\Delta^1$ is equivalently two terms $a_0,a_1:A$ in no shape context, so that $\partial\Delta^1$ behaves like $\booltype$, the boolean type $\unittype + \unittype$.
Similarly, a term $a:A$ in context $\partial\Delta^2$ is equivalently three terms $a_0,a_1,a_2:A$ in context $t:\two$ such that $a_0[0/t] \jdeq a_1[0/t]$ and $a_0[1/t] \jdeq a_2[0/t]$ and $a_1[1/t] \jdeq a_2[1/t]$, i.e.\ a ``noncommutative triangle''.

More interestingly, $\Delta^1\times \Delta^1$ (i.e.\ the shape $\sh{t:\two\times\two}{\top}$) behaves like the pushout of two copies of $\Delta^2$ along their common diagonal boundary $\Delta^1_1 \defeq \sh{\pair{t,s} : \two \times \two}{t \jdeq s}$.
For since we have $t:\two,s:\two \types (t\le s)\lor (s\le t)$, a term $a:A$ in context $\Delta^1\times \Delta^1$ is equivalently a term $a_0:A$ in context $t:\two, s:\two \mid (t\le s)$ (which, up to tupling and permutation of variables, is just $\Delta^2$) and a term $a_1:A$ in context $t:\two, s:\two \mid (s\le t)$ (another copy of $\Delta^2$), such that if we assume $t\le s$ and $s\le t$ then $a_0\jdeq a_1$.
But $(t\le s),(s\le t)\types t\jdeq s$, so this context is just a copy of $\Delta^1$, embedded into the two copies of $\Delta^2$ as one of the boundary edges.

As an example application of this, recall that in \cref{rmk:cubical} we remarked that the cubical de Morgan algebra structure on shapes\footnote{Actually, a lattice structure suffices.} enables the construction of ``connection'' squares with the following faces, for any arrow $f$ from $x$ to $y$:
  \begin{equation}
  \begin{tikzcd}
    x \arrow[r, "f"] \arrow[dr, "f" description] \arrow[d, "f"'] & y \arrow[d, equals] \\
    y \arrow[r, equals]  \arrow[ur, phantom, "\cdot" very near start, "\cdot" very near end] & y
  \end{tikzcd}
  \qquad\text{and}\qquad
  \begin{tikzcd}
    x \arrow[r, equals] \arrow[dr, "f" description] \arrow[d, equals] & x \arrow[d, "f"] \\
    x \arrow[r, "f"']  \arrow[ur, phantom, "\cdot" very near start, "\cdot" very near end] & y
  \end{tikzcd}\label{eq:connection-squares}
  \end{equation}
We denote these squares by $\connmax f$ and $\connmin f$, respectively; in terms of the lattice operations $\pmb{\land}$ and $\pmb{\lor}$ they are defined by $\connmax f(t,s) = f(t\mathbin{\pmb{\lor}}s)$ and $\connmin f(t,s) = f(t\mathbin{\pmb{\land}}s)$.
Unlike in the cubical type theory of~\cite{CCHM}, we have not assumed $\mathbin{\pmb{\lor}}$ and $\mathbin{\pmb{\land}}$ as operations in our algebra of cubes, but we can nevertheless construct $\connmax f$ and $\connmin f$ using $\rec_\lor$.
Essentially, this is because a totally ordered set is automatically a lattice.

\begin{prop}\label{prop:connections}
  For any $f:\two\to A$, we have squares $\connmax f,\connmin f:\two\times\two\to A$ with the faces displayed in \eqref{eq:connection-squares}, i.e.\ such that
  \begin{alignat*}{2}
    \connmax f(0,s) &\jdeq f(s) &\qquad \connmin f(0,s) &\jdeq f(0)\\
    \connmax f(t,0) &\jdeq f(t) &\qquad \connmin f(t,0) &\jdeq f(0)\\
    \connmax f(1,s) &\jdeq f(1) &\qquad \connmin f(0,s) &\jdeq f(s)\\
    \connmax f(t,1) &\jdeq f(1) &\qquad \connmin f(t,0) &\jdeq f(t)\\
    \connmax f(t,t) &\jdeq f(t) &\qquad \connmin f(t,t) &\jdeq f(t).
  \end{alignat*}
\end{prop}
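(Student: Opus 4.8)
The plan is to build $\connmax f$ and $\connmin f$ directly via $\rec_\lor$, gluing together two maps out of copies of $\Delta^2$ along their shared diagonal. As observed just before the proposition, the shape $\two\times\two$ is covered by the two subshapes $\sh{\pair{t,s}:\two\times\two}{t\le s}$ and $\sh{\pair{t,s}:\two\times\two}{s\le t}$ --- their union is all of $\two\times\two$ because $t:\two,s:\two\types(t\le s)\lor(s\le t)$, and their intersection is the diagonal $\sh{\pair{t,s}:\two\times\two}{t\jdeq s}$. On the locus $t\le s$ one has $\max(t,s)=s$ and $\min(t,s)=t$, while on $s\le t$ one has $\max(t,s)=t$ and $\min(t,s)=s$, which motivates setting
\begin{align*}
  \connmax f &\defeq \lam{\pair{t,s}} \rec_\lor^{(t\le s),(s\le t)}(f(s),f(t)), &
  \connmin f &\defeq \lam{\pair{t,s}} \rec_\lor^{(t\le s),(s\le t)}(f(t),f(s)).
\end{align*}

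For these applications of $\rec_\lor$ to be well-formed I must discharge its compatibility hypothesis on the overlap: in the context $t\le s,\ s\le t$, antisymmetry of $\le$ yields the equality tope $t\jdeq s$, whence $f(s)\jdeq f(t)$ by rule~\eqref{eq:tope-eq-jdeq}, which promotes a tope equality to a judgmental one. Thus both branches of each $\rec_\lor$ agree where the two subshapes meet, so both definitions are legitimate maps $\two\times\two\to A$.

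What remains is to verify the boundary equations, which is a routine computation of $\rec_\lor$-redices governed by the strict interval axioms $0\le x$, $x\le 1$ and $x\le x$. For example, substituting $0$ for $t$ in $\connmax f$: since $0\le s$ is derivable we are in the first disjunct, so the corresponding computation rule in \cref{fig:tope-or} gives $\connmax f(0,s)\jdeq f(s)$; substituting $0$ for $s$ lands us in the second disjunct (as $0\le t$), giving $\connmax f(t,0)\jdeq f(t)$; and the $t=1$ and $s=1$ faces are handled the same way, with $x\le 1$ selecting the relevant disjunct. The computations for $\connmin f$ are identical after swapping the two arguments of $\rec_\lor$. The one case of a slightly different flavour is the diagonal, obtained by substituting $t$ for $s$: here both topes $(t\le t)$ hold, both arguments of the $\rec_\lor$ have already reduced to $f(t)$, and one invokes instead the degeneracy rule $a\jdeq\rec_\lor^{\phi,\psi}(a,a)$ to conclude $\connmax f(t,t)\jdeq f(t)\jdeq\connmin f(t,t)$. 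I do not anticipate any genuine obstacle: the only point requiring care is to track, face by face, which disjunct of $(t\le s)\lor(s\le t)$ is in force, so that the right $\rec_\lor$-reduction is applied.
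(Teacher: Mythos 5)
Your proof is correct and follows the paper's approach exactly: both define $\connmax f(t,s)\defeq\rec_\lor^{t\le s,s\le t}(f(s),f(t))$ and $\connmin f(t,s)\defeq\rec_\lor^{t\le s,s\le t}(f(t),f(s))$, and both discharge the $\rec_\lor$ compatibility hypothesis via antisymmetry giving $t\jdeq s$ on the overlap. Your explicit face-by-face verification of the boundary equalities is a welcome addition that the paper leaves implicit.
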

\begin{proof}
  We define
  \begin{align*}
    \connmax f(t,s) &\defeq \rec_\lor^{t\le s, s\le t}(f(s),f(t))\\
    \connmin f(t,s) &\defeq \rec_\lor^{t\le s, s\le t}(f(t),f(s)).
  \end{align*}
  In both cases, if $t\le s$ and $s\le t$, then $t\jdeq s$ and so $f(s)\jdeq f(t)$, so the compatibility condition is satisfied.
  Geometrically, $\connmax f$ glues two copies of the degenerate 2-simplex $\lam{t}\lam{s} f(t)$ along their common 1-face, while $\connmin f$ similarly glues two copies of the other degenerate 2-simplex $\lam{t}\lam{s} f(s)$.
\end{proof}

As a second application, we observe that, at least as far as maps out of it are concerned, we may suppose $\Delta^2$ to be a retract of $\Delta^1\times\Delta^1$.
This will be useful in a number of places to deduce 2-simplex information from 1-simplex assumptions.

\begin{prop}\label{prop:two-simp-as-retract}
  For any type $A$, the type $\Delta^2\to A$ is a retract of $\Delta^1\times\Delta^1\to A$.
\end{prop}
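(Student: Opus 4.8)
The plan is to present $\Delta^2$ as the sub-shape $\sh{\pair{t_1,t_2}:\two\times\two}{t_2\le t_1}$ of $\Delta^1\times\Delta^1=\sh{\pair{t_1,t_2}:\two\times\two}{\top}$, take the retraction
\[ r:(\Delta^1\times\Delta^1\to A)\longrightarrow(\Delta^2\to A) \]
to be restriction along this shape inclusion, i.e.\ $r(h)\defeq \lam{\pair{t_1,t_2}} h\pair{t_1,t_2}$ now read in the smaller context where $t_2\le t_1$, and construct the section $s:(\Delta^2\to A)\to(\Delta^1\times\Delta^1\to A)$ by ``folding the square along its diagonal''. Concretely, since the strict-interval axioms give $t_1:\two,t_2:\two\types (t_2\le t_1)\lor(t_1\le t_2)$, I set
\[ s(g)\defeq \lam{\pair{t_1,t_2}}\rec_\lor^{\,t_2\le t_1,\;t_1\le t_2}\bigl(g\pair{t_1,t_2},\,g\pair{t_2,t_1}\bigr). \]
Here $g\pair{t_1,t_2}$ is well-typed in the branch where $t_2\le t_1$ and $g\pair{t_2,t_1}$ is well-typed in the branch where $t_1\le t_2$, exactly as needed. (Equivalently, using the description in the paragraph before this proposition, $\Delta^1\times\Delta^1$ is the strict pushout of two copies of $\Delta^2$ along the diagonal $\Delta^1_1$, and $s(g)$ is the map out of the pushout given by $g$ on one copy and the ``reflected'' $g$ on the other.)

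The verification has two small steps. First, the compatibility hypothesis for $\rec_\lor$ asks that, assuming both $t_2\le t_1$ and $t_1\le t_2$, the two branches agree; but the antisymmetry axiom $x\le y,\,y\le x\types x\jdeq y$ gives $t_1\jdeq t_2$ in that context, so by \eqref{eq:tope-eq-jdeq} we get $g\pair{t_1,t_2}\jdeq g\pair{t_1,t_1}\jdeq g\pair{t_2,t_1}$, and $s(g)$ is well-defined. Second, to see $r\circ s$ is the identity: for $g:\Delta^2\to A$ and $\pair{t_1,t_2}$ with $t_2\le t_1$, the $\beta$-rule for $\rec_\lor$ in \cref{fig:tope-or} (the first branch, since $t_2\le t_1$ holds) gives $s(g)\pair{t_1,t_2}\jdeq g\pair{t_1,t_2}$, hence $r(s(g))\jdeq g$ by $\eta$; in particular $r\circ s$ is propositionally equal to the identity by function extensionality, which is what ``$\Delta^2\to A$ is a retract of $\Delta^1\times\Delta^1\to A$'' means.

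There is essentially no obstacle here; the only point worth flagging is that this folding \emph{cannot} be carried out one level down, at the level of cubes or shapes: there is no cube term $\two\times\two\to\two\times\two$ realizing the reflection, because $\rec_\lor$ lives in the type layer rather than the cube layer. That is precisely why the statement asserts only that $\Delta^2\to A$ is a retract of $\Delta^1\times\Delta^1\to A$ (i.e.\ ``as far as maps out of $\Delta^2$ are concerned''), and not that $\Delta^2$ is itself a retract of $\Delta^1\times\Delta^1$ as a shape.
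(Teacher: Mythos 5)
Your proof is correct and takes essentially the same approach as the paper: the retraction is restriction along $\Delta^2\subseteq\Delta^1\times\Delta^1$, and the section is built with $\rec_\lor$ over the total-order disjunction, with antisymmetry supplying the compatibility on the diagonal. The one concrete difference is the choice of extension on the off-triangle: you reflect across the diagonal, defining $s(g)\pair{t_1,t_2}\defeq g\pair{t_2,t_1}$ when $t_1\le t_2$, whereas the paper collapses onto the diagonal, defining $s(g)\pair{t_1,t_2}\defeq g\pair{t_1,t_1}$ there. Both satisfy the compatibility and $\beta$-computation requirements and are equally valid sections; this is a cosmetic variant rather than a different method.
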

\begin{proof}
  The retraction is easy:\footnote{The apparently trivial $\eta$-expansion serves to create an element of an extension type with different domain.}
  \[ \lam{f} \lam{\pair{t,s}} f(t,s) : (\Delta^1\times\Delta^1\to A) \to (\Delta^2\to A). \]
  The section is where we have to use $\rec_\lor$:
  \[ \lam{f} \lam{\pair{t,s}} \rec_{\lor}^{t\le s,s\le t}(f(t,t),f(t,s)) : (\Delta^2\to A)\to (\Delta^1\times\Delta^1\to A) \]
  Again, if $t\le s$ and $s\le t$ then $t\jdeq s$ so $f(t,t)\jdeq f(t,s)$, so the compatibility condition holds.
  And if $\pair{t,s}:\Delta^2$ then $s\le t$, so the composite of section followed by retraction is the identity.
\end{proof}

Similar arguments apply in higher dimensions.
For instance, the 3-dimensional ``prism'' $\Delta^2\times\Delta^1 \jdeq \sh{\pair{\pair{t_1,t_2},t_3}}{t_2\le t_1}$ can be written as the union of three 3-simplices
\begin{align*}
  \Delta^3 &= \sh{\pair{\pair{t_1,t_2},t_3}}{t_3 \le t_2\le t_1}\\
  \Delta^3 &= \sh{\pair{\pair{t_1,t_2},t_3}}{t_2 \le t_3\le t_1}\\
  \Delta^3 &= \sh{\pair{\pair{t_1,t_2},t_3}}{t_2 \le t_1\le t_3}
\end{align*}
along their common boundary 2-simplices
\begin{align*}
  \Delta^2 &= \sh{\pair{\pair{t_1,t_2},t_3}}{t_3 \jdeq t_2\le t_1}\\
  \Delta^2 &= \sh{\pair{\pair{t_1,t_2},t_3}}{t_2\le t_3 \jdeq t_1}.
\end{align*}
This enables us to show:

\begin{prop}
  For any type $A$, the type $\Delta^3\to A$ is a retract of $\Delta^2\times\Delta^1 \to A$.
\end{prop}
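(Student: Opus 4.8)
The plan is to imitate the proof of \cref{prop:two-simp-as-retract}. The retraction is once more just restriction along the shape inclusion $\Delta^3\subseteq\Delta^2\times\Delta^1$ (legitimate because $t_3\le t_2\le t_1$ forces $t_2\le t_1$), realized by the trivial $\eta$-expansion
\[ \lam{g}\lam{\pair{t_1,t_2,t_3}} g(t_1,t_2,t_3) : (\Delta^2\times\Delta^1\to A) \to (\Delta^3\to A). \]

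For the section I would use the prism decomposition just described. By totality of $\le$, the prism $\Delta^2\times\Delta^1 = \sh{\pair{t_1,t_2,t_3}:\two^3}{t_2\le t_1}$ is the union of $\Delta^3$ (the region where in addition $t_3\le t_2$) and the region where $t_2\le t_3$ --- the latter being, inside the prism, the union of the remaining two of the three $3$-simplices listed above, $\{t_2\le t_3\le t_1\}$ and $\{t_2\le t_1\le t_3\}$ --- with the two pieces overlapping, by antisymmetry of the strict interval, exactly in the $2$-simplex $\{t_3\jdeq t_2\le t_1\}$. On $\Delta^3$ the section should be the identity, $f(t_1,t_2,t_3)$; on the region $t_2\le t_3$ it should collapse the last coordinate onto $t_2$, i.e.\ return $f(t_1,t_2,t_2)$ --- and $(t_1,t_2,t_2)$ does lie in $\Delta^3$ there, since $t_2\le t_2\le t_1$. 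Supplying the disjunction $t_3\le t_2\lor t_2\le t_3$ from totality of $\le$, this produces
\[ \lam{f}\lam{\pair{t_1,t_2,t_3}} \rec_\lor^{t_3\le t_2,\; t_2\le t_3}\Parens{f(t_1,t_2,t_3),\, f(t_1,t_2,t_2)} : (\Delta^3\to A) \to (\Delta^2\times\Delta^1\to A). \]
(If one would rather track all three $3$-simplices separately, split the second argument further by an inner $\rec_\lor^{t_3\le t_1,\, t_1\le t_3}$ both of whose branches are $f(t_1,t_2,t_2)$; the final computation rule of \cref{fig:tope-or} collapses it back to the term above.)

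Two points then need checking, neither a genuine obstacle. First, $\rec_\lor$ is well defined: when $t_3\le t_2$ and $t_2\le t_3$ we get $t_3\jdeq t_2$, hence $f(t_1,t_2,t_3)\jdeq f(t_1,t_2,t_2)$ by rule~\eqref{eq:tope-eq-jdeq}, so the compatibility hypothesis holds; and each branch is well-typed precisely because its argument lies in the shape $\Delta^3$ where $f$ is defined (this point-chasing is the only slightly fiddly part). Second, the composite of section followed by retraction is the identity: on $\Delta^3$ we have $t_3\le t_2$, so the computation rule for $\rec_\lor$ reduces the section to its first branch $f(t_1,t_2,t_3)$, and $\eta$ for extension types identifies this with $f$. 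The whole argument is a direct higher-dimensional echo of \cref{prop:two-simp-as-retract}, and the same ``collapse-the-last-coordinate-via-$\rec_\lor$'' recipe works for $\Delta^n$ as a retract of $\Delta^{n-1}\times\Delta^1$ in every dimension.
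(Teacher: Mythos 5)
Your proof is correct, but it realizes the retract in a genuinely different (and simpler) way than the paper. You take the retraction to be the literal restriction along the shape inclusion $\Delta^3 \subseteq \Delta^2\times\Delta^1$, and build the section by collapsing the last coordinate onto $\min(t_2,t_3)$, using a single binary $\rec_\lor$ split on $t_3\le t_2$ versus $t_2\le t_3$ — effectively realizing $\Delta^3\hookrightarrow\Delta^2\times\Delta^1$ as a \emph{shape-level} retract by $(t_1,t_2,t_3)\mapsto(t_1,t_2,t_3\mathbin{\pmb{\land}}t_2)$. The paper explicitly remarks that ``there are actually many such retractions'' and that the one it exhibits ``isn't the simplest''; it instead picks out the \emph{middle} of the three $3$-simplices in the prism, taking the retraction to be $f\mapsto \lam{\pair{t_1,t_2,t_3}}f\pair{\pair{t_1,t_3},t_2}$ and using a \emph{ternary} $\rec_\lor$ split over the position of $t_3$ among $t_1,t_2$. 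The paper does this deliberately, to illustrate in action the decomposition of the prism into its three shuffled $3$-simplices glued along two interior $2$-faces, which is the picture it wants to communicate. Your version trades that illustrative content for economy — binary rather than ternary case analysis, identity rather than permuted coordinates in the retraction — and, as you note at the end, it scales immediately to all $n$: collapsing the last coordinate via $\min$ always exhibits $\Delta^n$ as a retract of $\Delta^{n-1}\times\Delta^1$. Your well-definedness and compatibility checks are exactly what is required and are done correctly.
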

\begin{proof}
  There are actually many such retractions.
  To illustrate the above decomposition, we describe one that isn't the simplest.
  For the retraction we evaluate on the ``middle'' 3-simplex of the prism:
  \[\lam{f} \lam{\pair{t_1,t_2,t_3}} f\pair{\pair{t_1,t_3},t_2} : (\Delta^2\times\Delta^1 \to A) \to (\Delta^3\to A). \]
  This is well-defined since in $\Delta^3$ we have $t_3\le t_2\le t_1$, hence in particular $t_3\le t_1$.
  The section is defined using a triple $\rec_\lor$, which we can write informally as a case split:
  \[ \lam{f}\lam{\pair{\pair{t_1,t_2},t_3}}
  \begin{cases}
    f(t_1,t_2,t_2) &\quad t_3\le t_2\\
    f(t_1,t_3,t_2) &\quad t_2\le t_3 \le t_1\\
    f(t_1,t_1,t_2) &\quad t_1\le t_3
  \end{cases}
  \]
  Here in all cases we have $t_2\le t_1$, so in each case the requirement is met for $f$ to be defined.
  The agreement on the boundary 2-simplices, when $t_3\jdeq t_1$ or $t_3\jdeq t_2$, is also obvious, as is the fact that this is a section of the above retraction.
\end{proof}

\subsection{Joins of simplices}
\label{sec:join}

In this paper we will only need shapes and simplices of very small dimension such as $n=2,3$.
However, in this subsection we briefly indicate how some important shapes such as $n$-dimensional simplicial spheres and horns can be defined, using an analogue of Joyal's non-symmetric monoidal ``join'' operation \cite{joyal-quasi}.

Informally, if given a pair of shapes
\[  \sh{t:\two^n}{\phi} \qquad\text{and}\qquad  \sh{s:\two^m}{\psi}, \]
in the $n$-cube and $m$-cube respectively, their \emph{join} is the shape in the $(n+1+m)$-cube whose tope is those $\pair{t_1,\dots,t_n,u, s_1,\ldots, s_m}:\two^n \times \two \times \two^m$ satisfying $\phi[u/0] \wedge \psi[u/1]$, i.e.~satisfying $\phi$ except with $u$ substituted for all occurrences of the term $0$ and also $\psi$ with $u$ substituted for all occurrences of the term 1.  However, this sort of ``substitution of a variable for a constant'' is not technically possible, so instead we construct joins as restrictions of an auxiliary ``gluing'' operation on shapes in cubes of a larger dimension.

\begin{defn}
Given shapes 
\begin{equation}
  A \defeq \sh{t:\two^{1+n+1}}{\phi} \qquad\text{and}\qquad  B \defeq\sh{s:\two^{1+m+1}}{\psi} \label{eq:gluing-data}
\end{equation}
where we write $t \jdeq \pair{t_-,t_1,\ldots, t_n,t_+} : \two^{1+n+1}$ and $s \jdeq \pair{s_-,s_1,\ldots, s_m,s_+} : \two^{1+m+1}$,
their \textbf{gluing} is the shape $A \oast B$ in $\two^{1+n+1+m+1}$ defined by
\[ A \oast B \defeq \sh{\pair{t_-, t_1, \ldots, t_{n}, u, s_1,\ldots, s_{m}, s_+} : \two^{1+n+1+m+1}}{\phi[u/t_{+}] \land \psi[u/s_-]} \]
\end{defn}

\begin{ex}
  For any $n,m\geq 0$, the gluing $\Delta^{1+n+1} \oast \Delta^{1+m+1}$ of simplices is the simplex $\Delta^{1+n+1+m+1}$.
\end{ex}

\begin{defn}
Given a shape 
\[  A \defeq \sh{\pair{t_-,t_1,\ldots,t_n,t_+}:\two^{1+n+1}}{\phi}, \]
its \textbf{restriction} is the shape
\[ \restr{A} \defeq \sh{\pair{t_1,\ldots,t_n}:\two^{n}}{\phi[1/t_-,0/t_+]}.
\]
\end{defn}

\begin{ex} For any $n \geq 0$, the $n$-simplex is the restriction of the $n+2$-simplex. 
\end{ex}

\begin{defn}[join]
  An \textbf{augmented shape} is a shape in $\two^n$ of the form $\restr{A}$ for some specified shape $A$ in $\two^{1+n+1}$.
  In other words, an augmented shape is really just an arbitrary shape in $\two^{1+n+1}$, but we regard it as its restriction in $\two^n$ equipped with extra data.

  Now given augmented shapes $\restr{A}$ in $\two^n$ and $\restr{B}$ in $\two^m$, their \textbf{join} is the augmented shape defined as the restriction of the gluing:
  \[ \restr{A} \join \restr{B} \defeq \restr{A \oast B}.\]
\end{defn}

\begin{ex}[joins of simplices]
  The simplex $\Delta^n$ has a canonical augmentation as $\Delta^n = \restr{\Delta^{1+n+1}}$; we always regard it as augmented in this way.
  Thus the join $\Delta^n \join \Delta^m$ is defined, and since $\Delta^{1+n+1} \oast \Delta^{1+m+1} = \Delta^{1+n+1+m+1}$ we have $\Delta^n \join \Delta^m = \Delta^{n+1+m}$.
  In particular, we can ``construct'' the simplices by iterated joins as $\Delta^n \defeq \Delta^{n-1} \star \Delta^0$, although this is not really a definition since to augment $\Delta^{n-1}$ we have to already have $\Delta^{n+1}$.
\end{ex}

\begin{defn}[pushout join]
  Given inclusions of augmented shapes $A\subseteq B$ and $C\subseteq D$, their \textbf{pushout join} is
  \[ A \pojoin B = (A \join D) \cup (B\join C) \subseteq B\join D \]
\end{defn}

\begin{ex}[boundaries of simplices]
  We can define the simplex boundaries (simplicial spheres) $\partial\Delta^n$ by recursive pushout joins.
  As base cases we define the augmented $0$- and $1$-spheres:
  \begin{alignat*}{2}
    \partial\Delta^0 &= \restr{\sh{\pair{t_-,t_+}:\two^2}{t_+\jdeq t_-}} && \subseteq \Delta^0\\
    \partial\Delta^1 &= \restr{\sh{\pair{t_-,t_1,t_+}:\two^3}{(t_+ \jdeq t_1 \le t_-) \lor (t_+ \le t_1 \jdeq t_-)}}&&\subseteq \Delta^1\\
    \intertext{and then define recursively}
    \partial\Delta^{n+1} &= \partial\Delta^n \pojoin \partial\Delta^0 && \subseteq \Delta^{n+1}.
  \end{alignat*}
  For instance, we have
  \begin{align*}
    \partial\Delta^1 \join \Delta^0 &= \restr{\sh{\pair{t_-,t_1,u,s_+}:\two^4}{(s_+ \le u \jdeq t_1 \le t_-) \lor (s_+ \le u \le t_1 \jdeq t_-)}}\\
    \Delta^1 \join \partial\Delta^0 &= \restr{\sh{\pair{t_-,t_1,u,s_+}:\two^4}{s_+ \jdeq u \le t_1 \le t_-}}
  \end{align*}
  and hence
  \begin{multline*}
    \partial\Delta^2 = (\partial\Delta^1 \join \Delta^0) \cup (\Delta^1 \join \partial\Delta^0) = \\
    \restr{\sh{\pair{t_-,t_1,u,s_+}:\two^4}{(s_+ \le u \jdeq t_1 \le t_-) \lor (s_+ \le u \le t_1 \jdeq t_-) \lor (s_+ \jdeq u \le t_1 \le t_-)}}
  \end{multline*}
  Restricting by substituting $s_+\jdeq0$ and $t_- \jdeq 1$, we obtain the usual definition of the boundary of the 2-simplex:
  \[ \sh{\pair{t_1,u}:\two^2}{(0 \le u \jdeq t_1 \le 1) \lor (0 \le u \le t_1 \jdeq 1) \lor (0 \jdeq u \le t_1 \le 1)} \]
\end{ex}

\section{Equivalences involving extension types}
\label{sec:equiv-exten}

In this section we collect several important equivalences involving extension types, mainly straightforward generalizations of standard facts about dependent function types.
Moreover, since our extension types, $\Pi$-types, and $\Sigma$-types have judgmental $\eta$-conversion as well as $\beta$-reduction, all of these equivalences are actually ``judgmental isomorphisms'', i.e.\ the composites in both directions are judgmentally equal to the identity.
We also formulate a function extensionality axiom for extension types.

Note that when a theorem is stated as an equivalence between two types $A \simeq B$, we will not scruple to later use the \emph{specific} equivalence constructed in its proof rather than the mere existence of such an equivalence.
  This is in accord with the ``proof-relevant'' philosophy of type theory and the propositions-as-types principle: a proof of a theorem is the construction of an inhabitant of some type, in this case a type of equivalences.

\subsection{Commutation of arguments and currying}
\label{sec:curry}

For ordinary function types we have an equivalence $(X\to (Y\to Z)) \simeq (Y\to (X\to Z))$, and similarly in the dependent case we have $\left(\prod_{x:X} \prod_{y:Y} Z(x,y)\right) \simeq \left(\prod_{y:Y} \prod_{x:X} Z(x,y)\right)$.
The following theorems are analogues of this for extension types.

\begin{thm}\label{thm:exten-prod-commute}
  If $t:I \mid \phi\types\psi$ and $X:\univtype$, while $Y:\sh{t:I}{\psi} \to X\to\univtype$ and $f:\prod_{t:I\mid\phi} \prod_{x:X} Y(t,x)$, then
  \[ \exten{t:I|\psi}{\Parens{\prod_{x:X} Y(t,x)}}{\phi}{f} \simeq
  \tprod_{x:X} \exten{t:I|\psi}{Y(t,x)}{\phi}{\lam{t}f(t,x)}.
  \]
\end{thm}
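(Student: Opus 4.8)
The plan is to prove this not merely as an equivalence but as a judgmental isomorphism, by exhibiting explicit maps in both directions and checking that both composites are judgmentally the identity; this is possible because extension types and $\Pi$-types both enjoy $\beta$- and $\eta$-conversion, as noted at the start of this section.

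I would define the left-to-right map by $g \mapsto \lam{x}\lam{t} g(t)(x)$ and the right-to-left map by $h \mapsto \lam{t}\lam{x} h(x)(t)$. The first task is to verify that these assignments are well-typed, i.e.\ that each body really inhabits the asserted extension type. For the left-to-right map this amounts to checking, for fixed $x:X$, that $\lam{t} g(t)(x)$ satisfies the coherence condition of $\exten{t:I|\psi}{Y(t,x)}{\phi}{\lam{t}f(t,x)}$: but when $\phi$ holds the $\beta$-rule for the extension type containing $g$ gives $g(t)\jdeq f(t)$, hence $g(t)(x)\jdeq f(t)(x)$, as required. For the right-to-left map one must check that $\lam{t}\lam{x}h(x)(t)$ satisfies the coherence condition of the left-hand extension type, namely that $\lam{x}h(x)(t)\jdeq f(t)$ whenever $\phi$ holds; here the $\beta$-rules for the extension types containing the various $h(x)$ give $h(x)(t)\jdeq f(t)(x)$ in the context extended by $x:X$, and $\eta$-conversion for $\Pi$-types upgrades this pointwise equality to $\lam{x}h(x)(t)\jdeq \lam{x}f(t)(x)\jdeq f(t)$.

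Next I would check the two round-trips. Starting from $g$, applying the two maps and reducing with $\beta$ twice returns $\lam{t}\lam{x}g(t)(x)$, which collapses to $g$ by $\eta$ for $\Pi$-types and then $\eta$ for extension types; the other round-trip is symmetric. Hence the two maps constitute an isomorphism up to judgmental equality, and in particular an equivalence, with the identity-type coherence data given by reflexivity. No separate appeal to function extensionality is needed.

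The only step that needs a moment's care is the well-definedness of the right-to-left map, where one passes from a family of pointwise judgmental equalities $h(x)(t)\jdeq f(t)(x)$, valid in the context extended by $x:X$ under the assumption $\phi$, to a single judgmental equality between the two functions $\lam{x}h(x)(t)$ and $f(t)$; this is exactly where congruence of $\lambda$-abstraction together with the $\eta$-rule for $\Pi$-types is used. Everything else is a mechanical unwinding of $\beta$ and $\eta$.
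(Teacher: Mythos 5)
Your proof is correct and is essentially identical to the paper's: same maps (application and re-abstraction), same verification of the judgmental coherence conditions via $\beta$ and $\eta$, and same observation that the round-trips are judgmentally the identity. You merely spell out the $\eta$-step for the right-to-left direction in a bit more detail than the paper does.
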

Note that $X$ cannot depend on $t$ or $\psi$, since otherwise the right-hand side of the equivalence would be ill-formed.
\begin{proof}
  As for ordinary dependent functions, this is just application and re-ab\-strac\-tion: from left to right $g\mapsto \lam{x} \lam{t} g(t,x)$, and from right to left $h \mapsto \lam{t}\lam{x} h(x,t)$.
  We just have to verify that the requisite judgmental equations hold: if $g(t)\jdeq f(t)$ assuming $\phi$, then $g(t,x)\jdeq f(t,x)$ assuming $\phi$; while if $h(x,t)\jdeq f(t,x)$ assuming $\phi$, then $\lam{t}\lam{x} h(x,t) \jdeq \lam{t}\lam{x}f(t,x) \jdeq f$ assuming $\phi$ (using $\eta$-conversion).
  The composites in both directions are judgmentally the identity, by $\eta$-conversion.
\end{proof}

\begin{thm}\label{thm:exten-curry}
  If $t:I \mid \phi\types\psi$ and $s:J \mid \chi\types\zeta$, while
  \[X:\sh{t:I}{\psi} \to \sh{s:J}{\zeta} \to \univtype\]
  and $f:\tprod_{\pair{t,s}:I\times J\mid (\phi\land\zeta)\lor(\psi\land\chi)} X(t,s)$, then
  \begin{align*} &\exten{t:I|\psi}{\exten{s:J|\zeta}{X(t,s)}{\chi}{\lam{s}f{\pair{t,s}}}}{\phi}{\lam{t}\lam{s}f{\pair{t,s}}}
  \\&\qquad\qquad\qquad\simeq \exten{\pair{t,s}:I\times J|\psi\land\zeta}{X(t,s)}{(\phi\land\zeta)\lor(\psi\land\chi)}{f}
\\&\qquad\qquad\qquad\simeq \exten{s:J|\zeta}{\exten{t:I|\psi}{X(t,s)}{\phi}{\lam{t}f{\pair{t,s}}}}{\chi}{\lam{s}\lam{t}f{\pair{t,s}}}. \end{align*}
\end{thm}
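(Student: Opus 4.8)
The plan is to mimic the proof of \cref{thm:exten-prod-commute}: construct explicit maps in both directions by application and re-abstraction, and observe that all the required judgmental boundary conditions reduce to applications of the $\lor$-elimination rules of \cref{fig:tope-or}, after which $\eta$-conversion shows the composites are judgmentally the identity. Thus the claimed equivalences will in fact be judgmental isomorphisms, as promised in the introduction to the section.

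Concretely, for the first equivalence I would send a term $g$ of the iterated extension type $\exten{t:I|\psi}{\exten{s:J|\zeta}{X(t,s)}{\chi}{\lam{s}f\pair{t,s}}}{\phi}{\lam{t}\lam{s}f\pair{t,s}}$ to $\lam{\pair{t,s}} g(t)(s)$, and conversely send $h$ of the single extension type over $I\times J$ to $\lam{t}\lam{s} h\pair{t,s}$. To see the forward map lands in the target, I must check that $g(t)(s) \jdeq f\pair{t,s}$ whenever $(\phi\land\zeta)\lor(\psi\land\chi)$ holds; by $\lor$-elimination it suffices to treat the two disjuncts separately. If $\phi\land\zeta$ holds, then $\phi$ holds so $g(t) \jdeq \lam{s}f\pair{t,s}$, and since $\zeta$ holds, $g(t)(s) \jdeq f\pair{t,s}$ by $\beta$. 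If $\psi\land\chi$ holds, then $g(t)$ is an element of the inner extension type (using $\psi$) and $\chi$ holds, so $g(t)(s) \jdeq (\lam{s}f\pair{t,s})(s) \jdeq f\pair{t,s}$. For the reverse map, I must check that $\lam{s} h\pair{t,s}$ agrees with $\lam{s}f\pair{t,s}$ on $\chi$ (for fixed $t$ with $\psi$), which follows from $h\pair{t,s}\jdeq f\pair{t,s}$ under $\psi\land\chi$; and then that $\lam{t}\lam{s}h\pair{t,s}$ agrees with $\lam{t}\lam{s}f\pair{t,s}$ on $\phi$, which follows similarly from the $\phi\land\zeta$ case together with $\eta$. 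The second equivalence is entirely symmetric, swapping the roles of $(I,\phi,\psi)$ and $(J,\chi,\zeta)$; note the middle type is visibly symmetric in these data since $\psi\land\zeta$ and $(\phi\land\zeta)\lor(\psi\land\chi)$ are symmetric, so one obtains the third type by the same recipe applied in the other order. Composability of the two inner extension types in each iterated expression uses the hypotheses $\phi\types\psi$ and $\chi\types\zeta$ exactly as in the formation rule.

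The main obstacle is purely bookkeeping: correctly identifying, in each of the four directions, which combined tope one is assuming and which $\beta$/$\eta$ reduction discharges each boundary equation — in particular keeping straight that the ``outer'' extension supplies a judgmental equality only under $\phi$ (resp.\ $\chi$) while the inner one does so only under the other constraint, so that on the overlap both the single and the iterated form collapse to $f$. Once the two disjuncts of $(\phi\land\zeta)\lor(\psi\land\chi)$ are handled via $\rec_\lor$, there is no real content beyond $\eta$-conversion, which makes both round-trips judgmentally the identity and hence gives the equivalences (indeed isomorphisms) for free.
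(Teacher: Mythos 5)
Your proposal is correct and follows essentially the same approach as the paper: both prove the equivalences by explicit application/re-abstraction (currying) and check the required judgmental boundary equalities case-by-case. The only difference in emphasis is that the paper spends more words verifying well-formedness of the iterated extension type (that the inner and outer filling data are coherent), whereas you spend more words verifying the boundary conditions of the currying maps via the two disjuncts of $(\phi\land\zeta)\lor(\psi\land\chi)$; both checks are needed and both proofs contain both, so the substance is the same.
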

The equivalence of the sides to the middle in \cref{thm:exten-curry} is a version of \emph{currying}.
The shape $\sh{\pair{t,s}:I\times J}{(\phi\land\zeta)\lor(\psi\land\chi)}$ may be called the \textbf{pushout product} of the two inclusions $\sh{t:I}{\phi} \subseteq \sh{t:I}{\psi}$ and $\sh{s:J}{\chi}\subseteq \sh{s:J}{\zeta}$.
\begin{proof}
  We first check the well-formedness of the extension types.
  {Whenever $t : I$ is such that $\psi$ holds, we have for each $s : J$ such that $\chi$ holds a term $f\pair{t,s}:X(t,s)$, defining a function $\lam{s}f\pair{t,s} : \tprod_{s:J|\chi}X(t,s)$.; thus we can form
 $\exten{s:J|\zeta}{X}{\chi}{\lam{s} f\pair{t,s}}$. Now whenever $t : I$ is such that $\phi$ holds, we have for each $s : J$ such that $\zeta$ holds a term $f\pair{t,s}:X(t,s)$, which of course equals the first $f\pair{t,s}$ if we also have $\chi$, so we have the function $\lam{t}\lam{s}f\pair{t,s}:\tprod_{t:I|\phi}\exten{s:J|\zeta}{X}{\chi}{\lam{s} f\pair{t,s}}$ and thus can form the left-hand side.}
  The right-hand side is dual, and the middle is easy.
  Now the equivalence between the left- and right-hand types is again just application and re-abstraction, while the equivalence of both to the middle type is ordinary currying.
\end{proof}

Recall that when $\phi$ or $\chi$ is $\bot$, extension types behave like ordinary (dependent) function types, so we omit the angle brackets from the notation as in~\eqref{eq:exten-bot},~\eqref{eq:ndexten-bot}.
Thus, as special cases of \cref{thm:exten-curry} we have
\begin{align*}
  \exten{t:I|\psi}{\Parens{\tprod_{s:J|\zeta} X(t,s)}}{\phi}{\lam{t}\lam{s}f\pair{t,s}}
  &\simeq \exten{\pair{t,s}:I\times J|\psi\land\zeta}{X(t,s)}{\phi\land\zeta}{f}\\
  &\simeq \tprod_{s:J|\zeta}{\exten{t:I|\psi}{X(t,s)}{\phi}{\lam{s}f\pair{t,s}}}.\\
  \intertext{and}
  \tprod_{t:I|\psi} \tprod_{s:J|\zeta} X(t,s)
  &\simeq \tprod_{\pair{t,s}:I\times J|\psi\land\zeta} X(t,s)\\
  &\simeq \tprod_{s:J|\zeta} \tprod_{t:I|\psi} X(t,s).
  \intertext{with further notational specializations to the non-dependent case, such as}
  \Parens{\sh{t:I}{\psi} \to ( \sh{s:J}{\zeta} \to X)}
  &\simeq \Parens{\sh{t:I}{\psi}  \times  \sh{s:J}{\zeta} \to X}\\
  &\simeq  \Parens{\sh{s:J}{\zeta} \to (\sh{t:I}{\psi}  \to X)}.
\end{align*}

\subsection{Extending into $\Sigma$-types (the non-axiom of choice)}
\label{sec:non-choice}

For ordinary dependent functions we have the following equivalence~\cite[Theorem 2.15.7]{hottbook}:
\[ \Parens{\tprod_{x:X} \tsum_{y:Y(x)} Z(x,y)} \simeq \Parens{\tsum_{f:\prod_{x:X} Y(x)} \tprod_{x:X} Z(x,f(x)) } \]
The following is a version of this for extension types.

\begin{thm}\label{thm:exten-nonac}
  If $t:I \mid \phi\types\psi$, while $X:\sh{t:I}{\psi}\to\univtype$ and $Y:\tprod_{t:I\mid\psi} (X\to\univtype)$, while $a:\tprod_{t:I\mid\phi} X(t)$ and $b:\tprod_{t:I\mid\phi} Y(t,x(t))$, then
  \[ \exten{t:I|\psi}{\Parens{\tsum_{x:X(t)} Y(t,x)}}{\phi}{\lam{t}(a(t),b(t))} \simeq
\tsum_{f:\exten{t:I|\psi}{X(t)}{\phi}{a}} \exten{t:I|\psi}{Y(t,f(t))}{\phi}{b}. \]
\end{thm}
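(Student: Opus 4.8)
The plan is to argue exactly as in \cref{thm:exten-prod-commute,thm:exten-curry}: the equivalence is witnessed by a pair of maps built from pairing and projection, mutually inverse up to judgmental equality by the $\beta$- and $\eta$-rules for $\Sigma$-types and for extension types. From left to right I would send a term $g$ of the left-hand type to the pair with first component $\lam{t}\pi_1(g(t))$ and second component $\lam{t}\pi_2(g(t))$; from right to left I would send a pair $(f,k)$ to $\lam{t}(f(t),k(t))$.

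Checking the composites is immediate. Starting from $g$ we obtain, after extension-$\beta$, the term $\lam{t}(\pi_1(g(t)),\pi_2(g(t)))$, which equals $\lam{t}g(t)$ by $\Sigma$-$\eta$ and hence $g$ by extension-$\eta$. Starting from $(f,k)$ we obtain, after extension-$\beta$ and $\Sigma$-$\beta$, the pair $(\lam{t}f(t),\lam{t}k(t))$, hence $(f,k)$ by extension-$\eta$. So both round-trips are judgmentally the identity, and all that remains is well-typedness and the boundary conditions.

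The right-hand side is well-formed because the inner extension type over $\lam{t}Y(t,f(t))$ requires its specified section to land in $Y(t,f(t))$ on $\phi$: whenever $\phi$ holds we have $f(t)\jdeq a(t)$, so $Y(t,f(t))\jdeq Y(t,a(t))$ and the hypothesised $b(t):Y(t,a(t))$ typechecks there. For the left-to-right map, when $\phi$ holds $g(t)\jdeq(a(t),b(t))$, so $\pi_1(g(t))\jdeq a(t)$ --- putting $\lam{t}\pi_1(g(t))$ in $\exten{t:I|\psi}{X(t)}{\phi}{a}$ --- and $\pi_2(g(t))\jdeq b(t)$ in the type $Y(t,a(t))\jdeq Y(t,f(t))$, putting the second component in the inner extension type. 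For the right-to-left map, when $\phi$ holds $f(t)\jdeq a(t)$ and $k(t)\jdeq b(t)$, so $(f(t),k(t))\jdeq(a(t),b(t))$, as needed for $\lam{t}(f(t),k(t))$ to inhabit the left-hand type.

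The only real obstacle is notational: one must track that the second coordinate of an element of $\sum_{x:X(t)}Y(t,x)$ has a type depending on its first coordinate, and note that this dependency is harmless precisely because on the sub-shape $\phi$ --- where the extension conditions are imposed --- the relevant first coordinates are judgmentally equal to $a(t)$, so all the type-level side conditions reduce to the hypotheses on $a$ and $b$.
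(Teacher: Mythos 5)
Your proof is correct and follows exactly the paper's approach: the same pairing/projection maps, verified to be mutually inverse by the $\beta$- and $\eta$-rules and checked to respect the judgmental boundary conditions on $\phi$. You have simply spelled out the "easy to check" details that the paper leaves implicit.
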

\begin{proof}
  As in the ordinary case, this is just composing the introduction and elimination rules.
  From left to right, $h\mapsto (\lam{t}\pi_1(h(t)), \lam{t}\pi_2(h(t)))$; while from right to left, $(f,g)\mapsto \lam{t} (f(t),g(t))$.
  It is easy to check that the required judgmental equalities are preserved, and the $\beta$- and $\eta$-conversion rules make these inverse equivalences.
\end{proof}

\subsection{Composites and unions of cofibrations}
\label{sec:compose-cofib}

These equivalences have no analogue for ordinary dependent functions.

\begin{thm}\label{thm:exten-compose}
  Suppose $t:I \mid \phi\types \psi$ and $t:I\mid \psi\types\chi$, and that $X:\sh{t:I}{\chi}\to\univtype$ and $a:\tprod_{t:I\mid\phi} X(t)$.
  Then
  \[
  \exten{t:I|\chi}{X}{\phi}{a} \simeq
  \Parens{\tsum_{f:\exten{t:I|\psi}{X}{\phi}{a}} \exten{t:I|\chi}{X}{\psi}{f}} \]
\end{thm}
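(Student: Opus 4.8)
The plan is to prove this, like \cref{thm:exten-prod-commute,thm:exten-nonac}, by pure ``application and re-abstraction'': the two sides will in fact be judgmentally isomorphic, and all of the work is in tracking which judgmental equalities hold in which tope context. There is no homotopical content.

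First I would define the left-to-right map. Given $g : \exten{t:I|\chi}{X}{\phi}{a}$, the hypothesis $t:I\mid\psi\types\chi$ lets us evaluate $g$ whenever $\psi$ holds, and $g(t)\jdeq a(t)$ whenever $\phi$ holds, so $f\defeq\lam{t^{I\mid\psi}} g(t)$ is a well-typed element of $\exten{t:I|\psi}{X}{\phi}{a}$. Since $f$ is in particular a section of $X$ over $\psi$, the extension type $\exten{t:I|\chi}{X}{\psi}{f}$ is well-formed, and $\lam{t^{I\mid\chi}} g(t)$ inhabits it: the required equality $g(t)\jdeq f(t)$ under $\psi$ is the $\beta$-rule for $f$. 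Thus $g\mapsto\pair{f,\,\lam{t^{I\mid\chi}}g(t)}$.

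Conversely, given $\pair{f,h}$ with $f:\exten{t:I|\psi}{X}{\phi}{a}$ and $h:\exten{t:I|\chi}{X}{\psi}{f}$, I would send it to $\lam{t^{I\mid\chi}} h(t)$. The only point needing checking is that this lies in $\exten{t:I|\chi}{X}{\phi}{a}$, i.e.\ that $h(t)\jdeq a(t)$ whenever $\phi$ holds: under $\phi$ we have $\psi$ by the first hypothesis, hence $h(t)\jdeq f(t)$ by the $\beta$-rule for $h$, and $f(t)\jdeq a(t)$ by the $\beta$-rule for $f$; composing gives the claim.

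Finally I would verify these are mutually inverse using $\beta$ and $\eta$ as in \cref{fig:exten}. Round-tripping $g$ returns $\lam{t^{I\mid\chi}}g(t)\jdeq g$. Round-tripping $\pair{f,h}$ first produces $\lam{t^{I\mid\chi}}h(t)$ and then the pair $\pair{\lam{t^{I\mid\psi}}h(t),\,\lam{t^{I\mid\chi}}h(t)}$; the second component is $\jdeq h$ by $\eta$, and the first is $\jdeq f$ since $h(t)\jdeq f(t)$ under $\psi$, so $\lam{t^{I\mid\psi}}h(t)\jdeq\lam{t^{I\mid\psi}}f(t)\jdeq f$ by $\eta$. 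The only real obstacle is the bookkeeping: being careful that each extension type is well-formed --- in particular $\exten{t:I|\chi}{X}{\psi}{f}$, which is exactly why the intermediate shape $\psi$ must lie between $\phi$ and $\chi$ --- and that each chain of judgmental equalities is asserted in the correct tope context, so that the constructed maps genuinely compose to the identity on the nose rather than merely up to homotopy.
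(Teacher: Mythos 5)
Your proposal is correct and follows exactly the same approach as the paper: the paper's proof is $g\mapsto(\lam{t}g(t),\lam{t}g(t))$ and $(f,h)\mapsto\lam{t}h(t)$, with the $\eta$-expansions serving to re-package the same term in extension types with different domains and prescribed boundaries, and with $\beta$/$\eta$ giving judgmental inverseness. You have merely spelled out the well-formedness checks and the round-trip computations that the paper leaves implicit.
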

\begin{proof}
  From left to right, $h\mapsto (\lam{t} h(t), \lam{t} h(t))$; the $\eta$-expansions indicate a re-packaging of the same term into a different extension type.
  Similarly, from right to left, $(f,g) \mapsto \lam{t} g(t)$.
\end{proof}

\begin{thm}\label{thm:exten-union}
  Suppose $t:I \types \phi\tope$ and $t:I \types \psi\tope$, and that we have $X:\sh{t:I}{\phi\lor\psi}\to\univtype$ and $a:\tprod_{t:I\mid\psi} X(t)$.
  Then
  \[ \exten{t:I|\phi\lor\psi}{X}{\psi}{a} \simeq
  \exten{t:I|\phi}{X}{\phi\land\psi}{\lam{t}a(t)}.
  \]
\end{thm}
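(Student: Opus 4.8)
The plan is to construct the equivalence directly as a pair of mutually inverse functions given by re-abstraction, exactly in the style of the preceding theorems in this section, and to check that the required judgmental equalities (the "boundary conditions" imposed by the section data) are satisfied in each direction, so that $\beta$- and $\eta$-conversion make the composites judgmentally the identity. The key observation driving the whole argument is the pushout property of $\lor$ recorded in \cref{fig:tope-or}: giving a term in context $\Phi, \phi\lor\psi$ that restricts to a given term $a$ on $\psi$ is the same as giving a term in context $\Phi,\phi$ that restricts to $a$ on $\phi\land\psi$, since $\phi\lor\psi$ is the strict pushout of $\phi$ and $\psi$ along $\phi\land\psi$.

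First I would unwind what the two sides actually are. An element of the left-hand type $\exten{t:I\mid\phi\lor\psi}{X}{\psi}{a}$ is a dependent function $h$ defined for $t:I$ with $\phi\lor\psi$, such that $h(t)\jdeq a(t)$ whenever $\psi$ holds; note this is well-formed because $X$ is defined on $\sh{t:I}{\phi\lor\psi}$ and $a$ is defined on $\sh{t:I}{\psi}\subseteq\sh{t:I}{\phi\lor\psi}$. An element of the right-hand type $\exten{t:I\mid\phi}{X}{\phi\land\psi}{\lam{t}a(t)}$ is a dependent function $k$ defined for $t:I$ with $\phi$, such that $k(t)\jdeq a(t)$ whenever $\phi\land\psi$ holds; this is well-formed because $X$ restricts along $\sh{t:I}{\phi}\subseteq\sh{t:I}{\phi\lor\psi}$ and $a$ restricts to $\sh{t:I}{\phi\land\psi}$.

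Next I would give the two maps. From left to right, send $h$ to $\lam{t}h(t)$: given $t:I$ with $\phi$, since $\phi\types\phi\lor\psi$ we may evaluate $h(t):X(t)$, and if moreover $\psi$ holds then $h(t)\jdeq a(t)$, so the boundary condition for the right-hand extension type is met; this is just a re-packaging of $h$ into an extension type with a smaller domain shape but a larger "boundary". From right to left, send $k$ to the term defined by $\rec_\lor^{\phi,\psi}(\lam{t}k(t), \lam{t}a(t))$ — that is, for $t:I$ with $\phi\lor\psi$ we case-split: on $\phi$ we return $k(t)$, on $\psi$ we return $a(t)$, and the compatibility condition on $\phi\land\psi$ asks that $k(t)\jdeq a(t)$ there, which is exactly the boundary hypothesis on $k$. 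One then checks that the resulting term restricts to $a(t)$ on $\psi$ (immediate from the $\rec_\lor$ computation rule on the $\psi$ branch), so it lands in the left-hand type. Finally, the round-trips: starting from $h$, applying the two maps gives $\rec_\lor^{\phi,\psi}(\lam{t}h(t),\lam{t}a(t))$, which equals $\lam{t}h(t)\jdeq h$ because on $\phi$ it is $h(t)$ by the $\rec_\lor$ rule, on $\psi$ it is $a(t)\jdeq h(t)$ by the boundary condition on $h$, and by the uniqueness/$\eta$ rule for $\rec_\lor$ in \cref{fig:tope-or} (and $\eta$ for extension types) the whole thing is $h$; starting from $k$, the composite returns $\lam{t}k(t)\jdeq k$ by $\eta$. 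I expect the only mildly delicate point to be the bookkeeping in the last round-trip, where one must invoke the final rule of \cref{fig:tope-or} — the statement that any $a:A$ in context $\phi\lor\psi$ equals $\rec_\lor^{\phi,\psi}(a,a)$ — together with the fact that $h$ and $a$ agree on $\psi$, to conclude the two case-branches glue back to the original $h$ on the nose; everything else is a routine application of the $\beta$/$\eta$ rules for $\rec_\lor$ and for extension types, so the composites are literally judgmental identities and no homotopies are needed.
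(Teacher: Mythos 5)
Your proof is correct and takes essentially the same route as the paper: left-to-right is re-packaging $h\mapsto\lam{t}h(t)$, right-to-left uses $\rec_\lor$ to case-split on $\phi\lor\psi$, and the round-trips collapse judgmentally via the $\beta$- and $\eta$-rules for $\rec_\lor$ in \cref{fig:tope-or}. One small notational slip: the right-to-left map should be written $\lam{t}\rec_\lor^{\phi,\psi}(k(t),a(t))$ rather than $\rec_\lor^{\phi,\psi}(\lam{t}k(t),\lam{t}a(t))$, since $\rec_\lor$ eliminates a disjunctive tope in the current context and so must sit under the binder $\lam{t}$ — though your prose gloss makes clear you intended the former.
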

\begin{proof}
  From left to right this is just re-packaging, $h\mapsto \lam{t} h(t)$.
  From right to left is a little less obvious: $g \mapsto \lam{t} \rec_\lor^{\phi,\psi}(g(t),a(t))$, which is well-defined since $g(t)\jdeq a(t)$ {for $t:I$ satisfying $\phi\land\psi$.}
  The required equalities are immediate, and the composites are the identity by the $\beta$- and $\eta$-conversion rules for $\rec_\lor$.
\end{proof}

\subsection{Relative function extensionality}
\label{sec:funext}

We will need to assume a function extensionality axiom for extension types, with respect to the homotopical identity types, which we write as equalities $x=y$.
In homotopy type theory there are at least three formulations of function extensionality for ordinary dependent functions, which turn out to be equivalent:
\begin{itemize}
\item For $f,g:\tprod_{x:A}B(x)$, if $\tprod_{x:A} (fx=gx)$, then $f=g$.
\item For $f,g:\tprod_{x:A}B(x)$, the canonical map $(f=g) \to \tprod_{x:A} (fx=gx)$ is an equivalence.
\item If each $B(x)$ is contractible, then so is $\tprod_{x:A} B(x)$.
\end{itemize}
The first is a na\"{i}ve statement of function extensionality uninformed by homotopy theory; the second is a homotopical refinement of it; and the third is an easy consequence of the second that was observed by Voevodsky to be equivalent to it. The equivalence in the second statement is meant  in the usual sense of~\cite[\S4.5]{hottbook}.

We do not know whether the analogues of these three formulations for extension types are still equivalent, so as our axiom of function extensionality we take one that we do know how to prove the others from.
Somewhat surprisingly, this is the \emph{third} rather than the second.

\begin{ax}[relative function extensionality\footnote{Or ``extension extensionality''.}]\label{ax:extfunext}
{ Supposing $t:I \mid \phi\types\psi$ and that $A:\sh{t:I}{\psi}\to\univtype$ is such that each $A(t)$ is contractible, and moreover $a:\tprod_{t:I\mid\phi} A(t)$, then $\exten{t:I\mid\psi}{A(t)}{\phi}{a}$ is contractible.}
\end{ax}

Now suppose given any $A:\sh{t:I}{\psi}\to\univtype$ and $a:\tprod_{t:I\mid\phi} A(t)$, and also $f,g:\exten{t:I \mid \psi}{A}{\phi}{a}$.
Then in the context of $t:I$ and $\psi$ we can form the identity type $f(t)=g(t)$, and thereby the extension type $\exten{t:I \mid \psi}{f(t)=g(t)}{\phi}{\lam{t}\refl}$.
Of course, we have $\lam{t^{I\mid\psi}} \refl : \exten{t:I \mid \psi}{f(t)=f(t)}{\phi}{\lam{t}\refl}$, so by identity elimination, we obtain a map
\begin{equation} (f=g) \to \exten{t:I \mid \psi}{f(t)=g(t)}{\phi}{\lam{t}\refl}.\label{eq:extension-extensionality}
\end{equation}
Analogously to ordinary function extensionality, we have:

\begin{prop}\label{thm:ext-funext}
  Assuming \cref{ax:extfunext}:
  \begin{enumerate}[label=(\roman*)]
  \item\label{itm:ext-funext-equiv} The map \eqref{eq:extension-extensionality} is an equivalence.
  \item\label{itm:ext-funext-naive}  In particular, for any $f,g:\exten{t:I \mid \psi}{A}{\phi}{a}$, if $\exten{t:I \mid \psi}{f(t)=g(t)}{\phi}{\lam{t}\refl}$ then $f=g$.
  \end{enumerate}
\end{prop}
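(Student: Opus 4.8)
The plan is to follow the standard homotopy-type-theoretic template for deducing full function extensionality (the ``map is an equivalence'' statement) from the contractibility formulation, now carried out relative to a cofibration. The key tool is the general principle that if a map $\tprod_{x:A}B(x) \to \tprod_{x:A}C(x)$ is induced by a fiberwise map $B(x)\to C(x)$ that is a fiberwise equivalence, then it is an equivalence --- combined with the characterization of equivalences via contractible fibers and the fact that identity types based at a point have contractible total space ($\tsum_{y}(x=y)$ is contractible).

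First I would handle part \ref{itm:ext-funext-naive}, which is immediate: given \cref{ax:extfunext} and part \ref{itm:ext-funext-equiv}, an inhabitant of $\exten{t:I \mid \psi}{f(t)=g(t)}{\phi}{\lam{t}\refl}$ transports across the equivalence \eqref{eq:extension-extensionality} to an element of $f=g$.

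For part \ref{itm:ext-funext-equiv}, I would argue as follows. Fix $f:\exten{t:I\mid\psi}{A}{\phi}{a}$ and consider the two type families over the type $X \defeq \exten{t:I\mid\psi}{A}{\phi}{a}$ given by $g \mapsto (f=g)$ and $g\mapsto \exten{t:I\mid\psi}{f(t)=g(t)}{\phi}{\lam{t}\refl}$. The map \eqref{eq:extension-extensionality} is a fiberwise transformation between these families sending $\refl_f$ to $\lam{t^{I\mid\psi}}\refl$. By the standard lemma on maps of total spaces (\cite[Theorem 4.7.7]{hottbook}, the ``fiberwise equivalence iff equivalence on total spaces'' principle), it suffices to show that the induced map on total spaces
\[ \tsum_{g:X}(f=g) \;\longrightarrow\; \tsum_{g:X} \exten{t:I\mid\psi}{f(t)=g(t)}{\phi}{\lam{t}\refl} \]
is an equivalence. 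The domain is contractible (it is the based path space at $f$). So it suffices to prove the codomain is contractible, and then invoke the fact that a map between contractible types is automatically an equivalence. To see the codomain is contractible, I would first commute the $\Sigma$ past the extension type: by \cref{thm:exten-nonac} applied with the family $x\mapsto\sum_{y:A(t)}(\text{--}= y)$, we get
\[ \tsum_{g:X} \exten{t:I\mid\psi}{f(t)=g(t)}{\phi}{\lam{t}\refl} \;\simeq\; \exten{t:I\mid\psi}{\Parens{\tsum_{y:A(t)} (f(t)=y)}}{\phi}{\lam{t}(a(t),\refl)}, \]
where the section on the right is the evident one determined by $f(t)\jdeq a(t)$ for $\phi$. (One must check that the base points match up, i.e.\ that the pair $(a(t),\refl)$ is the image of $a$ under the equivalence; this is where the compatibility data of the extension types has to be tracked carefully, but it is routine.) Now for each $t$ the type $\tsum_{y:A(t)}(f(t)=y)$ is contractible, so by \cref{ax:extfunext} the extension type on the right is contractible, hence so is the left side. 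This completes the argument.

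The main obstacle I anticipate is purely bookkeeping rather than conceptual: making sure that the section data carried by each extension type is handled consistently when applying \cref{thm:exten-nonac} and when identifying the image of $f$ (resp.\ $\refl_f$) under the chain of equivalences, so that the total-space map really is the one induced by \eqref{eq:extension-extensionality} and the ``based path space'' really has basepoint $(f,\refl_f)$. None of these are difficult, but one has to be attentive because the whole point of extension types is that the strict equalities along $\phi$ are baked in, so there is no room to be cavalier about them. Everything else --- the use of \cite[Theorem 4.7.7]{hottbook}, the contractibility of based path spaces, and ``a map of contractible types is an equivalence'' --- is standard homotopy type theory.
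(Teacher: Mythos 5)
Your proof is correct and follows essentially the same route as the paper's: reduce to a map of total spaces, observe the domain is a contractible based path space, commute $\Sigma$ past the extension type via \cref{thm:exten-nonac}, and conclude contractibility of the codomain from \cref{ax:extfunext} applied to the fiberwise based path spaces. The bookkeeping concerns you flag are exactly the ones the paper silently elides, and they do work out as you expect.
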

\begin{proof}
  It suffices to prove that for each $f$ the induced map on total spaces
  \[ \Parens{\tsum_{g:\exten{t:I \mid \psi}{A(t)}{\phi}{a}} (f=g)} \to \Parens{\tsum_{g:\exten{t:I \mid \psi}{A(t)}{\phi}{a}} \exten{t:I \mid \psi}{f(t)=g(t)}{\phi}{\lam{t}\refl}}\]
  is an equivalence.
  But the domain of this map is contractible, as a based path space, so it suffices to prove that its codomain is also contractible.
  However, by \cref{thm:exten-nonac} this codomain is equivalent to
  \[ \exten{t:I\mid\psi}{\Parens{\tsum_{y:A(t)} (f(t) = y)}}{\phi}{\lam{t}(a,\refl)} \]
  which is contractible by \cref{ax:extfunext}, since each $\tsum_{y:A(t)} (f(t) = y)$ is a based path space and hence contractible.
\end{proof}

\begin{rmk}
Note that the identity type $f=g$ appearing as the domain of \eqref{eq:extension-extensionality} refers to identity \emph{in} the extension type $\exten{t:I \mid \psi}{A(t)}{\phi}{\lam{t}a}$, and \cref{thm:ext-funext}\ref{itm:ext-funext-equiv} identifies this with a type of ``relative homotopies'' or ``relative pointwise equalities'' that must restrict to reflexivity on the domain of the cofibration.
This explains the name ``relative function extensionality''.
Note also that when $\phi$ is $\bot$, relative function extensionality reduces to ordinary function extensionality for (dependent) function types whose domain is a shape rather than a type.
\end{rmk}

Another important consequence of \cref{ax:extfunext} is the following.

\begin{prop}[homotopy extension property]\label{thm:hep} Let $t:I \mid \phi\types\psi$.
  Assuming \cref{ax:extfunext},
  if we have $A:\sh{t:I}{\psi}\to\univtype$ and $b:\tprod_{t:I\mid\psi} A(t)$, and moreover $a:\tprod_{t:I\mid\phi} A(t)$ and $e:\tprod_{t:I\mid\phi} a(t) = b(t)$, then we have $a':\exten{t:I\mid\psi}{A(t)}{\phi}{a}$ and $e':\exten{t:I\mid\psi}{a'(t)=b(t)}{\phi}{e}$.
\end{prop}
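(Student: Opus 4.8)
The plan is to reduce the claim to \cref{ax:extfunext} by replacing $A$ with a family of \emph{based path spaces}. Over the shape $\sh{t:I}{\psi}$, define the family
\[ C(t) \defeq \tsum_{y:A(t)} \Parens{y = b(t)}. \]
Each $C(t)$ is a based path space --- the type of elements of $A(t)$ equipped with a path \emph{to} $b(t)$ --- and is therefore contractible, with center $(b(t),\refl)$. Furthermore, over $\phi$ we have a section $\lam{t}(a(t),e(t)) : \tprod_{t:I\mid\phi} C(t)$, which is well-typed precisely because $e(t) : a(t) = b(t)$.

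First I would apply \cref{ax:extfunext} to the family $C$ together with this partial section, obtaining that $\exten{t:I\mid\psi}{C(t)}{\phi}{\lam{t}(a(t),e(t))}$ is contractible, hence in particular inhabited; fix an inhabitant $h$ (e.g.\ its center).

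Next I would invoke \cref{thm:exten-nonac} with $X \defeq A$ and $Y(t,y) \defeq (y = b(t))$, so that $\tsum_{y:A(t)} Y(t,y)$ is exactly $C(t)$, and the hypotheses $a$ and $e$ of the present proposition are precisely the two partial sections required there. This produces an equivalence
\[ \exten{t:I\mid\psi}{C(t)}{\phi}{\lam{t}(a(t),e(t))} \;\simeq\; \tsum_{a':\exten{t:I\mid\psi}{A(t)}{\phi}{a}} \exten{t:I\mid\psi}{\Parens{a'(t)=b(t)}}{\phi}{e}. \]
Applying this equivalence to $h$ gives a pair whose components are the desired $a' : \exten{t:I\mid\psi}{A(t)}{\phi}{a}$ and $e' : \exten{t:I\mid\psi}{a'(t)=b(t)}{\phi}{e}$.

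I do not anticipate a genuine obstacle: this is the internal version of the classical fact that a trivial cofibration lifts against the point, realized here via contractibility of based path spaces. The only subtleties are bookkeeping ones --- one must use the fiber of the \emph{first} projection $C(t) \to A(t)$ so that the based-path-space contractibility applies (equivalently, one must orient the path as $y = b(t)$ rather than $b(t) = y$), and the required judgmental side conditions over $\phi$ hold automatically because the equivalence of \cref{thm:exten-nonac} is assembled from $\Sigma$-pairing and the projections, all of which commute strictly with restriction along $\phi$.
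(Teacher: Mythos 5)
Your proof is correct and matches the paper's argument exactly: both form the family of based path spaces $\tsum_{y:A(t)}(y=b(t))$, observe that its extension type is contractible (hence inhabited) by \cref{ax:extfunext}, and then split the inhabitant into $a'$ and $e'$ via \cref{thm:exten-nonac}. The remarks about path orientation and strict commutation are accurate but not needed beyond what you say.
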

\begin{proof}
  The extension type $\exten{t:I\mid\psi}{\Parens{\tsum_{y:A(t)} (y=b(t))}}{\phi}{\lam{t}(a(t),e(t))}$ is contractible by \cref{ax:extfunext}, hence inhabited.
  We obtain $a'$ and $e'$ by applying \cref{thm:exten-nonac}.
\end{proof}

We do not know how to derive \cref{ax:extfunext} or \cref{thm:hep} assuming only the conclusions of \cref{thm:ext-funext}\ref{itm:ext-funext-equiv} or \ref{itm:ext-funext-naive}, but we can show that  \cref{thm:ext-funext}\ref{itm:ext-funext-naive} and  \cref{thm:hep} imply  \cref{ax:extfunext}.

\begin{prop}
If \cref{thm:ext-funext}\ref{itm:ext-funext-naive} and the homotopy extension property hold, then the relative function extensionality axiom holds.
\end{prop}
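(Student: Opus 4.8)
To verify \cref{ax:extfunext} from these two hypotheses, fix $t:I\mid\phi\types\psi$, a family $A:\sh{t:I}{\psi}\to\univtype$ with each $A(t)$ contractible, and $a:\tprod_{t:I\mid\phi}A(t)$; the goal is that $E\defeq\exten{t:I\mid\psi}{A(t)}{\phi}{a}$ is contractible. Since the obvious abstract reductions (e.g.\ via \cref{thm:exten-nonac}) end up presupposing \cref{ax:extfunext} itself, the plan is to work concretely: first build a \emph{specific} center of contraction for $E$ using the homotopy extension property (\cref{thm:hep}), then, for an arbitrary element of $E$, construct the required ``relative homotopy'' to that center --- again via \cref{thm:hep} --- and finally conclude equality with the center using na\"{i}ve relative function extensionality (\cref{thm:ext-funext}\ref{itm:ext-funext-naive}).

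First I would unpack contractibility of the fibers: from $\tprod_{t:I\mid\psi}\iscontr{A(t)}$ extract a section of centers $c:\tprod_{t:I\mid\psi}A(t)$ and a section of contractions $\kappa:\tprod_{t:I\mid\psi}\tprod_{y:A(t)}(c(t)=y)$. Then apply \cref{thm:hep} to the family $A$ with ``$b$'' taken to be $c$, with $a$ the given section, and with the homotopy $e\defeq\lam{t}\kappa(t,a(t))^{-1}:\tprod_{t:I\mid\phi}(a(t)=c(t))$; its first output is a term $a':E$, which I propose as the center of contraction. (The second output of \cref{thm:hep}, a relative homotopy between $a'$ and $c$, is not needed.)

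Next, given an arbitrary $f:E$, I would show $a'=f$ in $E$. Since $a'$ and $f$ both restrict judgmentally to $a$ on $\phi$, we have $a'(t)\jdeq f(t)$ there, so $\exten{t:I\mid\psi}{(a'(t)=f(t))}{\phi}{\lam{t}\refl}$ is well-formed, and by \cref{thm:ext-funext}\ref{itm:ext-funext-naive} it suffices to inhabit it. I do so by a second application of \cref{thm:hep}, now to the family $A'(t)\defeq(a'(t)=f(t))$ over $\sh{t:I}{\psi}$: take ``$b$'' to be $\lam{t}\bigl(\kappa(t,a'(t))^{-1}\ct\kappa(t,f(t))\bigr):\tprod_{t:I\mid\psi}A'(t)$, take the prescribed restriction over $\phi$ to be $\lam{t}\refl:\tprod_{t:I\mid\phi}A'(t)$, and take the needed homotopy over $\phi$ from the standard identity $q^{-1}\ct q=\refl$ applied to $q\defeq\kappa(t,a(t))$ --- using that on $\phi$ we have $\kappa(t,a'(t))\jdeq\kappa(t,a(t))\jdeq\kappa(t,f(t))$, so the chosen ``$b$'' restricts judgmentally to $\kappa(t,a(t))^{-1}\ct\kappa(t,a(t))$. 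The first output of \cref{thm:hep} is then an element of $\exten{t:I\mid\psi}{(a'(t)=f(t))}{\phi}{\lam{t}\refl}$, whence $a'=f$; as $f$ was arbitrary, $E$ is contractible with center $a'$.

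The only delicate point --- and it is purely bookkeeping --- is checking that the data fed into the second use of \cref{thm:hep} meet its hypotheses \emph{judgmentally} on $\phi$: one must confirm that evaluating the chosen ``$b$'' on $\phi$, together with the relevant instances of $\kappa$, produces exactly the terms to which the identity $q^{-1}\ct q=\refl$ applies. This holds because membership in an extension type is a strict restriction and judgmental equality is a congruence, so $a'(t)\jdeq a(t)\jdeq f(t)$ on $\phi$ propagates through $\kappa$ and the groupoid operations. Apart from that, there is no real obstacle; all the substance lies in arranging the two invocations of the homotopy extension property so that their outputs assemble into a center of $E$ and, uniformly in $f$, a contraction onto it.
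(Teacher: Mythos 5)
Your proof is correct and follows essentially the same route as the paper's: extract a section of centers from contractibility, apply the homotopy extension property once to build a candidate center $a'$ of the extension type, reduce equality with an arbitrary $f$ to inhabiting a relative-homotopy extension type via na\"{i}ve relative function extensionality, and fill that extension type with a second application of the homotopy extension property. The only cosmetic difference is that you spell out the witnessing paths ($\kappa(t,a(t))^{-1}$ and $\kappa(t,a'(t))^{-1}\ct\kappa(t,f(t))$, reduced on $\phi$ by $q^{-1}\ct q=\refl$) where the paper simply invokes that any two points, respectively any two paths, in a contractible type are equal.
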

\begin{proof}
  Suppose $A:\sh{t:I}{\psi}\to\univtype$ and $a:\tprod_{t:I\mid\phi} A(t)$ such that each $A(t)$ is contractible.
  The latter assumption supplies centers of contraction $b(t)$ for each $t:I$ such that $\psi$, hence a function $b:\tprod_{t:I\mid\psi} A(t)$.
  Contractibility of each $A(t)$ also shows that if $\phi$ then $a(t)=b(t)$, hence $e:\tprod_{t:I\mid\phi} a(t) = b(t)$.
  Thus, by \cref{thm:hep}, we have $a':\exten{t:I\mid\psi}{A(t)}{\phi}{a}$ and $e':\exten{t:I\mid\psi}{a'(t)=b(t)}{\phi}{e}$.

  It remains to show that any $f:\exten{t:I\mid\psi}{A(t)}{\phi}{a}$ is equal to $a'$.
  By \cref{thm:ext-funext}\ref{itm:ext-funext-naive}, for this it suffices to inhabit $\exten{t:I\mid\psi}{f(t)=a'(t)}{\phi}{\lam{t}\refl}$.
  Now since each $A(t)$ is contractible, we have $c:\tprod_{t:I\mid\psi} f(t)=a'(t)$, and moreover if $\phi$ then $c(t) = \refl$ since any two paths in a contractible type are equal.
  Thus, applying \cref{thm:hep} to $\lam{t} f(t)=a'(t)$ in place of $A$, with $c$ in place of $b$ and $\lam{t}\refl$ in place of $a$, we have an element of $\exten{t:I\mid\psi}{f(t)=a'(t)}{\phi}{\lam{t}\refl}$ as desired.
\end{proof}

A similar argument and an induction on $n$ shows:

\begin{prop}
  Assuming \cref{ax:extfunext}, if $A:\sh{t:I}{\psi}\to\univtype$ and $a:\tprod_{t:I\mid\phi} A(t)$ are such that each $A(t)$ is an $n$-type, then $\exten{t:I\mid\psi}{A(t)}{\phi}{a}$ is also an $n$-type.\qed
\end{prop}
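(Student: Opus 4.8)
The plan is to induct on $n\ge -2$, using the standard facts that a type is a $(-2)$-type exactly when it is contractible, and that for $n\ge -2$ a type is an $(n+1)$-type exactly when all of its identity types are $n$-types.

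The base case $n=-2$ is immediate: the assertion that $\exten{t:I\mid\psi}{A(t)}{\phi}{a}$ is contractible whenever each $A(t)$ is contractible is precisely \cref{ax:extfunext}, so nothing new is needed.

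For the inductive step I would assume the result at level $n$, suppose each $A(t)$ is an $(n+1)$-type, and write $E\defeq\exten{t:I\mid\psi}{A(t)}{\phi}{a}$. Given $f,g:E$, the goal is to show that $f=g$ is an $n$-type. Since both $f$ and $g$ restrict to $a$ on $\phi$, we have $f(t)\jdeq a(t)\jdeq g(t)$ whenever $\phi$ holds, so $\lam{t}\refl$ is a legitimate boundary section and the extension type $\exten{t:I\mid\psi}{f(t)=g(t)}{\phi}{\lam{t}\refl}$ is well-formed; by \cref{thm:ext-funext}\ref{itm:ext-funext-equiv} the map \eqref{eq:extension-extensionality} identifies $f=g$ with it. Now I would apply the inductive hypothesis to the family $\lam{t}(f(t)=g(t)):\sh{t:I}{\psi}\to\univtype$ --- each of whose values is an $n$-type since $A(t)$ is an $(n+1)$-type --- equipped with the boundary section $\lam{t}\refl:\tprod_{t:I\mid\phi}(f(t)=g(t))$ along the same cofibration $t:I\mid\phi\types\psi$; this yields that $\exten{t:I\mid\psi}{f(t)=g(t)}{\phi}{\lam{t}\refl}$ is an $n$-type. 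Because $n$-types are closed under equivalence, $f=g$ is an $n$-type, hence $E$ is an $(n+1)$-type, completing the induction.

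I do not expect a serious obstacle here: the only points requiring care are checking that the extension type of identity types is well-formed (noted above) and invoking the inductive hypothesis for the \emph{correct} family and cofibration rather than the original $A$ and $a$. The essential work --- turning an identity type of an extension type into an extension type of identity types --- has already been carried out in \cref{thm:ext-funext}, so the argument is essentially a bookkeeping parallel of the proof that \cref{ax:extfunext} preserves contractibility.
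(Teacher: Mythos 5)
Your proof is correct and follows exactly the route the paper intends (the paper supplies only the hint ``a similar argument and an induction on $n$'' together with the \qed): the base case $n=-2$ is \cref{ax:extfunext} itself, and the inductive step applies the equivalence of \cref{thm:ext-funext}\ref{itm:ext-funext-equiv} to identify identity types in the extension type with extension types of pointwise identity types, to which the inductive hypothesis applies. The well-formedness check for the boundary section $\lam{t}\refl$ is correctly observed and is the only subtle point.
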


For the rest of the paper, we will assume relative function extensionality, \cref{ax:extfunext}, without further comment.

\section{Segal types}
\label{sec:Segal-types}

The simplices defined in \cref{sec:simplices} are used to parametrize internal categorical structure in types satisfying an analogue of the famous Segal condition. Interestingly, because we express the Segal condition in the internal language, it has a more compact form than usual. We first introduce notation for ``hom'' types of various dimensions whose terms are ``morphisms'' or ``compositions'' in another type. We then state our Segal type axiom and prove the somewhat surprising fact that a single low-dimensional condition suffices to establish the expected categorical properties.

\subsection{The Segal condition}
\label{sec:hom-types}

We introduce the following notation.

\begin{defn}\label{defn:hom}
Given $x,y:A$, determining a term $[x,y]:A$ in context $\partial\Delta^1$, we define \[\hom_A(x,y) \defeq \ndexten{\Delta^1}{A}{\partial\Delta^1}{[x,y]}.\]
We refer to an element of $\hom_A(x,y)$ as an \textbf{arrow} from $x$ to $y$ in $A$.
\end{defn}

This plays the role of the directed hom-space of $A$.
Note that every $f : \hom_A(x,y)$ is a kind of \emph{function} from $\two$ to $A$, with the property that $f(0) \jdeq x$ and $f(1)\jdeq y$.

\begin{defn}
Similarly, for $x,y,z:A$ and $f:\hom_A(x,y)$, $g:\hom_A(y,z)$, and $h:\hom_A(x,z)$ we have an induced term $[x,y,z,f,g,h]:A$ in context $\partial\Delta^2$, and thereby an extension type that we denote
\[\homtwo{A}(x,y,z,f,g,h) \defeq \ndexten{\Delta^2}{A}{\partial\Delta^2}{[x,y,z,f,g,h]}. \]
or $\homtwoshort{A}(x,y,z,f,g,h)$ when space is at a premium.
\end{defn}

In a few places we will use 3-simplices with specified boundaries, but we will not introduce a particular notation for them.
We will not have any need for $n$-simplices with $n>3$.

\begin{defn}\label{defn:segal-type}
A \textbf{Segal type} is a type $A$ such that for all $x,y,z:A$ and $f:\hom_A(x,y)$ and $g:\hom_A(y,z)$, the type
\begin{equation}
  \sum_{h:\hom_A(x,z)} \homtwo{A}(x,y,z,f,g,h)\label{eq:composites}
\end{equation}
is contractible.
\end{defn}

In particular,~\eqref{eq:composites} is inhabited, and the first component of this inhabitant we call $g\circ f : \hom_A(x,z)$, the \textbf{composite} of $g$ and $f$.
The second component of this inhabitant is a 2-simplex in $\homtwoshort{A}(x,y,z,f,g,g\circ f)$, which we consider a ``witness that $g\circ f$ is the composite of $g$ and $f$''; we denote it by $\iscomp g f$.
The contractibility of~\eqref{eq:composites} implies that composites are unique in the following sense: given $h:\hom_A(x,z)$ and any witness $p:\homtwoshort{A}(x,y,z,f,g,h)$, we have $(h,p) = (g\circ f, \iscomp g f)$, and hence in particular $h = g\circ f$.

We can usefully reformulate~\eqref{eq:composites} as a single extension type of functions $\Delta^2 \to A$ that restrict to $f$ and $g$ on the 2-1-horn defined by \[\Lambda^2_1 = \sh{\pair{s,t}:\two\times\two}{s=1 \lor t=0}.\]
 This observation is the key step in the proof of the following alternate characterization of Segal types (cf.~Proposition \ref{prop:joyal}).

\begin{thm}\label{thm:segal-global}
  A type $A$ is Segal if and only if the {restriction} map
  \[ (\Delta^2 \to A) \to (\Lambda^2_1 \to A) \]
  is an equivalence.
\end{thm}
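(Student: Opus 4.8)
The plan is to exhibit both the domain and codomain of the restriction map as total spaces of $\Sigma$-types fibered over the same base, so that being an equivalence reduces fiberwise to the contractibility condition defining a Segal type. First I would identify the relevant base: a map $\Lambda^2_1 \to A$ is precisely a pair of composable arrows, i.e., the data of $x,y,z:A$ together with $f:\hom_A(x,y)$ and $g:\hom_A(y,z)$. This follows from the tope logic: the horn $\Lambda^2_1 = \sh{\pair{s,t}:\two\times\two}{s\jdeq 1 \lor t\jdeq 0}$ is, by the elimination rules in \cref{fig:tope-or}, the union of the two edges $\sh{\pair{s,t}}{t\jdeq 0}$ and $\sh{\pair{s,t}}{s\jdeq 1}$ glued along their intersection $\sh{\pair{s,t}}{s\jdeq 1 \land t\jdeq 0}$, which is the single point $\pair{1,0}$; so by \cref{thm:exten-union} (or rather its iterated/non-dependent form) we get
\[ (\Lambda^2_1 \to A) \simeq \sum_{x,y,z:A} \hom_A(x,y)\times\hom_A(y,z), \]
with the equivalence sending a horn to the triple $(x,y,z)$ with $x$ its initial vertex, $y$ its middle vertex, $z$ its final vertex, and the two edges $f,g$.

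Next I would do the analogous decomposition of $\Delta^2\to A$. The shape $\Delta^2$ contains $\Lambda^2_1$ as a subshape (we have $t:I\mid (s\jdeq 1 \lor t\jdeq 0)\wedge(t\le s)\types (t\le s)$, so $\Lambda^2_1\cap\Delta^2$, suitably interpreted, is $\Lambda^2_1$ itself since $\Lambda^2_1\subseteq\Delta^2$). Applying \cref{thm:exten-compose} to the inclusions $\Lambda^2_1 \subseteq \Delta^2$ relative to $A$ — more precisely, first observing that an unrestricted map $\Delta^2\to A$ is the same as a map $\Lambda^2_1\to A$ together with an extension of it to $\Delta^2$ — we get
\[ (\Delta^2\to A) \simeq \sum_{k:\Lambda^2_1\to A} \ndexten{\Delta^2}{A}{\Lambda^2_1}{k}. \]
Transporting along the equivalence from the previous paragraph, $\Delta^2\to A$ becomes $\sum_{x,y,z:A}\sum_{f:\hom_A(x,y)}\sum_{g:\hom_A(y,z)} \ndexten{\Delta^2}{A}{\Lambda^2_1}{[\text{horn}(f,g)]}$. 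Under this reindexing the restriction map $(\Delta^2\to A)\to(\Lambda^2_1\to A)$ becomes the first projection, which discards the last component. By the standard fact that a map is an equivalence iff all its fibers are contractible, and that the fiber of such a projection over $(x,y,z,f,g)$ is exactly the extension type $\ndexten{\Delta^2}{A}{\Lambda^2_1}{[\text{horn}(f,g)]}$, the restriction map is an equivalence iff each of these extension types is contractible for all $x,y,z,f,g$.

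It then remains to match this extension type with the type $\sum_{h:\hom_A(x,z)}\homtwo{A}(x,y,z,f,g,h)$ appearing in \cref{defn:segal-type}. This is another application of \cref{thm:exten-compose}: the boundary $\partial\Delta^2$ decomposes as the union of $\Lambda^2_1$ with the remaining edge $\sh{\pair{t_1,t_2}}{t_2\jdeq t_1}$ (the would-be composite $h$), glued along the two endpoints; so a map $\Delta^2\to A$ extending a given horn is the same as a choice of that third edge $h:\hom_A(x,z)$ together with a filler $\Delta^2\to A$ of the resulting full boundary $\partial\Delta^2$, i.e.\ $\ndexten{\Delta^2}{A}{\Lambda^2_1}{[\mathrm{horn}(f,g)]} \simeq \sum_{h:\hom_A(x,z)}\homtwo{A}(x,y,z,f,g,h)$. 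Chaining these equivalences gives that the restriction map is an equivalence iff \eqref{eq:composites} is contractible for all $x,y,z,f,g$, which is exactly the Segal condition. The main obstacle I anticipate is purely bookkeeping: getting the tope-logic manipulations right so that the horn and boundary shapes really do decompose as the claimed unions/pushouts, and verifying that the equivalences of \cref{sec:equiv-exten} apply in the non-dependent, iterated form needed here — none of the individual steps is deep, but care is required to keep the restriction maps lined up as the relevant projections throughout.
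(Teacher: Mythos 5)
Your proposal is correct and follows essentially the same route as the paper's proof: express $(\Delta^2 \to A)$ as the total space of a fibration over $(\Lambda^2_1 \to A)$ via \cref{thm:exten-compose}, observe the fibers are the types $\ndexten{\Delta^2}{A}{\Lambda^2_1}{[x,y,z,f,g]}$, and identify those with~\eqref{eq:composites} using the decomposition $\partial\Delta^2 = \Lambda^2_1 \cup \Delta^1_1$ glued along $\partial\Delta^1_1$. One small correction: in your last step you attribute the identification of $\ndexten{\Delta^2}{A}{\Lambda^2_1}{[\text{horn}(f,g)]}$ with $\sum_h \homtwo A(x,y,z,f,g,h)$ entirely to \cref{thm:exten-compose}, but that step genuinely needs \cref{thm:exten-union} (for $\partial\Delta^2 = \Lambda^2_1\cup\Delta^1_1$, turning ``extend the horn to the boundary'' into ``extend the two endpoints to the third edge'') followed by \cref{thm:exten-compose} (for $\partial\Delta^2 \subseteq \Delta^2$); you already invoked \cref{thm:exten-union} correctly in your first paragraph, so this is a labeling slip rather than a gap.
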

\begin{proof}
If $\Delta^1_1$ denotes the diagonal 1-face $\sh{\pair{s,t}:\two\times\two}{s=t}$ of $\Delta^2$, then we have $\Lambda^2_1 \cap \Delta^1_1 = \partial\Delta^1_1$ and $\Lambda^2_1 \cup \Delta^1_1 = \partial\Delta^2$.
Therefore, by \cref{thm:exten-union}, to extend a map $\Lambda^2_1 \to A$ to $\partial\Delta^2$ is equivalent to extending its restriction to $\partial\Delta^1_1$ to $\Delta^1_1$.
This provides the second in the following chain of equivalences; the third is \cref{thm:exten-compose}.
\begin{align*}
  \sum_{h:\hom_A(x,z)} \homtwo{A}(x,y,z,f,g,h)
  &\jdeq \sum_{h:\ndexten{\Delta^1}{A}{\partial\Delta^1}{[x,z]}} \ndexten{\Delta^2}{A}{\partial\Delta^2}{[x,y,z,f,g,h]}\\
  &\simeq \sum_{\ell:\ndexten{\partial\Delta^2}{A}{\Lambda^2_1}{[x,y,z,f,g]}} \ndexten{\Delta^2}{A}{\partial\Delta^2}{\ell} \\
  &\simeq \ndexten{\Delta^2}{A}{\Lambda^2_1}{[x,y,z,f,g]}.
\end{align*}
In other words,~\eqref{eq:composites} is the type of functions $\Delta^2 \to A$ that restrict to $f$ and $g$ on the 2-1-horn.
\cref{defn:segal-type} asserts that $A$ is a Segal type if for any composable $f$ and $g$ there is a unique such extension.

Using \cref{thm:exten-compose} again, we have
\begin{align*}
  (\Delta^2 \to A)
  &\simeq \sum_{k:\Lambda^2_1 \to A} \ndexten{\Delta^2}{A}{\Lambda^2_1}{k}.
\end{align*}
Therefore, $\Delta^2 \to A$ is the total space of a type family over $\Lambda^2_1 \to A$ whose fibers are the types~\eqref{eq:composites}.
Since the projection from a total space is an equivalence exactly when all the fibers are contractible, the result follows.
\end{proof}

\begin{cor}\label{thm:segal-expidl}
  If $X$ is either a type or a shape and $A:X\to\univtype$ is such that each $A(x)$ is a Segal type, then the dependent function type $\tprod_{x:X} A(x)$ is a Segal type.
\end{cor}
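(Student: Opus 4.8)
The strategy is to reduce the Segal condition for $\tprod_{x:X} A(x)$ to the Segal condition for each $A(x)$ by commuting the extension types (hom-types) that define the relevant contractibility data past the $\tprod_{x:X}$. Since "$A$ is a Segal type" is, by \cref{thm:segal-global}, equivalent to the restriction map $(\Delta^2 \to A) \to (\Lambda^2_1 \to A)$ being an equivalence, it suffices to show this restriction map is an equivalence for $A = \tprod_{x:X} A(x)$.

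First I would observe that for any shape inclusion $\Psi \subseteq \Phi$, we have a chain of equivalences
\[ \Parens{\Phi \to \tprod_{x:X} A(x)} \simeq \Parens{\tprod_{x:X} \Parens{\Phi \to A(x)}}, \]
obtained by commuting the (non-dependent) extension type past the $\tprod_{x:X}$; this is an instance of \cref{thm:exten-prod-commute} (with $\phi$ taken to be $\bot$, so the extension types are ordinary function types out of a shape, as in \eqref{eq:ndexten-bot}), using that $X$ does not depend on the shape variable. Moreover this equivalence is natural with respect to restriction along $\Psi \subseteq \Phi$: the square relating the two restriction maps $(\Phi \to \tprod_x A(x)) \to (\Psi \to \tprod_x A(x))$ and $\tprod_x (\Phi \to A(x)) \to \tprod_x (\Psi \to A(x))$ commutes, because on both sides the map is just "apply and re-abstract", so it commutes with the further application-and-re-abstraction of \cref{thm:exten-prod-commute}. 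Applying this with $\Phi = \Delta^2$ and $\Psi = \Lambda^2_1$, the restriction map $(\Delta^2 \to \tprod_x A(x)) \to (\Lambda^2_1 \to \tprod_x A(x))$ is, up to the above equivalences, identified with the map $\tprod_{x:X}\bigl((\Delta^2 \to A(x)) \to (\Lambda^2_1 \to A(x))\bigr)$ obtained by applying $\tprod_{x:X}$ to each restriction map.

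Now each of these restriction maps is an equivalence by \cref{thm:segal-global}, since each $A(x)$ is a Segal type; and a $\tprod_{x:X}$ of fiberwise equivalences is an equivalence (this is a standard consequence of function extensionality — for shapes, relative function extensionality, \cref{ax:extfunext} — applied to the contractibility of the fibers, or directly: $\tprod$ preserves equivalences). Composing with the two equivalences from the previous step, the restriction map for $\tprod_{x:X} A(x)$ is an equivalence, so $\tprod_{x:X} A(x)$ is Segal by \cref{thm:segal-global}. The argument is uniform in whether $X$ is a type or a shape, since \cref{thm:exten-prod-commute} and the relevant function extensionality hold in both cases.

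**Main obstacle.** The one point requiring care is the naturality of the commutation equivalence with respect to the restriction maps — i.e.\ that the comparison really does become (up to the equivalences of \cref{thm:exten-prod-commute}) the $\tprod_{x:X}$ of the fiberwise restriction maps, rather than merely some map between equivalent types. Because all the maps involved are literally application-and-re-abstraction and the relevant type theory has judgmental $\eta$, I expect this to be a straightforward verification of a judgmental equality of functions rather than a genuine difficulty; but it is the step that must be checked to make the reduction legitimate. (An alternative, avoiding \cref{thm:segal-global} entirely, is to unfold \cref{defn:segal-type} directly: for $f, g$ in $\tprod_x A(x)$, the type $\sum_{h} \homtwo{\tprod_x A(x)}(\dots)$ commutes past $\tprod_x$ via \cref{thm:exten-prod-commute} and \cref{thm:exten-nonac} to a $\tprod_x$ of the corresponding composition-spaces for $A(x)$, each contractible, hence contractible by \cref{ax:extfunext}; this route trades the naturality check for bookkeeping with $\Sigma$-types.)
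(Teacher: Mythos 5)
Your proposal is correct and matches the paper's proof essentially verbatim: reduce via \cref{thm:segal-global} to the restriction map $(\Delta^2 \to -) \to (\Lambda^2_1 \to -)$, commute $\tprod_{x:X}$ past the function-out-of-shape types, and use that $\tprod$ preserves fiberwise equivalences by (relative) function extensionality. One small correction to your bookkeeping: when $X$ is a shape rather than a type, the commutation step is an instance of \cref{thm:exten-curry} rather than \cref{thm:exten-prod-commute} (the latter requires $X$ to be a type), which is exactly the pair of references the paper gives; this does not affect the soundness of the argument.
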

\begin{proof}
  Applying \cref{thm:exten-prod-commute} or \cref{thm:exten-curry} to rearrange function types, we have $(\Delta^2 \to \tprod_{x:X} A(x)) \simeq \tprod_{x:X} (\Delta^2 \to A(x))$ and similarly for $\Lambda^2_1$.
  Since $\tprod_{x:X}$ preserves fiberwise equivalences (using function extensionality or relative function extensionality), the result follows from \cref{thm:segal-global}.
\end{proof}

In the rest of this section we show that Segal types behave like categories, or more precisely $(\infty,1)$-categories.

\subsection{Identity}

Identities in a Segal type are obtained as constant maps.

\begin{defn} For any $x : A$, define a term $\idarr x : \hom_A(x,x)$ by $\idarr x(s) \jdeq x$ for all $s:\two$.
\end{defn}

The pair of ``degenerate'' 2-simplices witness that identities behave as identities in a Segal type:

\begin{prop}\label{prop:identity} If $A$ is a Segal type with terms $x,y : A$, then for any $f : \hom_A(x,y)$ we have 
$\idarr y\circ f = f$ and $f\circ \idarr x = f$.
\end{prop}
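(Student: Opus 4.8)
The plan is to use only the \emph{uniqueness} half of the Segal condition, namely the contractibility of~\eqref{eq:composites} from \cref{defn:segal-type}: for a composable pair $f',g'$, as soon as we exhibit \emph{any} arrow $h$ together with a $2$-simplex witnessing $h$ as a composite of $f'$ and $g'$, the pair $(h,\text{witness})$ is forced to equal $(g'\circ f',\iscomp{g'}{f'})$, and in particular $h = g'\circ f'$. So in each of the two equations I would simply guess that $f$ itself is the composite in question and produce a degenerate $2$-simplex to certify this; these are exactly the ``degenerate $2$-simplices'' alluded to just before the statement.

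For $\idarr y\circ f = f$, apply the above to the composable pair $f:\hom_A(x,y)$ and $\idarr y:\hom_A(y,y)$, taking as candidate composite $f$ itself, witnessed by $p\defeq\lam{\pair{t_1,t_2}}f(t_1):\Delta^2\to A$, the image of $f$ under the degeneracy $\lam{f}\lam{\pair{t_1,t_2}}f(t_1):(\Delta^1\to A)\to(\Delta^2\to A)$. Using $\eta$-conversion together with the judgmental equalities $f(0)\jdeq x$ and $f(1)\jdeq y$, the three faces of $p$ compute to $\lam{t}f(t)\jdeq f$ on the $t_2\jdeq0$ edge, to $\lam{t}f(1)\jdeq\idarr y$ on the $t_1\jdeq1$ edge, and to $\lam{t}f(t)\jdeq f$ on the diagonal, so $p:\homtwo{A}(x,y,y,f,\idarr y,f)$. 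Hence $(f,p)$ inhabits~\eqref{eq:composites} for this pair, so it equals $(\idarr y\circ f,\iscomp{\idarr y}{f})$, and projecting onto the first component gives $\idarr y\circ f = f$.

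For $f\circ\idarr x = f$, I would run the dual argument, applied to the composable pair $\idarr x:\hom_A(x,x)$ and $f:\hom_A(x,y)$ with the other degenerate $2$-simplex $q\defeq\lam{\pair{t_1,t_2}}f(t_2):\Delta^2\to A$, whose faces are $\lam{t}f(0)\jdeq\idarr x$, $\lam{t}f(t)\jdeq f$, and $\lam{t}f(t)\jdeq f$, so that $q:\homtwo{A}(x,x,y,\idarr x,f,f)$; the same contractibility argument then yields $f\circ\idarr x = f$. I do not anticipate any genuine obstacle: the only points needing care are the convention governing which variable-restriction of a map $\Delta^2\to A$ realizes which labelled edge of the triangle recorded in the $2$-simplex type, and the observation that both degenerate simplices are well-formed on all of $\Delta^2$ precisely because $f$, although merely an element of the extension type $\ndexten{\Delta^1}{A}{\partial\Delta^1}{[x,y]}$, can still be evaluated at any point of $\Delta^1\jdeq\two$.
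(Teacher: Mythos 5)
Your proof is correct and matches the paper's approach: both exhibit the degenerate $2$-simplex $\lam{\pair{t_1,t_2}}f(t_1)$ (resp.\ $\lam{\pair{t_1,t_2}}f(t_2)$) as a witness in $\homtwo{A}(x,y,y,f,\idarr y,f)$ (resp.\ $\homtwo{A}(x,x,y,\idarr x,f,f)$) and then conclude by uniqueness of composites from contractibility of~\eqref{eq:composites}. Your boundary computations and the attention to the edge-labelling convention are accurate; the paper simply states the $\idarr y\circ f$ case and says "similarly" for the other.
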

\begin{proof}
For any $f:\hom_A(x,y)$ we have a canonical 2-simplex:
\[ \lam{s,t} f(s) : \homtwo{A}(x,y,y,f,\idarr y,f).\]
To check that this has the right boundary, we see that $(s,0) \mapsto f(s)$ and $(s,s) \mapsto f(s)$, while $(1,t) \mapsto f(1) = y$.
Thus, when $A$ is Segal, by uniqueness of composites, $\idarr y\circ f = f$; and similarly $f\circ \idarr x = f$.
\end{proof}

\subsection{Associativity}

We now prove that composition in a Segal type is associative.
At first this may be surprising, since the definition of Segal type refers only to 2-simplices; but its ``uniformity'' allows us to apply it pointwise to arrows and use the fact that products of simplices contain higher-dimensional simplices.

\begin{prop}\label{prop:segal-assoc} If $A$ is a Segal type with terms $x,y,z,w : A$, then for any $f:\hom_A(x,y)$, $g:\hom_A(y,z)$, $h:\hom_A(z,w)$ we have $(h \circ g) \circ f = h \circ (g \circ f)$. 
\end{prop}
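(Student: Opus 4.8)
The plan is to produce a single 3-simplex $\sigma : \Delta^3 \to A$ whose spine edges $f_1, f_2, f_3$ are $f, g, h$ and whose short diagonals $f_{12}, f_{23}$ are $g\circ f$ and $h\circ g$; write $f_{123} : \hom_A(x,w)$ for its long diagonal. Given such a $\sigma$, two of its four 2-faces --- the one at $t_2 \jdeq t_1$ and the one at $t_3 \jdeq t_2$, in the simplex coordinates of \cref{sec:sub-simplices} --- are, respectively, a 2-simplex over $(x,z,w)$ with edges $(g\circ f,\, h,\, f_{123})$ and a 2-simplex over $(x,y,w)$ with edges $(f,\, h\circ g,\, f_{123})$. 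The first exhibits $f_{123}$ as a composite $h\circ(g\circ f)$ and the second exhibits it as a composite $(h\circ g)\circ f$; since composites in a Segal type are unique by the contractibility in \cref{defn:segal-type}, this gives $(h\circ g)\circ f = f_{123} = h\circ(g\circ f)$. So the entire content of the proof is the construction of $\sigma$.

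To build $\sigma$ I would invoke the earlier proposition that $\Delta^3 \to A$ is a retract of $\Delta^2\times\Delta^1 \to A$, together with currying (\cref{thm:exten-curry}), which identifies $\Delta^2\times\Delta^1 \to A$ with $\Delta^2 \to (\Delta^1 \to A)$, i.e.\ with the type of 2-simplices in the function type $\Delta^1 \to A$. Since $A$ is Segal, so is $\Delta^1 \to A$ by \cref{thm:segal-expidl}, so it suffices to name two composable arrows $P$ and $Q$ of $\Delta^1 \to A$ --- which are ``commutative squares'' in $A$ --- whose unique composition 2-simplex $\iscomp{Q}{P}$, transported back through the retract, is a 3-simplex with the edges prescribed above. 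The squares are manufactured from the Segal witnesses $\iscomp{g}{f}$ and $\iscomp{h}{g}$ of $A$: extending these maps $\Delta^2 \to A$ to $\Delta^1\times\Delta^1 \to A$ by a $\rec_\lor$ case split, exactly as in the section constructed in the proof of \cref{prop:two-simp-as-retract}, yields a square $P$ from $\idarr{x}$ to $g$ with relevant edges $f, g, g\circ f$ and a square $Q$ from $g$ to $h\circ g$ with relevant edges $g, h, h\circ g$; in particular the codomain of $P$ equals the domain of $Q$, so $P$ and $Q$ compose.

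The one genuine difficulty is bookkeeping: one must check that $P$ and $Q$ really are arrows of $\Delta^1 \to A$ with the stated domains and codomains, that they are composable, and that the retraction of $\iscomp{Q}{P}$ along the explicit prism-decomposition retraction lands on a 3-simplex whose six edges are exactly $f, g, h, g\circ f, h\circ g, f_{123}$. This means tracking the various $\rec_\lor$ branches together with the coordinate formulas for the faces and degeneracies of $\Delta^2$ and $\Delta^3$ fixed in \cref{sec:sub-simplices}; it is tedious but routine. A variant avoiding the detour through \cref{thm:segal-expidl} is to construct $F : \Delta^2\times\Delta^1 \to A$ directly as a triple $\rec_\lor$ gluing of $\iscomp{g}{f}$, $\iscomp{h}{g}$ and $\iscomp{h}{g\circ f}$ along the decomposition of the prism into three 3-simplices, and then retract; this trades the appeal to a corollary for a somewhat longer explicit case analysis.
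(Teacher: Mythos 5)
Your overall strategy is the paper's: exponentiate to $A^\two$ (Segal by \cref{thm:segal-expidl}), compose two ``square'' arrows there built from $\iscomp g f$ and $\iscomp h g$, and restrict the resulting map $\Delta^2 \times \Delta^1 \to A$ along the middle-shuffle inclusion $(t_1,t_2,t_3)\mapsto((t_1,t_3),t_2)$ to obtain a $3$-simplex whose two interior faces witness the two associations of a common long diagonal. (Only the restriction is needed, not the full retract structure of the earlier proposition, but the map is the same.)

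The one place you deviate is the choice of extension of $\iscomp g f : \Delta^2\to A$ to a square, and this is where your ``routine bookkeeping'' actually hides a real pitfall. The paper glues \emph{two copies} of $\iscomp g f$ along the diagonal, which is symmetric under swapping the two $\two$ coordinates; consequently both squares read off, under a single currying convention, as the arrows $f\to g$ and $g\to h$ of $A^\two$, and these visibly compose. You instead use the section from \cref{prop:two-simp-as-retract}, which is \emph{not} symmetric in the two coordinates: applied to $\iscomp g f$ and $\iscomp h g$, under a single currying it yields either $(\idarr x \to g,\ \idarr y\to h)$ or $(f\to g\circ f,\ g\to h\circ g)$ --- and neither pair is composable, because $g\ne \idarr y$ and $g\circ f\ne g$. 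Your asserted pair $(\idarr x\to g,\ g\to h\circ g)$ is composable, but it tacitly uses \emph{opposite} curryings for $P$ and $Q$ (reading $P$'s arrow direction along one coordinate of its square and $Q$'s along the other). This does in fact go through --- unwinding it gives the right boundary for $\sigma$ --- but it is exactly the kind of thing that ``checking composability'' would flag and that a naive implementation of what you wrote would get wrong. The paper's connection-square choice sidesteps the issue entirely, which is why it is the cleaner version of the argument. The alternative you sketch at the end (a direct triple $\rec_\lor$ gluing of the three witness $2$-simplices over the prism's $3$-simplices) does not quite work as stated either: the three $3$-simplices of the prism are not determined by filling their spine $2$-faces, so one would still need to invoke horn-filling (hence Segalness of $A$ or $A^\two$) to produce the fillers.
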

\begin{proof}
By \cref{thm:segal-expidl}, if $A$ is Segal then so is $A^\two \defeq (\two \to A)$.\footnote{For notational conciseness, we sometimes abbreviate the function type $X \to A$ as $A^X$, particularly in the case $X=\two$.}
Thus, for any $f:\hom_A(x,y)$, $g:\hom_A(y,z)$, $h:\hom_A(z,w)$, the type
\[
\sum_{p:\hom_{A^\two}(f,h)} \homtwo{A^\two}(f,g,h,\iscomp g f, \iscomp h g,p)
\] 
is contractible, and in particular inhabited. Here $\iscomp{g}{f}$ is abusive notation for the function $\two \to A^\two$ that is built from two copies of $\iscomp{g}{f} : \Delta^2 \to A$ using the equivalence  $\Delta^1 \times \Delta^1 \simeq \Delta^2 \cup_{\Delta^1_1} \Delta^2$ discussed in \cref{sec:sub-simplices}.

The second component of this inhabitant is a 2-simplex witness $\Delta^2 \times \two \to A$. There is a function 
\[ \lambda (t_1,t_2,t_3).((t_1,t_3),t_2) : \Delta^3 \to \Delta^2 \times \two\] that picks out the ``middle shuffle''.  The 1st and 2nd faces are identified with further restrictions
\[ \lambda (s,t). (s,s,t) : \Delta^2 \to \Delta^3 \qquad \lambda(s,t).(s,t,t) : \Delta^2 \to\Delta^3,\] with a common edge
\[ \lambda t. (t,t,t) : \Delta^1 \to \Delta^3.\]
This edge defines an inhabitant $\ell : \hom_A(x,w)$, while the pair of 2-simplices define witnesses that $\ell$ is the composite of $h \circ g$ and $f$, and that $\ell$ is the composite of $h$ with $f \circ g$, respectively. In particular, $(h \circ g) \circ f = h \circ (g \circ f)$.  
\end{proof}

\subsection{Homotopies}
\label{sec:homotopies}

Let $A$ be a Segal type with terms $x,y :A$.
Given two arrows $f,g:\hom_A(x,y)$, there are two ways to say that $f$ and $g$ are the same:
\begin{itemize}
\item we might have a path $p:f=_{\hom_A(x,y)}g$, or 
\item we might have a 2-simplex $q:\homtwo{A}(x,x,y,\idarr x,f,g)$.\footnote{A third case is presented by a 2-simplex in the ``dual'' type $\homtwoshort{A}(x,y,y,f, \idarr y, g)$, but an analogous argument will work for those; see Remark \ref{rmk:duality}.}
\end{itemize}
We demonstrate that these two types are in fact equivalent:

\begin{prop}\label{prop:htpy-is-htpy} For any $f,g : \hom_A(x,y)$ in a Segal type $A$, the natural map  
\begin{equation}
  (f=g) \to \homtwo{A}(x,x,y,\idarr x,f,g)\label{eq:idtosimplex}
\end{equation}
is an equivalence.
\end{prop}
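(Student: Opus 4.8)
The plan is to run the usual ``contractibility of total spaces'' argument, exactly parallel to the proof of \cref{thm:ext-funext} and to the standard homotopy-type-theory proof that $(f=g)\to\tprod_{x}(fx=gx)$ is an equivalence.

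First I would pin down the map \eqref{eq:idtosimplex} itself. By identity elimination it suffices to exhibit, for each $f:\hom_A(x,y)$, an element of $\homtwo{A}(x,x,y,\idarr x,f,f)$; the degenerate $2$-simplex $\lam{s,t}f(t):\Delta^2\to A$ works, since its $(s,0)$-face is constantly $f(0)\jdeq x$, i.e.\ $\idarr x$, its $(1,t)$-face is $\lam{t}f(t)\jdeq f$, and its diagonal $(t,t)$-face is again $f$. So \eqref{eq:idtosimplex} is the map sending $\refl$ to $\lam{s,t}f(t)$.

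Next, by the standard criterion that a fiberwise map is a fiberwise equivalence iff its induced map on total spaces is an equivalence \cite[Theorem~4.7.7]{hottbook}, it suffices to show that for each fixed $f$ the map
\[ \Parens{\tsum_{g:\hom_A(x,y)}(f=g)} \longrightarrow \Parens{\tsum_{g:\hom_A(x,y)} \homtwo{A}(x,x,y,\idarr x,f,g)} \]
is an equivalence. Its domain is a based path space, hence contractible, so it remains only to show the codomain is contractible. For this I would reuse essentially verbatim the chain of equivalences from the proof of \cref{thm:segal-global} (built from \cref{thm:exten-union} and \cref{thm:exten-compose}), now with $\idarr x$ playing the role of ``$f$'', the arrow $f$ playing the role of ``$g$'', and the summation variable $g$ playing the role of ``$h$''; this identifies the codomain with $\ndexten{\Delta^2}{A}{\Lambda^2_1}{[x,x,y,\idarr x,f]}$. (The horn data is valid because the target $x$ of $\idarr x$ is the source of $f$.) But the proof of \cref{thm:segal-global} shows precisely that such a type of $\Lambda^2_1$-fillers is contractible when $A$ is a Segal type, so the codomain is contractible and we are done.

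The argument has no real conceptual obstacle; the work is bookkeeping, and there are two places to be careful. First, one must verify that the degenerate $2$-simplex really has boundary $[x,x,y,\idarr x,f,f]$, so that \eqref{eq:idtosimplex} genuinely is ``the natural map''. Second, one must check that under the equivalences of \cref{thm:exten-union} and \cref{thm:exten-compose} the edges $\idarr x$ and $f$ of the $2$-simplex become exactly the two legs of the inner horn $\Lambda^2_1$ while $g$ becomes the free (diagonal) edge --- i.e.\ the same coordinate bookkeeping already carried out in \cref{thm:segal-global}. I expect this second point to be the main (modest) obstacle.
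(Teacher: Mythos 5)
Your proof is correct and takes essentially the same approach as the paper's: define the map by path induction using the degenerate $2$-simplex $\lam{s,t}f(t)$, pass to total spaces, and observe that the domain is a based path space while the codomain is contractible by the Segal condition. The only difference is a small redundancy: you route the contractibility of $\sum_{g}\homtwo{A}(x,x,y,\idarr x,f,g)$ through the equivalence with a $\Lambda^2_1$-filler type from \cref{thm:segal-global}, whereas this sum is \emph{literally} the type that \cref{defn:segal-type} declares contractible (with $x,y,z \mapsto x,x,y$ and $f,g \mapsto \idarr x, f$), so the detour is unnecessary — but harmless.
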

\begin{proof}
The map \eqref{eq:idtosimplex} is defined by path induction, since for any $f$ we have a ``degenerate'' 2-simplex \[ \lam{s,t}f(t): \homtwo{A}(x,x,y,\idarr x,f,f)\] defined to be constant on one input.
To show that~\eqref{eq:idtosimplex} is an equivalence, it suffices to show that the map of total spaces
\[ \Parens{\sum_{g:\hom_A(x,y)}(f=g)} \to \Parens{\sum_{g:\hom_A(x,y)} \homtwo{A}(x,x,y,\idarr x,f,g)}\]
is an equivalence.
But here both types are contractible; the first since it is a based path space, and the second since $A$ is a Segal type.
\end{proof}

More generally, we can say:

\begin{prop}\label{thm:comp-htpy}
  For $f:\hom_A(x,y)$ and $g:\hom_A(y,z)$ and $h:\hom_A(x,z)$ in a Segal type $A$, the natural map
  \[ (g\circ f = h) \to \homtwo{A}(x,y,z,f,g,h) \]
  is an equivalence.
\end{prop}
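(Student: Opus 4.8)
The plan is to mimic the proof of \cref{prop:htpy-is-htpy}, replacing the role of contractibility of a based path space with the contractibility guaranteed by the Segal condition. First I would exhibit the ``natural map'' concretely: by path induction on $p : g\circ f = h$ it suffices to produce, when $h$ is literally $g\circ f$, an element of $\homtwo{A}(x,y,z,f,g,g\circ f)$, and for this we take the canonical witness $\iscomp g f$ supplied by the Segal condition. So the map sends $p$ to $\transport{}{p}{\iscomp g f}$, or more precisely is defined by $\freeiso$-elimination sending $\refl$ to $\iscomp g f$.

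To show this map is an equivalence, I would pass to total spaces: it suffices to prove that
\[
\Parens{\tsum_{h:\hom_A(x,z)} (g\circ f = h)} \to \Parens{\tsum_{h:\hom_A(x,z)} \homtwo{A}(x,y,z,f,g,h)}
\]
is an equivalence. The domain is a based path space and hence contractible, so it is enough to show the codomain is contractible. But the codomain is exactly the type \eqref{eq:composites} whose contractibility \emph{is} the definition of $A$ being a Segal type (\cref{defn:segal-type}). Since a map between contractible types is automatically an equivalence, we are done; and a fiberwise map whose induced map on total spaces is an equivalence is itself a fiberwise equivalence, so the original map is an equivalence for each $h$.

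The only point requiring a little care is checking that the map constructed by path induction genuinely agrees with whatever ``natural map'' the reader has in mind, and that the total-space map is the one induced by it; this is the kind of bookkeeping that is routine but must be stated. I expect the main (mild) obstacle to be purely notational: being careful that $\iscomp g f$ indeed has the boundary $[x,y,z,f,g,g\circ f]$ so that it lands in the right extension type, which is immediate from the discussion following \cref{defn:segal-type}. No new idea beyond the one already used for \cref{prop:htpy-is-htpy} is needed — indeed that proposition is the special case $z = y$, $g = \idarr y$, using $\idarr y \circ f = f$ from \cref{prop:identity}.
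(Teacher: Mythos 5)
Your proposal is correct and is essentially the paper's own proof: define the map by path induction sending $\refl$ to $\iscomp g f$, then conclude it is an equivalence by summing over $h$ and observing that both total spaces are contractible (the domain as a based path space, the codomain by the Segal condition). Your closing remark that \cref{prop:htpy-is-htpy} is the special case $z\jdeq y$, $g\jdeq\idarr y$ is also accurate.
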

\begin{proof}
  The map is defined by path induction, since when $h\jdeq g\circ f$ the codomain is inhabited by $\iscomp g f$.
  Now we again show it to be an equivalence in the general case by summing over $h$ and noting that both types become contractible.
\end{proof}

However defined, the homotopies between arrows in a Segal type behave like a 2-category up to homotopy.
For instance, given $p:f=_{\hom_A(x,y)} g$ and $q:g=_{\hom_A(x,y)} h$, we can concatenate them as equalities to get $p\ct q : f=_{\hom_A(x,y)} h$, a ``vertical'' composite. 
We can also compose them ``horizontally'':

\begin{prop}
  Given $p:f=_{\hom_A(x,y)} g$ and $q:h=_{\hom_A(y,z)} k$ in a Segal type $A$, there is a concatenated equality $q \circ_2 p : h\circ f =_{\hom_A(x,z)} k\circ g$.
\end{prop}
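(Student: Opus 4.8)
The plan is to recognize horizontal composition as nothing more than functoriality of (binary) composition applied to paths. For fixed $x,y,z:A$, the contractibility hypothesis in \cref{defn:segal-type} provides, for every $g:\hom_A(y,z)$ and $f:\hom_A(x,y)$, a distinguished inhabitant of~\eqref{eq:composites}, namely the center of contraction; its first component is by definition $g\circ f$. Since this contractibility is assumed \emph{uniformly} in $f$ and $g$, the assignment $\pair{g,f}\mapsto g\circ f$ is an honest type-theoretic function
\[ m : \hom_A(y,z)\times\hom_A(x,y) \longrightarrow \hom_A(x,z), \]
the only thing worth pausing over in the whole argument, and it is immediate.

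First I would, given $p:f=_{\hom_A(x,y)}g$ and $q:h=_{\hom_A(y,z)}k$, assemble them into a single path $\pair{q,p}:(h,f)=(k,g)$ in the product type $\hom_A(y,z)\times\hom_A(x,y)$, using the standard identification of paths in a product with pairs of paths. Then I would define
\[ q\circ_2 p \defeq \ap_m\!\pair{q,p} : h\circ f =_{\hom_A(x,z)} k\circ g, \]
which is exactly the asserted concatenated equality.

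It may be more illuminating to record the equivalent ``concatenated'' description obtained by splitting $m$ into its two arguments: the whiskering $\ap_{\lam{f'}k\circ f'}\,p : k\circ f = k\circ g$ on one side, and $\happly\bigl(\ap_{\lam{g'}\lam{f'}g'\circ f'}\,q\bigr)\,f : h\circ f = k\circ f$ on the other, concatenated via $\ct$ to give $h\circ f = k\circ f = k\circ g$; functoriality of $\ap$ shows this agrees with the one-line definition above, so either may be taken as the definition of $q\circ_2 p$. There is no real obstacle here: the result is a formal consequence of the $\infty$-groupoid structure of identity types once composition is known to be a function, and everything needed for that is already in \cref{defn:segal-type}. (If one preferred to phrase the homotopies via $2$-simplices instead of paths, one could transport this construction along the equivalences of \cref{prop:htpy-is-htpy} and \cref{thm:comp-htpy}, but that is unnecessary for the bare existence statement.)
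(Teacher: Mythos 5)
Your proof is correct and amounts to the same argument the paper gives: the paper defines $q\circ_2 p$ by direct double path induction with base case $\refl_h\circ_2\refl_f\defeq\refl_{h\circ f}$, and your $\ap_m\pair{q,p}$ unfolds, via the definition of $\ap$ and the characterization of paths in a product, to exactly that path induction. The one point you flag — that $\pair{g,f}\mapsto g\circ f$ is an honest function — is indeed the only thing requiring a glance, and it holds because the Segal condition gives a term of $\prod_{x,y,z,f,g}\iscontr{\,\cdots}$ whose center's first projection is $g\circ f$, as the paper already uses freely. Your alternative ``split'' description via whiskering is also correct and matches the paper's subsequent remarks identifying whiskering with $\ap_{(h\circ-)}$ and $\ap_{(-\circ k)}$.
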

\begin{proof}
  By path induction on $p$ and $q$, defining $\refl_h \circ_2 \refl_f \defeq \refl_{h\circ f}$.
\end{proof}

In particular, taking one of $p$ or $q$ to be $\refl$ but not the other, we obtain ``whiskering'' operations.
These have another useful characterization:

\begin{prop}
  Given $p:f=_{\hom_A(x,y)} g$ and $h:\hom_A(y,z)$ and $k:\hom_A(w,x)$ in a Segal type $A$, we have
  \begin{align*}
    \refl_h \circ_2 p &= \ap_{(h\circ -)}(p)\\
    p \circ_2 \refl_k &= \ap_{(-\circ k)}(p).
  \end{align*}
\end{prop}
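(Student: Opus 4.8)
The plan is to prove both identifications by a single path induction on $p$, exactly as the horizontal composite $q\circ_2 p$ was itself defined by path induction in the previous proposition.

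First I would recall the relevant definitions, so that the base case reduces to a judgmental computation. Since $A$ is a Segal type, composites exist, so the functions $(h\circ -):\hom_A(x,y)\to\hom_A(x,z)$ and $(-\circ k):\hom_A(x,y)\to\hom_A(w,y)$ are well-defined, and hence so are $\ap_{(h\circ -)}$ and $\ap_{(-\circ k)}$, with the usual computation rules $\ap_{(h\circ -)}(\refl_f)\jdeq\refl_{h\circ f}$ and $\ap_{(-\circ k)}(\refl_f)\jdeq\refl_{f\circ k}$. On the other side, the whiskering $\refl_h\circ_2 p$ is the special case of $q\circ_2 p$ with $q\jdeq\refl_h$, and $\circ_2$ was defined by $\refl_h\circ_2\refl_f\defeq\refl_{h\circ f}$; similarly $p\circ_2\refl_k$ is the horizontal composite with the first path taken to be $\refl_k$, with the corresponding base value again defined to be $\refl$.

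Then I would fix $h$ (resp.\ $k$) and perform path induction on $p:f=g$. In the base case $g$ is $f$ and $p$ is $\refl_f$: the left side of the first equation, $\refl_h\circ_2\refl_f$, is $\refl_{h\circ f}$ by definition of $\circ_2$, while the right side $\ap_{(h\circ -)}(\refl_f)$ is $\refl_{h\circ f}$ by the computation rule for $\ap$; the two sides are therefore judgmentally equal, so the identity holds by $\refl$. The second equation is verified identically after interchanging the roles, using that $p\circ_2\refl_k$ evaluates to $\refl$ at $\refl_f$ together with the computation rule for $\ap_{(-\circ k)}$.

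There is no real obstacle here: the whole argument is a routine path induction in which both sides of each equation reduce judgmentally to $\refl$ in the base case. The only point worth checking before starting the induction is that the two maps being compared have matching codomains — i.e.\ that $\refl_h\circ_2 p$ and $\ap_{(h\circ -)}(p)$ both land in $h\circ f = h\circ g$, and that $p\circ_2\refl_k$ and $\ap_{(-\circ k)}(p)$ both land in $f\circ k = g\circ k$ — which is immediate from the typing of horizontal composition and of $\ap$.
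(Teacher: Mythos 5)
Your proof is correct and takes the same approach as the paper, which simply says ``by path induction on $p$''; you have spelled out the base-case computation that the paper leaves implicit, namely that $\refl_h\circ_2\refl_f\jdeq\refl_{h\circ f}\jdeq\ap_{(h\circ-)}(\refl_f)$ (and dually).
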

\begin{proof}
  By path induction on $p$.
\end{proof}

Of course, we have the usual middle-four interchange law:

\begin{prop}
  We have the following equality in a Segal type whenever it makes sense:
  \[ (q'\ct p') \circ_2 (q\ct p) = (q' \circ_2 q) \ct (p' \circ_2 p). \]
\end{prop}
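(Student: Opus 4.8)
The plan is to prove this by iterated path induction, just as in the earlier propositions on horizontal composition in this subsection. First I would spell out what ``whenever it makes sense'' means: there are $x,y,z:A$, arrows $f_1,f_2,f_3:\hom_A(x,y)$ and $g_1,g_2,g_3:\hom_A(y,z)$, and paths $p:f_1=f_2$, $q:f_2=f_3$, $p':g_1=g_2$, $q':g_2=g_3$; then both $(q'\ct p')\circ_2(q\ct p)$ and $(q'\circ_2 q)\ct(p'\circ_2 p)$ are elements of the identity type $g_1\circ f_1 = g_3\circ f_3$, which we must show are equal.

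Next I would perform path induction on all four of $p,q,p',q'$, reducing each to reflexivity. One carries this out with successive based path inductions in any workable order --- for instance eliminate $p$ first, with a motive that is a family over $(f_2,p)$ still quantifying over $f_3,q,g_1,g_2,g_3,p',q'$, then eliminate $q$, then $p'$, then $q'$ --- the only point requiring care being to keep the remaining quantifiers in each motive so the iterated elimination type-checks. After all four inputs become reflexivities, the left-hand side computes, using $\refl\ct\refl\jdeq\refl$ and the defining clause $\refl_h\circ_2\refl_f\jdeq\refl_{h\circ f}$, as
\[ (\refl_{g_1}\ct\refl_{g_1})\circ_2(\refl_{f_1}\ct\refl_{f_1}) \jdeq \refl_{g_1}\circ_2\refl_{f_1} \jdeq \refl_{g_1\circ f_1}, \]
while the right-hand side computes the same way as
\[ (\refl_{g_1}\circ_2\refl_{f_1})\ct(\refl_{g_1}\circ_2\refl_{f_1}) \jdeq \refl_{g_1\circ f_1}\ct\refl_{g_1\circ f_1} \jdeq \refl_{g_1\circ f_1}. \]
Both sides are thus judgmentally equal to $\refl_{g_1\circ f_1}$, and the claim follows by $\refl$.

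There is no genuine obstacle here: the statement is a purely formal consequence of the definitions of $\circ_2$ and $\ct$ by path induction, and it uses the Segal hypothesis only insofar as that is needed to make $\circ$, and hence the statement itself, well-typed. The only mild subtlety is the bookkeeping of the iterated path inductions, which is entirely routine.
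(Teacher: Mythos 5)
Your proof is correct and takes exactly the same route as the paper, which simply says ``By path induction on all four equalities''; you have merely spelled out the routine bookkeeping of the iterated elimination and the terminal judgmental computations that the paper leaves implicit.
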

\begin{proof}
  By path induction on all four equalities.
\end{proof}

On the other hand, if we view homotopies as 2-simplices, then a natural way to compose them is by filling 3-dimensional horns, as in a quasicategory.
We can express this in terms of whiskering and concatenation of equalities.

\begin{prop}\label{thm:32horn-concat}
  In a Segal type $A$, suppose given arrows $f,g,h,k,\ell,m$ and equalities
  \begin{mathpar}
    p:g\circ f =_{\hom_A(x,z)} k \and
    q:h\circ g =_{\hom_A(z,w)} \ell \and
    r:h\circ k =_{\hom_A(x,w)} m
  \end{mathpar}
  corresponding to 2-simplices that fill out the following horn $\Lambda^3_2 \to A$:
  \[
  \begin{tikzcd}
    & y \ar[dd,"g" description] \ar[dr,"\ell"] &&&& y \ar[dr,"\ell"] \\
    x \ar[ur,"f"] \ar[dr,"k"'] \ar[rr,phantom,"\scriptstyle p" near start, "\scriptstyle q" near end] && w && x \ar[ur,"f"] \ar[dr,"k"'] \ar[rr,"m"] && w \\
    & z \ar[ur,"h"'] &&&& z \ar[ur,"h"'] \ar[uu,phantom,"\scriptstyle r" near start]
  \end{tikzcd}
  \]
  Then the horn has a filler $\Delta^3 \to A$ in which the missing 2-face is the 2-simplex corresponding to the concatenated equality
  \begin{equation}\label{eq:32horn-concat}
    \ell \circ f \overset q= (h\circ g) \circ f
    = h \circ (g\circ f)
    \overset p= h\circ k
    \overset r= m.
  \end{equation}
  where $p$ and $q$ are whiskered by $h$ and $f$ respectively.
\end{prop}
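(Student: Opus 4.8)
The plan is to recognize this horn-filling statement as a repackaging of the associativity construction in the proof of \cref{prop:segal-assoc}, read off through the equivalence of \cref{thm:comp-htpy} between $2$-simplices and equalities of arrows. A map $\Lambda^3_2\to A$ is the same as a compatible triple of $2$-simplices on the boundaries $[x,y,z,f,g,k]$, $[y,z,w,g,h,\ell]$ and $[x,z,w,k,h,m]$ (the three faces of $\Delta^3$ present in the horn), and by \cref{thm:comp-htpy} such $2$-simplices are precisely the equalities $p:g\circ f=k$, $q:h\circ g=\ell$, $r:h\circ k=m$. So the statement is a claim about such triples, and I would first prove it by path induction on $p$, then on $q$, then on $r$ (so that $r$'s type has by then become $h\circ(g\circ f)=m$), reducing to the case
\[ k\jdeq g\circ f,\qquad \ell\jdeq h\circ g,\qquad m\jdeq h\circ(g\circ f),\qquad p\jdeq q\jdeq r\jdeq\refl. \]
In this case the three horn faces are $\iscomp g f$, $\iscomp h g$ and $\iscomp h{g\circ f}$, and since $\ap_{(h\circ-)}(\refl)$ and $\ap_{(-\circ f)}(\refl)$ are reflexivity, the concatenation~\eqref{eq:32horn-concat} collapses to the associativity equality $(h\circ g)\circ f=h\circ(g\circ f)$ of \cref{prop:segal-assoc}. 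It therefore suffices to exhibit a $3$-simplex $\sigma:\Delta^3\to A$ whose faces on $[x,y,z,f,g,g\circ f]$, $[y,z,w,g,h,h\circ g]$ and $[x,z,w,g\circ f,h,h\circ(g\circ f)]$ are $\iscomp g f$, $\iscomp h g$, $\iscomp h{g\circ f}$, and whose remaining face, on $[x,y,w,f,h\circ g,h\circ(g\circ f)]$, corresponds under \cref{thm:comp-htpy} to that associativity equality.

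For $\sigma$ I would reuse, almost verbatim, the $3$-simplex built in the proof of \cref{prop:segal-assoc}: the restriction along the ``middle shuffle'' $\lambda(t_1,t_2,t_3).((t_1,t_3),t_2):\Delta^3\to\Delta^2\times\two$ of the composition witness, in the Segal type $A^\two$, of the arrows $\alpha:\hom_{A^\two}(f,g)$ and $\beta:\hom_{A^\two}(g,h)$ assembled from two copies each of $\iscomp g f$ and $\iscomp h g$ glued along their diagonal, using $\Delta^1\times\Delta^1\simeq\Delta^2\cup_{\Delta^1_1}\Delta^2$. Unwinding the $\rec_\lor$'s in that construction shows that the faces of this $\sigma$ on $[x,y,z,f,g,g\circ f]$ and $[y,z,w,g,h,h\circ g]$ are $\iscomp g f$ and $\iscomp h g$; that the edge $\lambda t.\sigma(t,t,t)$ is the common composite $\bar\ell$ of \cref{prop:segal-assoc}; and that the other two faces witness $h\circ(g\circ f)=\bar\ell$ and $(h\circ g)\circ f=\bar\ell$ respectively. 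By uniqueness of composites, the pair consisting of $\bar\ell$ together with the face witnessing $h\circ(g\circ f)=\bar\ell$ equals $(h\circ(g\circ f),\iscomp h{g\circ f})$ in the contractible type of composites of $h$ and $g\circ f$; transporting $\sigma$ along this identification produces a $\sigma'$ whose faces on $[x,y,z,f,g,g\circ f]$ and $[y,z,w,g,h,h\circ g]$ are still $\iscomp g f$ and $\iscomp h g$ (these do not involve the edge $\sigma(t,t,t)$), whose face on $[x,z,w,g\circ f,h,h\circ(g\circ f)]$ is now $\iscomp h{g\circ f}$, and whose remaining face is obtained from the face of $\sigma$ witnessing $(h\circ g)\circ f=\bar\ell$ by concatenating with the path $\bar\ell=h\circ(g\circ f)$ --- which is exactly the associativity equality as it is \emph{defined} in \cref{prop:segal-assoc}. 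Hence $\sigma'$ is the required filler.

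The hard part is the bookkeeping in the middle paragraph: one must carefully match all four faces and the edge $\lambda t.\sigma(t,t,t)$ of the ``middle shuffle'' restriction against the claimed $2$-simplices, which amounts to unwinding the $\rec_\lor$-definitions of the doubled squares $\alpha,\beta$ and of their composite in $A^\two$, and then checking that the transport along $\bar\ell=h\circ(g\circ f)$ carries every face to its claimed value on the nose. A minor point is verifying that the whiskered reflexivities in~\eqref{eq:32horn-concat} collapse as claimed. A more self-contained, but more computational, alternative would be to build $\sigma$ directly from the hypotheses, using only that $A^\two$ is Segal together with the fact that $\Delta^3\to A$ is a retract of $\Delta^2\times\two\to A$.
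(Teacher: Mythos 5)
Your proposal is correct and follows essentially the same strategy as the paper: reduce by path induction, reuse the associativity $3$-simplex from \cref{prop:segal-assoc}, transport it along the identification of composite-with-witness pairs supplied by the Segal contractibility, and read off the missing face via naturality of transport together with \cref{thm:comp-htpy}. The only cosmetic difference is that you additionally path-induct on $r$ (pinning the third horn face down to $\iscomp h{g\circ f}$ and collapsing~\eqref{eq:32horn-concat} to bare associativity), whereas the paper leaves $r$ general and absorbs it directly into the contractibility-and-transport step.
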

\begin{proof}
  First we do path induction on $p$ and $q$, enabling us to assume $k\jdeq g\circ f$ and $\ell\jdeq h\circ g$.
  The 2-simplices corresponding to $p\jdeq \refl$ and $q\jdeq \refl$ are now $\iscomp g f$ and $\iscomp h g$, while~\eqref{eq:32horn-concat} reduces to
  \begin{equation}
    (h\circ g) \circ f = h \circ (g\circ f) \overset r= m.\label{eq:32horn-concat2}
  \end{equation}
  Now the proof of \cref{prop:segal-assoc} constructed a 3-simplex of the form
  \[
  \begin{tikzcd}
    & y \ar[dd,"g" description] \ar[dr,"h\circ g"] &&&& y \ar[dr,"h\circ g"] \\
    x \ar[ur,"f"] \ar[dr,"g\circ f"'] \ar[rr,phantom,"\scriptstyle \mathsf{comp}" near start, "\scriptstyle \mathsf{comp}" near end] && w & \Rightarrow & x \ar[ur,"f"] \ar[dr,"g\circ f"'] \ar[rr,"m'" description] && w \\
    & z \ar[ur,"h"'] &&&& z \ar[ur,"h"'] \ar[uu,phantom,"\scriptstyle r'" near start,"\scriptstyle s" near end]
  \end{tikzcd}
  \]
  By the contractibility of the type of composites for $h$ and $g\circ f$, we have $(m',r') = (m,r)$.
  And the type of 3-2-horns can be decomposed as
  \[ (\Lambda^3_2\to A) \simeq \sum_{\alpha : \Delta^2 \cup_{\Delta^1} \Delta^2 \to A} \ndexten{\Delta^2}{A}{\Lambda^2_1}{\alpha} \]
  where $\Delta^2 \cup_{\Delta^1} \Delta^2$ denotes the left-hand half of the 3-simplex drawn above, with $\Lambda^2_1$ sitting inside it as the lower two 1-simplices.
  Thus, the equality $(m',r') = (m,r)$ in $\ndexten{\Delta^2}{A}{\Lambda^2_1}{[h,g\circ f]}$ yields an equality of 3-2-horns
  \[[\iscomp g f,\iscomp h g,r'] = [\iscomp g f,\iscomp h g,r].\]
  The above 3-simplex lives in $\ndexten{\Delta^3}{A}{\Lambda^3_2}{[\iscomp g f,\iscomp h g,r']}$, so we can transport it across this equality to get a 3-simplex in $\ndexten{\Delta^3}{A}{\Lambda^3_2}{[\iscomp g f,\iscomp h g,r]}$, which is what we wanted.

  Finally, by the naturality of path transport, the missing 2-simplex face is the transport of $s$ along the equality $m'=m$, which is equal to the concatenation $m' \overset {r'}= h\circ (g\circ f) \overset{r}= m$ of the two equalities induced by $r$ and $r'$.
  Thus, the equality corresponding to this face is
  \[ (h\circ g) \circ f \overset s= m' \overset {r'}= h\circ (g\circ f) \overset{r}= m. \]
  But the concatenation of the first two of these equalities was the \emph{definition} of associativity $(h\circ g) \circ f = h \circ (g\circ f)$ in \cref{prop:segal-assoc}, so this is equal to~\eqref{eq:32horn-concat2}.
\end{proof}

\subsection{Anodyne maps}
\label{sec:anodyne}

The definition of Segal type says that any 2-1-horn has a unique filler, i.e.\ that any extension type of the form $\ndexten{\Delta^2}{A}{\Lambda^2_1}{h}$ is contractible.
This is sufficient to imply that many other cofibrations have the same property.

\begin{defn}
  An inclusion of shapes $t:I \mid \phi\types\psi$ is \textbf{inner anodyne} if for any Segal type $A$ and any $h:\sh{t:I}{\phi}\to A$, the extension type $\ndexten{\sh{t:I}{\psi}}{A}{\phi}{h}$ is contractible.
\end{defn}

This can only be a meta-theoretic definition, but we will not worry too much about exactly how it should be made precise; our only intent is to exhibit certain other maps as inner anodyne.

\begin{prop}\label{thm:pop-anodyne}
  If $t:I \mid \phi\types\psi$ is inner anodyne and $s:J \mid \chi \types \zeta$ is any cofibration, then the pushout product
  \[ {\pair{t,s}:I\times J} \mid {(\phi\land\zeta)\lor(\psi\land\chi)} \types \psi\land \zeta \]
  is inner anodyne.
\end{prop}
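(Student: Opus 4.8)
The plan is to deduce this from \cref{thm:exten-curry} (currying of extension types) and \cref{ax:extfunext} (relative function extensionality). Fix a Segal type $A$ and a map
\[ h : \sh{\pair{t,s}:I\times J}{(\phi\land\zeta)\lor(\psi\land\chi)} \to A; \]
we must show that the extension type $\ndexten{\sh{\pair{t,s}:I\times J}{\psi\land\zeta}}{A}{(\phi\land\zeta)\lor(\psi\land\chi)}{h}$ is contractible. Applying \cref{thm:exten-curry} with $X$ the constant family at $A$ and $f\defeq h$ (so $f$ is exactly a term of $\tprod_{\pair{t,s}:I\times J\mid(\phi\land\zeta)\lor(\psi\land\chi)}A$), and taking the \emph{third} of its three equivalent forms, this type is equivalent to
\[ \exten{s:J\mid\zeta}{\Parens{\ndexten{\sh{t:I}{\psi}}{A}{\phi}{\lam{t}h\pair{t,s}}}}{\chi}{\lam{s}\lam{t}h\pair{t,s}}. \]
It is essential to curry in this direction — putting the extension along the inner anodyne inclusion $\phi\types\psi$ \emph{inside} and the one along the arbitrary cofibration $\chi\types\zeta$ \emph{outside}; currying the other way would leave an inner extension along $\chi\types\zeta$, about which nothing is known.

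Next I would observe that the inner extension type $C(s)\defeq\ndexten{\sh{t:I}{\psi}}{A}{\phi}{\lam{t}h\pair{t,s}}$ is well-formed for each $s:J$ with $\zeta$: when $\phi$ and $\zeta$ both hold so does $(\phi\land\zeta)\lor(\psi\land\chi)$, hence $h\pair{t,s}$ is defined, so $\lam{t}h\pair{t,s}:\sh{t:I}{\phi}\to A$ is a map to be extended. Since $\phi\types\psi$ is inner anodyne and $A$ is a Segal type, each $C(s)$ is contractible. The equivalent type displayed above is thus an extension type of the family $C$ over the cofibration $\chi\types\zeta$, with section $\lam{s}\lam{t}h\pair{t,s}$ supplied by \cref{thm:exten-curry} itself, so \cref{ax:extfunext} shows it to be contractible. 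Contractibility being invariant under equivalence, the original extension type is contractible; since $A$ and $h$ were arbitrary, the pushout product inclusion is inner anodyne.

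The argument is really just bookkeeping with currying, and I do not expect a serious obstacle. The one point that needs a little care is that $C$ must be regarded as a genuine type family over the shape $\sh{s:J}{\zeta}$, so that \cref{ax:extfunext} applies — which, like the definition of ``inner anodyne'' itself, is a statement in the informal meta-theory rather than a single internal derivation — together with getting the direction of currying right, as noted above.
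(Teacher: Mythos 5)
Your proof is correct and follows essentially the same route as the paper's: both apply \cref{thm:exten-curry} to rewrite the pushout-product extension type as a nested one with the inner-anodyne factor inside and the arbitrary cofibration outside, then invoke contractibility of the inner extension type and \cref{ax:extfunext} for the outer. Your additional remarks on the well-formedness of the inner family and the need to curry in the right direction are points the paper leaves implicit.
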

\begin{proof}
  By \cref{thm:exten-curry}, for any Segal type $A$ and any $h$, we have
  \begin{align*}
&    \ndexten{\sh{I\times J}{\psi\land\zeta}}{A}{(\phi\land\zeta)\lor(\psi\land\chi)}{h}\\
    &\qquad\qquad\qquad\simeq \exten{s:J|\zeta}{\ndexten{\sh{t:I}{\psi}}{A}{\phi}{\lam{t}h{\pair{t,s}}}}{\chi}{\lam{s}\lam{t}h{\pair{t,s}}}.
  \end{align*}
  But the latter is an extension type of a contractible family, hence contractible by relative function extensionality (\cref{ax:extfunext}).
\end{proof}

\begin{prop}\label{thm:3horn-anodyne}
  In the cube context $    \pair{t_1,t_2,t_3}:\two^3$, the 3-1-horn and 3-2-horn inclusions
  \begin{align*}
(0\jdeq t_3\le t_2\le t_1) \lor (t_3\le t_2\jdeq t_1) \lor (t_3\le t_2\le t_1\jdeq 1) &\types t_3\le t_2\le t_1\\
 (0\jdeq t_3\le t_2\le t_1) \lor (t_3\jdeq t_2\le t_1) \lor (t_3\le t_2\le t_1\jdeq 1) &\types t_3\le t_2\le t_1
  \end{align*}
  are inner anodyne.
\end{prop}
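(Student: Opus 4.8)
The plan is to deduce both inclusions from the inner $2$-horn inclusion $\Lambda^2_1 \subseteq \Delta^2$ --- which is inner anodyne by \cref{defn:segal-type} (cf.\ the proof of \cref{thm:segal-global}) --- using the retraction of $\Delta^3$ onto the middle $3$-simplex of the prism $\Delta^2\times\Delta^1$ from the preceding proposition. Two closure properties of inner anodyne inclusions will be needed, both immediate from the earlier equivalences. \emph{Cobase change}: if $\sh{t:I}{\phi\land\psi}\subseteq\sh{t:I}{\phi}$ is inner anodyne then so is $\sh{t:I}{\psi}\subseteq\sh{t:I}{\phi\lor\psi}$, since by \cref{thm:exten-union} the extension type $\ndexten{\sh{t:I}{\phi\lor\psi}}{A}{\psi}{a}$ is equivalent to $\ndexten{\sh{t:I}{\phi}}{A}{\phi\land\psi}{a}$, contractible for $A$ Segal. \emph{Left cancellation}: given shape inclusions $\sh{t:I}{\phi}\subseteq\sh{t:I}{\psi}\subseteq\sh{t:I}{\chi}$, if $\sh{t:I}{\phi}\subseteq\sh{t:I}{\chi}$ and $\sh{t:I}{\phi}\subseteq\sh{t:I}{\psi}$ are inner anodyne then so is $\sh{t:I}{\psi}\subseteq\sh{t:I}{\chi}$; for by \cref{thm:exten-compose} the type $\ndexten{\sh{t:I}{\chi}}{A}{\phi}{a}$ is the total space of a family over $\ndexten{\sh{t:I}{\psi}}{A}{\phi}{a}$ with fibres $\ndexten{\sh{t:I}{\chi}}{A}{\psi}{f}$, and for $A$ Segal both the total space and the base are contractible, whence so is each fibre. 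Lastly, $\Lambda^2_1\times\Delta^1\subseteq\Delta^2\times\Delta^1$ is inner anodyne by \cref{thm:pop-anodyne}, being the pushout product of $\Lambda^2_1\subseteq\Delta^2$ with the (degenerate) cofibration $\sh{t:\two}{\bot}\subseteq\Delta^1$.

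I will prove the $3$-$2$-horn inclusion; the $3$-$1$-horn inclusion follows by the same argument with the two internal $2$-simplices of the prism interchanged, or by the duality of \cref{rmk:duality}, which preserves Segal types and hence inner anodyne inclusions. Recall from \cref{sec:sub-simplices} that $\Delta^2\times\Delta^1 = \sh{\pair{\pair{t_1,t_2},t_3}}{t_2\le t_1}$ is the union of three $3$-simplices glued along the internal $2$-simplices $\delta \defeq \sh{\pair{\pair{t_1,t_2},t_3}}{t_3\jdeq t_2\le t_1}$ and $\delta' \defeq \sh{\pair{\pair{t_1,t_2},t_3}}{t_2\le t_3\jdeq t_1}$; let $\mu:\Delta^3\to\Delta^2\times\Delta^1$ and $\nu:\Delta^2\times\Delta^1\to\Delta^3$ be the section and retraction from the preceding proposition, so that $\mu\pair{t_1,t_2,t_3}\jdeq\pair{\pair{t_1,t_3},t_2}$, $\nu\pair{\pair{t_1,t_2},t_3}$ equals $\pair{t_1,t_2,t_2}$, $\pair{t_1,t_3,t_2}$, or $\pair{t_1,t_1,t_2}$ according as $t_3\le t_2$, $t_2\le t_3\le t_1$, or $t_1\le t_3$, and $\nu\circ\mu\jdeq\mathrm{id}_{\Delta^3}$. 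Put $D\defeq(\Lambda^2_1\times\Delta^1)\cup\delta\subseteq\Delta^2\times\Delta^1$. One checks that $(\Lambda^2_1\times\Delta^1)\cap\delta$ is exactly the inner $2$-horn of $\delta$, so by cobase change $\Lambda^2_1\times\Delta^1\subseteq D$ is inner anodyne; together with $\Lambda^2_1\times\Delta^1\subseteq\Delta^2\times\Delta^1$ being inner anodyne, left cancellation shows $D\subseteq\Delta^2\times\Delta^1$ is inner anodyne.

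It remains to verify two shape containments. The $3$-$2$-horn $\Lambda\subseteq\Delta^3$ has three faces, cut out by $t_1\jdeq 1$, by $t_3\jdeq t_2$, and by $t_3\jdeq 0$; one checks that $\mu$ carries these into the subshapes of $\Delta^2\times\Delta^1$ cut out by $t_1\jdeq 1$, by $t_3\jdeq t_2$ (that is, $\delta$), and by $t_2\jdeq 0$ respectively, all lying in $D$, so $\mu(\Lambda)\subseteq D$. And reading off the definition of $\nu$, it carries $\Lambda^2_1\times\Delta^1$ into the union of the subshapes $t_1\jdeq 1$ and $t_3\jdeq 0$, and carries $\delta$ into the subshape $t_3\jdeq t_2$, all faces of $\Lambda$, so $\nu(D)\subseteq\Lambda$. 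Given these, for any Segal type $A$ and any $h:\Lambda\to A$, the maps $g\mapsto g\circ\nu:\ndexten{\Delta^3}{A}{\Lambda}{h}\to\ndexten{\Delta^2\times\Delta^1}{A}{D}{h\circ\nu|_D}$ and $k\mapsto k\circ\mu$ in the reverse direction are well defined (using $\nu(D)\subseteq\Lambda$ and $\mu(\Lambda)\subseteq D$), and their composite on $\ndexten{\Delta^3}{A}{\Lambda}{h}$ is the identity because $\nu\circ\mu\jdeq\mathrm{id}$; hence $\ndexten{\Delta^3}{A}{\Lambda}{h}$ is a retract of $\ndexten{\Delta^2\times\Delta^1}{A}{D}{h\circ\nu|_D}$, which is contractible because $D\subseteq\Delta^2\times\Delta^1$ is inner anodyne. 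Therefore $\Lambda\subseteq\Delta^3$ is inner anodyne. The bulk of the work is the combinatorial bookkeeping of the two containments just described (together with their counterparts for the $3$-$1$-horn, obtained by using $\delta'$ in place of $\delta$); the one point that is not pure formality is the left-cancellation lemma, which I would therefore isolate first.
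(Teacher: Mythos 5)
Your proof is correct, and it follows a genuinely different route from the paper's. The paper follows Joyal's Lemma 2.3.2.1 in \cite{HTT} directly: it forms the pushout product of $\Lambda^3_2 \to \Delta^3$ with $\Lambda^2_1\to\Delta^2$ inside the five-dimensional cube $\Delta^3\times\Delta^2$, observes that this pushout product is inner anodyne by \cref{thm:pop-anodyne}, and then exhibits the extension type for the $3$-horn as a retract of the extension type for the pushout product, building the section by hand with a sextuple case split emulating the lattice operations $\pmb{\lor},\pmb{\land}$. Your argument instead stays in the three-dimensional prism $\Delta^2\times\Delta^1$, uses the already-established retraction $\nu:\Delta^2\times\Delta^1\to\Delta^3$ from the preceding proposition, and replaces the big case split by two small closure lemmas. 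The left-cancellation lemma is the genuinely nonformal ingredient you identify: it is \emph{not} a closure property of an arbitrary left class of a weak factorization system (so it would not be available in a purely abstract treatment), but it is immediate from the contractibility characterization via \cref{thm:exten-compose}, exactly as you argue. Your cobase-change lemma from \cref{thm:exten-union} is standard but also not stated explicitly in the paper. Net effect: your proof is more modular and works in lower dimension, at the cost of relying on two auxiliary lemmas and on the prism retraction; the paper's is self-contained within the one proposition and mirrors the classical combinatorics more literally, at the cost of a heavier case analysis and a higher-dimensional ambient cube. Both correctly reduce to the $\Lambda^2_1\subseteq\Delta^2$ inclusion being inner anodyne by the Segal condition. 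One small point you should make explicit in a polished write-up is that the notion of inner anodyne is invariant under a change of cube coordinates, which you implicitly use when identifying $(\Lambda^2_1\times\Delta^1)\cap\delta\subseteq\delta$ in cube $\two^3$ with $\Lambda^2_1\subseteq\Delta^2$ in cube $\two^2$; this is a judgmental isomorphism via substitution, so it is unproblematic, but it deserves a sentence.
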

\begin{proof}
  This is essentially the same argument as Joyal's lemma~\cite[2.3.2.1]{HTT}, but written using our ``interval'' description of simplices rather than the ``finite ordered set'' version.
  Let $\Lambda^3_2 \to \Delta^3$ be the 3-2-horn inclusion; the 3-1 case is analogous.
  By \cref{thm:pop-anodyne}, the pushout product of this inclusion with the 2-1-horn $\Lambda^2_1\to\Delta^2$:
  \[ (\Lambda^3_2 \times \Delta^2) \cup_{(\Lambda^3_2 \times \Lambda^2_1)} (\Delta^3\times\Lambda^2_1) \longrightarrow \Delta^3\times\Delta^2 \]
  is inner anodyne.
  For brevity, let $X$ be the domain of this pushout product.
  We will show that for any $h:\Lambda^3_2 \to A$, there is an $\hat h:X\to A$ such that $\ndexten{\Delta^3}{A}{\Lambda^3_2}{h}$ is a retract of $\ndexten{\Delta^3\times\Delta^2}{A}{X}{\hat h}$; thus when $A$ is Segal, the former is contractible since the latter is.

  The retraction will be defined by
  \[
  \begin{array}{ccc}
    \ndexten{\Delta^3\times\Delta^2}{A}{X}{\hat h} & \to &\ndexten{\Delta^3}{A}{\Lambda^3_2}{h} \\
    f & \mapsto & \lam{\pair{t_1,t_2,t_3}} f\pair{\pair{t_1,t_2,t_3},\pair{t_1,t_2}}.
  \end{array}
  \]
  This will be well-defined as long as we define $\hat h$ such that
  \[ \hat h\pair{\pair{t_1,t_2,t_3},\pair{t_1,t_2}} \jdeq h\pair{t_1,t_2,t_3}. \]
  whenever $\pair{t_1,t_2,t_3} : \Lambda^3_2$, i.e.\ whenever we have
  \[ (0\jdeq t_3\le t_2\le t_1) \lor (t_3\jdeq t_2\le t_1) \lor (t_3\le t_2\le t_1\jdeq 1). \]
  Note that $\pair{t_1,t_2,t_3} : \Lambda^3_2$ implies $\pair{\pair{t_1,t_2,t_3},\pair{t_1,t_2}}:X$, so this condition makes sense.

  If we had maximum and minimum operations $\pmb{\lor}$ and $\pmb{\land}$ in our theory, the section would be defined by
  \[
  \begin{array}{ccc}
    \ndexten{\Delta^3}{A}{\Lambda^3_2}{h} & \to &\ndexten{\Delta^3\times\Delta^2}{A}{X}{\hat h}\\
    g & \mapsto & \lam{\pair{\pair{t_1,t_2,t_3},\pair{s_1,s_2}}} g\pair{t_1 \mathbin{\pmb{\lor}} s_1, t_2 \mathbin{\pmb{\land}} s_2, t_3 \mathbin{\pmb{\land}} s_2}
  \end{array}
  \]
  and we could say that $\Lambda^3_2 \to \Delta^3$ is literally a retract of $X \to \Delta^3\times \Delta^2$, before we even form the extension types.
  Instead, we have to define the section at the level of extension types only, using $\rec_\lor$ and a sextuple case split:
  \[
  \begin{array}{r@{}l}
    \ndexten{\Delta^3}{A}{\Lambda^3_2}{h}\  &\to  \ndexten{\Delta^3\times\Delta^2}{A}{X}{\hat h}\\
    g\  &\mapsto  \lam{\pair{\pair{t_1,t_2,t_3},\pair{s_1,s_2}}}
    \begin{cases}
      g\pair{t_1,t_2,t_3} & (s_1\le t_1) \land (t_2\le s_2)\\
      g\pair{s_1,t_2,t_3} & (t_1\le s_1) \land (t_2\le s_2)\\
      g\pair{t_1,s_2,t_3} & (s_1\le t_1) \land (t_3\le s_2\le t_2)\\
      g\pair{s_1,s_2,t_3} & (t_1\le s_1) \land (t_3\le s_2\le t_2)\\
      g\pair{t_1,s_2,s_2} & (s_1\le t_1) \land (s_2\le t_3)\\
      g\pair{s_1,s_2,s_2} & (t_1\le s_1) \land (s_2\le t_3).
    \end{cases}
  \end{array}
  \]
  It is easy to see that this is indeed a section of the above retraction, as long as it is well-defined.
  We can make both the section and retraction well-defined if we define $\hat h : X \to A$ by the same sextuple case split from $h:\Lambda^3_2\to A$, as long as we check that this makes sense, i.e.\ that if $\pair{\pair{t_1,t_2,t_3},\pair{s_1,s_2}}:X$ then all the arguments to which the case split tries to apply $g$ in fact live in $\Lambda^3_2$.

  It is clear that in all cases if $\pair{\pair{t_1,t_2,t_3},\pair{s_1,s_2}}:\Delta^3\times\Delta^2$ then the arguments of $g$ lie in $\Delta^3$.
  The additional condition imposed by $\pair{\pair{t_1,t_2,t_3},\pair{s_1,s_2}}:X$ is
  \begin{equation*}
    (0\jdeq t_3) \lor (t_3\jdeq t_2) \lor (t_1 \jdeq 1) \lor (0\jdeq s_2) \lor (s_1\jdeq 1).
  \end{equation*}
  and we have to show that each of these five cases implies
  \[ (0\jdeq (t_3 \mathbin{\pmb{\land}} s_2)) \lor
  ((t_3 \mathbin{\pmb{\land}} s_2) \jdeq (t_2 \mathbin{\pmb{\land}} s_2)) \lor
  ((t_1 \mathbin{\pmb{\lor}} s_1)\jdeq 1)
  \]
  But this is easy:
  \begin{itemize}
  \item If $0\jdeq t_3$ then $0\jdeq (t_3 \mathbin{\pmb{\land}} s_2)$.
  \item If $t_3\jdeq t_2$ then $(t_3 \mathbin{\pmb{\land}} s_2) \jdeq (t_2 \mathbin{\pmb{\land}} s_2)$.
  \item If $t_1 \jdeq 1$ then $(t_1 \mathbin{\pmb{\lor}} s_1)\jdeq 1$.
  \item If $0\jdeq s_2$ then $0\jdeq (t_3 \mathbin{\pmb{\land}} s_2)$.
  \item If $s_1\jdeq 1$ then $(t_1 \mathbin{\pmb{\lor}} s_1)\jdeq 1$.
  \end{itemize}
  Technically, lacking $\pmb{\lor}$ and $\pmb{\land}$ this must be proven by splitting into $30 = 5\times 6$ cases, but we leave that for automation in a proof assistant.
\end{proof}

If we gave a formal definition of all the horns $\Lambda^n_k$ (by meta-theoretic induction on $n$ and $k$), then we could presumably generalize \cref{thm:3horn-anodyne} to a theorem-schema that all the inner horn inclusions $\Lambda^n_k\to \Delta^n$ ($0<k<n$) are inner anodyne.

\begin{cor}\label{thm:32horn-is-concat}
  In the situation of \cref{thm:32horn-concat}, if we also have $s:\ell\circ f = m$, then the type of 3-simplices with given boundary $\ndexten{\Delta^3}{A}{\partial\Delta^3}{[p,q,r,s]}$ is equivalent to the type of equalities from $s$ to~\eqref{eq:32horn-concat}.
\end{cor}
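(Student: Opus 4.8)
The plan is to exhibit $\ndexten{\Delta^3}{A}{\partial\Delta^3}{[p,q,r,s]}$ as a fiber of the (contractible) horn-filling type and then read off a path space. Write $[p,q,r]$ also for the $3$-$2$-horn $\Lambda^3_2\to A$ assembled from the $2$-simplices corresponding, under \cref{thm:comp-htpy}, to the equalities $p,q,r$; likewise read the fourth slot of $[p,q,r,s]$ as the $2$-simplex corresponding to $s$. First I would apply \cref{thm:exten-compose} to the composite of shape inclusions $\Lambda^3_2\subseteq\partial\Delta^3\subseteq\Delta^3$, identifying $\ndexten{\Delta^3}{A}{\Lambda^3_2}{[p,q,r]}$ with the total space of the family $\sigma\mapsto\ndexten{\Delta^3}{A}{\partial\Delta^3}{[p,q,r,\sigma]}$ indexed by the type $\ndexten{\partial\Delta^3}{A}{\Lambda^3_2}{[p,q,r]}$ of extensions of the horn to the full boundary. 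Since $\partial\Delta^3$ is $\Lambda^3_2$ together with the single missing $2$-face, whose three edges ($f$, $\ell$, $m$ on the vertices $x$, $y$, $w$) already occur in the horn, \cref{thm:exten-union} identifies that indexing type with $\homtwoshort{A}(x,y,w,f,\ell,m)$, the type of $2$-simplices eligible to be the missing face. Thus
\[ \ndexten{\Delta^3}{A}{\Lambda^3_2}{[p,q,r]}\;\simeq\;\sum_{\sigma:\homtwoshort{A}(x,y,w,f,\ell,m)}\ndexten{\Delta^3}{A}{\partial\Delta^3}{[p,q,r,\sigma]}. \]

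Next, because $A$ is Segal and the $3$-$2$-horn inclusion is inner anodyne (\cref{thm:3horn-anodyne}), the left-hand side is contractible; I would take as its center the explicit filler produced in \cref{thm:32horn-concat}, whose missing $2$-face is, by that proposition, the $2$-simplex $c$ corresponding to the concatenated equality~\eqref{eq:32horn-concat}. Phrased via fibers: restriction to the missing face is a map out of a contractible type, so each of its fibers $\ndexten{\Delta^3}{A}{\partial\Delta^3}{[p,q,r,\sigma]}$ is equivalent to the path space $(c=\sigma)$ by contracting the domain onto that center. Specializing $\sigma$ to the $2$-simplex corresponding to the given $s:\ell\circ f=m$ gives $\ndexten{\Delta^3}{A}{\partial\Delta^3}{[p,q,r,s]}\simeq(c=\sigma)$ in $\homtwoshort{A}(x,y,w,f,\ell,m)$.

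Finally I would transport this equivalence across the equivalence $\homtwoshort{A}(x,y,w,f,\ell,m)\simeq(\ell\circ f=m)$ of \cref{thm:comp-htpy}. Any equivalence induces an equivalence on identity types, and this one sends $c$ to the equality~\eqref{eq:32horn-concat} and sends the $2$-simplex corresponding to $s$ back to $s$ essentially by definition, so $(c=\sigma)$ becomes the type of equalities between $s$ and the concatenated equality~\eqref{eq:32horn-concat}, i.e.\ the type of equalities from $s$ to~\eqref{eq:32horn-concat}, as claimed.

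The step needing the most care is the combinatorial bookkeeping in the first paragraph: pinning down which of the four $2$-faces of $\Delta^3$ constitute $\Lambda^3_2$ and which is the missing one, verifying that all three edges of the missing face lie in the horn (so that $\homtwoshort{A}(x,y,w,f,\ell,m)$ really is the right ``type of missing faces''), and matching the section data in \cref{thm:exten-compose} and \cref{thm:exten-union} with the boundary notation $[p,q,r,-]$. Once those identifications are fixed, the contractibility input from \cref{thm:3horn-anodyne} and the final transport are routine.
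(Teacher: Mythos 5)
Your proof is correct and follows essentially the same route as the paper's: the paper likewise uses the map from \cref{thm:32horn-concat} and observes that summing over the fourth face makes both sides contractible — one a based path-space, the other by \cref{thm:3horn-anodyne}. You simply unfold that observation more explicitly via \cref{thm:exten-compose} and \cref{thm:exten-union} and then contract onto the center, but the argument is the same.
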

\begin{proof}
  \cref{thm:32horn-concat} gives a map from equalities to 3-simplices.
  As in \cref{prop:htpy-is-htpy}, if we sum over $s$ we get contractible types on both sides: one being a based path-space, the other by \cref{thm:3horn-anodyne}.
  Thus, the map is an equivalence.
\end{proof}

\section{The 2-category of Segal types}
\label{sec:2cat-segal}

The collection of all categories is a 2-category, and similarly the collection of all $(\infty,1)$-categories is an $(\infty,2)$-category.
In this section we introduce the 2-categorical structure on the collection of Segal types.
Unlike in case of Segal types themselves, where we had to introduce an extra condition to characterize those types that ``behave like categories'', every function between Segal types is automatically a ``functor'', and every arrow between such functors is automatically a ``natural transformation''.
Thus, Segal types really do behave like ``synthetic $(\infty,1)$-categories''.

\subsection{Functoriality}
\label{sec:functoriality}


Given a function $\phi : A \to B$, we have an induced function
\[ \extfn \phi: \hom_A(x,y) \to \hom_{B}(\phi x, \phi y)\]
defined by postcomposition: $(\extfn \phi(f))(s) = \phi(f(s))$.
We usually write $\extfn \phi$ abusively as simply $\phi$.
In the case where $A$ and $B$ are Segal types, the following observations justify our referring to any function $\phi : A \to B$ as a \textbf{functor}.

\begin{prop}\label{prop:segal-functors} Any function $\phi : A \to B$ between Segal types preserves identities and composition.
\end{prop}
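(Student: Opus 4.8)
The plan is to verify the two assertions — preservation of identities and preservation of composition — directly from the uniqueness of composites in a Segal type, using the fact that $\extfn\phi$ acts by postcomposition with $\phi$ and hence carries witnessing $2$-simplices to witnessing $2$-simplices. For identities: given $x:A$, the arrow $\extfn\phi(\idarr x):\hom_B(\phi x,\phi x)$ is the map $s\mapsto\phi(\idarr x(s))\jdeq\phi(x)$, which is constant at $\phi x$, hence judgmentally equal to $\idarr{\phi x}$. So in fact $\extfn\phi$ preserves identities \emph{strictly}, and nothing further is needed.

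For composition: fix $f:\hom_A(x,y)$ and $g:\hom_A(y,z)$, with composite $g\circ f:\hom_A(x,z)$ and witness $\iscomp g f:\homtwo{A}(x,y,z,f,g,g\circ f)$. Applying $\phi$ pointwise to this $2$-simplex, i.e.\ forming $\lam{\pair{s,t}}\phi(\iscomp g f(s,t)):\Delta^2\to B$, produces a term of $\homtwo{B}(\phi x,\phi y,\phi z,\extfn\phi f,\extfn\phi g,\extfn\phi(g\circ f))$: its three faces are $\phi$ applied to the faces of $\iscomp g f$, which are $f$, $g$, and $g\circ f$ respectively, so the boundary is $[\phi x,\phi y,\phi z,\extfn\phi f,\extfn\phi g,\extfn\phi(g\circ f)]$ as required. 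Since $B$ is Segal, the type $\sum_{h:\hom_B(\phi x,\phi z)}\homtwo{B}(\phi x,\phi y,\phi z,\extfn\phi f,\extfn\phi g,h)$ is contractible, so by the uniqueness-of-composites consequence of \cref{defn:segal-type} this witness forces $\extfn\phi(g\circ f)=\extfn\phi g\circ\extfn\phi f$ (equivalently, the pair $(\extfn\phi(g\circ f),\phi\circ\iscomp g f)$ equals $(\extfn\phi g\circ\extfn\phi f,\iscomp{\extfn\phi g}{\extfn\phi f})$, whence the first components are equal).

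There is essentially no obstacle here; the only thing to be slightly careful about is the bookkeeping that $\phi$ applied pointwise to a $2$-simplex has the expected boundary, which is immediate from the judgmental face equations $\iscomp g f(s,0)\jdeq f(s)$, $\iscomp g f(1,t)\jdeq g(t)$, $\iscomp g f(s,s)\jdeq (g\circ f)(s)$ together with the definition $(\extfn\phi k)(s)\jdeq\phi(k(s))$. Thus the whole proof is a short application of \cref{defn:segal-type}; one may note that preservation of identities is strict (judgmental) while preservation of composition is only up to the identity type, as is unavoidable since $g\circ f$ is itself only characterized up to homotopy.
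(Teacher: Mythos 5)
Your proof is correct and matches the paper's argument essentially verbatim: identities are preserved judgmentally by unfolding definitions, and composition is preserved by applying $\phi$ pointwise to the witness $\iscomp g f$ and invoking contractibility of the type of composites in $B$. Nothing to add.
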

\begin{proof}
In the case of $\idarr x : \hom_A(x,x)$, $\phi \idarr x : \hom_{B}(\phi x, \phi x)$ is defined by $\phi \idarr x(s) \jdeq \phi x$ for all $s : \two$. As $\idarr {\phi x}$ has the same definition, we conclude that $\idarr {\phi x} \jdeq \phi(\idarr x)$, i.e.\~that functors preserve identities.

Similarly, $\phi$ gives rise to a postcomposition function
\[ \extfn\phi : \homtwo{A}(x,y,z,f,g,h) \to \homtwo{B}(\phi x, \phi y, \phi z, \phi f , \phi g, \phi h).\]
In particular, when $A$ is a Segal type, we have a term
\[ \extfn\phi(\iscomp{g}{f}): \homtwo{B}(\phi x, \phi y, \phi z, \phi f, \phi g, {\phi (g \circ f)})\]
witnessing the fact that $\phi (g \circ f)$ is a composite of $\phi g$ and $\phi f$. If $B$ is also a Segal type, we have $(\phi g \circ \phi f, \iscomp{\phi g}{\phi f}) = (\phi (g \circ f),\phi (\iscomp{g}{f}))$, which implies in particular that $\phi g \circ \phi f = \phi (g \circ f)$.
\end{proof}

Similarly, it can be shown that any functor between Segal types preserves all the higher-dimensional operations defined in \cref{sec:homotopies}.

\subsection{Naturality}
\label{sec:naturality}

\begin{defn}
Given Segal types $A$ and $B$ and $f,g : A\to B$, we refer to a term $\alpha : \nat A B(f,g)$ as a \textbf{natural transformation from $f$ to $g$}.

Given such an $\alpha$ we can apply it to any $s : \two$ to yield a term $\extapp{\alpha}{s}:A\to B$ so that $\extapp{\alpha}{0} \jdeq f$ and $\extapp{\alpha}{1}\jdeq g$.
In particular, for each $a : A$, we have
\[ \lambda s. \extapp{\alpha}{s}(a) : \two \to B\]
which we can alternatively abstract as
\[\extlam{s}{\extapp{\alpha}{s}(a)} : \hom_{B}(fa,ga).\]
We refer to the latter as the \textbf{component of $\alpha$ at $a$} and denote it by $\alpha_a$. 
\end{defn}

As expected, ``a natural transformation is defined by its components''.

\begin{prop}\label{prop:transformation-extensionality}
For any type $B$ and any type or shape $A$, and any $f,g : A \to B$, the function
\begin{equation}
  \nat AB(f,g) \to \prod_{a : A} \hom_{B}(fa,ga)\label{eq:transf-ext}
\end{equation}
that carries a natural transformation to its components is an equivalence.
\end{prop}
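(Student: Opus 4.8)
The plan is to recognize the map \eqref{eq:transf-ext} as an instance of the ``commutation of arguments'' and currying equivalences for extension types from \cref{sec:curry}; in particular no Segal hypothesis is needed, and indeed the statement makes none.

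First I would unfold the definitions. By \cref{defn:hom}, $\nat AB(f,g) \jdeq \ndexten{\Delta^1}{(A\to B)}{\partial\Delta^1}{[f,g]}$, the type of $\alpha:\two\to(A\to B)$ with $\alpha(0)\jdeq f$ and $\alpha(1)\jdeq g$, while $\prod_{a:A}\hom_B(fa,ga) \jdeq \prod_{a:A}\ndexten{\Delta^1}{B}{\partial\Delta^1}{[fa,ga]}$. If $A$ is a type, I would apply \cref{thm:exten-prod-commute} with $X\defeq A$, $Y$ the constant family $B$, taking $\psi\defeq\top$ (so that $\sh{t:\two}{\psi}$ is $\Delta^1$) and $\phi$ the tope $(0\jdeq t)\lor(t\jdeq 1)$ of $\partial\Delta^1$; this directly produces an equivalence $\ndexten{\Delta^1}{(A\to B)}{\partial\Delta^1}{[f,g]} \simeq \prod_{a:A}\ndexten{\Delta^1}{B}{\partial\Delta^1}{[fa,ga]}$. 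If instead $A$ is a shape $\sh{u:K}{\theta}$, I would apply \cref{thm:exten-curry} with $(I,\psi,\phi)\defeq(\two,\top,(0\jdeq t)\lor(t\jdeq 1))$ and $(J,\zeta,\chi)\defeq(K,\theta,\bot)$: its left-hand type is then $\nat AB(f,g)$ and its right-hand type is $\prod_{u:K\mid\theta}\hom_B(fu,gu)\jdeq\prod_{a:A}\hom_B(fa,ga)$, after using \eqref{eq:ndexten-bot} to drop the angle brackets wherever the cofibration is $\bot$. This is exactly the rearrangement used in the proof of \cref{thm:segal-expidl}.

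It then remains to check that the equivalence so produced is the \emph{particular} map \eqref{eq:transf-ext}, not merely some equivalence. By the proofs of \cref{thm:exten-prod-commute} and \cref{thm:exten-curry} the equivalence is ``application and re-abstraction'', sending $\alpha$ to $\lam a\lam t\,\alpha(t)(a)$; and by the definition of the component of a natural transformation, $\lam t\,\alpha(t)(a) \jdeq \lam t\,\extapp{\alpha}{t}(a) \jdeq \alpha_a$, so this is precisely the assignment $\alpha\mapsto(a\mapsto\alpha_a)$. Since these extension-type equivalences are all judgmental isomorphisms (by $\beta$- and $\eta$-conversion), so in fact is \eqref{eq:transf-ext}.

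The only place demanding care is the bookkeeping across the type-versus-shape case split: confirming that with $\psi\defeq\top$ the shape $\sh{t:\two}{\psi}$ really is $\Delta^1$, and that the pushout-product domain appearing in \cref{thm:exten-curry}, which in our instance is $(\phi\land\theta)\lor(\top\land\bot)$ and so reduces to $\phi\land\theta$, is indeed the subshape of $\Delta^1\times A$ on which the boundary datum $[fu,gu]$ lives. These are immediate, so I anticipate no real obstacle here: the entire content of the proposition is captured by currying.
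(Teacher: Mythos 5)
Your proof is correct and is essentially the paper's own: unfold $\nat AB(f,g)$ and $\hom_B(fa,ga)$ as extension types and invoke \cref{thm:exten-prod-commute} (for $A$ a type) or \cref{thm:exten-curry} (for $A$ a shape), noting that the resulting equivalence is precisely application-and-re-abstraction. The extra verification that this is the particular map~\eqref{eq:transf-ext} and not merely some equivalence is a nice touch but implicit in the paper's proof.
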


\begin{proof}
Writing out the hom-types as extension types, this function becomes
\[ \exten{t:\two}{(A\to B)}{\booltype}{[f,g]} \to \tprod_{a:A} \exten{t:\two}{B}{\booltype}{[fa,ga]} \]
and is therefore an equivalence, being an instance of \cref{thm:exten-prod-commute} or \cref{thm:exten-curry} depending on whether $A$ is a type or a shape.
\end{proof}

We sometimes refer to the equivalence of \cref{prop:transformation-extensionality} as \textbf{transformation extensionality}, since it is a directed analogue of function extensionality for homotopies.
Since it is not just an equivalence but a judgmental isomorphism (due to the $\beta$- and $\eta$-rules for dependent function types and extension types), we generally blur the line between a natural transformation and its components.

All other structure on natural transformations is also performed componentwise.
For instance, we have:

\begin{prop}\label{prop:componentwise-composition}
  If $B$ is a Segal type and $A$ is a type or a shape, then for $\alpha:\nat AB(f,g)$ and $\beta: \nat AB(g,h)$ and $x:A$ we have
  \[ (\beta\circ\alpha)_x = \beta_x \circ \alpha_x \qquad\text{and}\qquad (\idarr{f})_x = \idarr{f(x)}. \]
\end{prop}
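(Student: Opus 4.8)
The plan is to deduce both equalities from the already-established functoriality of maps between Segal types (\cref{prop:segal-functors}), by recognizing the ``component at $x$'' operation as the functorial action of evaluation at $x$. First I would observe that, by \cref{thm:segal-expidl} applied to the constant family $\lam{a}B$, the function type $A\to B$ is itself a Segal type (whether $A$ is a type or a shape), so that $\beta\circ\alpha$ and $\idarr f$ are defined by \cref{defn:segal-type} applied to $A\to B$ --- these are exactly the composite and identity appearing in the statement.

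Next, for each $x:A$ I would consider the evaluation function $\mathsf{ev}_x \defeq \lam{h}h(x):(A\to B)\to B$, which is a function between Segal types. Its induced action on hom types
\[ \extfn{(\mathsf{ev}_x)}:\nat AB(f,g) \to \hom_B(f(x),g(x)) \]
sends a natural transformation $\gamma$ to the arrow $s\mapsto \mathsf{ev}_x(\gamma(s)) \jdeq \extapp{\gamma}{s}(x)$, which by $\eta$-conversion for extension types is literally the component $\gamma_x$; thus $\extfn{(\mathsf{ev}_x)}(\gamma)\jdeq \gamma_x$. This identification is the one place where one must unwind definitions carefully, but it is immediate from the definition of $\extfn{(-)}$ by postcomposition and the definition of $\gamma_x$ given above.

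Finally I would invoke \cref{prop:segal-functors}: since $\mathsf{ev}_x$ is a function between Segal types it preserves composition and identities, so $\extfn{(\mathsf{ev}_x)}(\beta\circ\alpha) = \extfn{(\mathsf{ev}_x)}(\beta)\circ\extfn{(\mathsf{ev}_x)}(\alpha)$ and $\extfn{(\mathsf{ev}_x)}(\idarr f)\jdeq \idarr{\mathsf{ev}_x(f)}$. Rewriting both sides via $\extfn{(\mathsf{ev}_x)}(\gamma)\jdeq\gamma_x$ and $\mathsf{ev}_x(f)\jdeq f(x)$ yields $(\beta\circ\alpha)_x=\beta_x\circ\alpha_x$ and $(\idarr f)_x\jdeq\idarr{f(x)}$ (the latter in fact holding judgmentally, hence certainly as an equality). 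I do not expect any genuine obstacle here; the only subtlety is to confirm that \cref{thm:segal-expidl} and \cref{prop:segal-functors} apply uniformly whether $A$ is a type or a shape, which they do.
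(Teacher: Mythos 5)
Your proof is correct and, once unwound, is the same argument as the paper's: the paper's step of ``commuting arguments and evaluating at $x$'' applied to $\iscomp{\beta}{\alpha}$ is exactly $\extfn{(\mathsf{ev}_x)}(\iscomp{\beta}{\alpha})$, and the subsequent Segal contractibility argument is precisely the proof of \cref{prop:segal-functors} specialized to $\phi = \mathsf{ev}_x$. Routing explicitly through \cref{prop:segal-functors} is a touch more modular, but the underlying content is identical, and the paper likewise observes that the identity equation holds judgmentally.
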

\begin{proof}
  By definition, we have a 2-simplex witness
  \[\iscomp \beta\alpha : \nattwo AB(f,g,h,\alpha,\beta,\beta\circ\alpha).\]
  By commuting arguments and evaluating at $x:A$ this gives a 2-simplex
  \[(\iscomp \beta\alpha)_x : \homtwo B(f(x),g(x),h(x),\alpha_x,\beta_x,{(\beta\circ\alpha)_x}).\]
  But we also have a 2-simplex
  \[\iscomp{\beta_x}{\alpha_x} : \homtwo B(f(x),g(x),h(x),\alpha_x,\beta_x,\beta_x\circ\alpha_x)\]
  so since $B$ is Segal we have $(\beta\circ\alpha)_x = \beta_x \circ \alpha_x$.
  The second equality is in fact judgmental, since $\idarr{f}$ is defined to be constant at $f$.
\end{proof}

We now demonstrate that natural transformations really are natural.

\begin{prop}\label{prop:nat-are-nat} For any natural transformation $\alpha : \nat AB(f,g)$ for which $B$ is a Segal type and $k : \hom_A(x,y)$, $\alpha_y \circ fk = gk \circ \alpha_x$. 
\end{prop}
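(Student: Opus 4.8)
The plan is to realize the ``naturality square'' geometrically. Since $\alpha:\nat AB(f,g)$ may be regarded as a function $\two\to(A\to B)$ with $\extapp\alpha0\jdeq f$ and $\extapp\alpha1\jdeq g$, and $k:\hom_A(x,y)$ as a function $\two\to A$ with $k(0)\jdeq x$ and $k(1)\jdeq y$, I would form the square
\[ \sigma \defeq \lam{\pair{t_1,t_2}} \extapp\alpha{t_2}(k(t_1)) \;:\; \two\times\two\to B. \]
The first step is the purely computational one of reading off the boundary: substituting the judgmental equalities above shows that the four $1$-faces of $\sigma$ are $fk$ (at $t_2\jdeq0$), $gk$ (at $t_2\jdeq1$), $\alpha_x$ (at $t_1\jdeq0$), and $\alpha_y$ (at $t_1\jdeq1$), and that its two diagonal restrictions coincide, both being $d\defeq\lam s\extapp\alpha s(k(s)):\hom_B(fx,gy)$.

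Next I would cut the square into its two triangular halves, exactly as in the decomposition of $\Delta^1\times\Delta^1$ into two copies of $\Delta^2$ along the diagonal $\Delta^1_1$ recalled in \cref{sec:sub-simplices} (the same fact underlying \cref{prop:two-simp-as-retract}). Restricting $\sigma$ to $\sh{\pair{t_1,t_2}:\two\times\two}{t_2\le t_1}$ yields a $2$-simplex whose edge at $t_2\jdeq0$ is the first edge $fk$, whose edge at $t_1\jdeq1$ is the second edge $\alpha_y$, and whose diagonal $d$ is the composite; that is, an element of $\homtwo B(fx,fy,gy,fk,\alpha_y,d)$, witnessing $d=\alpha_y\circ fk$. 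Restricting instead to $\sh{\pair{t_1,t_2}:\two\times\two}{t_1\le t_2}$ --- a copy of $\Delta^2$ with the two coordinates interchanged --- yields an element of $\homtwo B(fx,gx,gy,\alpha_x,gk,d)$, witnessing $d=gk\circ\alpha_x$.

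Finally, since $B$ is a Segal type, uniqueness of composites (\cref{defn:segal-type}) turns these two $2$-simplices into the equalities $\alpha_y\circ fk=d$ and $gk\circ\alpha_x=d$, which concatenate to $\alpha_y\circ fk=gk\circ\alpha_x$. The main obstacle is not conceptual but bookkeeping: one must keep the orientations and the composition convention of \cref{defn:segal-type} straight so that the two halves of the square are genuinely the $2$-simplices claimed, with the composites coming out as $\alpha_y\circ fk$ and $gk\circ\alpha_x$ rather than in the opposite order. Once this is set up correctly the Segal condition does the remaining work; this is the synthetic form of the familiar fact that applying a natural transformation of quasicategories to an edge fills the corresponding commuting square.
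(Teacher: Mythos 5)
Your proof is correct and is essentially the same as the paper's: the paper also forms the square $\two\times\two\to B$ (there written as the uncurrying of $\extfn\alpha(k):\two\to B^\two$, which unwinds to the same term $\extapp\alpha{t_2}(k(t_1))$), identifies its four $1$-faces and common diagonal $\alpha_k$, and uses the two $2$-simplices along the diagonal together with the Segal condition to get $\alpha_y\circ fk=\alpha_k=gk\circ\alpha_x$.
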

\begin{proof}
By the usual associativity of non-dependent function types, we can also rearrange $\alpha : \nat AB(f,g)$ into a function
\[ \alpha \jdeq \lam{a}\lam{s}\extapp{\alpha}{s}(a) : A \to B^\two.\]
Now the functoriality of the extension type construction yields a map
\[ \extfn\alpha : \hom_A(x,y) \to \hom_{B^\two}(\alpha_x,\alpha_y).\]
Given $k :\hom_A(x,y)$, the term $\extfn\alpha(k)$ is a function from $\two$ to $B^\two$, and evaluating it at $0,1:\two$ we get $\extfn\alpha(k)(0) \jdeq \alpha_x : \hom_{B}(fx,gx)$ and $\extfn\alpha(k)(1)\jdeq\alpha_y : \hom_{B}(fy,gy)$.
We can also postcompose $\extfn\alpha(k) : \two \to B^\two$ with the evaluation functions $\mathsf{ev}_0, \mathsf{ev}_1 : B^\two \to B$ to yield $fk : \hom_{B}(fx,fy)$ and $gk : \hom_{B}(gx,gy)$.

Put differently, $\extfn\alpha(k)$ can be regarded (technically, by application and re-ab\-stract\-ion) as a function $\two \to B^\two$, and thereby uncurried to obtain a function $\two\times \two \to B$, and the preceding paragraph identifies the values of this function on four of the nondegenerate 1-simplices in $\two\times\two$.
If we call its value on the fifth ``diagonal'' 1-simplex $\alpha_k$
\[
\begin{tikzcd}
fx \arrow[r, "\alpha_x"] \arrow[dr, "\alpha_k" description] \arrow[d, "fk"'] & gx \arrow[d, "gk"] \\ fy \arrow[r, "\alpha_y"']  \arrow[ur, phantom, "\cdot" very near start, "\cdot" very near end] & gy
\end{tikzcd}
\]
then the two 2-simplices contained in $\two\times\two$ yield witnesses that $\alpha_y \circ fk = \alpha_k$ and $gk \circ \alpha_x = \alpha_k$, hence $\alpha_y \circ fk = gk \circ \alpha_x$. This demonstrates that $\alpha$ really is natural.
\end{proof}

\subsection{Horizontal composition}
\label{sec:horiz}

Given
\begin{mathpar}
f,g : A \to B \and j,k : B \to C \and \alpha : \nat AB(f,g) \and \beta : \nat BC(j,k)
\end{mathpar}
where $C$ at least is a Segal type, we can define a \textbf{horizontal composite} natural transformation
\[
\beta\ast\alpha :\nat CA(jf,kg)
\]
as follows. Define $\beta\ast\alpha$ to be
\[
\lam{a}\beta_{\alpha_a} : A \to C^\two.
\]
That is, the component $(\beta\ast\alpha)_a : \hom_C(jfa, kga)$ is defined to be the ``diagonal'' 1-simplex of the term obtained by applying the map
\[ \extfn\beta : \hom_{B}(fa,ga) \to \hom_{C^\two}(\beta_{fa}, \beta_{ga})\] to $\alpha_a : \hom_{B}(fa,ga)$.
\[
\begin{tikzcd}
jfa \arrow[r, "\beta_{fa}"] \arrow[dr, "\beta_{\alpha_a}" description] \arrow[d, "j\alpha_a"'] & kfa \arrow[d, "k\alpha_a"] \\ jga \arrow[r, "\beta_{ga}"']  \arrow[ur, phantom, "\cdot" very near start, "\cdot" very near end]  & kga
\end{tikzcd}
\]
The square $\two \times \two \to C$ witnesses the ``Gray interchanger'', a homotopy between the two ways to define a ``horizontal composite'' in terms of whiskering.

\section{Discrete types}
\label{sec:discrete-types}

In this section we will show that any type $A$ that satisfies a natural ``discreteness'' condition is a Segal type.  In a discrete type, the hom-types of \cref{defn:hom} are equivalent to its identity types, so in these types the Segal structure can be thought of as another incarnation of the weak $\omega$-groupoid structure possessed by any type (see~\cite{vdbg:oogpds,pll:oogpds}). For more on the ``groupoid'' interpretation of discrete types, see \cref{prop:discrete-are-groupoid}.

\begin{defn}\label{defn:discrete}
  For any type $A$, there is a map
  \[ \idtoarr: \prod_{x,y :A} (x =_A y) \to \hom_A(x,y)\]
  defined by path induction and the equation $\idtoarr_{x,x}(\refl_x) \defeq \idarr x$.
  We say that $A$ is a \textbf{discrete type} if $\idtoarr_{x,y}$ is an equivalence for all $x,y:A$.
\end{defn}

\begin{prop}\label{thm:forall-discrete} If $A:X\to\univtype$ is a family of discrete types, then $\prod_{x:X} A(x)$ is also discrete.
\end{prop}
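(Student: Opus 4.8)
The plan is to avoid unwinding $\idtoarr$ directly and instead use the standard ``total space'' criterion: a type $B$ is discrete if and only if for every $f:B$ the type $\sum_{g:B}\hom_B(f,g)$ is contractible. Indeed, $\lam{g}\idtoarr_{f,g}$ is a fiberwise map from $\sum_{g:B}(f=g)$ to $\sum_{g:B}\hom_B(f,g)$; since a fiberwise transformation is a fiberwise equivalence exactly when the induced map on total spaces is an equivalence, and since the domain $\sum_{g:B}(f=g)$ is a based path space and hence contractible, $\idtoarr_{f,-}$ is a fiberwise equivalence precisely when $\sum_{g:B}\hom_B(f,g)$ is contractible. (This is the same maneuver used in the proof of \cref{prop:htpy-is-htpy}.) So it suffices to fix $f:\tprod_{x:X}A(x)$ and show that $\sum_{g:\tprod_{x:X}A(x)}\hom_{\tprod_{x:X}A(x)}(f,g)$ is contractible.

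First I would rewrite the hom-type as an extension type, $\hom_{\tprod_{x:X}A(x)}(f,g)\jdeq\ndexten{\Delta^1}{\Parens{\tprod_{x:X}A(x)}}{\partial\Delta^1}{[f,g]}$, and apply \cref{thm:exten-prod-commute} (or \cref{thm:exten-curry} if one wishes to allow $X$ to be a shape) to pull the product out, exactly as in the proof of \cref{thm:segal-expidl}, obtaining $\hom_{\tprod_{x:X}A(x)}(f,g)\simeq\tprod_{x:X}\hom_{A(x)}(f(x),g(x))$ fiberwise over $g$. Since $\Sigma$-types preserve fiberwise equivalences, and then applying the ordinary type-theoretic ``axiom of choice'' equivalence $\prod\sum\simeq\sum\prod$ (\cite[Theorem 2.15.7]{hottbook}, recalled in \cref{sec:non-choice}) in reverse, we get the chain
\[ \sum_{g:\tprod_{x:X}A(x)}\hom_{\tprod_{x:X}A(x)}(f,g)\;\simeq\;\sum_{g:\tprod_{x:X}A(x)}\tprod_{x:X}\hom_{A(x)}(f(x),g(x))\;\simeq\;\tprod_{x:X}\sum_{h:A(x)}\hom_{A(x)}(f(x),h). \]

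Finally, for each $x:X$ the type $\sum_{h:A(x)}\hom_{A(x)}(f(x),h)$ is contractible: this is precisely the criterion from the first paragraph applied to the discrete type $A(x)$ at the base point $f(x)$. A dependent product of contractible types is contractible by function extensionality (or \cref{ax:extfunext} when $X$ is a shape), so the right-hand side above is contractible, hence so is $\sum_{g:\tprod_{x:X}A(x)}\hom_{\tprod_{x:X}A(x)}(f,g)$, and therefore $\tprod_{x:X}A(x)$ is discrete. There is no serious obstacle here; the only point requiring a choice of tactic is that routing through contractibility of based hom-spaces lets us sidestep a path-induction verification that the $\idtoarr$ map for the product type is compatible with the currying equivalence and with function extensionality, which would otherwise be the fiddly bookkeeping.
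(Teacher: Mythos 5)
Your proof is correct and rests on the same two ingredients as the paper's (\cref{thm:exten-prod-commute} to commute $\prod_{x:X}$ past the hom-extension-type, and fiberwise discreteness of each $A(x)$), but you route it through the total-space contractibility criterion while the paper simply chains the equivalences $(f=g)\simeq\prod_x(f(x)=g(x))\simeq\prod_x\hom_{A(x)}(f(x),g(x))\simeq\hom_{\prod A}(f,g)$ and stops. Your version has the small advantage of being explicit about why the constructed equivalence suffices to show $\idtoarr$ is an equivalence: by summing over $g$ and contracting, you never need to check that the composite chain of equivalences actually coincides with $\idtoarr$, which is the implicit bookkeeping step the paper glosses over with ``we obtain the result''. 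The extra rearrangement via $\prod\sum\simeq\sum\prod$ is then the natural way to package the pointwise contractibility, and it is exactly the fiberwise-equivalence-on-total-spaces trick used in \cref{prop:htpy-is-htpy}. Either presentation is fine; yours is slightly more careful about the justification.
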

\begin{proof}
  Let $f,g:\prod_{x:X} A(x)$.
  By function extensionality and \cref{thm:exten-prod-commute},
  \begin{align*}
    (f=g) &\simeq \prod_{x:X} (f(x)=g(x))\\
    \hom_{\prod_{x:X} A(x)}(f,g) &\simeq \prod_{x:X}\hom_A(f(x),g(x)).
  \end{align*}
  Thus, applying $\prod_{x:X}$ to the fiberwise equivalence $(f(x)=g(x))\simeq \hom_A(f(x),g(x))$ arising from the discreteness of $A$, we obtain the result.
\end{proof}

\begin{prop} If $A$ is a discrete type then $A$ is a Segal type.
\end{prop}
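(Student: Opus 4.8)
The plan is to use \cref{thm:segal-global}, reducing the claim to showing that for a discrete type $A$ the restriction map $(\Delta^2\to A)\to(\Lambda^2_1\to A)$ is an equivalence. The geometric picture is that a discrete type is ``degenerate in the simplicial direction'', so that any connected shape should map into it to give back a copy of $A$. Concretely, I will show that evaluation at the shared central vertex $v=\pair{1,0}\in\Lambda^2_1\subseteq\Delta^2$ gives equivalences $\mathsf{ev}_v:(\Delta^2\to A)\to A$ and $\mathsf{ev}_v:(\Lambda^2_1\to A)\to A$. Since $v\in\Lambda^2_1$, the evaluation on $\Delta^2$ is the composite of the restriction map with the evaluation on $\Lambda^2_1$; so once two legs of this triangle are equivalences, two-out-of-three makes the restriction map one as well, and \cref{thm:segal-global} gives that $A$ is Segal.

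The engine is that discreteness says precisely that the fibers of $\mathsf{ev}_0,\mathsf{ev}_1:A^\two\to A$, namely $\tsum_{w:A}\hom_A(x,w)$ and $\tsum_{w:A}\hom_A(w,x)$, are --- via $\idtoarr$ --- based path spaces, hence contractible; so $\mathsf{ev}_0$ and $\mathsf{ev}_1$ out of $A^\two$ are equivalences. Iterating one coordinate at a time, using transformation extensionality (\cref{prop:transformation-extensionality}) and the non-axiom of choice (\cref{thm:exten-nonac}) to rewrite the intermediate fibers as products of based path spaces, shows that $\mathsf{ev}_v:(\Delta^1\times\Delta^1\to A)\to A$ is an equivalence. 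For the $\Lambda^2_1$ case the argument is then direct: $\Lambda^2_1$ is two copies of $\Delta^1$ glued at $v$, so $(\Lambda^2_1\to A)$ is a pullback of two copies of $A^\two$ over $A$, and the fiber of $\mathsf{ev}_v$ over a point is the product of two fibers of the kind just shown contractible. For the $\Delta^2$ case I invoke \cref{prop:two-simp-as-retract}: its section $s$ and retraction $r$ are, after the $\rec_\lor$-reductions available in this theory, compatible with $\mathsf{ev}_v$ (that is $\mathsf{ev}_v\circ r=\mathsf{ev}_v$ and $\mathsf{ev}_v\circ s=\mathsf{ev}_v$) and satisfy $r\circ s\jdeq\mathrm{id}$, and $r$ carries the constant map $A\to(\Delta^1\times\Delta^1\to A)$ to the constant-$2$-simplex map $c:A\to(\Delta^2\to A)$; transporting along $r$ the contraction witnessing that $\mathsf{ev}_v$ is an equivalence on $\Delta^1\times\Delta^1$ then shows $\mathsf{ev}_v$ is an equivalence on $\Delta^2$, with inverse $c$.

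I expect the main obstacle to be this last ($\Delta^2$) step, where the bookkeeping concentrates: one must check that the maps of \cref{prop:two-simp-as-retract} really commute, up to judgmental equality, with evaluation at $v$ --- which comes down to determining which disjunct each $\rec_\lor$ selects at $v$, using the strict-interval inequalities $0\le t$ and $t\le 1$ --- and then that transporting a homotopy along $r$, combined with the judgmental identities $r\circ s\jdeq\mathrm{id}$ and $\mathsf{ev}_v\circ s\jdeq\mathsf{ev}_v$, genuinely produces the homotopy $c\circ\mathsf{ev}_v\sim\mathrm{id}$ on $(\Delta^2\to A)$. Each individual verification is routine. (An alternative that merely redistributes the bookkeeping is to show directly that restriction to a single edge through $v$ gives equivalences $(\Delta^2\to A)\simeq(\Delta^1\to A)\simeq(\Lambda^2_1\to A)$, using ``straight-line'' homotopies assembled from $\idtoarr^{-1}$ and $\rec_\lor$-minima; but the coherence of the gluings there is at least as delicate.)
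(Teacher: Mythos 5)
Your argument is correct, and it takes a genuinely different route from the paper's. The paper proceeds ``locally'': starting from the definition of Segal type, it uses that $A^\two$ is discrete to identify $(f=_{\hom_A(x,y)} g)$ with the extension type of squares $\ndexten{\Delta^1\times\Delta^1}{A}{\partial(\Delta^1\times\Delta^1)}{[\idarr x,f,\idarr y,g]}$, contracts the based path space, then does \emph{path induction} on the second composable arrow to reduce to the case $k\jdeq\idarr y$, finishing with a retraction onto the ``cubical horn'' extension type. Your proof instead goes ``globally'' via \cref{thm:segal-global}: you show the vertex-evaluation maps $\mathsf{ev}_{(1,0)}:(\Delta^2\to A)\to A$ and $\mathsf{ev}_{(1,0)}:(\Lambda^2_1\to A)\to A$ are equivalences and apply two-out-of-three, avoiding path induction entirely. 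Both proofs hinge on the same two facts --- that discreteness of $A$ (hence of $A^\two$, by \cref{thm:forall-discrete}) makes the fibers of $\mathsf{ev}_0$ and $\mathsf{ev}_1$ into based path spaces, and that $\Delta^2\to A$ is a retract of $\Delta^1\times\Delta^1\to A$ (\cref{prop:two-simp-as-retract}) compatibly with the relevant restrictions --- so they have the same combinatorial core, but your organization is arguably cleaner: the only things to verify are that the section and retraction commute judgmentally with evaluation at $(1,0)$, which as you say reduces to checking which disjunct $\rec_\lor$ selects there (the $s\le t$ branch, since $0\le 1$), whereas the paper has to manage the identification of identity types in a $\Sigma$-type and the passage through the cubical-horn extension type. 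The tradeoff is that the paper's path-induction step concentrates the argument into a single contractibility statement, while yours spreads it across three evaluation maps; which is preferable is largely a matter of taste.
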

\begin{proof}
By \cref{thm:forall-discrete}, the type $A^\two$ is also discrete; thus we know that for every $f : \hom_A(x,y)$ and $g : \hom_A(z,w)$, the map
\[ \idtoarr : (f =_{A^\two} g) \to \hom_{A^\two}(f,g) \] is an equivalence. Since the type of arrows $A^\two$ is equivalent to the dependent sum $\sum_{x :A} \sum_{y : A} \hom_A(x,y)$, its identity types are characterized as dependent sums as well by~\cite[Theorem 2.7.2]{hottbook}, and \cref{thm:exten-nonac} similarly characterizes its hom-types.
Thus we have an equivalence
\[ \Parens{ \sum_{e_1: x = z} \sum_{e_2 :y = w} f =^{(e_1,e_2)}_{\hom_A} g } \simeq \sum_{h : \hom_A(x,z)} \sum_{k : \hom_A(y,w)} \ndexten{\Delta^1\times\Delta^1}{A}{\partial(\Delta^1\times\Delta^1)}{[h,f,k,g]}\]
where the right-hand extension type is the type of extensions from the square boundary
\[ \begin{tikzcd} x \arrow[d, "f"'] \arrow[r, "h"] & z \arrow[d, "g"] \\ y \arrow[r, "k"'] & w
\end{tikzcd}
\] into a diagram $\Delta^1 \times \Delta^1 \to A$. This equivalence projects onto the equivalence
\[ \idtoarr : (x = z) \times (y = w) \to \hom_A(x,z) \times \hom_A(y,w),\] inducing an equivalence on the fibers over any pair of terms. Specializing to the case of $(\refl_x, \refl_y) : (x =x ) \times (y=y)$ we see that the types
\[ (f =_{\hom_A(x,y)} g ) \simeq  \ndexten{\Delta^1\times\Delta^1}{A}{\partial(\Delta^1\times\Delta^1)}{[\idarr x, f,\idarr y, g]}\] are equivalent. Hence, there is an equivalence
\[  \sum_{g : \hom_A(x,y)} (f =_{\hom_A(x,y)} g ) \simeq  \sum_{g : \hom_A(x,y)} \ndexten{\Delta^1\times\Delta^1}{A}{\partial(\Delta^1\times\Delta^1)}{[\idarr x, f,\idarr y, g]} ,\] and since the left-hand type is a based path space, both types are contractible.

Applying \cref{thm:exten-compose}, the right-hand type is equivalent to a single extension type
\begin{equation}
\ndexten{\Delta^1\times\Delta^1}{A}{d}{[\idarr x, f,\idarr y]} \label{eq:discrete-retract}
\end{equation} 
where $d$ is the ``cubical horn''
  \[
\left( \begin{tikzcd} \cdot \arrow[r, "{\idarr x}"] \arrow[d, "f"'] & \cdot \\ \cdot \arrow[r, "{\idarr y}"'] & \cdot \end{tikzcd} \right) \longrightarrow \left( \begin{tikzcd} \cdot \arrow[r, "{\idarr x}"] \arrow[d, "f"']   \arrow[dr] & \cdot \arrow[d] \arrow[dl, phantom, "\cdot" very near start, "\cdot" very near end] \\ \cdot\arrow[r, "{\idarr y}"']  & \cdot \end{tikzcd} \right)
 \]

Now to show that $A$ is a Segal type, we must show that for all $x,y,w:A$ and $f:\hom_A(x,y)$ and $k:\hom_A(y,w)$, the type
\[
  \sum_{\ell:\hom_A(x,w)} \homtwo{A}(x,y,w,f,k,\ell)
\]
is contractible. Since $A$ is discrete, the hom types are equivalent to identity types, and so by path induction we may reduce to the case $y \jdeq w$ and $k \jdeq \idarr y$.
In this case we have

\[
  \Parens{\sum_{\ell:\hom_A(x,y)} \homtwo{A}(x,y,y,f,\idarr y,\ell)} \simeq \ndexten{\Delta^2}{A}{\Lambda^2_1}{[f,\idarr y]}.
\]
To show that this type is contractible, we observe that it is a retract of the type \eqref{eq:discrete-retract}, using the construction of \cref{prop:identity} to construct the inclusion of a 2-simplex to a diagram of shape $\Delta^1 \times \Delta^1$ in which one of the new edges is an identity. 
\end{proof}

\begin{rmk}
  The $(\infty,1)$-topos of simplicial $\infty$-groupoids, which is presented by our motivating model of bisimplicial sets, is a \emph{cohesive $(\infty,1)$-topos} in the sense of~\cite{dcct}, i.e.\ its global sections functor to $\infty$-groupoids has both a right adjoint (``codiscrete objects'') and a left adjoint (``discrete objects'') that has a further product-preserving left adjoint (see also~\cite{lcoh}).
  In this model, the discrete objects defined above coincide with those in the image of the ``discrete objects'' functor.
  It would thus be natural to enhance our type theory with modalities representing the discrete reflection, discrete coreflection, and codiscrete coreflection, as in~\cite{bfp}.
  The discrete reflection, in particular, should be constructible by ``localizing at $\two$''.
\end{rmk}

\section{Covariantly functorial type families}
\label{sec:covariant}

Let $C:A\to \univtype$ be a type family.
Given $x,y:A$ and $f:\hom_A(x,y)$ and $u:C(x)$ and $v:C(y)$, we define the \textbf{dependent hom-type} from $u$ to $v$ over $f$ to be
\begin{equation}
  \hom_{C(f)}(u,v) \defeq \exten{s:\two}{C(f(s))}{\partial\Delta^1}{[u,v]}.\label{eq:dependent-arrows}
\end{equation}
Intuitively, $\hom_{C(f)}(u,v)$ is the type of arrows from $u$ to $v$ in the total space of $C$ that lie over $f$.
In particular, we see that $C$ associates to every arrow $f:\hom_A(x,y)$ a \emph{span}
\[ \begin{tikzcd}
 & {\textstyle\sum_{u:C(x)}\sum_{v:C(y)} \hom_{C(f)}(u,v)} \arrow[dl] \arrow[dr] & \\
  C(x) && C(y) 
  \end{tikzcd}
\]
In general, the types $C(x)$ do not depend \emph{functorially} on $x:A$ in the usual sense, so $C$ cannot be regarded as a functor from $A$ to a category of groupoids or categories.
One might hope that it could be regarded as a functor to a category whose morphisms are spans, but this also fails; if in the context of
\begin{mathpar}
  u:C(x) \and v:C(y) \and w:C(z) \and
  k:\hom_{C(f)}(u,v) \and m:\hom_{C(g)}(v,w) \and n:\hom_{C(h)}(u,w) \and
  t:\homtwo{A}(x,y,z,f,g,h)
\end{mathpar}
we define similarly
\[ \homtwo{C(t)}(u,v,w,k,m,n) \defeq \exten{s:\Delta^2}{C(t(s))}{\partial\Delta^2}{[k,m,n]} \]
we see that when $h=g\circ f$, the span $C(h)$ is not necessarily the composite of the spans $C(f)$ and $C(g)$, but is only related to them by a ``higher span''.

We will say that $C$ is \emph{covariant} if each $C(x)$ is a groupoid, i.e.~discrete in the sense of \cref{defn:discrete}, and moreover all of these spans are suitably representable, so that these $\infty$-groupoids do depend functorially on $A$.\footnote{One can also consider more general \emph{cocartesian} dependent types where the fibers are \emph{categories} (i.e.\ Segal or Rezk types) depending functorially, but we leave those for later work.}
Fortunately, as with Segal types, it turns out to be sufficient to ask for one contractibility condition, which we introduce presently; {in \cref{thm:covar-discrete}, we see that this condition implies that the fibers of a covariant fibration are necessarily discrete.} We prove that the total space of a covariant family over a Segal type is itself a Segal type.  
{ We show that any fiberwise map between covariant type families induces a ``natural transformation,'' commuting with the functorial actions of the arrows in the base type.} 
Then we turn our attention to the question of multivariable functoriality. 

The prototypical example of a covariant family is the ``representable'' type family associated to a term $a : A$ in a Segal type.
In \cref{sec:yoneda-lemma} we will state and prove versions of the Yoneda lemma involving this notion of representable family.

\subsection{Covariant fibrations}

See \cref{rmk:boavida} for a semantic justification of the following definition in the bisimplicial set model.

\begin{defn}\label{defn:covariant-family}
  We say that a type family $C:A\to\univtype$ is \textbf{covariant} if for every $f:\hom_A(x,y)$ and $u:C(x)$, the type \[\sum_{v:C(y)} \hom_{C(f)}(u,v)\] is contractible.
\end{defn}

Dually, $C$ is \textbf{contravariant} if for every $f:\hom_A(x,y)$ and $v:C(y)$, the type \[\sum_{u:C(x)} \hom_{C(f)}(u,v)\] is contractible; see Remark \ref{rmk:duality}.
Often we will assume that $A$ is a Segal type.

\begin{rmk}\label{rmk:covariant-pullbacks}
Note that the condition that characterizes a covariant fibration is stable under substitution (i.e.\ precomposition or reindexing).
That is, if $g : B \to A$ is a function and  $C : A \to \univtype$ is a covariant type family, then $\lam{b}C(g(b)) : B \to \univtype$ is also covariant.
\end{rmk}

As for Segal types, we can reformulate this using \cref{thm:exten-compose}:

\begin{prop}\label{prop:covariance-as-extension-type} A type family $C : A \to \univtype$ is covariant if and only if for all $f : \hom_A(x,y)$ and $u : C(x)$ there is a unique lifting of $f$ that starts at $u$; {i.e.\ the type 
\(\exten{t:\two}{C(f(t))}{0}{u}\)
is contractible.}
\end{prop}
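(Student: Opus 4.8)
The plan is to reuse, in simplified form, the strategy of the proof of \cref{thm:segal-global}: I will exhibit $\exten{t:\two}{C(f(t))}{0}{u}$ as the total space of a family over $C(y)$ whose fibres are the dependent hom-types $\hom_{C(f)}(u,v)$, so that its contractibility becomes equivalent to that of $\sum_{v:C(y)}\hom_{C(f)}(u,v)$ appearing in \cref{defn:covariant-family}.

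First I would note that in the cube $\two$ we have the chain of shape inclusions $\sh{t:\two}{t\jdeq 0} \subseteq \partial\Delta^1 \subseteq \Delta^1$, since $t\jdeq 0$ entails $(0\jdeq t)\lor(t\jdeq 1)$. Applying \cref{thm:exten-compose} to this composite of cofibrations, with the family $\lam{t}C(f(t))$ over $\Delta^1$ and the datum $u:C(x)$ over $\sh{t:\two}{t\jdeq 0}$, gives
\[ \exten{t:\two}{C(f(t))}{0}{u} \;\simeq\; \sum_{g:\exten{t:\two\mid\partial\Delta^1}{C(f(t))}{0}{u}} \exten{t:\two}{C(f(t))}{\partial\Delta^1}{g}. \]
As observed in \cref{sec:sub-simplices}, a term over $\partial\Delta^1$ amounts to a pair of terms over its two endpoints; hence $\exten{t:\two\mid\partial\Delta^1}{C(f(t))}{0}{u}$, being such a pair with the first coordinate pinned to $u$, is judgmentally isomorphic to $C(y)$, with $v:C(y)$ corresponding to the boundary datum $[u,v]$. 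Under this identification the fibre over $v$ is exactly $\exten{s:\two}{C(f(s))}{\partial\Delta^1}{[u,v]} \jdeq \hom_{C(f)}(u,v)$, so I obtain
\[ \exten{t:\two}{C(f(t))}{0}{u} \;\simeq\; \sum_{v:C(y)}\hom_{C(f)}(u,v). \]

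Finally, since this equivalence holds for every $x,y:A$, $f:\hom_A(x,y)$ and $u:C(x)$, and contractibility is invariant under equivalence, the left-hand side is contractible for all such data precisely when the right-hand side is --- i.e.\ precisely when $C$ is covariant in the sense of \cref{defn:covariant-family}. There is no real obstacle here; the only point requiring a little care is matching the base and fibres of the \cref{thm:exten-compose} decomposition with the data of \cref{defn:covariant-family}, which is routine shape bookkeeping and could alternatively be carried out using \cref{thm:exten-union} applied to $\partial\Delta^1 = \sh{t:\two}{(t\jdeq 1)\lor(t\jdeq 0)}$.
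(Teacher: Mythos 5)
Your proposal is correct and follows essentially the same route as the paper: both apply \cref{thm:exten-compose} to the chain of shape inclusions $\{t\jdeq 0\}\subseteq\partial\Delta^1\subseteq\Delta^1$ to identify $\exten{t:\two}{C(f(t))}{0}{u}$ with $\sum_{v:C(y)}\hom_{C(f)}(u,v)$, after which the equivalence of contractibility conditions is immediate. You merely spell out the identification of the base $\exten{t:\two\mid\partial\Delta^1}{C(f(t))}{0}{u}\cong C(y)$, which the paper leaves implicit.
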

\begin{proof}
By \cref{defn:hom} and \cref{thm:exten-compose}
\begin{align*}
  \tsum_{v:C(y)} \hom_{C(f)}(u,v)
  &\jdeq \tsum_{v:C(y)} \exten{t:\two}{C(f(t))}{0\lor 1}{[u,v]}\\
  &\simeq \exten{t:\two}{C(f(t))}{0}{u}.
\end{align*}
Thus, \cref{defn:covariant-family} asserts that $C$ is covariant if and only if the type of extensions of $u$ over $f$ is contractible.
\end{proof}

On the other hand, for a more global view of covariance, let us write $\tilde C \defeq {\sum_{z:A} C(z)}$ and denote the projection by $\pi : \tilde C \to A$.

\begin{thm}\label{thm:covariance-as-pullback} A type family  $C : A \to \univtype$ is covariant if and only if the square
\[ 
\begin{tikzcd}
 \tilde C^\two \arrow[r, "{\pi^\two}"] \arrow[d, "{\mathsf{ev}_0}"'] &
  A^\two \arrow[d, "{\mathsf{ev}_0}"] \\
  \tilde C \arrow[r, "\pi"'] &
  A 
  \end{tikzcd}\]
is a (homotopy) pullback.
\end{thm}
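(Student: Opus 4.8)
The plan is to show that both the covariance of $C$ and the pullback condition are equivalent to the single criterion of \cref{prop:covariance-as-extension-type}: namely, that for every $f : \hom_A(x,y)$ and $u : C(x)$, the extension type $\exten{t:\two}{C(f(t))}{0}{u}$ of lifts of $f$ starting at $u$ is contractible. For covariance this is literally \cref{prop:covariance-as-extension-type}, so all the work lies in reformulating the pullback condition. Recall that the square is a homotopy pullback exactly when the gap map $\tilde C^\two \to \tilde C \times_A A^\two$ into the pullback $\tsum_{c:\tilde C}\tsum_{\bar f:A^\two}(\pi(c) = \bar f(0))$ is an equivalence; and since $\pi((\pi\circ g)(0)) \jdeq \pi(g(0))$ the square commutes judgmentally, so this gap map is simply $g \mapsto (g(0), \pi\circ g, \refl)$.

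The first step is to unpack the corners. A function $\two \to \tilde C$, i.e.\ $\two \to \tsum_{z:A} C(z)$, is --- by the universal property of $\Sigma$-types for maps out of a shape, which is \cref{thm:exten-nonac} with $\phi \jdeq \bot$ --- the same as a pair consisting of an arrow $\bar f : \two \to A$ and a section $\sigma : \tprod_{t:\two} C(\bar f(t))$; under this identification $\mathsf{ev}_0$ becomes $(\bar f,\sigma) \mapsto (\bar f(0),\sigma(0))$ and $\pi^\two$ becomes $(\bar f,\sigma)\mapsto \bar f$. On the other side, contracting the based path space $\tsum_{c:\tilde C}(\pi(c) = \bar f(0)) \simeq C(\bar f(0))$ identifies $\tilde C \times_A A^\two \simeq \tsum_{\bar f:A^\two} C(\bar f(0))$, and under both identifications the gap map becomes the map over $A^\two$ sending $(\bar f,\sigma)$ to $(\bar f, \sigma(0))$. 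This being a fiberwise map over $A^\two$, it is an equivalence iff for each $\bar f : A^\two$ the evaluation-at-$0$ map $\tprod_{t:\two} C(\bar f(t)) \to C(\bar f(0))$ is an equivalence. Applying \cref{thm:exten-compose} to the chain of shape inclusions $\bot \subseteq (t\jdeq 0) \subseteq \top$ in $\two$ rewrites $\tprod_{t:\two} C(\bar f(t)) \simeq \tsum_{u : C(\bar f(0))} \exten{t:\two}{C(\bar f(t))}{0}{u}$ compatibly with evaluation at $0$ and the first projection; since a first projection out of a $\Sigma$-type is an equivalence precisely when every fiber is contractible, the square is a homotopy pullback iff $\exten{t:\two}{C(\bar f(t))}{0}{u}$ is contractible for all $\bar f : A^\two$ and $u : C(\bar f(0))$. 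Finally, using $A^\two \simeq \tsum_{x,y:A} \hom_A(x,y)$ (as recalled in \cref{sec:discrete-types}), giving such a $\bar f$ together with $u : C(\bar f(0))$ is the same as giving $x,y:A$, $f : \hom_A(x,y)$, and $u : C(x)$, so the pullback condition is exactly the criterion of \cref{prop:covariance-as-extension-type}, hence equivalent to covariance.

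I expect the only real difficulty to be bookkeeping: verifying that the extension-type ``strict fibers'' $\exten{t:\two}{-}{0}{-}$ genuinely compute the homotopy fibers of the two $\mathsf{ev}_0$ maps, that the gap map does become the claimed fiberwise projection after the rearrangements, and that the various judgmental and propositional equalities are compatible. A variant that avoids the gap map is to compare the two vertical maps fiberwise from the start: the fiber of $\mathsf{ev}_0 : A^\two \to A$ over $x$ is $\exten{t:\two}{A}{0}{x} \simeq \tsum_{y:A} \hom_A(x,y)$, the fiber of $\mathsf{ev}_0 : \tilde C^\two \to \tilde C$ over $(x,u)$ is $\tsum_{\bar f : \exten{t:\two}{A}{0}{x}} \exten{t:\two}{C(\bar f(t))}{0}{u}$, and the square is a pullback iff the induced map between these --- again a first projection up to equivalence --- is an equivalence, which reduces once more to contractibility of $\exten{t:\two}{C(\bar f(t))}{0}{u}$.
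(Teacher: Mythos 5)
Your argument is correct and follows essentially the same route as the paper: both proofs use \cref{thm:exten-nonac} and \cref{thm:exten-compose} to rewrite the types on the two corners, contract a based path space to compute the pullback corner, and reduce the pullback condition to contractibility of a fiber. The main organizational difference is that you pivot through \cref{prop:covariance-as-extension-type} and compute the gap map explicitly as a fiberwise first projection over $A^\two$, whereas the paper starts from \cref{defn:covariant-family}, rewrites the projection~\eqref{eq:cvpbproj} using $\hom_{\tilde C}$, and passes to the map on total spaces --- identifying $\tilde C^\two$ via its endpoints-and-hom decomposition rather than via the base-arrow-and-lift decomposition you use. Your version is arguably a bit more transparent about why the map in question really is the gap map, and your parenthetical ``fiberwise from the start'' variant is a perfectly good alternative; the content is the same.
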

\begin{proof}
For each $(x,u) : \tilde C$, we have a projection map
\begin{equation}
  \Parens{\sum_{(y,v):\tilde C} \Parens{\sum_{f:\hom_A(x,y)} \hom_{C(f)}(u,v)}} \to \Parens{\sum_{y:A} \hom_A(x, y)}\label{eq:cvpbproj}
\end{equation}
whose fibers are the types $\sum_{v:C(y)} \hom_{C(f)}(u,v)$.
Thus, \cref{defn:covariant-family} is equivalent to the condition that this projection is an equivalence.
If we substitute the equivalence  \[\hom_{\tilde C}((x,u),(y,v)) \simeq \Parens{\sum_{f:\hom_A(x,y)} \hom_{C(f)}(u,v)}\]
established by \cref{thm:exten-nonac} into~\eqref{eq:cvpbproj}, and write $p=(x,u)$ and $q=(y,v)$, it becomes
\begin{equation}
  (\pi,\extfn\pi) : \Parens{\sum_{q:\tilde C} \hom_{\tilde C}(p,q)} \to \Parens{\sum_{y:A} \hom_A(\pi p, y)}.\label{eq:cvpbproj2}
\end{equation}
Thus, \cref{defn:covariant-family} is equivalent to saying that~\eqref{eq:cvpbproj2} is an equivalence for each $p:\tilde{C}$.
And since a fiberwise map is a fiberwise equivalence if and only if it induces an equivalence on total spaces, this is equivalent to asking that
\[ (\pi,\pi,\extfn\pi) : \Parens{\sum_{p : \tilde C} \sum_{q:\tilde C} \hom_{\tilde C}(p,q)} \to \Parens{\sum_{p : \tilde C}\sum_{y : A} \hom_A(\pi p, y)}\]
is an equivalence.
Finally, since $\sum_{p : \tilde C} \sum_{q:\tilde C} \hom_{\tilde C}(p,q)$ is equivalent to $\tilde C^\two$ by \cref{thm:exten-compose}, and similarly
\begin{align*}
  \Parens{\sum_{p : \tilde C}\sum_{y : A} \hom_A(\pi p, y)}
  &\simeq \sum_{p : \tilde C}\sum_{x:A} \sum_{e:\pi p=x}\sum_{y : A} \hom_A(\pi p, y)\\
  &\simeq \sum_{p : \tilde C}\sum_{x:A}\sum_{y : A}\sum_{f:\hom_A(\pi p, y)} (\pi p=x)\\
  &\simeq \sum_{p : \tilde C} \sum_{f:A^\two} (\pi p=\mathsf{ev}_0(x))\\
  &\jdeq \tilde C \times_{A} A^\two
\end{align*}
this is equivalent to saying that the square in the statement is a (homotopy) pullback.
\end{proof}

\begin{thm}\label{thm:covariant-segal}
  If $A$ is a Segal type and $C:A\to\univtype$ is covariant, then $\tilde C \defeq \sum_{z:A} C(z)$ is also a Segal type.
\end{thm}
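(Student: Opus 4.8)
The plan is to use the characterization of Segal types via \cref{thm:segal-global}: it suffices to show that the restriction map $(\Delta^2 \to \tilde C) \to (\Lambda^2_1 \to \tilde C)$ is an equivalence. The key idea is that maps into $\tilde C = \sum_{z:A} C(z)$ out of a shape $\Phi$ decompose, by \cref{thm:exten-nonac} (and currying, \cref{thm:exten-curry}), as a pair consisting of a map $a : \Phi \to A$ together with a section of $C \circ a$ over $\Phi$. So both $\Delta^2 \to \tilde C$ and $\Lambda^2_1 \to \tilde C$ become total spaces of type families, and the restriction map is a map over the corresponding restriction map $(\Delta^2 \to A) \to (\Lambda^2_1 \to A)$, which is an equivalence since $A$ is Segal. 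Thus it remains to check that the induced map is a fiberwise equivalence.

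First I would fix $a : \Lambda^2_1 \to A$; since $A$ is Segal, by \cref{thm:segal-global} this extends essentially uniquely to $\bar a : \Delta^2 \to A$, and we may work over $\bar a$. The fiber of $(\Delta^2 \to \tilde C) \to (\Lambda^2_1 \to \tilde C)$ over a given section $h$ of $C \circ a$ over $\Lambda^2_1$ is then (using \cref{thm:exten-compose} to split the extension over $\Lambda^2_1 \subseteq \Delta^2$) equivalent to the extension type $\exten{s:\Delta^2}{C(\bar a(s))}{\Lambda^2_1}{h}$. So the claim reduces to: for $a$ a $2$-$1$-horn in a Segal type $A$ with unique filler $\bar a$, and any section $h$ of $C \circ a$ over $\Lambda^2_1$, the type $\exten{s:\Delta^2}{C(\bar a(s))}{\Lambda^2_1}{h}$ is contractible. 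Concretely $h$ is a pair of dependent arrows $k : \hom_{C(f)}(u,v)$ and $m : \hom_{C(g)}(v,w)$ lying over the two legs $f,g$ of the horn.

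The heart of the argument is then a repeated use of covariance. Covariance of $C$ applied to $f$ and $u$ gives a contractible type $\sum_{v':C(y)} \hom_{C(f)}(u,v')$, so $(v,k)$ is its unique inhabitant; covariance applied to $g$ and $v$ gives the composite dependent arrow, i.e.\ a unique $(w', n)$ with $w':C(z)$ and $n : \hom_{C(g)}(v,w')$, together with a filling dependent $2$-simplex. The strategy mirrors the proof of \cref{thm:segal-global}: one shows that $\exten{s:\Delta^2}{C(\bar a(s))}{\Lambda^2_1}{h}$ is the type of dependent $2$-simplices over $\bar a$ restricting to $k$ and $m$, and that this is a total space over the contractible type $\sum_{w':C(z)} \hom_{C(h')}(u,w')$ (where $h' \defeq g\circ f$, and we use $\bar a = \iscomp g f$) whose fibers are again extension types of lifts; each such fiber is contractible by another application of covariance in the form \cref{prop:covariance-as-extension-type}. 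Assembling, the fiber is contractible, hence the restriction map is a fiberwise equivalence over an equivalence, hence an equivalence, so $\tilde C$ is Segal.

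The main obstacle I anticipate is the bookkeeping in the reduction: carefully matching the shape-theoretic identities $\Lambda^2_1 \cup \Delta^1_1 = \partial\Delta^2$ etc.\ in the \emph{dependent} setting, and tracking which section data lies over which face of $\bar a$, so that the applications of \cref{thm:exten-nonac}, \cref{thm:exten-compose}, and \cref{thm:exten-union} fit together coherently. Once the problem is correctly reduced to "a dependent $2$-$1$-horn over a Segal base has a unique dependent filler when $C$ is covariant," the contractibility itself is a fairly mechanical iteration of the covariance condition, analogous to how associativity of composition followed from it in the non-dependent case.
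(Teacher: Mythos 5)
Your first reduction is exactly right and matches the paper's type-theoretic proof: by \cref{thm:segal-global,thm:exten-nonac,thm:exten-compose} the problem comes down to showing that, for the unique filler $\bar a \defeq \iscomp g f : \Delta^2 \to A$ of a horn $(f,g)$ in the Segal base and any lift $h$ of that horn to $\tilde C$, the dependent extension type $\exten{\pair{t,s}:\Delta^2}{C(\bar a\pair{t,s})}{\Lambda^2_1}{h}$ is contractible. But your account of why that type is contractible does not close the argument. The proposed decomposition ``as a total space over the contractible type $\sum_{w':C(z)}\hom_{C(g\circ f)}(u,w')$'' cannot be right as stated: all three vertices of $\Delta^2$ lie in $\Lambda^2_1$, so $h$ already fixes $w\defeq h(1,1):C(z)$, and there is no free $w'$ to sum over when $k$ and $m$ are both held fixed. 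Running the analogue of \cref{thm:segal-global} for $\tilde C$ just yields
\[
\exten{\pair{t,s}:\Delta^2}{C(\bar a\pair{t,s})}{\Lambda^2_1}{h} \;\simeq\; \sum_{\ell:\hom_{C(g\circ f)}(u,w)}\exten{\pair{t,s}:\Delta^2}{C(\bar a\pair{t,s})}{\partial\Delta^2}{[k,m,\ell]},
\]
which is the Segal condition for $\tilde C$ restated, not a decomposition with a contractible base.

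The deeper gap is the final contractibility step. Covariance, in the form \cref{prop:covariance-as-extension-type}, only gives contractibility of \emph{one-dimensional} lifts $\exten{t:\two}{C(\cdot)}{0}{\cdot}$; here you must handle a two-dimensional extension over $\Delta^2$. Concretely, after one use of covariance over the $g$-leg to strip that edge off the horn, you are reduced to showing that $\exten{\pair{t,s}:\Delta^2}{C(\iscomp g f\pair{t,s})}{s\jdeq 0}{\bar f}$ is contractible, where $\bar f$ is the lift over the bottom edge --- and this is \emph{not} a ``mechanical iteration'' of covariance. The paper's essential move is to extend $\iscomp g f$ and any candidate lift $\nu$ from $\Delta^2$ to $\two\times\two$ via $\rec_\lor$, in the spirit of \cref{prop:two-simp-as-retract}, thereby exhibiting the type above as a retract of
\[
\exten{\pair{t,s}:\two\times\two}{C(c\pair{t,s})}{s\jdeq 0}{\bar f}\;\simeq\;\tprod_{t:\two}\exten{s:\two}{C(c\pair{t,s})}{s\jdeq 0}{\bar f(t)},
\]
a product of types each contractible by covariance. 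This retraction onto the square is the one nonobvious step your proposal omits, and without it the argument does not go through; the paper's alternative ``categorical proof'' rests on exactly the same retraction of $(\Delta^2\to X)$ onto $(\two\times\two\to X)$, so switching viewpoints cannot avoid it.
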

We will give two proofs, or rather two versions of the same proof; one in type-theoretic language and one in category-theoretic language.
\begin{proof}[Type-theoretic proof]
  By \cref{thm:exten-nonac}, we have
  \begin{align*}
    (\Lambda^2_1 \to \tilde C)
    &\simeq \tsum_{\phi:\Lambda^2_1 \to A} \tprod_{t:\Lambda^2_1} C(\phi(t))
  \end{align*}
  We want to show that the type of extensions of any $(\phi,\psi)$ in this type to $\Delta^2$ is contractible.
  This type of extensions is similarly equivalent to
  \[ \sum_{\mu:\ndexten{\Delta^2}{A}{\Lambda^2_1}{\phi}} \exten{t:\Delta^2}{C(\mu(t))}{\Lambda^2_1}{\psi}. \]
  Since $A$ is Segal, the base of this dependent sum is contractible, so the sum itself is equivalent to
  \[ \exten{\pair{t,s}:\Delta^2}{C(\iscomp g f\pair{t,s})}{\Lambda^2_1}{\psi} \]
  where $f:\hom_A(x,y)$ and $g:\hom_A(y,z)$ are the arrows making up $\phi$.
  Now since $C$ is covariant, $\exten{s:\two}{C(g(s))}{0}{\psi(1,0)}$ is contractible, so it suffices to show that
  \[ \exten{\pair{t,s}:\Delta^2}{C(\iscomp g f\pair{t,s})}{s\jdeq 0}{\bar f} \]
  is contractible, where $\bar f : \tprod_{t:\two} C(f(t))$ is what is left of $\psi$.
  However, given any $\nu$ in this type, we can extend it to $\two\times\two$ with $\rec_\lor$, sending $\pair{t,s}$ to $\nu(t,s)$ if $s\le t$ and $\nu(t,t)$ if $s\ge t$; cf.~the proof of \cref{prop:two-simp-as-retract}.
  Thus, this type is a retract of 
  \[ \exten{\pair{t,s}:\two\times\two}{C(c\pair{t,s})}{s\jdeq 0}{\bar f} \]
  where $c:\two\times\two\to A$ is a similar extension of $\iscomp g f$; and this is equivalent to
  \[ \tprod_{t:\two}\exten{s:\two}{C(c\pair{t,s})}{s\jdeq 0}{\bar f(t)} \]
  and hence contractible, since it is a product of types that are each contractible by covariance of $C$.
\end{proof}
\begin{proof}[Categorical proof]
  It suffices to show that $\tilde C^{\Delta^2} \to \tilde C^{\Lambda^2_1}$ is an equivalence.
  Consider the following pair of squares:
  \begin{equation}
\begin{tikzcd}
    \tilde C^{\Delta^2} \arrow[r] \arrow[d] &
    \tilde C^{\Lambda^2_1} \arrow[d] \arrow[r, "d^2"] &
    \tilde C^{\two} \arrow[d]\\
    A^{\Delta^2} \arrow[r] &
    A^{\Lambda^2_1} \arrow[r,"{d^2}"] &
    A^{\two} 
    \end{tikzcd}\label{eq:cvsegrect}
  \end{equation}
  We have $A^{\Lambda^2_1} \simeq \sum_{p:A^\two} \sum_{z:A} \hom_A(p(1),z)$ and similarly for $\tilde C$, so covariance of $\pi$ implies that the right-hand square is a pullback.
  Since $A$ is a Segal type, the bottom-left arrow $A^{\Delta^2} \to A^{\Lambda^2_1}$ is an equivalence; thus it will suffice to show that the outer rectangle is also a pullback.

  Now we also have $A^{\two\times\two} \simeq \sum_{p:A^\two} \prod_{s:\two} \sum_{z:A} \hom_A(p(s),z)$ and similarly for $\tilde C$, so covariance of $\pi$ also implies that the square below is a pullback:
  \begin{equation}
\begin{tikzcd}[column sep=large]
    \tilde C^{\two\times\two}\arrow[d] \arrow[r, "{(\idfunc \two,[0])}"] &
    \tilde C^{\two} \arrow[d] \\
    A^{\two\times\two} \arrow[r, "{(\idfunc \two,[0])}"] &
    A^{\two} 
    \end{tikzcd}\label{eq:cvsegsq}
  \end{equation}
  Now we recall again that $A^{\Delta^2}$ is a retract of $A^{\two\times\two}$.
  Since this retraction is natural, the outer rectangle in~\eqref{eq:cvsegrect} is a retract of the square~\eqref{eq:cvsegsq}; hence it is also a pullback.
\end{proof}

\begin{rmk}[dependent composition]\label{rmk:dependent-comp}
Since $\sum_{z:A} C(z)$ is a Segal type, we can compose arrows in it.
However, it is often more useful to compose ``dependent arrows'' in the following sense.
Given $f:\hom_A(x,y)$ and $g:\hom_A(y,z)$ and also $k:\hom_{C(f)}(u,v)$ and $m:\hom_{C(g)}(v,w)$, we showed that for any $h:\hom_A(x,z)$ and ${t:\homtwoshort{A}(x,y,z,f,g,h)}$ the type
  \begin{equation}
    \sum_{n:\hom_{C(h)}(u,w)} \homtwo{C(t)}(u,v,w,k,m,n)\label{eq:dep-comp}
  \end{equation}
is contractible.
In the case when $h\defeq g\circ f$ and $t\defeq \iscomp g f$, we will write the specified inhabitant of~\eqref{eq:dep-comp} as $(m\circ k, \iscomp{m}{k})$.
Note that $m \circ k$ is, by definition, an arrow ``over'' $g\circ f$, and similarly $\iscomp m k$ is a 2-simplex over $\iscomp g f$.
\end{rmk}

We now show that each element of a Segal type gives rise to a covariant ``representable'' type family {and in fact the covariance condition on this type family characterizes Segal types}.

\begin{prop}\label{prop:covar-rep}
{  Let $A$ be a type and fix $a : A$. Then the type family \[\lam{x} \hom_A(a,x) : A\to \univtype\] is covariant if and only if $A$ is a Segal type.}
  \end{prop}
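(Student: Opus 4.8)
The plan is to unwind both conditions into statements about the same extension types and observe that they coincide. For the forward direction, suppose $\lam{x}\hom_A(a,x)$ is covariant. Using \cref{prop:covariance-as-extension-type}, covariance says that for every $f : \hom_A(x,y)$ and every $u : \hom_A(a,x)$, the type $\exten{t:\two}{\hom_A(a,f(t))}{0}{u}$ is contractible. Writing out the inner hom-type as an extension type and applying \cref{thm:exten-curry} to rearrange the nested extensions over $\two \times \two$, this contractible type becomes equivalent to an extension type of maps $\Delta^2 \to A$ (or $\Lambda^2_1 \to A$-style horn data): concretely it should be identified with $\ndexten{\Delta^2}{A}{\Lambda^2_1}{[a,x,y,u,f]}$, the type of fillers of the $2$-$1$-horn with short edges $u$ and $f$. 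Contractibility of all such types, ranging over all $a,x,y,u,f$, is exactly the Segal condition reformulated via \cref{thm:segal-global} (or directly \cref{defn:segal-type} together with the chain of equivalences in the proof of \cref{thm:segal-global}). Hence $A$ is Segal.

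For the converse, suppose $A$ is Segal. Fix $a : A$; we must show $\lam{x}\hom_A(a,x)$ is covariant. By \cref{prop:covariance-as-extension-type} it suffices to show that for every $f : \hom_A(x,y)$ and $u : \hom_A(a,x)$ the type $\exten{t:\two}{\hom_A(a,f(t))}{0}{u}$ is contractible. Run the same rearrangement of nested extension types via \cref{thm:exten-curry} as above to identify this with the type of fillers $\ndexten{\Delta^2}{A}{\Lambda^2_1}{[a,x,y,u,f]}$ of a $2$-$1$-horn in $A$; since $A$ is Segal, this is contractible by \cref{thm:segal-global} (equivalently by the chain of equivalences in its proof: the type of such fillers is the fiber of $(\Delta^2 \to A) \to (\Lambda^2_1 \to A)$, which is an equivalence). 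Therefore $\lam{x}\hom_A(a,x)$ is covariant.

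The one bookkeeping subtlety — and the step I expect to be the main obstacle — is the identification of $\exten{t:\two}{\hom_A(a,f(t))}{0}{u}$ with the horn-filler type $\ndexten{\Delta^2}{A}{\Lambda^2_1}{[a,x,y,u,f]}$. Here $\hom_A(a,f(t))$ unfolds to $\ndexten{\Delta^1}{A}{\partial\Delta^1}{[a,f(t)]}$, so the whole type is a double extension in the two coordinates of $\two\times\two$: in the ``$s$'' direction one extends along $\partial\Delta^1$ with boundary values $a$ (at $s\jdeq 0$) and $f(t)$ (at $s\jdeq 1$), and in the ``$t$'' direction one fixes the value at $t\jdeq 0$ to be $u$. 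Applying \cref{thm:exten-curry} turns this into a single extension type over the pushout product of $(\{t\jdeq 0\}\subseteq\Delta^1)$ with $(\partial\Delta^1\subseteq\Delta^1)$; one then checks that the resulting sub-shape of $\two\times\two$ is (up to the tope-logic manipulations in \cref{sec:sub-simplices}, e.g.\ reindexing so that the relevant coordinates are ordered) precisely the $2$-$1$-horn $\Lambda^2_1$ sitting in $\Delta^2$, with the prescribed boundary data assembling to $[a,x,y,u,f]$. This is routine but must be done carefully with the tope logic; once it is in hand, both directions are immediate from \cref{thm:segal-global}.
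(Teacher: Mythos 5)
The core move of your proposal — unwinding the covariance condition via \cref{prop:covariance-as-extension-type} and currying — is correct, and it is also where the paper's proof starts. But the identification you flag as ``the main obstacle'' and then dismiss as ``routine'' is in fact where the argument goes wrong, and it is the place where a genuine new idea is required.

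After currying, the covariance condition becomes an extension problem over the full square $\two\times\two$: one must extend a map defined on \emph{three} sides (top edge constantly $a$, left edge $u$, bottom edge $f$) to all of $\two\times\two$. The resulting subshape inclusion is the pushout product of $(\{0\}\hookrightarrow\Delta^1)$ with $(\partial\Delta^1\hookrightarrow\Delta^1)$ sitting inside $\Delta^1\times\Delta^1$. This is \emph{not} $\Lambda^2_1 \hookrightarrow \Delta^2$: the square $\Delta^1\times\Delta^1$ is strictly larger than $\Delta^2$, being the union of \emph{two} $2$-simplices glued along their diagonal edge (this is exactly the decomposition discussed in \cref{sec:sub-simplices}). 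No amount of tope-logic reindexing collapses $\Delta^1\times\Delta^1$ to $\Delta^2$; \cref{prop:two-simp-as-retract} only makes $\Delta^2\to A$ a \emph{retract} of $\Delta^1\times\Delta^1\to A$, which is not the same as an identification of the two horn-filling problems.

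The paper handles this by decomposing the square-filling problem into two triangle-filling problems, yielding
\[ \sum_{k:\hom_A(a,c)} \Parens{\homtwo{A}(a,b,c,f,g,k) \times \sum_{h:\hom_A(a,c)} \homtwo{A}(a,a,c,\idarr a,h,k)}. \]
The first factor is the $\Lambda^2_1$-filler whose contractibility is the Segal condition, but the second factor (filling the ``upper'' triangle, which has an identity edge $\idarr a$) is extra, and the paper must invoke \cref{prop:htpy-is-htpy} to show it is contractible, so that it drops out. This auxiliary contractibility argument is the key step your proposal is missing; without it, you cannot identify the square-horn filler type with the $\Lambda^2_1$-horn filler type, and the equivalence you assert between the covariance condition and the Segal condition does not go through.
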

  \begin{proof}
  {The condition of \cref{defn:covariant-family} asserts that for each $b,c:A$, $f:\hom_A(a,b)$, and $g:\hom_A(b,c)$, the type
    \begin{equation}
      \sum_{h:\hom_A(a,c)} \exten{s:\two}{\hom_A(a,g(s))}{\partial\Delta^1}{[f,h]}
      \tag{$\dagger$}\label{eq:covar-rep-type}
    \end{equation}
  is contractible.}  Applying \cref{thm:exten-compose}, this is easily seen to be equivalent to
  \[ \ndexten{\two\times\two}{A}{d}{[\idarr a,f,g]} \]
  where $d$ is the ``cubical horn''
  \[
\left( \begin{tikzcd} \cdot \arrow[r, "\idarr a"] \arrow[d, "f"'] & \cdot \\ \cdot \arrow[r, "g"'] & \cdot \end{tikzcd} \right) \longrightarrow \left( \begin{tikzcd} \cdot \ar[r, "\idarr a"]\arrow[d, "f"']  \arrow[dr] & \cdot \arrow[d] \arrow[dl, phantom, "\cdot" very near start, "\cdot" very near end] \\ \cdot \arrow[r, "g"'] & \cdot \end{tikzcd} \right)
 \]
  But since $\two\times\two$ is the pushout of two copies of $\Delta^2$ over their diagonal faces, the type~\eqref{eq:covar-rep-type} is now also equivalent to
  \[ \sum_{k:\hom_A(a,c)} \left(\homtwo{A}(a,b,c,f,g,k) \times \sum_{h:\hom_A(a,c)} \homtwo{A}(a,a,c,\idarr a,h,k)\right) \]
  which by reassociating is equivalent to the dependent sum type
  \begin{equation}
    \sum\nolimits_{\sum\nolimits_{k:\hom_A(a,c)} \homtwoshort{A}(a,b,c,f,g,k)} \sum_{h:\hom_A(a,c)} \homtwo{A}(a,a,c,\idarr a,h,k)
    \tag{$\ddagger$}\label{eq:covar-rep-type-2}
  \end{equation}

  Now if $A$ is Segal, then by \cref{prop:htpy-is-htpy}, we have
  \[\left(\sum_{h:\hom_A(a,c)} \homtwo{A}(a,a,c,\idarr a,h,k)\right) \simeq \sum_{h:\hom_A(a,c)} (h=k),\]
  which is contractible. Thus, the dependent sum~\eqref{eq:covar-rep-type-2} is equivalent to its base type
  \begin{equation}
    \sum_{k:\hom_A(a,c)} \homtwo{A}(a,b,c,f,g,k)
    \tag{$**$}\label{eq:covar-rep-base-type}
  \end{equation}
  which is contractible, again because $A$ is Segal.

  Conversely, if $\lam{x} \hom_A(a,x)$ is covariant, then the dependent sum type~\eqref{eq:covar-rep-type-2} is contractible. While we cannot appeal to \cref{prop:htpy-is-htpy} (which was pointed out to us a few years ago by Bastiaan Cnossen and rediscovered more recently when formalizing these results), nevertheless the image of $\refl_k$ under the map \eqref{eq:idtosimplex} defines an inhabitant in each fiber of this dependent sum. This makes the base type~\eqref{eq:covar-rep-base-type} into a retract of the contractible~\eqref{eq:covar-rep-type-2}, which proves that $A$ is Segal.
\end{proof}

Of course, by duality, $\lam{x} \hom_A(x,a)$ is contravariant.

\subsection{Functoriality}
\label{sec:covar-funct}

If $C:A\to\univtype$ is covariant, then the arrows of $A$ act on $C$ in the following way.
Given $f:\hom_A(x,y)$ and $u:C(x)$, by assumption $\sum_{v:C(y)} \hom_{C(f)}(u,v)$ is contractible, and in particular inhabited.
We write its specified inhabitant as $(\covtr f u, \istrans f u)$.

\begin{ex}\label{ex:representable-transport}
  In the case of the covariant representable $\lam{x} \hom_A(a,x) : A\to \univtype$, suppose $e:\hom_A(a,x)$ and $f:\hom_A(x,y)$.
  Then the proof of \cref{prop:covar-rep} shows that $\covtr f e = f\circ e$.
\end{ex}

We have an analogue of \cref{prop:htpy-is-htpy}.
Note that this is also a directed version of the usual characterization of ``dependent paths'' in homotopy type theory as paths whose domain is a transport~\cite[(6.2.2)]{hottbook}.

\begin{lem}\label{thm:covtr-is-eq}
  If $C:A\to\univtype$ is covariant and $f:\hom_A(x,y)$ and $u:C(x)$ and $v:C(y)$, then
  \[ \hom_{C(f)}(u,v) \simeq \Parens{\covtr f u =_{C(y)} v} \]
\end{lem}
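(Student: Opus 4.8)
The plan is to follow the pattern already used for \cref{prop:htpy-is-htpy} and \cref{thm:comp-htpy}: build the comparison map by path induction, and then verify it is an equivalence by passing to total spaces over $v:C(y)$, where both sides become contractible.

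First I would define a fiberwise map $\prod_{v:C(y)}\bigl((\covtr f u =_{C(y)} v) \to \hom_{C(f)}(u,v)\bigr)$ by path induction on the identity $\covtr f u = v$, sending $\refl_{\covtr f u}$ to $\istrans f u$; recall that $\istrans f u : \hom_{C(f)}(u,\covtr f u)$ is precisely the second component of the center of contraction of $\sum_{v:C(y)}\hom_{C(f)}(u,v)$ supplied by the covariance of $C$ at $f$ and $u$. There is nothing to compute at this stage.

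To show this fiberwise map is a fiberwise equivalence, by \cite[Theorem 4.7.7]{hottbook} it suffices to show that the induced map on total spaces
\[ \Parens{\sum_{v:C(y)} (\covtr f u =_{C(y)} v)} \longrightarrow \Parens{\sum_{v:C(y)} \hom_{C(f)}(u,v)} \]
is an equivalence. But the domain is contractible, being a based path space, and the codomain is contractible precisely by the covariance of $C$ applied to $f$ and $u$. A map between contractible types is automatically an equivalence, so we conclude; taking fibers over $v$ yields the asserted equivalence $\hom_{C(f)}(u,v) \simeq (\covtr f u = v)$.

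I do not expect any real obstacle here: the only point worth flagging is that the covariance hypothesis is exactly the statement that $\sum_{v:C(y)}\hom_{C(f)}(u,v)$ is contractible with center $(\covtr f u, \istrans f u)$, which is what turns the argument into a routine instance of the ``fundamental theorem of identity types'' pattern ($\sum_{v} P(v)$ contractible $\Rightarrow$ $P(v)\simeq(v_0=v)$), just as in the proof of \cref{prop:htpy-is-htpy}.
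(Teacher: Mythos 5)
Your proof is correct and uses the same total-space contractibility argument as the paper; the only cosmetic difference is the direction of the comparison map. You build $(\covtr f u = v) \to \hom_{C(f)}(u,v)$ by path induction sending $\refl$ to $\istrans f u$, whereas the paper builds the map $\hom_{C(f)}(u,v) \to (\covtr f u = v)$ directly by projecting out the first component of the contractibility equation $(\covtr f u,\istrans f u) = (v,g)$; both are valid and the verification step is identical.
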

\begin{proof}
  Given $g:\hom_{C(f)}(u,v)$, we have $(v,g) : \sum_{w:C(y)}\hom_{C(f)}(u,w)$, hence $(\covtr f u,\istrans f u) = (v,g)$ by contractibility and so $\covtr f u = v$.
  This gives a map from left to right.
  To show that it is an equivalence, we observe that the induced map on total spaces
  \[ \Parens{\tsum_{v:C(y)} \hom_{C(f)}(u,v)} \to \Parens{\tsum_{v:C(y)}(\covtr f u = v)} \]
  is an equivalence, since both types are contractible.
\end{proof}

We now argue that the operation that takes $f:\hom_A(x,y)$ and $u:C(x)$ and produces $\covtr f u : C(y)$ is ``functorial'' in the expected sense:

\begin{prop}\label{prop:functorial-fibration} Suppose $A$ is a Segal type and $C:A\to\univtype$ is covariant. Then given $f:\hom_A(x,y)$, $g:\hom_A(y,z)$, and $u:C(x)$, we have 
\[\covtr g (\covtr f u) = \covtr{(gf)} u \qquad \mathrm{and} \qquad \covtr{(\idarr x)}u = u.\]
\end{prop}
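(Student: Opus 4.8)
The plan is to reduce both equations to \cref{thm:covtr-is-eq}, which identifies a dependent arrow $u\to v$ over $f$ with a path $\covtr f u = v$ in $C(y)$; it then suffices, in each case, to produce an appropriate dependent arrow and read off the resulting path.

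For the unit law $\covtr{(\idarr x)}u = u$, I would use that $\idarr x(s)\jdeq x$ for all $s:\two$, so that $C(\idarr x(s))\jdeq C(x)$ and hence the constant function $\lam{s}u$ is a well-typed element of $\hom_{C(\idarr x)}(u,u) \jdeq \exten{s:\two}{C(\idarr x(s))}{\partial\Delta^1}{[u,u]}$ (it restricts to $u$ on $\partial\Delta^1$). Applying the equivalence of \cref{thm:covtr-is-eq} with $f\defeq \idarr x$ and $v\defeq u$ carries this element to the desired path $\covtr{(\idarr x)}u = u$. (Alternatively, one may observe that $(u,\lam{s}u)$ and $(\covtr{(\idarr x)}u,\istrans{\idarr x}{u})$ both inhabit the contractible type $\sum_{v:C(x)}\hom_{C(\idarr x)}(u,v)$ and are therefore equal, which gives the same conclusion.)

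For the composition law, I would first assemble a dependent arrow over $g\circ f$ out of the two transport witnesses. We have $\istrans f u : \hom_{C(f)}(u,\covtr f u)$ and $\istrans g{(\covtr f u)} : \hom_{C(g)}(\covtr f u, \covtr g(\covtr f u))$; since $A$ is Segal, \cref{rmk:dependent-comp} furnishes a dependent composite $\istrans g{(\covtr f u)}\circ\istrans f u : \hom_{C(g\circ f)}(u,\covtr g(\covtr f u))$ that lies over $g\circ f$ (this uses \cref{thm:covariant-segal} to know that $\sum_{z:A}C(z)$ is Segal). Feeding this into \cref{thm:covtr-is-eq}, now with $f$ replaced by the composite $g\circ f:\hom_A(x,z)$ and $v\defeq \covtr g(\covtr f u)$, yields a path $\covtr{(g\circ f)}u = \covtr g(\covtr f u)$, i.e.\ the asserted equation.

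The argument is essentially bookkeeping, since both ingredients — the dependent-path characterization \cref{thm:covtr-is-eq} and the existence of dependent composition from \cref{rmk:dependent-comp} — are already in hand. The only point requiring a moment's attention is that the dependent composite produced by \cref{rmk:dependent-comp} lies precisely over $g\circ f$ rather than over some other filler of the relevant horn; but this is exactly the normalization built into the notation $m\circ k$ (with witness $\iscomp m k$ over $\iscomp g f$) recorded there, so no extra work is needed.
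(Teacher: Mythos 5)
Your proof is correct and follows essentially the same route as the paper's: in both cases the identity law comes from the constant dependent arrow $\lam{s}u$ (which is $\idarr u$), the composition law comes from the dependent composite $\istrans g{(\covtr f u)}\circ\istrans f u : \hom_{C(g\circ f)}(u,\covtr g(\covtr f u))$ furnished by \cref{rmk:dependent-comp}, and the conclusion is read off from contractibility of $\sum_{v}\hom_{C(-)}(u,v)$. Routing through \cref{thm:covtr-is-eq} is a cosmetic repackaging of the same contractibility argument — your parenthetical alternative is precisely what the paper writes.
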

\begin{proof}
Given $f:\hom_A(x,y)$ and $u:C(x)$, we have
\[ (\covtr f u, \istrans{f}{u}) : \sum_{v:C(y)} \hom_{C(f)}(u,v).\] 
Now suppose given also $g:\hom_A(y,z)$.
Then we have also
\[(\covtr g (\covtr f u), \istrans g {\covtr f u}) : \sum_{w:C(z)} \hom_{C(g)}(\covtr f u,w)\]
and
\[(\covtr{(gf)} u, \istrans{gf}{u}) : \sum_{w:C(z)} \hom_{C(gf)}(u,w). \]
On the other hand, the dependent composition $\istrans g {\covtr f u} \circ \istrans f u$ discussed in Remark \ref{rmk:dependent-comp} lies in the type $\hom_{C(gf)}(u,\covtr g (\covtr f u))$, and so we have
\[(\covtr g (\covtr f u),\istrans g {\covtr f u} \circ \istrans f u) : \sum_{w:C(z)} \hom_{C(gf)}(u,w). \]
Thus, since this type is contractible, we have $\covtr g (\covtr f u) = \covtr{(gf)} u$.

The case of identities is even easier.
Given $u:C(x)$, by definition we have
\[(\covtr{(\idarr x)}u,\istrans {\idarr x}u):\sum_{v:C(x)}\hom_{C(\idarr x)}(u,v)\]
but we also have a dependent identity arrow $\idarr u : \hom_{C(\idarr x)}(u,u)$ and so
\[(u,\idarr u) : \sum_{v:C(x)}\hom_{C(\idarr x)}(u,v) \]
By contractibility, therefore, $\covtr{(\idarr x)}u = u$.
\end{proof}

\subsection{Naturality}
\label{sec:covar-nat}

Any fiberwise map between two covariant fibrations $C,D:A\to\univtype$ defines a ``natural transformation'', commuting with the functorial action of \cref{prop:functorial-fibration}:

\begin{prop}\label{prop:natural-fibration}
Suppose given two covariant $C,D:A\to\univtype$ and a fiberwise map $\phi:\prod_{x:A} C(x) \to D(x)$.
Then for any $f:\hom_A(x,y)$ and $u:C(x)$, \[\phi_y(\covtr f u)= \covtr f (\phi_x(u)).\]
\end{prop}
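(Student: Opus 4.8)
The plan is to reduce the claim to producing a single dependent arrow and then apply \cref{thm:covtr-is-eq}. Concretely, that lemma applied to the covariant family $D$, the arrow $f$, the point $\phi_x(u):D(x)$, and the point $\phi_y(\covtr f u):D(y)$ yields an equivalence
\[ \hom_{D(f)}\Parens{\phi_x(u),\phi_y(\covtr f u)} \simeq \Parens{\covtr f (\phi_x(u)) =_{D(y)} \phi_y(\covtr f u)}. \]
So it will suffice to exhibit a term of the left-hand type; pushing it through this equivalence and inverting the resulting path then gives $\phi_y(\covtr f u) = \covtr f (\phi_x(u))$, as desired.

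To construct the required dependent arrow I would transport the covariance witness of $C$ along the fiberwise map $\phi$. Recall that covariance of $C$ provides $\istrans f u : \hom_{C(f)}(u,\covtr f u)$, i.e.\ a term of $\exten{s:\two}{C(f(s))}{\partial\Delta^1}{[u,\covtr f u]}$ with $\istrans f u(0)\jdeq u$ and $\istrans f u(1)\jdeq \covtr f u$. I would then form $\lam{s}\phi_{f(s)}\Parens{\istrans f u(s)}$ and check that it inhabits $\exten{s:\two}{D(f(s))}{\partial\Delta^1}{[\phi_x(u),\phi_y(\covtr f u)]}$, which is by definition $\hom_{D(f)}\Parens{\phi_x(u),\phi_y(\covtr f u)}$. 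The required judgmental boundary equalities are immediate: since $f(0)\jdeq x$ and $f(1)\jdeq y$, the endpoints are $\phi_{f(0)}(\istrans f u(0))\jdeq \phi_x(u)$ and $\phi_{f(1)}(\istrans f u(1))\jdeq \phi_y(\covtr f u)$.

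Combining the two steps finishes the argument. I expect no real obstacle: the proof amounts to bookkeeping with the definitions of the transport $\covtr f u$, its witness $\istrans f u$, and the dependent hom-types, and the hypothesis that $A$ is Segal is not even needed. An alternative to invoking \cref{thm:covtr-is-eq} would be to observe that both $\lam{s}\phi_{f(s)}(\istrans f u(s))$ and $\istrans f {\phi_x(u)}$ inhabit the type $\exten{s:\two}{D(f(s))}{0}{\phi_x(u)}$, which is contractible by \cref{prop:covariance-as-extension-type}; hence these two lifts of $f$ starting at $\phi_x(u)$ are equal, and so are their endpoints $\phi_y(\covtr f u)$ and $\covtr f (\phi_x(u))$. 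The route through the lemma is slightly cleaner.
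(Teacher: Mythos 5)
Your proof is correct and is essentially the paper's argument: both construct the dependent arrow $\lam{s}\phi_{f(s)}(\istrans f u(s)) : \hom_{D(f)}(\phi_x(u),\phi_y(\covtr f u))$ by applying $\phi$ to the covariance witness of $C$, then conclude by uniqueness of lifts for the covariant family $D$. The paper invokes contractibility of $\sum_{v:D(y)}\hom_{D(f)}(\phi_x(u),v)$ directly rather than routing through \cref{thm:covtr-is-eq}, but that lemma is proved by the same contractibility, so the difference is cosmetic (and your ``alternative'' route at the end is the paper's proof verbatim).
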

\begin{proof}
We can apply $\phi$ to $\istrans f u$ to obtain
\[ (\phi_y(\covtr f u), \lam{t}\phi_{f(t)}(\istrans f u(t))) : \sum_{v:D(\phi_y(u))} \hom_{D(f)}(\phi_x(u),v). \]
But of course we also have
\[ (\covtr f (\phi_x(u)), \istrans f {\phi_x(u)}) : \sum_{v:D(\phi_y(u))} \hom_{D(f)}(\phi_x(u),v) \]
so by contractibility $\phi_y(\covtr f u)= \covtr f (\phi_x(u))$.
\end{proof}



\subsection{Discrete fibers}

The fibers of a covariant fibration over a Segal type are discrete types.

\begin{prop}\label{thm:covar-discrete} If $A$ is a Segal type and $C \colon A \to \univtype$ is a covariant type family, then for each $x : A$, the type $C(x)$ is discrete.
\end{prop}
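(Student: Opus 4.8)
The plan is to unwind the definition of discreteness and reduce it to a contractibility statement that is an immediate instance of covariance. So fix $x:A$; by \cref{defn:discrete} we must show that for all $u,v:C(x)$ the map $\idtoarr_{u,v}:(u=_{C(x)}v)\to\hom_{C(x)}(u,v)$ is an equivalence. By the standard ``fiberwise equivalence iff equivalence on total spaces'' principle, it suffices to prove that the map on total spaces induced by $\idtoarr$,
\[ \Parens{\sum_{v:C(x)}(u=_{C(x)}v)}\to\Parens{\sum_{v:C(x)}\hom_{C(x)}(u,v)}, \]
is an equivalence. Its domain is a based path space and hence contractible, so it is enough to show that the codomain $\sum_{v:C(x)}\hom_{C(x)}(u,v)$ is contractible; a map between contractible types is automatically an equivalence, so we will not even need to inspect what this map does.

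The key observation is that $\idarr x:\hom_A(x,x)$ is the \emph{constant} arrow, i.e.\ $\idarr x(s)\jdeq x$ for all $s:\two$, so that $C(\idarr x(s))\jdeq C(x)$ judgmentally. Comparing \eqref{eq:dependent-arrows} with \cref{defn:hom}, this means that the dependent hom-type $\hom_{C(\idarr x)}(u,v)$ is (judgmentally) nothing but the ordinary hom-type $\hom_{C(x)}(u,v)$ of the type $C(x)$. Consequently
\[ \sum_{v:C(x)}\hom_{C(x)}(u,v)\;\jdeq\;\sum_{v:C(x)}\hom_{C(\idarr x)}(u,v), \]
and the right-hand side is precisely the type that \cref{defn:covariant-family} declares to be contractible, instantiated at the arrow $\idarr x:\hom_A(x,x)$ and the point $u:C(x)$. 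This finishes the proof.

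I expect no real obstacle: the argument is short, and the one place requiring a moment's thought is the judgmental identification of $\hom_{C(x)}$ with the dependent hom-type over $\idarr x$, which is forced by the definition of $\idarr x$ as a constant map. As a remark, this argument uses only the covariance of $C$, not that $A$ is Segal; one can alternatively route through \cref{thm:covtr-is-eq} and \cref{prop:functorial-fibration} to obtain $\hom_{C(\idarr x)}(u,v)\simeq(\covtr{(\idarr x)}u=v)\simeq(u=v)$, but that version carries the extra obligation of checking that this composite equivalence coincides with $\idtoarr$, which the total-space formulation above avoids.
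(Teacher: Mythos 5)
Your proof is correct and follows the same route as the paper: reduce to contractibility of $\sum_{v:C(x)}\hom_{C(x)}(u,v)$ via the total-space criterion, then invoke covariance at $\idarr x$. The extra sentence spelling out the judgmental identification $\hom_{C(x)}(u,v)\jdeq\hom_{C(\idarr x)}(u,v)$ is a helpful clarification of a step the paper leaves implicit, and your remark that the Segal hypothesis on $A$ is not actually used is accurate.
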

\begin{proof}
  We must show that $\idtoarr: (u=v) \to \hom_{C(x)}(u,v)$ is an equivalence for all $u,v:C(x)$.
  It suffices to show that the induced map on total spaces
  \[ \Parens{\tsum_{v:C(x)} (u=v)} \to \Parens{\sum_{v:C(x)} \hom_{C(x)}(u,v)} \]
  is an equivalence.
  But its domain is contractible since it is a based path space, and its codomain is contractible by covariance of $C$ applied to $\idarr x : \hom_A(x,x)$.
\end{proof}

\begin{cor}\label{thm:segal-hom-discrete}
  If $A$ is a Segal type and $x,y:A$, then $\hom_A(x,y)$ is discrete.
\end{cor}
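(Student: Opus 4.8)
The plan is to realize $\hom_A(x,y)$ as a fiber of a covariant fibration and then appeal to the general fact, already established, that fibers of covariant families over Segal types are discrete. Concretely: since $A$ is a Segal type, \cref{prop:covar-rep} shows that the representable type family $\lam{z}\hom_A(x,z) : A\to\univtype$ is covariant. Now apply \cref{thm:covar-discrete} to this covariant family over the Segal type $A$, evaluated at the point $y:A$; this yields directly that the fiber $\hom_A(x,y)$ is discrete.

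The only thing to verify is that the hypotheses match up cleanly. \cref{prop:covar-rep} requires exactly that $A$ be a Segal type in order to conclude that $\lam{z}\hom_A(x,z)$ is covariant, which is our hypothesis. \cref{thm:covar-discrete} requires that the base $A$ be Segal and that the family be covariant, both of which we now have. So there is essentially no obstacle: the substantive work was done in proving \cref{prop:covar-rep} and \cref{thm:covar-discrete}, and the corollary is just the composite of these two facts.

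If one preferred to avoid \cref{prop:covar-rep}, an alternative is to inline the argument of \cref{thm:covar-discrete}: to show $\idtoarr : (f=g)\to\homtwo{A}(\ldots)$-type comparison is an equivalence it suffices, after passing to total spaces, to compare a based path space (contractible) with a type whose contractibility follows from the covariance condition applied to the identity arrow at $x$; but this is precisely what \cref{prop:covar-rep} and \cref{thm:covar-discrete} package, so the two-line proof above is the natural one. One could equally well use the contravariant family $\lam{z}\hom_A(z,y)$ in place of the covariant $\lam{z}\hom_A(x,z)$, via the duality of \cref{rmk:duality}, obtaining the same conclusion.
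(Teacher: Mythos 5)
Your proof is exactly the paper's: realize $\hom_A(x,y)$ as the fiber at $y$ of the covariant representable family $\lam{z}\hom_A(x,z)$ (covariant by \cref{prop:covar-rep}), then invoke \cref{thm:covar-discrete}. The paper's proof is literally the one-line citation of these two results, so your argument coincides with it.
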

\begin{proof}
  By \cref{prop:covar-rep,thm:covar-discrete}.
\end{proof}

With \cref{thm:forall-discrete} it follows that various other types are discrete.
For instance, if $B,C:A\to \univtype$ are covariant, then the type $\prod_{a:A} (B(a) \to C(a))$ of ``natural transformations'' from $B$ to $C$ is discrete.
We also have:

\begin{cor}\label{thm:discrete-eq-discrete}
  If $A$ is discrete then so is $x=_A y$ for any $x,y:A$.
\end{cor}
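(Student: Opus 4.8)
The plan is to reduce to two facts already established: that every discrete type is a Segal type (the proposition immediately following \cref{thm:forall-discrete}), and that the hom-types of a Segal type are discrete (\cref{thm:segal-hom-discrete}). Since $A$ is discrete it is Segal, so $\hom_A(x,y)$ is discrete; and discreteness of $A$ says precisely that $\idtoarr_{x,y} : (x =_A y) \to \hom_A(x,y)$ is an equivalence. Hence $x =_A y$ is equivalent to the discrete type $\hom_A(x,y)$, and all that remains is to observe that discreteness is invariant under equivalence.

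For that observation I would argue directly. Let $e : B \to B'$ be an equivalence with $B'$ discrete; I claim $B$ is discrete. For $u,v : B$, form the square with top edge $\ap_e : (u = v) \to (eu = ev)$, bottom edge $\extfn e : \hom_B(u,v) \to \hom_{B'}(eu,ev)$, and both vertical edges $\idtoarr$; it commutes by a path induction, since both composites send $\refl_u$ to $\idarr{eu}$. The top map is an equivalence because $e$ is; the right-hand map is an equivalence because $B'$ is discrete; and the bottom map is an equivalence because postcomposition with the equivalence $e$ is an equivalence $B^\two \to (B')^\two$ compatible with restriction along $\partial\Delta^1 \subseteq \Delta^1$, hence induces an equivalence on hom-types (which, by \cref{thm:exten-compose}, present $B^\two$ as the total space of the family $(u,v) \mapsto \hom_B(u,v)$). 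By $2$-out-of-$3$ the remaining edge $\idtoarr_{u,v}$ is an equivalence, so $B$ is discrete. Applying this with $B = (x =_A y)$, $B' = \hom_A(x,y)$, and $e = \idtoarr_{x,y}$ finishes the proof.

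I expect the equivalence-invariance of discreteness to be the only non-formal point: since the paper does not assume univalence it cannot simply be read off from the slogan that discreteness is a predicate on types, and must instead be unwound to the fact that postcomposition with an equivalence induces an equivalence on arrow (equivalently, extension) types. A variant staying closer to the surrounding material reaches the conclusion without hom-types: by \cref{prop:covar-rep} the family $\lam{z}\hom_A(x,z) : A \to \univtype$ is covariant since $A$ is Segal; discreteness of $A$ makes it fiberwise equivalent to $\lam{z}(x =_A z)$, which is therefore also covariant; precomposing with the constant map $\unittype \to A$ at $y$ and using \cref{rmk:covariant-pullbacks} shows that the constant family $\lam{w}(x =_A y) : \unittype \to \univtype$ is covariant over the (trivially Segal) type $\unittype$; and then \cref{thm:covar-discrete} yields that $x =_A y$ is discrete. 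The corresponding obstacle in this version is the transfer of covariance along a fiberwise equivalence of families, which reduces to the same fact about postcomposition.
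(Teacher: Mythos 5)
Your main argument follows the paper's proof exactly: discrete implies Segal, so $\hom_A(x,y)$ is discrete by \cref{thm:segal-hom-discrete}, and $(x =_A y)\simeq\hom_A(x,y)$ by definition of discreteness. The added value is that you correctly flagged, and then filled, the one gap in the paper's two-sentence proof: the step ``$(x=_A y)$ is equivalent to $\hom_A(x,y)$ and hence also discrete'' requires an argument in the absence of univalence, and your commuting-square argument (using that top, bottom, and right edges are equivalences, hence so is the left by two-out-of-three) is a correct way to supply it. One small remark on the bottom edge: you want the fiberwise map $\extfn e:\hom_B(u,v)\to\hom_{B'}(eu,ev)$ to be an equivalence, and you get it because $\extfn e$ on total spaces $B^\two\to(B')^\two$ is an equivalence covering the equivalence $e\times e:B\times B\to B'\times B'$ on bases, and a map of total spaces over an equivalence of bases that is itself an equivalence is automatically a fiberwise equivalence; that is the precise form of the standard fact you are invoking.

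On the covariance variant: once you have established that $\lam{z}(x=_A z):A\to\univtype$ is covariant, you can apply \cref{thm:covar-discrete} directly with base $A$ (which is Segal) to conclude that every fiber, in particular $x=_A y$, is discrete; the detour through the constant family on $\unittype$ is unnecessary. The substantive content of this route is the transfer of covariance across a fiberwise equivalence of families, which (as you note) is the same kind of postcomposition lemma as in the main route and is also not stated in the paper, so this variant neither shortens nor sidesteps the nontrivial step.
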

\begin{proof}
  Since $A$ is discrete, it is Segal, hence $\hom_A(x,y)$ is discrete.
  But since $A$ is discrete, $x=_A y$ is equivalent to $\hom_A(x,y)$ and hence also discrete.
\end{proof}

\subsection{Multivariable covariance}
\label{sec:mult-covar}

We say that a type family dependent on multiple types is \textbf{covariant} if it is covariant in the ordinary sense when regarded as dependent on the $\Sigma$-type that collects all its arguments.
For instance, $C:A\to B\to \univtype$ is covariant if its uncurried version $C' : A\times B \to \univtype$ is covariant.
In this case we have:

\begin{prop}\label{thm:multivar-covar}
  $C:A\to B\to \univtype$ is covariant if and only if $C(a,-)$ is covariant for each $a:A$ and $C(-,b)$ is covariant for each $b:B$.
\end{prop}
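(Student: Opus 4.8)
The plan is to prove the two implications separately; the forward direction is a routine specialization, and the real content is in the converse.

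\textbf{($\Rightarrow$)} Suppose the uncurried family $C':A\times B\to\univtype$ is covariant, and fix $a:A$. Given $g:\hom_B(b,b')$ and $u:C(a,b)$, consider the arrow $f\defeq\lam{s}\pair{\idarr a(s),g(s)}:\hom_{A\times B}(\pair{a,b},\pair{a,b'})$. Since $\idarr a(s)\jdeq a$, precomposing $C'$ with $f$ yields, judgmentally, the family $\lam{s}C(a,g(s))$, so $\hom_{C'(f)}(u,v)\jdeq\exten{s:\two}{C(a,g(s))}{\partial\Delta^1}{[u,v]}$, and hence the type $\tsum_{v:C(a,b')}\hom_{C'(f)}(u,v)$ whose contractibility covariance of $C'$ asserts is \emph{judgmentally} the very type whose contractibility \cref{defn:covariant-family} requires of $C(a,-)$ at $g$ and $u$. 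So $C(a,-)$ is covariant, and symmetrically so is $C(-,b)$ for each $b:B$.

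\textbf{($\Leftarrow$)} Assume $C(a,-)$ is covariant for every $a:A$ and $C(-,b)$ is covariant for every $b:B$. By \cref{prop:covariance-as-extension-type} it suffices to show that for each arrow $f:\hom_{A\times B}(\pair{x_1,x_2},\pair{y_1,y_2})$ and each $u:C(x_1,x_2)$, the extension type $\exten{r:\two}{C'(f(r))}{0}{u}$ is contractible. Writing $f_1\defeq\pi_1\circ f:\hom_A(x_1,y_1)$ and $f_2\defeq\pi_2\circ f:\hom_B(x_2,y_2)$, the $\eta$-rule for $\Sigma$-types gives $C'(f(r))\jdeq C(f_1(r),f_2(r))$, so the goal is contractibility of $\exten{r:\two}{C(f_1(r),f_2(r))}{0}{u}$. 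I would attack this by recognizing the ``diagonal arrow'' $f$ as a composite. Consider the square family $\hat C(t,s)\defeq C(f_1(t),f_2(s))$ on the shape $\two\times\two$, whose restriction to the diagonal $\sh{\pair{t,s}:\two\times\two}{s\jdeq t}$ is the family in question, whose restriction to any horizontal slice is an instance of $C(-,f_2(c))$, and whose restriction to any vertical slice is an instance of $C(f_1(c),-)$; the canonical $2$-simplex $\sh{\pair{t,s}}{s\le t}\hookrightarrow\two\times\two$ exhibits the diagonal as the composite of the bottom edge $s\jdeq 0$ (an $f_1$ in the $C(-,x_2)$-direction) followed by the right edge $t\jdeq 1$ (an $f_2$ in the $C(y_1,-)$-direction). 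Covariance of $C(-,x_2)$ applied to $f_1$ and $u$ gives a section $e_0$ of $\hat C$ over the bottom edge with $e_0(0)\jdeq u$, and then by \cref{thm:exten-curry} (currying off the second variable) and \cref{thm:exten-prod-commute},
\[ \exten{\pair{t,s}:\two\times\two}{\hat C(t,s)}{s\jdeq 0}{e_0}\;\simeq\;\tprod_{t:\two}\exten{s:\two}{C(f_1(t),f_2(s))}{0}{e_0(t)}, \]
which is a product of contractible types --- each factor being contractible by covariance of $C(f_1(t),-)$ applied to $f_2$ and $e_0(t)$ --- hence contractible by relative function extensionality (\cref{ax:extfunext}). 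One then transfers contractibility from this ``square extension type'' to the ``diagonal extension type'' $\exten{r:\two}{C(f_1(r),f_2(r))}{0}{u}$ using the decomposition of $\two\times\two$ into two copies of $\Delta^2$ glued along the diagonal, together with the extension-type manipulations of \cref{thm:exten-compose} and \cref{thm:exten-union} and a $\rec_\lor$-based retract in the spirit of \cref{prop:two-simp-as-retract}, \cref{prop:covar-rep} and \cref{thm:covariant-segal}; since a retract of a contractible type is contractible, this finishes the proof.

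I expect the main obstacle to be precisely this last transfer step. Morally it amounts to the statement that the ``dependent composite'' of the unique lift of $f_1$ (in the $C(-,x_2)$-direction) with the unique lift of $f_2$ (in the $C(y_1,-)$-direction) is the unique lift of the diagonal arrow --- a dependent-composition statement of the kind in \cref{rmk:dependent-comp}, but one that must be carried out over the \emph{shape} $\two\times\two$ with its canonical $2$-simplex fillers rather than over a Segal base, since $A$ and $B$ are not assumed Segal. Getting the bookkeeping of the relevant subshapes of $\two\times\two$ right, and checking that each application of $\rec_\lor$ is well-defined, is the kind of finite but fiddly verification that appears in the proof of \cref{thm:3horn-anodyne}, and I would push it through following that template.
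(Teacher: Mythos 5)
Your forward direction is fine (and is, in substance, the observation the paper records as \cref{rmk:covariant-pullbacks}). Your backward direction sets up exactly the same reduction the paper uses: by \cref{prop:covariance-as-extension-type}, reduce to contractibility of the ``diagonal'' extension type, relate it to the ``square'' extension type over $\two\times\two$, and show the square type contractible by currying and two applications of the one-variable hypotheses. That part is correct.

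The gap is in the step you yourself flag as ``the main obstacle,'' and it is not the kind of bookkeeping that the template of \cref{thm:3horn-anodyne} or \cref{prop:two-simp-as-retract} would dispatch. To make the diagonal extension a retract of the square extension, the retraction is indeed evaluation on the diagonal $\phi \mapsto \lam{t}\phi(t,t)$; the hard part is the section. Given $\psi$ over the diagonal with $\psi(0)\jdeq u$, the natural move is to use a connection square to rewrite each off-diagonal point as lying over an arrow whose source is on the diagonal, lift $\psi(t)$ along $g_t \defeq \lam{s}\connmax g(t,s)$ on one triangle and along $f_s\defeq\lam{t}\connmax f(t,s)$ on the other, and glue with $\rec_\lor$. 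But this glue \emph{fails judgmentally}: on the diagonal $t\jdeq s$, the first lift evaluates to $\istrans{g_t}{\psi(t)}(t)$, and while $g_t$ is constant on $[0,t]$ nothing forces the covariant lift to be constant there, so $\istrans{g_t}{\psi(t)}(t)$ is not judgmentally equal to $\psi(t)$. In \cref{prop:two-simp-as-retract} and \cref{thm:3horn-anodyne} the $\rec_\lor$ cases agree judgmentally on overlaps because the formulas literally coincide under the tope equality; here they do not, so the same recipe does not apply.

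The paper's fix --- and this is the genuine idea your sketch is missing --- is to observe that the fibers of a covariant family over a Segal base are \emph{discrete} (\cref{thm:covar-discrete}). One first glues a square showing that $\lam{s}\istrans{g_t}{\psi(t)}(s)$ restricted to $s\le t$ is an arrow in $C(f(t),g(t))$ from $\psi(t)$ to $\istrans{g_t}{\psi(t)}(t)$, then uses discreteness to convert that arrow to a path $\psi(t)=\istrans{g_t}{\psi(t)}(t)$, and finally transports the candidate section over one triangle along this path (promoted by relative function extensionality) so that it \emph{does} restrict judgmentally to $\psi$ on the diagonal; the symmetric argument handles the other triangle, after which $\rec_\lor$ glues cleanly. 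Framing this as a ``dependent composition over the shape $\two\times\two$'' in the spirit of \cref{rmk:dependent-comp} does point in a vaguely similar direction, but the actual mechanism is the interplay between covariant lifting, connection squares, and discreteness of fibers, and without it the section simply cannot be written down.
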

\begin{proof}
  ``Only if'' follows from \cref{rmk:covariant-pullbacks}.
  For ``if'', note first that by \cref{thm:exten-nonac} we have $\hom_{A\times B}((a,b),(a',b'))\simeq \hom_A(a,a')\times \hom_B(b,b')$. For  $f:\hom_A(a,a')$ and $g:\hom_B(b,b')$, we write $(f,g):\hom_{A\times B}((a,b),(a',b'))$ by abuse of notation.
{ By \cref{prop:covariance-as-extension-type} we must show that for any $u:C(a,b)$ the type
 \begin{equation}
    \exten{t:\two}{C(f(t),g(t))}{0}{u}.\label{eq:2varcontr-exten}
  \end{equation}
  is contractible.}
  We will show that this is a retract of
  \begin{equation}
    \exten{\pair{t,s}:\two\times\two}{C(f(t),g(s))}{\pair{0,0}}{u}\label{eq:2varcov-contr}
  \end{equation}
  and that~\eqref{eq:2varcov-contr} is contractible.
  For the latter, we rewrite~\eqref{eq:2varcov-contr} using \cref{thm:exten-curry,thm:exten-compose} as
  \[ \tsum_{\phi:\exten{t:\two}{C(f(t),b)}{0}{u}} \tprod_{t:\two} \exten{s:\two}{C(f(t),g(s))}{0}{\phi(t)} \]
  Now ${\exten{t:\two}{C(f(t),b)}{0}{u}}$ is contractible since $C(-,b)$ is covariant, with center $\istrans f u$.
  So~\eqref{eq:2varcov-contr} is equivalent to
  \[ \tprod_{t:\two} \exten{s:\two}{C(f(t),g(s))}{0}{(\istrans f u)(t)} \]
  But since $C(f(t),-)$ is covariant, $\exten{s:\two}{C(f(t),g(s))}{0}{(\istrans f u)(t)}$ is contractible for any $t:\two$; thus~\eqref{eq:2varcov-contr} is contractible by relative function extensionality.

  It remains to show that~\eqref{eq:2varcontr-exten} is a retract of~\eqref{eq:2varcov-contr}.
  The retraction is just evaluation on the diagonal: $\phi \mapsto \lam{t} \phi(t,t)$.
  For the section, suppose given $\phi:\exten{t:\two}{C(f(t),g(t))}{0}{u}$.  We want to define an element of $\prod_{\pair{t,s}} C(f(t),g(s))$; this will be defined by gluing together a pair of 2-simplices defined for $t \le s$ and $s \le t$ respectively that restrict judgmentally to  $\phi(t)$ on the 1-simplex $t = s$.

  Recall from \cref{prop:connections} that we have a connection square $\connmax g$ with the following faces:
  \[
  \begin{tikzcd}
    b \arrow[r, "g"] \arrow[dr, "g" description] \arrow[d, "g"'] & b' \arrow[d, "\idarr{b'}"] \\ b' \arrow[r, "\idarr{b'}"']  \arrow[ur, phantom, "\cdot" very near start, "\cdot" very near end] & b'
  \end{tikzcd}
  \]
  We define
  \[ g_t \defeq \lam{s} \connmax g(t,s) : \hom_B(g(t),b'). \]
  Thus if $s\le t$ then $g_t(s)\jdeq g(t)$, while if $t\le s$ then $g_t(s)\jdeq g(s)$.
  Thus we have the covariant lifting arrow $\istrans{g_t}{\phi(t)} : \hom_{C(f(t),g_t)}(\phi(t),\covtr{(g_t)}{(\phi(t))})$ with respect to the type family $C(f(t),-)$, and evaluating it at $s$ we have $\istrans{g_t}{\phi(t)}(s): C(f(t),g_t(s))$.

  Similarly, we define $f_s \defeq \lam{t} \connmax f(t,s)$, yielding $\istrans{f_s}{\phi(s)}(t): C(f_s(t),g(s))$.
Since $C(f(t),g_t(s)) \jdeq C(f(t),g(s))$ for $t\le s$ and $C(f_s(t),g(s)) \jdeq C(f(t),g(s))$ for $s\le t$, we would like to paste these together with $\rec_\lor$ to get
  \[ \maybe{\psi(t,s) =
  \begin{cases}
    \istrans{g_t}{\phi(t)}(s) &\quad t\le s\\
    \istrans{f_s}{\phi(s)}(t) &\quad s\le t
  \end{cases}}
  \]
  But unfortunately we do not know that these two values agree when $s\jdeq t$.
  We know that $\istrans{g_t}{\phi(t)}(0) \jdeq \phi(t)$ and $\istrans{f_s}{\phi(s)}(0)\jdeq \phi(s)$, but although $g_t$ is constant for $0\le s\le t$ it doesn't follow that $\istrans{g_t}{\phi(t)}$ is constant on that range, so that $\istrans{g_t}{\phi(t)}(t)$ might not equal $\phi(t)$.
  Put differently, we have
  \begin{equation}
    \lam{\pair{t,s}} \istrans{g_t}{\phi(t)}(s) : \exten{\pair{t,s}\mid t\le s}{C(f(t),g(s))}{t\jdeq s}{\lam{\pair{t,s}}\istrans{g_t}{\phi(t)}(t)}\label{eq:tweakfrom}
  \end{equation}
  whereas we need something in
  \begin{equation}
    \exten{\pair{t,s}\mid t\le s}{C(f(t),g(s))}{t\jdeq s}{\lam{\pair{t,s}}\phi(t)}.\label{eq:tweakto}
  \end{equation}

  Consider the 2-simplex that we would like to be degenerate but isn't, $\istrans{g_t}{\phi(t)}(s)$ for $s\le t$.
  We can use $\rec_\lor$ to put this together with a 2-simplex $\istrans{g_t}{\phi(t)}(t)$ for $t \le s$ that is degenerate:
  \[
  h(t,s) \defeq
  \begin{cases}
    \istrans{g_t}{\phi(t)}(s)&\quad s\le t\\
    \istrans{g_t}{\phi(t)}(t) &\quad t\le s
  \end{cases}
  \]
  Then $h(t,s) : C(f(t),g(t))$ for all $t$ and $s$, since $g_t(s)\jdeq g(t)$ for $s\le t$.
  Thus for each $t$, we have an arrow $\lam{s} h(t,s) : \hom_{C(f(t),g(t))}(\phi(t),\istrans{g_t}{\phi(t)}(t))$.
  Since \cref{thm:covar-discrete} proves that $C(f(t),g(t))$ is discrete, this yields an equality $\phi(t) = \istrans{g_t}{\phi(t)}(t)$, and thus  an equality $(\lam{\pair{t,s}}\phi(t)) = (\lam{\pair{t,s}}\istrans{g_t}{\phi(t)}(t))$ by relative function extensionality.
  Therefore, we can transport~\eqref{eq:tweakfrom} along this equality to get an element of~\eqref{eq:tweakto} as desired.
  We argue similarly on the opposite side to obtain $\phi(s)= \istrans{f_s}{\phi(s)}(s)$ and another 2-simplex in
  \[
      \exten{\pair{t,s}\mid s\le t}{C(f(t),g(s))}{t\jdeq s}{\lam{\pair{t,s}}\phi(s)}
      \] that we can glue with this one, giving the desired section.
\end{proof}

We can also consider such families where one variable depends on another one.
For instance, $C : \prod_{a:A} (B(a) \to\univtype)$ is covariant if its uncurried version $C' : \left(\sum_{a:A} B(a)\right) \to \univtype$ is covariant.
A fundamental example is the following.

\begin{thm}\label{thm:eq-covar}
  Suppose $C:A\to\univtype$ is covariant.
  Then
  \[\lam{a}\lam{u}\lam{v} (u=v) : \prod_{a:A} (C(a) \to C(a) \to \univtype)\]
  is also covariant.
\end{thm}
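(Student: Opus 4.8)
The plan is to deduce this purely from the pullback characterisation of covariance (\cref{thm:covariance-as-pullback}), its stability under substitution (\cref{rmk:covariant-pullbacks}), and the pasting lemma for homotopy pullbacks, avoiding any direct manipulation of dependent hom-types. Write $G \defeq \sum_{a:A} C(a)\times C(a)$, so that after uncurrying the family in question becomes $E:G\to\univtype$ with $E(a,u,v)\defeq (u=_{C(a)}v)$, and let $\tilde C\defeq\sum_{z:A}C(z)$ with its projection $\pi:\tilde C\to A$. I want to recognise $E$ as built from $C$ by two pieces of bookkeeping, each of which preserves covariance.

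The first step is that the reindexed family $C'\defeq\lambda(z,u).\,C(z):\tilde C\to\univtype$ is covariant, being the substitution of the covariant $C$ along $\pi$ (\cref{rmk:covariant-pullbacks}); re-associating $\Sigma$-types identifies its total space $\tilde{C'}=\sum_{(z,u):\tilde C}C(z)$ with $G$, carrying the fibration projection $\pi':\tilde{C'}\to\tilde C$ to $q\defeq\lambda(z,u,v).(z,u):G\to\tilde C$. The second step is that the total space $\tilde E=\sum_{(z,u):\tilde C}\sum_{v:C(z)}(u=v)$ is equivalent to $\tilde C$, by contracting the based path space $\sum_{v:C(z)}(u=v)$; the equivalence $\iota:\tilde C\to\tilde E$ is $\iota(z,u)=(z,u,u,\refl_u)$, and under it the projection $\pi_E:\tilde E\to G$ corresponds to the fibrewise diagonal $\delta\defeq\lambda(z,u).(z,u,u):\tilde C\to G$, which satisfies $q\circ\delta=\idfunc{\tilde C}$.

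Then I would paste squares. Applying \cref{thm:covariance-as-pullback} to the covariant $C'$, and transporting its conclusion across $\tilde{C'}\simeq G$, shows that the square with corners $G^\two,\tilde C^\two,G,\tilde C$, horizontal maps $q^\two$ (top) and $q$ (bottom), and vertical maps $\mathsf{ev}_0$, is a homotopy pullback. Placing to its left the square with left corners $\tilde C^\two$ and $\tilde C$, the same vertical maps $\mathsf{ev}_0$, and horizontal maps $\delta^\two$ (top) and $\delta$ (bottom), the pasted rectangle has identity horizontal maps (since $q\circ\delta=\idfunc{\tilde C}$) and so is trivially a homotopy pullback; hence by the pasting lemma the left-hand square is a homotopy pullback as well. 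Transporting that left-hand square across $\iota:\tilde C\simeq\tilde E$ turns it into precisely the square whose being a homotopy pullback is, by \cref{thm:covariance-as-pullback} applied to $E$, equivalent to $E$ being covariant. Hence $E$ is covariant, which is the assertion.

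The one genuinely substantive point — the main obstacle, insofar as there is one — is to verify the two ``intertwining'' facts, that $\tilde{C'}\simeq G$ carries $\pi'$ to $q$ and that $\iota:\tilde C\simeq\tilde E$ carries $\pi_E$ to $\delta$, so that the cited covariance squares for $C'$ and for $E$ really are the two squares of the pasting diagram; one also uses the routine fact that being a homotopy pullback is unaffected by replacing a corner with an equivalent type along compatible maps. It is worth recording why the apparently more direct route fails: applying \cref{prop:covariance-as-extension-type} and reducing, via \cref{thm:covtr-is-eq} and path induction, to the contractibility of extension types of the form $\exten{t:\two}{w(t)=w(t)}{0}{\refl}$ does not succeed, since such a type need not be contractible (the fibres of $C$ are discrete but may have nontrivial loop spaces) — which is exactly what makes the detour through homotopy pullbacks worthwhile. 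Note, finally, that this argument requires neither that $A$ be a Segal type nor any extensionality beyond what is already built into \cref{thm:covariance-as-pullback}.
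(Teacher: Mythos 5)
Your proof is correct and takes a genuinely different route from the paper's. The paper argues directly at the level of extension types: by \cref{prop:covariance-as-extension-type} one must show $\exten{t:\two}{\istrans f u(t)=\istrans f v(t)}{0}{p}$ is contractible; path induction on $p$ reduces to $v\jdeq u$, $p\jdeq\refl$; and then relative function extensionality (\cref{thm:ext-funext}\ref{itm:ext-funext-equiv}) identifies $\exten{t:\two}{\istrans f u(t)=\istrans f u(t)}{0}{\refl}$ with the loop space of $\exten{t:\two}{C(f(t))}{0}{u}$ at $\istrans f u$, which is contractible because $C$ is covariant. Your argument instead stays entirely at the level of the homotopy-pullback characterization (\cref{thm:covariance-as-pullback}), exhibiting the family of identity types as the ``relative diagonal'' $\delta:\tilde C\to G$ which is a section of the projection $q:G\to\tilde C$ coming from $\pi^*C$, and concluding by the pasting lemma; the one substantive verification is that the equivalences $\tilde{C'}\simeq G$ and $\tilde C\simeq\tilde E$ carry the respective projections to $q$ and $\delta$, which you check. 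Both proofs are valid; yours trades the paper's one pointwise step for the bookkeeping of intertwining the pullback squares, and it has the conceptual appeal of isolating the slogan ``the identity-type family is the relative diagonal of a covariant fibration''.

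However, your closing diagnosis of why the ``direct route'' fails is wrong and worth correcting, because it points to a misunderstanding of how extension types interact with contractibility. You claim $\exten{t:\two}{w(t)=w(t)}{0}{\refl}$ need not be contractible because the fibres of $C$ may have nontrivial loop spaces. The fibres indeed may — discreteness in this paper's sense is not truncation — but this extension type is not a family of pointwise loops in the fibres $C(f(t))$. The boundary constraint that it restrict to $\refl$ at $t\jdeq 0$ means, by \cref{thm:ext-funext}\ref{itm:ext-funext-equiv}, that it is exactly the loop space of the whole lift type $\exten{t:\two}{C(f(t))}{0}{u}$ at the point $\istrans f u$, and that lift type is contractible by covariance of $C$ (via \cref{prop:covariance-as-extension-type}); hence so is its loop space. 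This is precisely the paper's proof. So the direct route does succeed, and your proof is not a workaround for an obstruction — it is a second, more structural, route to the same result.
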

\begin{proof}
  The family $(\lam{a} C(a)\times C(a)) : A \to \univtype$ is covariant, so an arrow in its total space is uniquely determined by an arrow $f:\hom_A(a,a')$ and a lift $(u,v) : C(a)\times C(a)$ of its domain.
  We denote the resulting uniquely determined arrow by
  \[ \phi^f_{u,v} : \hom_{\sum_{a:A} C(a)\times C(a)}((a,u,v),(a',\covtr f u,\covtr f v)). \]
  By \cref{thm:exten-nonac}, the type of $\phi^f_{u,v}$ is equivalent to
  \[ \sum_{f:\hom_A(a,a')} \hom_{C(f)}(u,\covtr f u) \times \hom_{C(f)}(v,\covtr f v) \]
  and under this equivalence $\phi^f_{u,v}$ corresponds to the triple $(f,\istrans f u, \istrans f v)$.

  Now suppose $p:u=v$; we want to show that the following type is contractible:
  \[ \exten{t:\two}{\istrans f u(t) =_{C(f(t))} \istrans f v(t)}{0}{p}. \]
  By path induction, we are free to assume that $v$ is $u$ and that $p$ is $\refl$.
  However, now by relative function extensionality we have
  \[ \exten{t:\two}{\istrans f u(t) =_{C(f(t))} \istrans f u(t)}{0}{\refl_u}
  \simeq
  \left(\istrans f u =_{\exten{t:\two}{C(f(t))}{0}{u}} \istrans f u\right)
  \]
  and the latter is contractible since it is a path space in a type that is itself contractible, since $C$ is covariant.
\end{proof}

It is also useful to identify the covariant transport in such a family.

\begin{prop}\label{thm:eq-covtr-ap}
  With notation as in the proof of \cref{thm:eq-covar}, for any equality $e:u=_{C(a)} v$ we have $\covtr {(\phi^f_{u,v})}{e} = \ap_{\covtr f }(e)$.
\end{prop}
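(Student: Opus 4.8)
The plan is to prove this by path induction on $e$. Fix $a:A$, $f:\hom_A(a,a')$, and $u:C(a)$; then the motive is $\lam{v}\lam{e}(\covtr{(\phi^f_{u,v})}{e} = \ap_{\covtr f}(e))$, a statement about $v:C(a)$ and $e:u=_{C(a)}v$, both sides having type $\covtr f u =_{C(a')} \covtr f v$. It therefore suffices to treat the case $v\jdeq u$ and $e\jdeq\refl_u$.

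In that case the right-hand side reduces judgmentally: $\ap_{\covtr f}(\refl_u) \jdeq \refl_{\covtr f u}$ by the computation rule for $\ap$. So it remains to show $\covtr{(\phi^f_{u,u})}{\refl_u} = \refl_{\covtr f u}$. By the definition of covariant transport, the pair $(\covtr{(\phi^f_{u,u})}{\refl_u},\, \istrans{(\phi^f_{u,u})}{\refl_u})$ is the specified — hence, since $\lam{a}\lam{u}\lam{v}(u=v)$ is covariant by \cref{thm:eq-covar}, the \emph{unique} — inhabitant of the type
\[ \tsum_{w:\covtr f u =_{C(a')}\covtr f u} \hom_{\bigl(\lam{a}\lam{u}\lam{v}(u=v)\bigr)(\phi^f_{u,u})}(\refl_u, w). \]
Recalling from the proof of \cref{thm:eq-covar} that $\phi^f_{u,u}$, regarded as a function $\two \to \sum_{a:A}C(a)\times C(a)$, is $\lam{s}(f(s),\istrans f u(s),\istrans f u(s))$, the dependent hom-type here unfolds to $\exten{s:\two}{\bigl(\istrans f u(s) =_{C(f(s))} \istrans f u(s)\bigr)}{\partial\Delta^1}{[\refl_u, w]}$.

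Now I exhibit an obvious competing inhabitant of that $\Sigma$-type: take $w \defeq \refl_{\covtr f u}$ together with the section $\lam{s}\refl_{\istrans f u(s)}$. This typechecks because $\istrans f u(0) \jdeq u$ and $\istrans f u(1) \jdeq \covtr f u$, so the boundary conditions $\refl_{\istrans f u(0)} \jdeq \refl_u$ and $\refl_{\istrans f u(1)} \jdeq \refl_{\covtr f u}$ hold judgmentally. By uniqueness, $(\refl_{\covtr f u},\lam{s}\refl_{\istrans f u(s)})$ equals $(\covtr{(\phi^f_{u,u})}{\refl_u},\istrans{(\phi^f_{u,u})}{\refl_u})$; projecting to the first factor yields $\covtr{(\phi^f_{u,u})}{\refl_u} = \refl_{\covtr f u}$, which closes the base case and hence the induction.

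This is a routine path-induction-plus-uniqueness argument in the same style as \cref{thm:covtr-is-eq} and \cref{prop:functorial-fibration}; the one place demanding care is the bookkeeping in the second paragraph — correctly unfolding the dependent hom-type along the arrow $\phi^f_{u,u}$ so that the degenerate section $\lam{s}\refl_{\istrans f u(s)}$ is manifestly well-typed — together with the minor observation that the $\Sigma$-form of covariance used to define $\covtr{}{}$ matches the ``unique lift'' form of \cref{prop:covariance-as-extension-type} appealed to in the proof of \cref{thm:eq-covar}.
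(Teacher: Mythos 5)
Your proof is correct and takes essentially the same route as the paper: path induction to reduce to $v\jdeq u$, $e\jdeq\refl$, then exhibiting $\lam{s}\refl_{\istrans f u(s)}$ as a lift of $\phi^f_{u,u}$ from $\refl_u$ to $\refl_{\covtr f u}$ and invoking the uniqueness of covariant lifts. You spell out the unfolding of the dependent hom-type and the contractibility appeal more explicitly than the paper's terse one-line version, but the underlying argument is identical.
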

\begin{proof}
  By path induction, we assume $v\jdeq u$ and $e\jdeq \refl$.
  But then $\lam{t} \refl_{\istrans f u(t)}$ is a lift of $\phi^f_{u,u}$ starting at $\refl_u$ and ending at $\refl_{\covtr f u}$, so $\covtr {(\phi^f_{u,u})}{\refl_u} = \refl_{\covtr f u}$, which is by definition $\ap_{\covtr f }(\refl_u)$.
\end{proof}

\subsection{Two-sided discrete fibrations}

We now consider type families dependent on multiple types with opposite variance.

\begin{defn}\label{defn:two-sided-discrete}
  Let $A$ and $B$ be Segal types and let $C : A \to B \to \univtype$ be a type family. We say that $C$ is \textbf{contravariant over $A$ and covariant over $B$} if for all $a : A$ and $b : B$ the type families
  \[ \lam{y}C(a,y) : B \to \univtype \qquad \mathrm{and} \qquad \lam{x}C(x,b) : A \to \univtype\] respectively define a covariant family over $B$ and a contravariant family over $A$.
\end{defn}

In classical category theory, fibrations of the form of \cref{defn:two-sided-discrete} are called \textbf{two-sided discrete fibrations}. The prototypical example is given by \cref{prop:covar-rep} and its dual:

\begin{prop}\label{prop:hom-bifunctor}
If $A$ is a Segal type, then the type family
\[ \lam{x}\lam{y} \hom_A(x,y) : A \to A \to \univtype\] is a two-sided discrete fibration.\qed
\end{prop}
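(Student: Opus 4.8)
The plan is to verify directly the two conditions in \cref{defn:two-sided-discrete} for the type family $\lam{x}\lam{y}\hom_A(x,y)$. Fix $a:A$; then the family $\lam{y}\hom_A(a,y):A\to\univtype$ is precisely the covariant representable family appearing in \cref{prop:covar-rep}, which tells us it is covariant exactly because $A$ is a Segal type. Dually, fixing $b:A$, the family $\lam{x}\hom_A(x,b):A\to\univtype$ is the contravariant representable, which is contravariant by the duality remark (\cref{rmk:duality}) applied to \cref{prop:covar-rep}. Since both halves hold, \cref{defn:two-sided-discrete} is satisfied with $A$ contravariant and $A$ covariant, so $\lam{x}\lam{y}\hom_A(x,y)$ is a two-sided discrete fibration.

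Concretely, I would write: ``By \cref{prop:covar-rep}, for each $a:A$ the family $\lam{y}\hom_A(a,y)$ is covariant since $A$ is Segal; dually (see \cref{rmk:duality}), for each $b:A$ the family $\lam{x}\hom_A(x,b)$ is contravariant. Hence by \cref{defn:two-sided-discrete}, $\lam{x}\lam{y}\hom_A(x,y)$ is contravariant over $A$ and covariant over $B=A$, i.e.\ a two-sided discrete fibration.'' That is essentially the whole argument; the statement is flagged with \qed in the excerpt precisely because it is an immediate corollary.

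There is really no substantive obstacle here. The only point requiring a moment's care is making sure the variances line up with the conventions of \cref{defn:two-sided-discrete}: that definition says ``$C$ is contravariant over $A$ and covariant over $B$ if $\lam{y}C(a,y)$ is covariant over $B$ and $\lam{x}C(x,b)$ is contravariant over $A$'', so one must check that $\hom_A(a,-)$ being the \emph{covariant} representable matches the ``covariant over $B$'' slot and $\hom_A(-,b)$ being the \emph{contravariant} representable matches the ``contravariant over $A$'' slot — which it does, since in \cref{prop:covar-rep} (and the remark immediately following it) $\lam{x}\hom_A(a,x)$ is covariant and $\lam{x}\hom_A(x,a)$ is contravariant. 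With that bookkeeping observed, the proof is a one-line citation, so I would not expect to need more than a sentence or two in the actual text.
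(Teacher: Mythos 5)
Your proof is correct and matches the paper's own argument: the result is stated with an immediate \qed precisely because it follows from \cref{prop:covar-rep} for the covariant slot and its dual (via \cref{rmk:duality}) for the contravariant slot, exactly as you wrote.
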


\subsection{Closure properties of covariance}
\label{sec:closure-of-covariance}

Mapping into a covariant family (even dependently) preserves covariance; while mapping out of a covariant family, at least into a discrete type, yields a contravariant family.
The former is easy to prove, but the latter is rather trickier.

\begin{thm}\label{thm:prod-covar}
  Let $C:A\to B\to \univtype$ be such that each $C(-,b)$ is covariant.
  Then $\lam{a} \prod_b C(a,b) : A \to \univtype$ is also covariant.
\end{thm}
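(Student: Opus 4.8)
The plan is to check covariance directly through the reformulation in \cref{prop:covariance-as-extension-type}. Writing $D \defeq \lam{a}\prod_{b:B} C(a,b)$, it suffices to show that for every $f:\hom_A(x,y)$ and $u:D(x)$ the extension type $\exten{t:\two}{D(f(t))}{0}{u}$ is contractible. Since $D(f(t)) \jdeq \prod_{b:B} C(f(t),b)$ and $B$ does not depend on $t$, the first step is to apply \cref{thm:exten-prod-commute} to obtain an equivalence (indeed a judgmental isomorphism)
\[ \exten{t:\two}{\Parens{\prod_{b:B} C(f(t),b)}}{0}{u} \;\simeq\; \prod_{b:B} \exten{t:\two}{C(f(t),b)}{0}{u(b)}. \]

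Next, fix $b:B$. The family $\lam{a}C(a,b)$ is covariant by hypothesis, so \cref{prop:covariance-as-extension-type} (applied to $f:\hom_A(x,y)$ and $u(b):C(x,b)$) says precisely that the fiber $\exten{t:\two}{C(f(t),b)}{0}{u(b)}$ is contractible. A dependent product of a family of contractible types is contractible (using the function extensionality we assume for ordinary $\Pi$-types), so the right-hand side of the displayed equivalence is contractible, and hence so is $\exten{t:\two}{D(f(t))}{0}{u}$. This is exactly the contractibility condition required for $D$ to be covariant.

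There is no real obstacle here; the only points deserving a little care are the bookkeeping in \cref{thm:exten-prod-commute} --- that the domain shape is the endpoint $\sh{t:\two}{t\jdeq 0}$, the codomain shape is all of $\two$, the parameter type $B$ is independent of $t$, and the supplied section is $\lam{t}u$ with $u(b):C(x,b)\jdeq C(f(0),b)$ --- together with the observation that, in contrast to the preceding covariance results, this argument needs neither $A$ nor $B$ to be a Segal type.
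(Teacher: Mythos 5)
Your proof is correct and follows exactly the paper's argument: reformulate covariance via \cref{prop:covariance-as-extension-type}, commute the extension type past $\prod_{b:B}$ using \cref{thm:exten-prod-commute}, then apply pointwise contractibility from covariance of each $C(-,b)$ together with (relative) function extensionality. The extra remarks about the shape bookkeeping and the absence of any Segal hypothesis are accurate but not points of difference.
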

\begin{proof}
  By \cref{prop:covariance-as-extension-type} we must show that every $f:\hom_A(a,a')$ has a unique lifting that starts at $g:\prod_{b:B} C(a,b)$. By \cref{thm:exten-prod-commute}, the type of such extensions, displayed below-left, is equivalent to the dependent function type displayed below-right:
  \begin{align*}
    \exten{t:\two}{\Parens{\tprod_{b:B} C(f(t),b)}}{0}{g}
    &\simeq \tprod_{b:B} \exten{t:\two}{C(f(t),b)}{0}{g(b)}.
  \end{align*}
  Since $C(-,b)$ is covariant, each $\exten{t:\two}{C(f(t),b)}{0}{g(b)}$ is contractible, hence so is the right-hand side.
\end{proof}

\begin{thm}\label{thm:covar-into-discrete}
  Let $C:A\to\univtype$ be covariant and let $Y$ be discrete.
  Then $\lam{a} (C(a)\to Y): A \to \univtype$ is contravariant.
\end{thm}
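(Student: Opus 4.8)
The plan is to reduce, via the contravariant form of \cref{prop:covariance-as-extension-type}, to one contractibility statement, and to prove it by ``straightening'' the varying fibre $C(a)$ using the connection squares of \cref{prop:connections} and the covariant transport of $C$.

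Write $D\defeq\lam{a}(C(a)\to Y)$. By the dual of \cref{prop:covariance-as-extension-type}, it suffices to show that for every $f:\hom_A(x,y)$ and $\psi:C(y)\to Y$ the type
\[ E \defeq \exten{t:\two}{(C(f(t))\to Y)}{1}{\psi} \]
is contractible. This is harder than \cref{thm:prod-covar}, whose proof simply commutes the extension past the $\prod$ using \cref{thm:exten-prod-commute}: here the function type $C(f(t))\to Y$ has a \emph{domain} $C(f(t))$ that varies with $t$, so no such commutation is available. Instead I would use covariance of $C$ to move everything into the fibre over $y$.

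For each $t:\two$ the connection $\connmax f$ gives an arrow $f_t\defeq\lam{r}\connmax f(t,r):\hom_A(f(t),y)$, with $f_0\jdeq f$ and --- because $\connmax f(1,r)\jdeq f(1)$ --- $f_1\jdeq\idarr y$ judgmentally. Covariance of $C$ then supplies $\covtr{f_t}{-}:C(f(t))\to C(y)$ together with lifting arrows $\istrans{f_t}{w}:\hom_{C(f_t)}(w,\covtr{f_t}{w})$. I would take as an (approximate) centre of $E$ the family $\chi^\circ(t)\defeq\lam{w}\psi(\covtr{f_t}{w})$; at $t\jdeq1$ this equals $\psi$ only up to the identification $\covtr{(\idarr y)}{w}=w$ of \cref{prop:functorial-fibration}, so I would use the homotopy extension property (\cref{thm:hep}) along $\{t\jdeq1\}\subseteq\Delta^1$ to replace it by a genuine $\chi_0:E$ that is pointwise equal to $\chi^\circ$. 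To contract an arbitrary $\chi:E$ onto $\chi_0$, I would produce, for each $t:\two$ and $w:C(f(t))$, an arrow in $\hom_Y(\chi(t)(w),\chi^\circ(t)(w))$ by gluing with $\rec_\lor$, along $(t\le r)\lor(r\le t)$, the function $r\mapsto\chi(r)(\istrans{f_t}{w}(r))$ (well-typed for $t\le r$, since then $\connmax f(t,r)\jdeq f(r)$) with $r\mapsto\chi(t)(\istrans{f_t}{w}(r))$ (well-typed for $r\le t$); the branches agree when $t\jdeq r$ because tope equality is judgmental equality. As $Y$ is discrete, $\idtoarr$ is an equivalence, so each such arrow transposes to a path $\chi(t)(w)=\chi^\circ(t)(w)$; function extensionality over $w$ and relative function extensionality over $t$ then assemble these (after composing with the pointwise equality $\chi_0\sim\chi^\circ$) into a relative homotopy, giving $\chi=_E\chi_0$ by \cref{thm:ext-funext}\ref{itm:ext-funext-naive}.

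I expect the only real obstacle to be bookkeeping: several pieces of data above --- the centre, and the homotopies, at $t\jdeq1$ --- are only \emph{propositionally}, not judgmentally, equal to the value the relevant extension type demands, ultimately because $\covtr{(\idarr y)}{w}=w$ is not a judgmental equality; each such mismatch must be absorbed by a further application of \cref{thm:hep}, with the patches chosen compatibly. Discreteness of $Y$ keeps this under control by trivializing the coherences that would otherwise appear.
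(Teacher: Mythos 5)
Your proof is essentially the paper's: it builds $f_t\defeq\lam{s}\connmax f(t,s)$ from the connection square, straightens the varying fibre by covariant transport, glues two branches with $\rec_\lor$ along $(t\le r)\lor(r\le t)$ to produce an arrow in the discrete type $Y$, and assembles the result via discreteness of $Y$ and relative function extensionality. If anything you are more careful than the paper at $t\jdeq 1$: the paper presents $\lam{t}\lam{c}v(\covtr{(f_t)}c)$ as an inhabitant of the extension type, glossing over the fact that $\covtr{(\idarr{a'})}{c}$ is only \emph{propositionally} (not judgmentally) equal to $c$, whereas you correctly flag this mismatch and patch it with the homotopy extension property (the paper's splitting into two glued arrows $k$ and $h$ versus your single one is a cosmetic difference).
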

\begin{proof}
  Fix $f:\hom_A(a,a')$ and $v:C(a')\to Y$; we must show that
  \begin{equation}
    \exten{t:\two}{(C(f(t))\to Y)}{1}{v}\label{eq:covar-disc-contr}
  \end{equation}
  is contractible.
  Recall from \cref{prop:connections} that $f$ gives rise to a square $\connmax f$ with the following faces:
  \[
  \begin{tikzcd}
    a \arrow[r, "f"] \arrow[dr, "f" description] \arrow[d, "f"'] & a' \arrow[d, "\idarr{a'}"] \\ a' \arrow[r, "\idarr{a'}"']  \arrow[ur, phantom, "\cdot" very near start, "\cdot" very near end] & a'
  \end{tikzcd}
  \]
  As in \cref{thm:multivar-covar}, we define
  \[ f_t \defeq \lam{s} \connmax f(t,s) : \hom_A(f(t),a'). \]
  Thus if $s\le t$ then $f_t(s)\jdeq f(t)$, while if $t\le s$ then $f_t(s)\jdeq f(s)$.
  Then for any $t$ and any $c:C(f(t))$ we have $\covtr{(f_t)} c : C(a')$ and hence $v(\covtr{(f_t)} c) : Y$, so~\eqref{eq:covar-disc-contr} is inhabited by $\lam{t} \lam{c} v(\covtr{(f_t)} c)$.

  It remains to show that any element of~\eqref{eq:covar-disc-contr} is equal to $\lam{t} \lam{c} v(\covtr{(f_t)} c)$.
  Thus, let $\phi:\exten{t:\two}{(C(f(t))\to Y)}{1}{v}$; by relative function extensionality it suffices to fix $t:\two$ and $c:C(f(t))$ and show $\phi(t,c) = v(\covtr{(f_t)} c)$. And in fact, since $Y$ is discrete, it suffices to define an arrow in $\hom_Y(\phi(t,c), v((f_t)_*c))$.

  Now we have $\istrans{f_t}{c} : \hom_{C(f_t)}(c,\covtr{(f_t)} c)$, and thus for any $s:\two$ we have $\istrans{f_t}{c}(s) : C(f_t(s))$.
  Thus if $s\le t$, we can write $\phi(t,\istrans{f_t}{c}(s))$; while we can always write $\phi(t,\istrans{f_t}{c}(t))$ since $f_t(t)\jdeq f(t)$ for any $t$.
    Using $\rec_\lor$, we may paste these 2-simplices together to define
  \[
  k(t,s) \defeq
  \begin{cases}
    \phi(t,\istrans{f_t}{c}(s)) &\quad s \le t \\
    \phi(t,\istrans{f_t}{c}(t)) &\quad t \le s.
  \end{cases}
  \]
   For each $t$, this provides an arrow $\lam{s} k(t,s): \hom_Y(\phi(t,c),    \phi(t,\istrans{f_t}{c}(t)))$ 
     since $\istrans{f_t}{c}(0)\jdeq c$.

Similarly, using $\rec_\lor$, we may paste  together a pair of 2-simplices to define
  \[
  h(t,s) \defeq
  \begin{cases}
    \phi(t,\istrans{f_t}{c}(t)) &\quad s \le t \\
    \phi(s,\istrans{f_t}{c}(s)) &\quad t \le s.
  \end{cases}
  \]
For each $t$, this provides an arrow $\lam{s} h(t,s) : \hom_Y( \phi(t,\istrans{f_t}{c}(t)) , v((f_t)_*c))$ since $    \phi(1,\istrans{f_t}{c}(1)) \jdeq \phi(1, (f_t)_*(c)) \jdeq v((f_t)_*c)$. Thus $\phi(t,c) =  \phi(t,\istrans{f_t}{c}(t))$ and $ \phi(t,\istrans{f_t}{c}(t)) =  v((f_t)_*c)$ since $Y$ is discrete.
\end{proof}

\section{The Yoneda lemma}
\label{sec:yoneda-lemma}

Let $C:A\to \univtype$ be covariant, and fix $a:A$.
Then we have maps
\begin{alignat*}{2}
  \evid^C_a &\defeq \lam{\phi}\phi(a,\idarr a) &\quad:\quad& \Parens{\prod_{x:A} (\hom_A(a,x) \to C(x))} \to C(a)\\
  \yon^C_a &\defeq \lam{u} \lam{x}\lam{f} \covtr f u &\quad:\quad& C(a) \to \Parens{\prod_{x:A} (\hom_A(a,x) \to C(x))}.
\end{alignat*}

\begin{thm}[Yoneda lemma]\label{thm:yoneda}
  If $A$ is a Segal type, then for any covariant $C:A\to \univtype$ and $a : A$, the maps $\evid^C_a$ and $\yon^C_a$ are inverse equivalences.
\end{thm}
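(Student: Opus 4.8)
The plan is to verify that $\evid^C_a$ and $\yon^C_a$ are quasi-inverse to one another, i.e.\ that both composites are pointwise equal to the appropriate identity function; by function extensionality these pointwise equalities upgrade to genuine equalities of functions, and a map equipped with a quasi-inverse is an equivalence.

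First I would dispatch the composite $\evid^C_a \circ \yon^C_a : C(a) \to C(a)$. Unfolding definitions, it sends $u : C(a)$ to $(\lam{x}\lam{f}\covtr f u)(a,\idarr a) \jdeq \covtr{(\idarr a)}{u}$, and the identity clause of \cref{prop:functorial-fibration} gives $\covtr{(\idarr a)}{u} = u$. So this composite is pointwise the identity, and function extensionality concludes this direction.

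The substantive direction is $\yon^C_a \circ \evid^C_a$. The key observation is that a term $\phi : \prod_{x:A}(\hom_A(a,x)\to C(x))$ is precisely a fiberwise map from the representable family $H \defeq \lam{x}\hom_A(a,x)$ to $C$. Since $A$ is Segal, $H$ is covariant by \cref{prop:covar-rep}, and $C$ is covariant by hypothesis, so \cref{prop:natural-fibration} applies to $\phi$ and yields, for every $f:\hom_A(x,y)$ and $e:\hom_A(a,x)$, the equality $\phi(y,\covtr f e) = \covtr f(\phi(x,e))$. I would then instantiate $x \defeq a$ and $e \defeq \idarr a$, and use that covariant transport along $f:\hom_A(a,y)$ in the representable family $H$ is postcomposition, so $\covtr f{(\idarr a)} = f\circ \idarr a = f$ — the first identity by \cref{ex:representable-transport}, the second by \cref{prop:identity}, valid since $A$ is Segal. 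This gives
\[ \phi(y,f) = \covtr f(\phi(a,\idarr a)) = \covtr f(\evid^C_a(\phi)) = \bigl(\yon^C_a(\evid^C_a(\phi))\bigr)(y,f) \]
for all $y:A$ and $f:\hom_A(a,y)$, and two nested applications of function extensionality then give $\yon^C_a(\evid^C_a(\phi)) = \phi$.

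The main obstacle is conceptual rather than computational: recognizing that an arbitrary term of the dependent function type $\prod_{x:A}(\hom_A(a,x)\to C(x))$ is automatically a fiberwise map of covariant families, so that the ``naturality'' packaged in \cref{prop:natural-fibration} is available even though $\phi$ carries no a priori naturality data, together with the identification of transport in a representable as composition. Once those are in hand, the remainder is a short diagram chase together with careful bookkeeping of the (iterated, but ordinary) function extensionality, both of which are routine.
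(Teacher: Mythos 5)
Your proof is correct and follows essentially the same route as the paper: one composite is dispatched via the identity clause of \cref{prop:functorial-fibration}, and the other via \cref{prop:natural-fibration} applied to $\phi$ as a fiberwise map out of the covariant representable, together with \cref{ex:representable-transport} to identify $\covtr f{(\idarr a)}$ with $f$. The only difference is that you spell out a bit more explicitly that $\phi$ is being viewed as a fiberwise map of covariant families and that function extensionality is applied twice, which the paper leaves implicit.
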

\begin{proof}
In one direction, given $u : C(a)$ we have
\[ (\lam{x}\lam{f} \covtr f u)(a,\idarr a) = \covtr {(\idarr a)}u = u\] by \cref{prop:functorial-fibration}. 
In the other direction, we want to compare the fiberwise map $\phi$ to $\lam{x}\lam{f} \covtr f (\phi(a,\idarr a))$.
By function extensionality, we can evaluate both of them at some $x:A$ and $f:\hom_A(a,x)$, in which case we have
\[ \covtr f (\phi(a,\idarr a)) = \phi(x,\covtr f \idarr a) = \phi(x,f \circ \idarr a) = \phi(x,f) \]
by \cref{prop:natural-fibration} and \cref{ex:representable-transport}.
\end{proof}

This is of course just the usual proof of the Yoneda lemma.
However, note that we do not need to manually check the naturality of $\yon_a^C(u)$; this is automatic simply by its being defined as a fiberwise map.
Similarly, because its domain and codomain are both covariant in $a:A$ (the domain by \cref{thm:prod-covar,thm:covar-into-discrete} --- or simply by the fact of being fiberwise equivalent to the codomain), the Yoneda equivalence is itself automatically natural in $a$.
Naturality in $C$ is not similarly automatic, but is easy to prove:

\begin{lem}\label{thm:yoneda-nat}
  If $A$ is a Segal type and $a:A$, while $C,D:A\to\univtype$ are covariant and $\psi:\prod_{x:A} (C(x) \to D(x))$, then we have
  \begin{align*}
    \psi_a \circ \evid^C_a &\jdeq \evid^D_a \circ (\lam{\phi}\lam{x}\lam{f} \psi_x(\phi(x,f)))\\
    (\lam{\phi}\lam{x}\lam{f} \psi_x(\phi(x,f))) \circ \yon^C_a &= \yon^D_a \circ \psi_a
  \end{align*}
\end{lem}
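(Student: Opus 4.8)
The plan is to prove the two equalities separately; the first is judgmental and the second follows by unfolding definitions and then appealing to naturality of covariant transport (\cref{prop:natural-fibration}).

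For the first equality, I would simply compute both sides on an arbitrary $\phi:\prod_{x:A}(\hom_A(a,x)\to C(x))$. On the left we get $\psi_a(\evid^C_a(\phi)) \jdeq \psi_a(\phi(a,\idarr a))$, by the definition of $\evid^C_a$. On the right, writing $\phi' \defeq \lam{\phi}\lam{x}\lam{f}\psi_x(\phi(x,f))$ applied to $\phi$, i.e.\ $\phi' = \lam{x}\lam{f}\psi_x(\phi(x,f))$, we have $\evid^D_a(\phi') \jdeq \phi'(a,\idarr a) \jdeq \psi_a(\phi(a,\idarr a))$, using the $\beta$-rules for $\Pi$-types and extension types. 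So the two sides are judgmentally equal, which is exactly what the $\jdeq$ in the statement asserts; there is nothing homotopical to check here.

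For the second equality, which holds only up to a path, I would again evaluate both sides on an arbitrary $u:C(a)$, reducing (by function extensionality, since both sides are elements of $\prod_{x:A}(\hom_A(a,x)\to D(x))$) to proving, for each $x:A$ and $f:\hom_A(a,x)$, an equality in $D(x)$. The left side evaluates to $\psi_x\bigl((\yon^C_a(u))(x,f)\bigr) \jdeq \psi_x(\covtr{f}{u})$ and the right side to $(\yon^D_a(\psi_a(u)))(x,f) \jdeq \covtr{f}{(\psi_a(u))}$. So what remains is precisely $\psi_x(\covtr{f}{u}) = \covtr{f}{(\psi_a(u))}$, which is the statement of \cref{prop:natural-fibration} applied to the fiberwise map $\psi:\prod_{x:A}C(x)\to D(x)$ and the arrow $f:\hom_A(a,x)$ with source element $u:C(a)$. (Both $C$ and $D$ are covariant by hypothesis, as required.)

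I do not expect any genuine obstacle. The only mild care needed is bookkeeping: making sure the $\beta$-reductions of the nested lambda-abstractions and extension-type applications go through as claimed, and that after function extensionality the two families are being compared at the same $x$ and $f$ so that \cref{prop:natural-fibration} applies on the nose. Since our $\Pi$-types and extension types satisfy judgmental $\beta$ and $\eta$, the first equality is literally a chain of judgmental equalities, and the second is a single invocation of an already-proved naturality lemma after a routine unfolding.
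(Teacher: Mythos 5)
Your proof is correct and follows the paper's own argument exactly: the first equality is a chain of judgmental $\beta$-reductions, and the second reduces (after unfolding $\yon$ and applying function extensionality) to a single invocation of \cref{prop:natural-fibration}.
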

\begin{proof}
  The first is simple $\beta$-reduction: both sides equal $\lam{\phi} \psi_a(\phi(a,\idarr a))$.
  In the second, the left-hand side equals $\lam u \lam x \lam f \psi_x(\covtr f u)$ while the right-hand side equals $\lam u \lam x \lam f \covtr f \psi_a(u)$; thus the equality follows from \cref{prop:natural-fibration} (and function extensionality).
\end{proof}

\begin{defn}\label{defn:yoneda-embedding}
When $a,a':A$ are terms in a Segal type, we refer to the map
\[   \yon^{\hom_A(a',-)}_a  :  \hom_A(a',a) \to \Parens{\prod_{x:A} \hom_A(a,x) \to \hom_A(a',x)}\]
as the \textbf{Yoneda embedding}. 
\end{defn}

\begin{rmk}\label{rmk:rep-nat-trans} Because the Yoneda embedding is an equivalence, we know that any fiberwise map $\phi : \prod_{x:A} \hom_A(a,x) \to \hom_A(a',x)$ between covariant representables for a Segal type $A$ is equal to a post-composition function. Namely, if $u \defeq \evid_a^{\hom_A(a',-)}(\phi) : \hom_A(a',a)$ then by \cref{ex:representable-transport}, 
\[\phi = \yon_a^{\hom(a',-)}(u) = \lam{x}\lam{f} \covtr f u = \lam{x}\lam{f} f \circ u,\] which is to say the natural transformation $\phi$ is given by precomposition with the arrow $u : \hom_A(a',a)$.
\end{rmk}

From a type-theoretic perspective, the Yoneda lemma is a ``directed'' version of the ``transport'' operation for identity types.
This suggests a ``dependently typed'' generalization of the Yoneda lemma, analogous to the full induction principle for identity types.
Recall from \cref{sec:mult-covar} that a type family $C:\prod_{x:A} (\hom(a,x)\to \univtype)$ is called covariant if its uncurried version $C:\Parens{\sum_{x:A} \hom(a,x)} \to \univtype$ is covariant.

\begin{thm}[dependent Yoneda lemma]\label{thm:dep-yoneda}
  If $A$ is a Segal type, $a:A$, and $C:\prod_{x:A} (\hom(a,x)\to \univtype)$ is covariant, then the function
  \[ \evid^C_a \defeq \lam{\phi}\phi(a,\idarr a) : \Parens{\tprod_{x:A} \tprod_{f:\hom_A(a,x)} C(x,f)} \to C(a,\idarr a) \]
  is an equivalence.
\end{thm}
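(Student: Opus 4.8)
The plan is to deduce the dependent Yoneda lemma from the non-dependent one (\cref{thm:yoneda}) by exhibiting $B \defeq \sum_{x:A}\hom_A(a,x)$, pointed at $b_0 \defeq (a,\idarr a)$, as a ``directed based path space'': that is, $b_0$ is an \emph{initial} object of $B$, in the sense that $\hom_B(b_0,b)$ is contractible for every $b:B$.

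First I would check that $B$ is a Segal type. By \cref{prop:covar-rep}, since $A$ is Segal the representable family $D \defeq \lam{x}\hom_A(a,x):A\to\univtype$ is covariant, so by \cref{thm:covariant-segal} its total space $B = \sum_{x:A}\hom_A(a,x)$ is again Segal. Next I would compute the homs out of $b_0$: for any $(x,f):B$, \cref{thm:exten-nonac} gives $\hom_B((a,\idarr a),(x,f)) \simeq \sum_{g:\hom_A(a,x)}\hom_{D(g)}(\idarr a,f)$; since $D$ is covariant, \cref{thm:covtr-is-eq} identifies $\hom_{D(g)}(\idarr a,f)$ with the identity type $\covtr g {\idarr a} = f$; and by \cref{ex:representable-transport} together with \cref{prop:identity} we have $\covtr g {\idarr a} = g\circ\idarr a = g$. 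Hence $\hom_B(b_0,(x,f)) \simeq \sum_{g:\hom_A(a,x)}(g=f)$, which is contractible, so $b_0$ is initial in $B$.

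The theorem then follows by composing equivalences. Currying identifies $\prod_{x:A}\prod_{f:\hom_A(a,x)}C(x,f)$ with $\prod_{b:B}C(b)$. Since each $\hom_B(b_0,b)$ is contractible, the map $\prod_{b:B}C(b)\to\prod_{b:B}\bigl(\hom_B(b_0,b)\to C(b)\bigr)$, $\phi\mapsto\lam b\lam h\phi(b)$, is a fiberwise equivalence over $B$ and hence an equivalence. Composing it with the Yoneda equivalence $\evid^C_{b_0}:\prod_{b:B}\bigl(\hom_B(b_0,b)\to C(b)\bigr)\to C(b_0)$ of \cref{thm:yoneda} --- available because $B$ is Segal and $C$ is covariant as a family over $B$ --- the composite carries $\phi$ to $\phi(b_0) = \phi(a,\idarr a)$, which under the currying isomorphism is exactly $\evid^C_a$. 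Being a composite of equivalences, $\evid^C_a$ is an equivalence.

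The main obstacle is the middle step, showing $(a,\idarr a)$ is initial in $\sum_{x:A}\hom_A(a,x)$: it requires correctly chaining the characterization of homs in a $\Sigma$-type total space (\cref{thm:exten-nonac}), the identification of dependent homs over a covariant family with equalities of transports (\cref{thm:covtr-is-eq}), and the explicit formula for transport in a representable family (\cref{ex:representable-transport}), while keeping track of where the Segal hypothesis on $A$ is being used. The remaining steps are routine manipulations of equivalences --- function extensionality, currying, and mapping out of contractible types.
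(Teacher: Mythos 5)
Your proof is correct and hinges on the same key observation as the paper's --- that $(a,\idarr a)$ is initial in $B\defeq\sum_{x:A}\hom_A(a,x)$ --- but completes the argument differently. The paper isolates a more general lemma (\cref{thm:initial-yoneda}): for \emph{any} type $B$ with an initial object $b$ and any covariant $C:B\to\univtype$, evaluation at $b$ is an equivalence $\prod_{x:B}C(x)\to C(b)$, with explicit inverse $u\mapsto\lam{x}\covtr{(f_x)}{u}$; note this requires no Segal hypothesis on $B$. You instead reduce to the non-dependent Yoneda lemma (\cref{thm:yoneda}) on the Segal type $B$ (whose Segal-ness you correctly establish via \cref{thm:covariant-segal}), using contractibility of $\hom_B(b_0,-)$ to see that $\prod_{b:B}C(b)\to\prod_{b:B}\bigl(\hom_B(b_0,b)\to C(b)\bigr)$, $\phi\mapsto\lam b\lam h\phi(b)$, is a fiberwise equivalence, whose composite with $\evid^C_{b_0}$ is exactly evaluation at $b_0$. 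Both work; yours is a clean conceptual reduction of the dependent to the non-dependent case, at the modest cost of needing $B$ Segal, while the paper's auxiliary lemma is stated and proved in greater generality. Your verification of initiality also routes slightly differently: you decompose $\hom_B((a,\idarr a),(x,f))$ via \cref{thm:exten-nonac} and then apply \cref{thm:covtr-is-eq}, \cref{ex:representable-transport}, and \cref{prop:identity} to recognize the total as $\sum_{g:\hom_A(a,x)}(g=f)$, while the paper's \cref{thm:slice-initial} identifies it with $\exten{t:\two}{\hom_A(a,f(t))}{0}{\idarr a}$ and appeals directly to \cref{prop:covariance-as-extension-type}; these two computations amount to reading the same boundary-constrained square $\Delta^1\times\Delta^1\to A$ along its two axes.
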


We will obtain this as a special case of a result about types with initial objects.

\begin{defn}
  A point $b:B$ is \textbf{initial} if for all $x:B$ the hom-type $\hom_B(b,x)$ is contractible.
\end{defn}

\begin{thm}\label{thm:initial-yoneda}
  If $b:B$ is initial and $C:B\to \univtype$ is covariant, then the function
  \[ \lam{\phi}{\phi(b)} : \Parens{\tprod_{x:B} C(x)} \to C(b) \]
  is an equivalence.
\end{thm}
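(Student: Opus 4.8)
The plan is to imitate the proof of the ordinary Yoneda lemma (\cref{thm:yoneda}) and exhibit an explicit inverse to $\evid_b \defeq \lam{\phi}\phi(b)$. Since $b$ is initial, the hypothesis provides a term of $\tprod_{x:B}\iscontr{\hom_B(b,x)}$; let $(!_x):\hom_B(b,x)$ denote the induced center of contraction, chosen uniformly in $x:B$. Using the covariant transport of $C$, I would set
\[ \yon_b \defeq \lam{u}\lam{x} \covtr{(!_x)}{u} \;:\; C(b) \to \tprod_{x:B} C(x). \]
To prove $\evid_b$ an equivalence it suffices to check that $\evid_b$ and $\yon_b$ are mutually inverse, i.e.\ that both composites are pointwise equal to the identity (using function extensionality for the composite on $\tprod_{x:B}C(x)$).

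For $\evid_b \circ \yon_b$: given $u:C(b)$ one must show $\covtr{(!_b)}{u} = u$. Since $\hom_B(b,b)$ is contractible and contains $\idarr b$, there is a path $(!_b) = \idarr b$, and as $\covtr{(-)}{u}$ is a function of its arrow argument this gives $\covtr{(!_b)}{u} = \covtr{(\idarr b)}{u}$. Finally $\covtr{(\idarr b)}{u} = u$: the dependent identity arrow $\idarr u : \hom_{C(\idarr b)}(u,u)$ (the constant map $\lam{s}u$, which is well-typed because $\idarr b(s)\jdeq b$) is carried by the equivalence of \cref{thm:covtr-is-eq} precisely to the equality $\covtr{(\idarr b)}{u} = u$. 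Note that $B$ is not assumed to be a Segal type, so the identity case of \cref{prop:functorial-fibration} is not available here; but \cref{thm:covtr-is-eq} requires no Segal hypothesis.

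For $\yon_b \circ \evid_b$: given $\psi : \tprod_{x:B} C(x)$, by function extensionality it suffices to fix $x:B$ and prove $\covtr{(!_x)}{(\psi(b))} = \psi(x)$. Restricting $\psi$ along the arrow $(!_x)$ yields the term $\lam{s}\psi(!_x(s))$, which inhabits $\hom_{C(!_x)}(\psi(b),\psi(x))$ --- it is well-typed since $!_x(0)\jdeq b$ and $!_x(1)\jdeq x$ --- and \cref{thm:covtr-is-eq} sends it to an equality $\covtr{(!_x)}{(\psi(b))} = \psi(x)$, as required.

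I expect no serious obstacle: the whole argument reduces to the two applications of \cref{thm:covtr-is-eq} --- one to the dependent identity arrow, one to the restriction of a section along the unique arrow out of $b$ --- together with the elementary remark that any arrow out of an initial object agrees up to a path with an identity. The only point worth flagging is the absence of a Segal hypothesis on $B$, which is why identity-preservation of covariant transport must be re-derived here rather than quoted. Specializing to $B \defeq \tsum_{x:A}\hom_A(a,x)$, whose point $(a,\idarr a)$ is initial, will then deliver the dependent Yoneda lemma (\cref{thm:dep-yoneda}).
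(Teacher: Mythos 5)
Your proposal is correct and matches the paper's proof essentially step for step: pick $f_x\in\hom_B(b,x)$ by initiality, define the inverse as $\lam{u}\lam{x}\covtr{(f_x)}{u}$, use $f_b=\idarr b$ to handle one round-trip, and restrict $\psi$ along $f_x$ to handle the other (the paper argues directly from the contractibility defining covariance where you route through \cref{thm:covtr-is-eq}, but these are the same argument). Your observation that $\covtr{(\idarr b)}{u}=u$ must be re-derived rather than cited from \cref{prop:functorial-fibration} because of its Segal hypothesis is a fair and careful point, and is consistent with how the paper handles it.
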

\begin{proof}
  Since each $\hom_B(b,x)$ is contractible, it is in particular inhabited, so we have some $f:\tprod_{x:B} \hom_B(b,x)$.
  Moreover, since $\hom_B(b,b)$ is contractible, we have $f_b = \idarr b$.

  Now for an inverse to the above map, we send $u:C(b)$ to $\lam{x} \covtr{(f_x)}{u}$.
  In one direction we have
  \[ (\lam{x} \covtr{f_x}{u})(b) \jdeq \covtr{(f_b)}{u} = \covtr{(\idarr b)}{u} = u.\]
  In the other direction, for any $x:B$ we have $f_x : \hom_B(b,x)$, and thus for any $\phi:\tprod_{x:B} C(x)$ we have $\lam{t} \phi(f_x(t)) : \hom_{C(f_x)}(\phi(b),\phi(x))$.
  Since $C$ is covariant, $\sum_{v:C(x)} \hom_{C(f_x)}(\phi(b),v)$ is contractible, so \[(\phi(x),\lam{t} \phi(f_x(t))) = (\covtr{(f_x)}{\phi(b)},\istrans{f_x}{u}),\] and in particular $\phi(x) = \covtr{(f_x)}{\phi(b)}$.
\end{proof}

\begin{lem}\label{thm:slice-initial}
  For any Segal type $A$ and $a:A$, the type $\sum_{x:A}\hom_A(a,x)$ has an initial object $(a,\idarr a)$.
\end{lem}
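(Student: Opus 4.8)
The plan is to unwind the definition of initiality. Writing $\tilde C \defeq \sum_{x:A}\hom_A(a,x)$, we must show that for every $(x,f):\tilde C$ the hom-type $\hom_{\tilde C}\bigl((a,\idarr a),(x,f)\bigr)$ is contractible. Abbreviate $C \defeq \lam{x}\hom_A(a,x) : A\to\univtype$, which is covariant by \cref{prop:covar-rep} because $A$ is Segal.

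First I would use \cref{thm:exten-nonac} to decompose the hom-type of this $\Sigma$-type as
\[ \hom_{\tilde C}\bigl((a,\idarr a),(x,f)\bigr) \;\simeq\; \sum_{g:\hom_A(a,x)} \hom_{C(g)}(\idarr a, f), \]
where $\hom_{C(g)}$ is the dependent hom-type of the family $C$ over the arrow $g$. Since $C$ is covariant, \cref{thm:covtr-is-eq} identifies each fiber of this sum: $\hom_{C(g)}(\idarr a, f) \simeq \bigl(\covtr g(\idarr a) =_{C(x)} f\bigr)$. Finally, \cref{ex:representable-transport} computes the covariant transport in a representable family as $\covtr g(\idarr a) = g\circ\idarr a$, while \cref{prop:identity} gives $g\circ\idarr a = g$; chaining these two equalities yields an equivalence $\bigl(\covtr g(\idarr a) = f\bigr) \simeq \bigl(g =_{\hom_A(a,x)} f\bigr)$. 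Composing all of this, we obtain
\[ \hom_{\tilde C}\bigl((a,\idarr a),(x,f)\bigr) \;\simeq\; \sum_{g:\hom_A(a,x)} \bigl(g =_{\hom_A(a,x)} f\bigr), \]
which is a based path space and hence contractible, with center $(f,\refl_f)$. This establishes that $(a,\idarr a)$ is initial.

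I do not expect a real obstacle here: the argument is essentially the observation that, under the Yoneda-style identifications, an arrow from $(a,\idarr a)$ to $(x,f)$ in $\tilde C$ amounts to an arrow $g:\hom_A(a,x)$ together with a witness that $g = f$. The only point requiring a little care is the bookkeeping in the middle step — making sure the equivalences compose in the correct order, in particular chaining $\covtr g(\idarr a) = g\circ\idarr a = g$ before comparing with $f$ — but every ingredient is already available from earlier in the paper.
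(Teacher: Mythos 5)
Your argument is correct, but it takes a more roundabout route than the paper's. You decompose $\hom_{\tilde C}((a,\idarr a),(x,f))$ into a $\Sigma$-type via \cref{thm:exten-nonac}, then compute each fiber $\hom_{C(g)}(\idarr a, f)$ using \cref{thm:covtr-is-eq} to convert it into a transport-equality $(\covtr g(\idarr a)=f)$, and then invoke \cref{ex:representable-transport} and \cref{prop:identity} to evaluate $\covtr g(\idarr a)=g\circ\idarr a=g$, landing on a based path space $\sum_g(g=f)$. The paper instead reorganizes the same data (essentially transposing the square $\two\times\two\to A$ underlying an arrow in $\tilde C$) into the \emph{single} extension type $\exten{t:\two}{\hom_A(a,f(t))}{0}{\idarr a}$ and declares it contractible directly by \cref{prop:covariance-as-extension-type}, without ever needing the explicit formula for covariant transport in the representable family or the unit law. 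So the paper's proof uses strictly less machinery — just the reformulated covariance condition — while yours makes the ``unique lift exists and equals $f$'' content explicit. Both are valid; yours spells out what the contractibility center is (namely $(f,\refl_f)$), which the paper's compressed proof does not.
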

\begin{proof}
  Let $x:A$ and $f:\hom_A(a,x)$; we must show that \[\hom_{\sum_{x:A}\hom_A(a,x)}((a,\idarr a),(x,f))\] is contractible. {By \cref{thm:exten-nonac}, this type is equivalent to
\[\exten{t:\two}{\hom_A(a,f(t))}{0}{\idarr a}\] 
which is contractible by \cref{prop:covariance-as-extension-type} since $\hom_A(a,-)$ is covariant.}
\end{proof}

\begin{proof}[Proof of \cref{thm:dep-yoneda}]
  By \cref{thm:slice-initial}, $\sum_{x:A}\hom_A(a,x)$ has an initial object $(a,\idarr a)$. Thus \cref{thm:initial-yoneda} specializes to the desired result.
\end{proof}

A formula for the inverse of the dependent $\evid^C_a$
\[ \yon^C_a  : C(a,\idarr a) \to \Parens{\tprod_{x:A} \tprod_{f:\hom_A(a,x)} C(x,f)} \]
can be extracted from the above proofs.
Under the equivalent description of $\hom_{\sum_{x:A}\hom_A(a,x)}((a,\idarr a),(x,f))$ in \cref{thm:slice-initial}, a specific inhabitant of it is given by the connection square $\connmin f$ from \cref{prop:connections}.
Thus, we can write
\begin{equation}
  \yon^C_a(u,x,f) \defeq \covtr{(\connmin f)}{u}.\label{eq:dep-yon}
\end{equation}

We say that a covariant type family $C : A \to \univtype$ over a Segal type $A$ is \textbf{representable} if there exists some $a :A$ and a family of equivalences over $A$:
\[ \prod_{x :A} (\hom_A(a,x) \simeq C(x)).\]

\begin{prop}\label{thm:representability} A covariant type family $C : A \to \univtype$ over a Segal type $A$ is representable if and only if the type
\[ \sum_{x \in A} C(x)\] has an initial object $(a,u)$, in which case
\[ \yon^C_a(u) : \prod_{x :A} (\hom_A(a,x) \to C(x))\] defines an equivalence.
\end{prop}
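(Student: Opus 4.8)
The plan is to prove both directions through one intermediate characterization: \emph{for any $a:A$ and $u:C(a)$, the point $(a,u)$ is initial in $\tsum_{x:A}C(x)$ if and only if the map $\lam f\covtr f u : \hom_A(a,x)\to C(x)$ is an equivalence for every $x:A$} --- and the latter map is exactly the $x$-component of $\yon^C_a(u)$. Granting this, the ``if'' direction of the proposition is immediate: if $\tsum_{x:A}C(x)$ has an initial object $(a,u)$, then $\yon^C_a(u)$ is a fiberwise equivalence, which is by definition a witness that $C$ is representable at $a$ --- and this is precisely the final clause of the statement.

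For the characterization I would fix $x:A$ and $v:C(x)$ and combine \cref{thm:exten-nonac} with \cref{thm:covtr-is-eq} (the latter using that $C$ is covariant) to get the chain of equivalences
\[ \hom_{\tsum_{x:A}C(x)}\bigl((a,u),(x,v)\bigr) \;\simeq\; \tsum_{f:\hom_A(a,x)}\hom_{C(f)}(u,v) \;\simeq\; \tsum_{f:\hom_A(a,x)}\bigl(\covtr f u =_{C(x)} v\bigr), \]
whose right-hand side is the homotopy fiber of $\lam f\covtr f u$ over $v$. Hence $(a,u)$ is initial --- i.e.\ all these hom-types are contractible --- exactly when every such map has contractible fibers, i.e.\ is an equivalence. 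This step is routine, using only the standard facts from~\cite{hottbook} that a map with contractible fibers is an equivalence and that ``$\yon^C_a(u)$ is a fiberwise equivalence'' means precisely that each of its components is one.

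For the ``only if'' direction, suppose $C$ is representable at $a$, witnessed by a family of equivalences $e:\tprod_{x:A}(\hom_A(a,x)\simeq C(x))$, and set $u\defeq e_a(\idarr a)$. The subtle point --- which I expect to be the main obstacle --- is that $e$ need not literally be $\yon^C_a(u)$, so I cannot simply conclude that $(a,u)$ is initial from an ``equivalence of total spaces''. The remedy is to view $e$ as a fiberwise map between the covariant families $\hom_A(a,-)$ (covariant by \cref{prop:covar-rep}, since $A$ is Segal) and $C$, and apply naturality of covariant transport, \cref{prop:natural-fibration}: this gives $e_x(\covtr g w) = \covtr g(e_a(w))$, and specializing $w\defeq\idarr a$, using \cref{ex:representable-transport} ($\covtr f{\idarr a} = f\circ\idarr a$ in the representable) and \cref{prop:identity} ($f\circ\idarr a = f$), yields $e_x(f) = \covtr f u$ for all $x:A$ and $f:\hom_A(a,x)$. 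Thus $\lam f\covtr f u$ is pointwise equal, hence equal by function extensionality, to the equivalence $e_x$, so it is itself an equivalence for each $x$; by the characterization, $(a,u)$ is initial in $\tsum_{x:A}C(x)$, which finishes the proof.
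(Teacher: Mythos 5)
Your proof is correct and lands on essentially the same ingredients as the paper's, with a modest re-organization. The ``initial $\Rightarrow$ representable'' direction (your characterization lemma) is word-for-word what the paper does for its converse: combine \cref{thm:exten-nonac} with \cref{thm:covtr-is-eq} to identify $\hom_{\sum C}((a,u),(x,v))$ with the fiber of $\yon^C_a(u)_x$ over $v$.

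For the ``representable $\Rightarrow$ initial'' direction the routes diverge slightly. The paper cites the Yoneda lemma (\cref{thm:yoneda}) to get $\yon^C_a(\evid^C_a(\phi)) = \phi$ in one step, then invokes \cref{thm:slice-initial} (initiality of $(a,\idarr a)$ in $\sum_x\hom_A(a,x)$) and transports initiality across the total-space equivalence induced by the fiberwise equivalence. You instead re-derive the chain $e_x(f) = e_x(\covtr f{\idarr a}) = \covtr f (e_a(\idarr a)) = \covtr f u$ from \cref{prop:natural-fibration}, \cref{ex:representable-transport}, and \cref{prop:identity}, and then plug into your characterization lemma. That chain of equations \emph{is} the body of the proof of \cref{thm:yoneda} — so you could have saved yourself the work and simply cited it, exactly as the paper does. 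Your stated ``main obstacle'' — that $e$ need not literally be $\yon^C_a(u)$, so one cannot transport initiality across the total-space equivalence — is not actually an obstacle: to transport initiality from $\sum_x\hom_A(a,x)$ to $\sum_x C(x)$ you only need \emph{some} equivalence carrying $(a,\idarr a)\mapsto(a,u)$, which the fiberwise equivalence $e$ supplies regardless of whether it coincides with $\yon^C_a(u)$; \cref{thm:slice-initial} then does the rest. Your workaround is nonetheless valid, and has the mild advantage of deriving the final clause ($\yon^C_a(u)$ is an equivalence) as an equality $\yon^C_a(u) = e$ rather than indirectly through the characterization.
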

\begin{proof}
If $C$ is representable, then there is an equivalence 
\[ \phi : \prod_{x :A} (\hom_A(a,x) \simeq C(x))\] corresponding under the Yoneda lemma to a term
$\evid^C_a(\phi) : C(a)$ defined by $\evid^C_a(\phi) = \phi(a,\idarr a)$. By \cref{thm:slice-initial}, $(a, \idarr a)$ is initial in $\sum_{x:A}\hom_A(a,x)$. Transporting along the equivalence, we conclude that $(a, \phi(a,\idarr a)) : \sum_{x :A} C(x)$ is initial and moreover that \[ \yon^C_a(\phi(a,\idarr a)) = \yon^C_a( \evid^C_a(\phi)) = \phi\] defines the postulated equivalence $\phi$.

Conversely, if $(a,u) : \sum_{x :A} C(x)$ is initial then we argue that 
\[ \yon^C_a(u) : \prod_{x :A} (\hom_A(a,x) \to C(x))\] defines an equivalence by showing that its fibers are contractible. From the definition of $\yon^C_a(u)$ the fiber of 
\[ \yon^C_a(u)_b : \hom_A(a,b) \to C(b)\] over $v : C(b)$ is \[ \sum_{f : \hom_A(a,b)} f_*(u)=_{C(b)} v.\]
By \cref{thm:covtr-is-eq}  and \cref{thm:exten-nonac}
\begin{align*}
  \Parens{\sum_{f : \hom_A(a,b)} f_*(u)=_{C(b)} v}
  &\simeq \Parens{\sum_{f : \hom_A(a,b)} \hom_{C(f)}(u,v)}\\
  &\simeq \hom_{\sum_{x:A}C(x)}((a,u),(b,v)),
\end{align*}
 which is contractible since $(a,u)$ is initial.
\end{proof}

Yoneda lemmas for bisimplicial sets have recently been studied by~\cite{RV4,kv:yoneda-css,rasekh:yoneda-ss}, with similar conclusions.
For instance, \cref{thm:representability} above corresponds to~\cite[Theorem 5.6]{rasekh:yoneda-ss}.

\section{Rezk types}
\label{sec:Rezk-types}

A Segal type is a type $A$ whose hom-types $\hom_A : A \to A \to \univtype$ are enhanced by a homotopically unique composition operation. A Rezk type is a Segal type that is ``complete'' or ``univalent'' in the sense that the identity type $x  =_A y$ between any two terms is equivalent to the type of isomorphisms $x \cong_A y$ that we now introduce.

\subsection{Isomorphisms}
\label{sec:isos}

Let $A$ be a Segal type and consider $f : \hom_A(x,y)$. As in ordinary category theory, we say $f$ is an isomorphism if it has a two-sided composition inverse. However, as for functions in ordinary homotopy type theory \cite[Chapter 4]{hottbook}, more care is required to define a type of witnesses for the invertibility of $f$ in such a way that it is contractible if it is inhabited. Guided by that experience, we define:
\[\isiso f \defeq \Parens{\sum_{g:\hom_A(y,x)} g\circ f = \idarr x}
\times \Parens{\sum_{h:\hom_A(y,x)} f\circ h = \idarr y} \]
and say that $f$ is an isomorphism if this type is inhabited.

\begin{prop}
  Let $A$ be a Segal type and $f:\hom_A(x,y)$.
  Then $f$ is an isomorphism if and only if we have $g:\hom_A(y,x)$ with $g\circ f = \idarr x$ and $f\circ g = \idarr y$.
\end{prop}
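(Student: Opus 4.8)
The forward implication is trivial: given $g:\hom_A(y,x)$ with $g\circ f=\idarr x$ and $f\circ g=\idarr y$, taking this same $g$ in both factors witnesses that $\isiso f$ is inhabited.

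For the reverse implication I would run the classical ``a morphism with a left inverse and a right inverse has them equal'' argument. Unfolding $\isiso f$ gives $g:\hom_A(y,x)$ together with a path $p:g\circ f=\idarr x$, and $h:\hom_A(y,x)$ together with a path $q:f\circ h=\idarr y$. The plan is first to produce a path $r:g=h$ by concatenating the chain
\[ g = g\circ\idarr y = g\circ(f\circ h) = (g\circ f)\circ h = \idarr x\circ h = h, \]
whose steps are, in order: the unit law $g\circ\idarr y=g$ (\cref{prop:identity}); $\ap_{(g\circ-)}$ applied to the reverse of $q$; associativity $g\circ(f\circ h)=(g\circ f)\circ h$ (\cref{prop:segal-assoc}); $\ap_{(-\circ h)}$ applied to $p$; and the unit law $\idarr x\circ h=h$ (\cref{prop:identity}). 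Having $r:g=h$, I would then exhibit $g$ itself as the desired two-sided inverse: the equation $g\circ f=\idarr x$ is exactly $p$, while $f\circ g=\idarr y$ is obtained by transporting $q$ along $r$, i.e.\ by concatenating $\ap_{(f\circ-)}(r):f\circ g=f\circ h$ with $q$.

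I do not anticipate a genuine obstacle here; the only points needing attention are that every composite and every whiskering above is formed inside the Segal type $A$, so that \cref{prop:segal-assoc} and \cref{prop:identity} apply after the evident relabelling of endpoints, and that the right-invertibility witness phrased in terms of $h$ is correctly transported along $r$ into one phrased in terms of $g$.
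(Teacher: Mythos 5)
Your proof is correct and follows essentially the same route as the paper: the easy direction takes $h\defeq g$ to inhabit $\isiso f$, and the converse uses the standard chain $g = g\circ\idarr y = g\circ(f\circ h) = (g\circ f)\circ h = \idarr x\circ h = h$ via associativity and the unit laws, then transports the right-inverse witness along $g=h$. The paper simply states the chain and concludes; your elaboration of the $\ap$/whiskering steps and the explicit transport is the same argument spelled out in more detail.
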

\begin{proof}
  ``Only if'' is easy; take $h\defeq g$.
  Conversely, from an inhabitant of $\isiso f$ we get $g$ and $h$ with $g\circ f = \idarr x$ and $f\circ h = \idarr y$, and then we can show
  \[ g = g \circ \idarr y = g \circ (f\circ h) = (g\circ f) \circ h = \idarr x \circ h = h \]
  and therefore $f\circ g = \idarr y$ as well.
\end{proof}

\begin{prop} Let $A$ be a Segal type and $f : \hom_A(x,y)$. Then the type $\isiso f$ is a proposition.
\end{prop}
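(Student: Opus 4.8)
The plan is to deduce that $\isiso f$ is a proposition from the stronger claim that it is \emph{contractible} whenever it is inhabited; recall that a type $P$ is automatically a proposition once one exhibits a map $P \to \iscontr{P}$ (if $P$ holds then $\iscontr P$ holds, and a contractible type is a proposition). So I would begin by assuming an arbitrary inhabitant of $\isiso f$. By the previous proposition, this yields a genuine two-sided inverse: an arrow $g:\hom_A(y,x)$ with $g\circ f = \idarr x$ and $f\circ g = \idarr y$. The goal is then to show that, given such a $g$, each of the two $\Sigma$-type factors in the definition of $\isiso f$ is contractible, so that their product is as well.

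The key observation is that these two factors are fibers of composition maps. First I would consider the post-composition map $\lam{g'} g'\circ f : \hom_A(y,x) \to \hom_A(x,x)$; its fiber over $\idarr x$ is exactly $\sum_{g':\hom_A(y,x)} (g'\circ f = \idarr x)$, the first factor. This map is an equivalence, with quasi-inverse $\lam{h} h\circ g$: for $g':\hom_A(y,x)$ we compute $(g'\circ f)\circ g = g'\circ(f\circ g) = g'\circ\idarr y = g'$, and for $h:\hom_A(x,x)$ we compute $(h\circ g)\circ f = h\circ(g\circ f) = h\circ\idarr x = h$, both using associativity (\cref{prop:segal-assoc}) and the unit laws (\cref{prop:identity}). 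Since a map with a quasi-inverse is an equivalence, all of its fibers are contractible, so the first factor is contractible. An entirely symmetric argument applies to the pre-composition map $\lam{h'} f\circ h' : \hom_A(y,x)\to\hom_A(y,y)$, whose quasi-inverse is $\lam{k} g\circ k$ and whose fiber over $\idarr y$ is the second factor $\sum_{h':\hom_A(y,x)} (f\circ h' = \idarr y)$.

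Putting these together, $\isiso f$ is the product of two contractible types, hence contractible; since this conclusion was derived from an arbitrary inhabitant of $\isiso f$, the type $\isiso f$ is a proposition. I do not expect any real obstacle here: this is precisely the argument of~\cite[Chapter 4]{hottbook} showing that bi-invertibility data for a function forms a proposition, transported to the present setting via the facts — supplied by \cref{prop:segal-assoc} and \cref{prop:identity} — that composition in a Segal type is associative and unital, so that composing with an isomorphism on either side is an equivalence of hom-types. The only thing to be slightly careful about is using a \emph{two-sided} inverse $g$ (rather than just the separate left and right inverses given directly by an element of $\isiso f$) when building the quasi-inverses, which is why the previous proposition is invoked at the outset.
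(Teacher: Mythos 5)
Your proof is correct and follows essentially the same strategy as the paper's: both observe that the two $\Sigma$-factors of $\isiso f$ are fibers of the pre- and post-composition maps by $f$, and that these maps are equivalences once $f$ is invertible, so the fibers are contractible. The only (minor) difference is that you first invoke the preceding proposition to upgrade to a single two-sided inverse $g$ and then exhibit an explicit quasi-inverse for each composition map, whereas the paper skips that step and works directly with the separate left inverse $g$ and right inverse $h$ supplied by the inhabitant of $\isiso f$, showing $(f\circ -)$ has both a left inverse $(g\circ -)$ and a right inverse $(h\circ -)$ and is therefore an equivalence by~\cite[4.3.3]{hottbook}; your route is no less correct, just slightly less economical.
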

\begin{proof}
If $f$ is an isomorphism witnessed by a left inverse $g$ and right inverse $h$, then for any $k:\hom_A(z,x)$ we have $k = (g\circ f)\circ k = g\circ (f\circ k)$, and for any $\ell:\hom_A(z,y)$ we have $\ell = (f\circ h)\circ \ell = f\circ (h\circ \ell)$.
Therefore, the function $(f\circ -) : \hom_A(z,x) \to \hom_A(z,y)$ has both a left and a right inverse, and hence it is an equivalence \cite[4.3.3]{hottbook}.

Since $\sum_{h:\hom_A(y,x)} f\circ h = \idarr y$ is a fiber of $(f\circ -)$, it is therefore contractible.
Similarly, the function $(-\circ f) : \hom_A(y,z) \to \hom_A(x,z)$ is an equivalence, so its fiber $\sum_{g:\hom_A(y,x)} g\circ f = \idarr x$ is contractible, and hence $\isiso f$ is also contractible. In other words, if $\isiso f$ is inhabited, then it is contractible.
Therefore, it is a proposition.
\end{proof}

Thus it makes sense to define \textbf{the type of isomorphisms} from $x$ to $y$ to be
\[ (x\cong_A y) \defeq \sum_{f:\hom_A(x,y)}\isiso f. \]

Consider now a pair of functions $f,g : X \to A$ where $X$ is a type or shape and $A$ is a Segal type. For any natural transformation $\alpha : \nat XA(f,g)$, if $\alpha$ is an isomorphism in $X \to A$ then clearly its components $\alpha_x : \hom_A(f(x),g(x))$ are isomorphisms in $A$. Conversely:

\begin{prop} Let $X$ be a type or shape, let $A$ be a Segal type, and consider $\alpha : \nat XA(f,g)$. Then the map
\[ \isiso \alpha \to \prod_{x : X} \isiso {\alpha_x}\] is an equivalence. That is, a natural transformation is an isomorphism if and only if it is a pointwise isomorphism. 
\end{prop}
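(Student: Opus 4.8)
The plan is to reduce the claim to a logical equivalence between two propositions. By \cref{thm:segal-expidl} applied with a constant family, the function type $X\to A$ is again a Segal type, so $\isiso\alpha$ is defined, and by the preceding proposition (that $\isiso h$ is a proposition for any arrow $h$ in a Segal type), applied inside $X\to A$, the type $\isiso\alpha$ is a proposition. Its image $\prod_{x:X}\isiso{\alpha_x}$ is a product of propositions (again by that proposition, now applied in $A$), hence also a proposition. Since a map between propositions is an equivalence exactly when some map exists in the reverse direction, it suffices to construct such a reverse map; the forward map is part of the statement (concretely, a chosen two-sided inverse $\beta$ of $\alpha$ together with its witnessing equalities is sent to the family whose $x$-component applies $\ap_{(-)_x}$ to $\beta\circ\alpha=\idarr f$ and chains it, via \cref{prop:componentwise-composition}, with $(\beta\circ\alpha)_x=\beta_x\circ\alpha_x$ and $(\idarr f)_x\jdeq\idarr{f(x)}$ to obtain $\beta_x\circ\alpha_x=\idarr{f(x)}$, and symmetrically for the other composite).

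For the reverse map, suppose $w:\prod_{x:X}\isiso{\alpha_x}$. By the proposition characterizing isomorphisms in a Segal type via a single two-sided inverse, each $\isiso{\alpha_x}$ supplies a $\beta_x:\hom_A(g(x),f(x))$ with $\beta_x\circ\alpha_x=\idarr{f(x)}$ and $\alpha_x\circ\beta_x=\idarr{g(x)}$. Distributing $\prod$ over $\sum$ (the type-theoretic ``axiom of choice'', \cite[Theorem 2.15.7]{hottbook}) we extract a function $\beta:\prod_{x:X}\hom_A(g(x),f(x))$ together with both equalities pointwise, and by transformation extensionality (\cref{prop:transformation-extensionality}) this $\beta$ is the component family of a natural transformation $\beta:\nat XA(g,f)$ whose component at $x$ is the chosen inverse of $\alpha_x$.

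It then remains to verify $\beta\circ\alpha=\idarr f$ and $\alpha\circ\beta=\idarr g$ in $X\to A$. By \cref{prop:componentwise-composition}, $(\beta\circ\alpha)_x=\beta_x\circ\alpha_x=\idarr{f(x)}\jdeq(\idarr f)_x$, and similarly for $\alpha\circ\beta$, so $\beta\circ\alpha$ and $\idarr f$ have pointwise-equal components, and likewise $\alpha\circ\beta$ and $\idarr g$. Now \cref{prop:transformation-extensionality} asserts that the component map $\nat XA(f,f)\to\prod_{x:X}\hom_A(f(x),f(x))$ is an equivalence, hence injective on identity types; combining this with function extensionality to promote the pointwise equalities of components into an equality of the component families, we conclude $\beta\circ\alpha=\idarr f$ and $\alpha\circ\beta=\idarr g$. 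Together these inhabit $\isiso\alpha$, which is the reverse map we wanted.

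The argument is essentially formal, and the only points needing a little care — the ``main obstacle'', such as it is — are that all structure on natural transformations is computed componentwise (packaged into \cref{prop:transformation-extensionality,prop:componentwise-composition}) and the step ``equal components implies equal natural transformations'', which is just injectivity of an equivalence on paths together with function extensionality. No new idea beyond the Segal closure of $X\to A$ and the fact that $\isiso{-}$ is always a proposition is required.
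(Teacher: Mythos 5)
Your proof is correct and takes essentially the same approach as the paper: both reduce to constructing a reverse map because both sides are propositions, define $\beta$ pointwise as $\beta_x=(\alpha_x)^{-1}$ via transformation extensionality, and verify $\beta\circ\alpha=\idarr f$ and $\alpha\circ\beta=\idarr g$ componentwise using \cref{prop:componentwise-composition} and function extensionality. Your extra packaging via the non-axiom-of-choice and the explicit injectivity-of-an-equivalence step are just more verbose spellings of the same argument.
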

\begin{proof}
  Since both sides are propositions, it suffices to assume $\prod_{x : X} \isiso {\alpha_x}$ and prove that $\alpha$ is an isomorphism.
  To define $\beta:\nat XA(g,f)$, we must assume $t:\two$ and then $x:X$, and define $\beta_x(t)$; but since $\alpha_x$ is an isomorphism it has an inverse, so we can take $\beta_x(t) = (\alpha_x)^{-1}(t)$, i.e.\ $\beta_x = (\alpha_x)^{-1}$.
  To show $\beta\circ\alpha = \idarr f$, by function extensionality it suffices to show that $(\beta \circ \alpha)_x = (\idarr f)_x$; but this follows by \cref{prop:componentwise-composition} since $\beta_x \circ \alpha_x = \idarr{f(x)}$.
  Similarly we have $\alpha\circ\beta = \idarr g$.
\end{proof}

This gives ``isomorphism extensionality'':

\begin{cor}\label{thm:isomorphism-extensionality}
  For $X$ a type or shape, $A$ a Segal type, and $f,g: X\to A$, we have
  \[ (f \cong_{A^X} g) \simeq \prod_{x :X} (fx \cong_A gx).\]
\end{cor}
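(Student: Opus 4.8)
The plan is to unfold the definition of the type of isomorphisms on both sides of the claimed equivalence and then assemble it from results already in hand. By definition $(f \cong_{A^X} g) \jdeq \sum_{\alpha : \nat XA(f,g)} \isiso\alpha$, which makes sense because $A^X$ is a Segal type by \cref{thm:segal-expidl}; dually, $\prod_{x:X}(fx \cong_A gx) \jdeq \prod_{x:X} \sum_{h:\hom_A(fx,gx)} \isiso h$. So it suffices to produce an equivalence between these two $\Sigma$/$\Pi$-expressions.

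First I would commute $\prod_{x:X}$ past the $\sum$ using the type-theoretic ``non-axiom of choice'' ($\cite[Theorem 2.15.7]{hottbook}$; cf.\ \cref{sec:non-choice}), giving $\prod_{x:X}(fx\cong_A gx) \simeq \sum_{\beta : \prod_{x:X}\hom_A(fx,gx)} \prod_{x:X} \isiso{\beta_x}$. Next, transformation extensionality (\cref{prop:transformation-extensionality}) identifies the index type $\nat XA(f,g)$ with $\prod_{x:X}\hom_A(fx,gx)$, sending a natural transformation $\alpha$ to its tuple of components $\lam{x}\alpha_x$. Under this identification the immediately preceding proposition says exactly that $\isiso\alpha \simeq \prod_{x:X}\isiso{\alpha_x}$, i.e.\ it supplies a family of fiberwise equivalences lying over that base equivalence. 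Since a base equivalence together with a family of fiberwise equivalences induces an equivalence of the corresponding $\Sigma$-types, we obtain $\sum_{\alpha:\nat XA(f,g)} \isiso\alpha \simeq \sum_{\beta:\prod_{x:X}\hom_A(fx,gx)} \prod_{x:X}\isiso{\beta_x}$.

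Chaining the two displayed equivalences yields $(f\cong_{A^X}g) \simeq \prod_{x:X}(fx\cong_A gx)$, and since all of the constituent maps are either judgmental isomorphisms or the canonical comparisons, the composite is the evident ``componentwise'' map. There is no real obstacle here: the work is purely bookkeeping, matching up the $\Sigma/\Pi$ reshuffling with the two extensionality results. The one point that warrants a sentence of care is that the componentwise-isomorphism equivalence is genuinely fibered over the transformation-extensionality equivalence rather than merely holding fiberwise — but this is immediate, since the preceding proposition is stated uniformly in $\alpha$, and the $\isiso$ of a transformation is computed from its components.
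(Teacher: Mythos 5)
Your proof is correct and follows essentially the same route as the paper's: unfold both sides of the equivalence, commute $\prod$ past $\sum$ via the non-axiom of choice, identify the index types using transformation extensionality (\cref{prop:transformation-extensionality}), and apply the preceding proposition fiberwise to conclude. The extra care you take to note that the fiberwise equivalence lies coherently over the base equivalence is implicit in the paper's chain of equivalences and is correctly handled.
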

\begin{proof}
  We have
  \begin{align*}
    \tprod_{x :X} (fx \cong_A gx)
    &\jdeq \tprod_{x :X} \sum_{\alpha_x:\hom_A(f(x),g(y))} \isiso {\alpha_x}\\
    &\simeq \tsum_{\alpha:\prod_{x:X}\hom_A(f(x),g(x))} \tprod_{x:X} \isiso {\alpha_x}\\
    &\simeq \tsum_{\alpha:\nat XA(f,g)} \tprod_{x:X} \isiso {\alpha_x}\\
    &\simeq  \tsum_{\alpha:\nat XA(f,g)} \isiso{\alpha}\\
    &\jdeq (f\cong_{A^X} g).\qedhere
  \end{align*}
\end{proof}

\subsection{Rezk-completeness}
\label{sec:completeness}

Of course, $\idarr x$ is always an isomorphism.
Thus, path induction allows us to define
\begin{equation}
  \idtoiso:\prod_{x,y :A} (x =_A y) \to (x\cong_A y)\label{eq:idtoiso}
\end{equation}
by reducing to the case where $x \jdeq y$ and our equality is $\refl_x : x =_A x$, which we map to $\idarr x : (x\cong_A x)$.

\begin{defn}\label{defn:rezk-complete} A Segal type  $A$ is \textbf{Rezk-complete}  if~\eqref{eq:idtoiso} is an equivalence, in which case we say that $A$ is a \textbf{Rezk type}.
\end{defn}

When working with Rezk types, it is useful to observe that $\idtoiso$ mediates between the type-theoretic operations on paths and the category-theoretic operations on arrows.

\begin{lem}\label{thm:idtoiso-trans}
  If $A$ is Segal and $C:A\to\univtype$ is covariant, while $e:x=_A y$, then for any $u:C(x)$ we have
  \[ \covtr{\idtoiso(e)}{u}= \transport{C}{e}{u} \]
  (The left-hand side is covariant transport along an arrow, while the right-hand side is homotopy-type-theoretic transport along an equality.)
\end{lem}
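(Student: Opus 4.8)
The plan is to prove this by path induction on the equality $e : x =_A y$. Since $\idtoiso$, covariant transport $\covtr{(-)}{-}$, and homotopy-type-theoretic transport $\transport{C}{-}{-}$ are all ultimately defined (directly or indirectly) by path induction, the universal property of the identity type lets us reduce to the case where $y$ is $x$ and $e$ is $\refl_x$. It then suffices to exhibit an equality $\covtr{\idtoiso(\refl_x)}{u} = \transport{C}{\refl_x}{u}$ for each $u : C(x)$.

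In this reduced situation both sides simplify: by the defining clause of $\idtoiso$ in~\eqref{eq:idtoiso} we have $\idtoiso(\refl_x) \jdeq \idarr x$, and by the computation rule for transport $\transport{C}{\refl_x}{u} \jdeq u$. So the goal becomes the propositional equality $\covtr{(\idarr x)}{u} = u$. But this is precisely the second clause of \cref{prop:functorial-fibration}, which was proved using the contractibility of the type of lifts of $\idarr x$ together with the dependent identity arrow $\idarr u : \hom_{C(\idarr x)}(u,u)$. Invoking that result finishes the argument.

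There is essentially no obstacle here — the content is entirely packaged in \cref{prop:functorial-fibration}, and the only thing to be careful about is that the asserted equality is genuinely propositional rather than judgmental, since $\covtr{(\idarr x)}{u}$ is only propositionally (not judgmentally) equal to $u$; this matches the statement of the lemma. If one wanted to be slightly more explicit, one could note that after the path induction it is enough to check this single equality in the fiber $C(x)$, and no further use of the Segal hypothesis on $A$ is needed beyond what is already assumed in \cref{prop:functorial-fibration} (namely to make covariant transport well-behaved).
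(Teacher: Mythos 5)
Your proof is correct and takes essentially the same route as the paper's: path-induct on $e$, observe $\idtoiso(\refl_x)\jdeq\idarr x$ and $\transport{C}{\refl_x}{u}\jdeq u$, and invoke $\covtr{(\idarr x)}{u}=u$ from \cref{prop:functorial-fibration}. The paper's own proof simply says ``By path induction on $e$: when $e\jdeq\refl$, both sides are equal to $u$,'' which is the same argument in more compressed form.
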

\begin{proof}
  By path induction on $e$: when $e\jdeq \refl$, both sides are equal to $u$.
\end{proof}

\begin{lem}\label{thm:idtoiso-ap}
  If $A$ and $B$ are Segal and $f:A\to B$, while $e:x=_A y$, then
  \[ \extfn f(\idtoiso(e)) = \idtoiso(\ap_f(e)) \]
\end{lem}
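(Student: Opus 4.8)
The plan is to prove this by path induction on $e$, exactly as in the proof of \cref{thm:idtoiso-trans}. The statement asserts an equality of two isomorphisms in $\hom_B(f(x),f(y))$, namely $\extfn f(\idtoiso(e))$ and $\idtoiso(\ap_f(e))$, so it suffices to produce a path between them. By the induction principle for the identity type $x =_A y$, it is enough to treat the case where $y$ is $x$ and $e$ is $\refl_x$.

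In that case the left-hand side becomes $\extfn f(\idtoiso(\refl_x))$. By the computation rule defining $\idtoiso$ in \eqref{eq:idtoiso}, $\idtoiso(\refl_x) = \idarr x$ (indeed judgmentally, as a first component; but in any case as an element of $x \cong_A x$). Applying $\extfn f$, we get $\extfn f(\idarr x)$, which by \cref{prop:segal-functors} (functors preserve identities) equals $\idarr{f(x)}$; in fact by the definition of $\extfn f$ on arrows we have $\extfn f(\idarr x)(s) \jdeq f(\idarr x(s)) \jdeq f(x)$, so this is judgmentally $\idarr{f(x)}$. Meanwhile the right-hand side is $\idtoiso(\ap_f(\refl_x))$, and since $\ap_f(\refl_x) \jdeq \refl_{f(x)}$ by the computation rule for $\ap$, this is $\idtoiso(\refl_{f(x)}) = \idarr{f(x)}$. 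Hence both sides reduce to $\idarr{f(x)}$ and are equal by $\refl$.

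One small point of care: the equality in the conclusion is an equality of \emph{isomorphisms}, i.e.\ of elements of the $\Sigma$-type $f(x) \cong_B f(y)$. Since $\isiso{}$ is a proposition (as shown in \cref{sec:isos}), such an equality is determined by an equality of the underlying arrows, so it suffices to compare the first components; but in the base case of the path induction both isomorphisms are literally $\idarr{f(x)}$ on the nose, so no extra work with the proposition $\isiso{}$ is needed. The only genuinely nontrivial input, and the "main obstacle" if there is one, is simply unwinding the definitions of $\extfn f$ on isomorphisms, of $\idtoiso$, and of $\ap$ carefully enough to see that everything collapses to $\idarr{f(x)}$ in the reflexivity case; this is entirely routine.
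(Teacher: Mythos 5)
Your proof is correct and follows exactly the same strategy as the paper: path induction on $e$, reducing to the case $e \jdeq \refl_x$, where both sides compute to $\idarr{f(x)}$. You have simply spelled out the definitional unfolding ($\idtoiso(\refl_x) = \idarr x$, $\extfn f(\idarr x) \jdeq \idarr{f(x)}$, $\ap_f(\refl_x) \jdeq \refl_{f(x)}$) and the remark about $\isiso{}$ being a proposition in more detail than the paper's one-line proof.
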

\begin{proof}
  By path induction on $e$: when $e\jdeq \refl$, both sides are equal to $\idarr{fx}$.
\end{proof}

Rezk types, like Segal types, are closed under function spaces.

\begin{prop} If $A$ is a Rezk type so is $X \to A$ for any type or shape $X$.
\end{prop}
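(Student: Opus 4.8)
The plan is to reduce the claim to the Rezk-completeness of $A$ together with the extensionality results already established. Since a Rezk type is in particular Segal, $X \to A$ is Segal by \cref{thm:segal-expidl}, so it remains only to verify that $X \to A$ is Rezk-complete: that for all $f,g : X \to A$ the canonical map $\idtoiso : (f =_{A^X} g) \to (f \cong_{A^X} g)$ is an equivalence. As in the proofs of \cref{prop:htpy-is-htpy} and \cref{thm:covar-discrete}, it suffices to show the induced map on total spaces $\bigl(\sum_{g}(f = g)\bigr) \to \bigl(\sum_{g}(f \cong_{A^X} g)\bigr)$ is an equivalence; the domain is a based path space and hence contractible, so the goal is to show that $\sum_{g : A^X}(f \cong_{A^X} g)$ is contractible.

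First I would apply \cref{thm:isomorphism-extensionality}, which supplies a family of equivalences $(f \cong_{A^X} g) \simeq \prod_{x:X}(fx \cong_A gx)$ indexed by $g$, and therefore (a fiberwise equivalence induces an equivalence of total spaces) an equivalence $\sum_{g:A^X}(f \cong_{A^X} g) \simeq \sum_{g : X \to A}\prod_{x:X}(fx \cong_A gx)$. Next, by the ``non-axiom of choice'' --- \cite[Theorem 2.15.7]{hottbook} when $X$ is a type, or \cref{thm:exten-nonac} when $X$ is a shape --- this rearranges to $\prod_{x:X}\sum_{y:A}(fx \cong_A y)$. Finally, because $A$ is Rezk, $\idtoiso$ gives a fiberwise equivalence $(fx =_A y) \simeq (fx \cong_A y)$, so $\sum_{y:A}(fx \cong_A y) \simeq \sum_{y:A}(fx =_A y)$ is contractible, being a based path space; hence $\prod_{x:X}\sum_{y:A}(fx \cong_A y)$ is contractible by (relative) function extensionality. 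This shows $\sum_{g:A^X}(f \cong_{A^X} g)$ is contractible, as needed.

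I do not expect a substantial obstacle; the only point requiring attention is the uniform handling of ``$X$ a type or shape'', but every ingredient used (\cref{thm:segal-expidl}, \cref{thm:isomorphism-extensionality}, \cref{thm:exten-nonac}, \cref{ax:extfunext}) is available in both cases. An alternative route, equally viable, is to assemble the commuting square relating $\idtoiso$ on $A^X$, the pointwise $\idtoiso$ on $A$, and the equivalences coming from function extensionality and \cref{thm:isomorphism-extensionality}; since $A$ is Rezk three of its four edges are equivalences, so the fourth is too, and checking the square commutes reduces by path induction to the judgmental equation $(\idarr f)_x \jdeq \idarr{fx}$ from \cref{prop:componentwise-composition}.
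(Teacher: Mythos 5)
Your main argument is correct, but it takes a different (slightly longer) route than the paper. The paper proves the result in two lines by factoring $\idtoiso_{f,g}$ directly as the composite
\[ (f =_{A^X} g) \xto{\simeq} \Parens{\prod_{x :X} (fx =_A gx)} \xto{\prod_{x:X} \idtoiso_{fx,gx}} \Parens{\prod_{x :X} fx \cong_A gx} \xto{\simeq} (f \cong_{A^X} g),\]
where the outer two maps are (relative) function extensionality and isomorphism extensionality and the middle one is an equivalence because $A$ is Rezk. This is essentially what you sketch as your ``alternative route,'' with the minor cosmetic difference that you describe it as a commuting square rather than a factorization, and you rightly note that verifying the factorization/commutation reduces by path induction to $(\idarr f)_x \jdeq \idarr{fx}$ (which the paper leaves implicit). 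Your primary argument instead runs the standard ``contract the total space'' maneuver: fix $f$, use isomorphism extensionality and the non-axiom of choice to rewrite $\sum_{g}(f\cong_{A^X}g)$ as $\prod_{x:X}\sum_{y:A}(fx\cong_A y)$, and then observe each $\sum_{y:A}(fx\cong_A y)$ is contractible by Rezk-completeness of $A$. This is perfectly sound and matches how the paper handles several neighboring results (\cref{prop:htpy-is-htpy}, \cref{thm:covar-discrete}), but it is more machinery than the direct factorization for this particular statement; the factorization also has the small advantage of exhibiting the equivalence explicitly rather than abstractly via contractibility. Either way, all the ingredients you invoke (\cref{thm:segal-expidl}, \cref{thm:isomorphism-extensionality}, \cref{thm:exten-nonac}, \cref{ax:extfunext}) are available uniformly for $X$ a type or a shape, so there is no gap.
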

\begin{proof}
For $f,g:X\to A$, the map $\idtoiso_{f,g}$ factors through (perhaps relative) function extensionality and isomorphism extensionality (\cref{thm:isomorphism-extensionality}) and the maps $\idtoiso_{fx,gx}$ for $A$:
\[ 
(f =_{A^X} g) \xto{\simeq} \Parens{\prod_{x :X} (fx =_A gx)} \xto{\prod_{x:X} \idtoiso_{fx,gx}} \Parens{\prod_{x :X} fx \cong_A gx} \xto{\simeq} (f \cong_{A^X} g).\]
If $A$ is a Rezk type, the middle map is an equivalence, hence so is the composite.
\end{proof}

We now observe that if Rezk types are the ``categories'', then discrete types are the ``groupoids''.

\begin{prop}\label{prop:discrete-are-groupoid}
  A type is discrete if and only if it is Rezk and all its arrows are isomorphisms.
\end{prop}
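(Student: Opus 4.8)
The plan is to exhibit the comparison map $\idtoarr$ as the composite of $\idtoiso$ with the underlying-arrow projection, and then play the three maps off against each other using the ``two-out-of-three'' property of equivalences. Note first that both halves of the biconditional only involve Segal types: discreteness implies Segalness by the last proposition of \cref{sec:discrete-types}, and a Rezk type is Segal by definition, so in each case the phrase ``all arrows are isomorphisms'' is meaningful (composition being available). So let $A$ be a Segal type and, for $x,y:A$, write $\pi_1 \colon (x\cong_A y)\to\hom_A(x,y)$ for the first projection out of $(x\cong_A y)\jdeq\sum_{f:\hom_A(x,y)}\isiso f$. By a one-line path induction (both sides send $\refl_x$ to $\idarr x$) we have $\pi_1\circ\idtoiso_{x,y} = \idtoarr_{x,y}$. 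Moreover, since $\isiso f$ is always a proposition, the fiber of $\pi_1$ over an arrow $f$ is $\isiso f$, which is contractible precisely when it is inhabited; hence $\pi_1$ is an equivalence for all $x,y$ if and only if every arrow of $A$ is an isomorphism.

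Granting this, the reverse implication is immediate: if $A$ is Rezk and every arrow is an isomorphism, then $\idtoiso_{x,y}$ is an equivalence (Rezk-completeness) and $\pi_1$ is an equivalence (every arrow an iso), so $\idtoarr_{x,y} = \pi_1\circ\idtoiso_{x,y}$ is an equivalence, i.e.\ $A$ is discrete.

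For the forward implication, suppose $A$ is discrete, hence Segal, with $\idtoarr_{x,y}$ an equivalence for all $x,y$. Given $f:\hom_A(x,y)$, choose $e:x=_A y$ with $\idtoarr(e)=f$; then $\idtoiso(e):x\cong_A y$ is an isomorphism whose underlying arrow is $\pi_1(\idtoiso(e)) = \idtoarr(e) = f$, so transporting its witness of invertibility along this equality inhabits $\isiso f$. Thus every arrow of $A$ is an isomorphism, whence $\pi_1$ is an equivalence; combined with $\pi_1\circ\idtoiso_{x,y} = \idtoarr_{x,y}$ and two-out-of-three, $\idtoiso_{x,y}$ is an equivalence, so $A$ is a Rezk type. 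I do not anticipate any real obstacle: the only points requiring a moment of care are the identification $\pi_1\circ\idtoiso = \idtoarr$ (a trivial path induction) and the translation between ``$\pi_1$ is an equivalence'' and ``every arrow is an isomorphism'' (immediate since $\isiso$ is a family of propositions).
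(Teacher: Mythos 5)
Your proof is correct and matches the paper's argument essentially step for step: you factor $\idtoarr = \pi_1\circ\idtoiso$, observe that since $\isiso f$ is a proposition the projection $\pi_1$ is an equivalence exactly when every arrow is an isomorphism, and then close both directions by composition and two-out-of-three. The only cosmetic difference is that in the forward direction you explicitly transport an invertibility witness to inhabit $\isiso f$, whereas the paper phrases the same step as surjectivity of the embedding $\pi_1$ following from surjectivity of the composite.
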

\begin{proof}
  Note that the composite
  \[ (x=_A y) \xto{\idtoiso} (x\cong_A y) \longrightarrow \hom_A(x,y) \]
  is the map $\idtoarr$ from \cref{defn:discrete}.
  Since being an isomorphism is a proposition, the inclusion $(x\cong_A y) \to \hom_A(x,y)$ is an embedding, and hence is an equivalence if and only if it is surjective (i.e.\ all arrows are isomorphisms).
  This gives ``if'', since equivalences compose.
  On the other hand, if $A$ is discrete, then the composite is in particular surjective, hence so is the second factor. Thus this second factor is an equivalence, and hence so is the first factor; this gives ``only if''.
\end{proof}

\subsection{Representable isomorphisms}

As a corollary of the Yoneda lemma, we can prove:

\begin{prop}\label{prop:rep-iso} If given a pair of terms $a , a' : A$ in a Segal type  and a fiberwise equivalence
\[ \phi:\prod_{x : A} \hom_A(a,x) \simeq \hom_A(a',x)\] then the corresponding term $\evid^{\hom_A(a',-)}_a(\phi) : \hom_A(a',a)$ is an isomorphism. If $A$ is a Rezk type, then $a' =_A a$.
\end{prop}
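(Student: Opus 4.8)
The plan is to apply the Yoneda lemma in both directions: it identifies $\phi$ as precomposition with the arrow $u \defeq \evid^{\hom_A(a',-)}_a(\phi)$, and identifies its inverse $\phi^{-1}$ as precomposition with a dual arrow $u'$, after which the two triangle identities relating $u$ and $u'$ fall out of the unit and associativity laws for composition in the Segal type $A$, and the final statement follows from Rezk-completeness.

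In detail, I would first note that since $A$ is Segal, $\hom_A(a',-)$ is covariant (\cref{prop:covar-rep}), so the Yoneda lemma (\cref{thm:yoneda}) applies; by \cref{rmk:rep-nat-trans} (which uses \cref{ex:representable-transport}), setting $u \defeq \evid^{\hom_A(a',-)}_a(\phi) : \hom_A(a',a)$ we have $\phi_x(f) = f \circ u$ for all $x:A$ and $f : \hom_A(a,x)$. Symmetrically, $\phi^{-1} : \prod_{x:A}(\hom_A(a',x)\to\hom_A(a,x))$ is again a fiberwise map between covariant representables, so applying the Yoneda lemma to $\hom_A(a,-)$ based at $a'$ and setting $u' \defeq \evid^{\hom_A(a,-)}_{a'}(\phi^{-1}) : \hom_A(a,a')$ gives $\phi^{-1}_x(g) = g \circ u'$ for all $x:A$ and $g : \hom_A(a',x)$.

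Now I would feed in the equivalence homotopies. For every $x:A$ and $f:\hom_A(a,x)$ we have $f = \phi^{-1}_x(\phi_x(f)) = (f\circ u)\circ u' = f\circ(u\circ u')$ by associativity (\cref{prop:segal-assoc}); specializing to $x\defeq a$, $f\defeq\idarr a$ and using the unit law (\cref{prop:identity}) yields $u\circ u' = \idarr a$. Dually, $g = \phi_x(\phi^{-1}_x(g)) = g\circ(u'\circ u)$ for all $g:\hom_A(a',x)$, so that $u'\circ u = \idarr{a'}$. Thus $u'$ is a two-sided inverse of $u$, and by the characterization of isomorphisms via two-sided inverses in \cref{sec:isos}, $\isiso u$ is inhabited; that is, $\evid^{\hom_A(a',-)}_a(\phi)$ is an isomorphism. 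Finally, if $A$ is a Rezk type then $\idtoiso : (a' =_A a)\to(a'\cong_A a)$ is an equivalence, and since we have exhibited an inhabitant of $a'\cong_A a$, the type $a' =_A a$ is inhabited as well.

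I do not expect a genuine obstacle: the crux is the double application of Yoneda, and what remains is the elementary algebra of composition (unit and associativity) already available for Segal types. The only things to watch are the bookkeeping of variances and composition orders --- which covariant representable is based at which point, and in each use of Yoneda whether one recovers a pre- or a post-composition --- and the (immediate) check that $\phi$ and $\phi^{-1}$ are genuine fiberwise maps, which holds by hypothesis since $\phi$ is a family of equivalences over $A$.
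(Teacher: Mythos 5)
Your proof is correct and takes essentially the same route as the paper's: identify $\phi$ and $\phi^{-1}$ via Yoneda as precompositions with $u$ and $u'$ (the paper calls the latter $v$), derive the two inverse identities from the round-trip homotopies plus associativity and unit laws, and conclude via Rezk-completeness. Incidentally, your bookkeeping is actually more careful than the paper's, which contains a small typo writing $v \defeq \evid^{\hom_A(a',-)}_a(\phi^{-1})$ where it should be $\evid^{\hom_A(a,-)}_{a'}(\phi^{-1})$, exactly as you have it.
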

\begin{proof}
  Let $u \defeq \evid^{\hom_A(a',-)}_a(\phi)$.
  Then as observed in \cref{rmk:rep-nat-trans}, we have
  \[ \phi = \yon^{\hom_A(a',-)}_a(u) = \lam{x} \lam{f} f \circ u. \] 
  By the same argument, for the fiberwise inverse equivalence $\phi_x^{-1}:\hom_A(a',x) \to \hom_A(a,x)$ we have
  \[ \phi^{-1} = \yon^{\hom_A(a',-)}_a(v) = \lam{x} \lam{f} f \circ v. \] 
  where $v \defeq \evid^{\hom_A(a',-)}_a(\phi^{-1})$.
  Since the composite of these fiberwise equivalences is equal to the identity function, we then have in particular that $\idarr a = (\idarr a \circ u) \circ v = \idarr a \circ  (u \circ v) = u \circ v$ by the associativity and identity laws; similarly, $\idarr {a'} = v \circ u$. Thus $u$ is an isomorphism and if $A$ is a Rezk type then $u$ proves that $a' =_A a$.
\end{proof}

\section{Adjunctions}
\label{sec:adjunctions}

In this section we introduce several types of adjunction data between a pair of types $A$ and $B$ and then investigate comparisons between these adjunction notions in the case where $A$ and $B$ are Segal or Rezk types. This extends the similar inquiry concerning data defining an equivalence between types in \cite[Chapter 4]{hottbook}.

\subsection{Notions of adjunction}
\label{sec:notions-adjunction}

In ordinary category theory, there are two ways of defining an adjunction: by a natural isomorphism of hom-sets, or in terms of a unit and counit satisfying the triangle identities.
For clarity, in this section we will refer to the first style as a \emph{transposing adjunction} and the latter as a \emph{diagrammatic adjunction}.
Transposing adjunctions generalize to our synthetic context fairly easily.

\begin{defn}
  A \textbf{transposing adjunction} between types $A,B$ consists of functors $f:A\to B$ and $u:B\to A$ and a family of equivalences
  \[ \prod_{\substack{a:A\\ b:B}} \hom_B(fa,b) \simeq \hom_A(a,ub). \]
  Similarly, a \textbf{transposing left adjoint} of a functor $u:B\to A$ consists of a functor $f:A\to B$ together with such a family of equivalences. A \textbf{transposing right adjoint} of a functor $f \colon A \to B$ is defined dually.
\end{defn}

On the other hand, in any sort of higher category theory, the triangle identities for a diagrammatic adjunction become data that can be asked to satisfy higher coherence laws as in~\cite{RVadj}.
We will indicate the absence of such coherence with the prefix ``quasi-'', intentionally recalling the use of ``quasi-inverse'' in~\cite{hottbook} for an incoherent homotopy inverse.

\begin{defn}
  A \textbf{quasi-diagrammatic adjunction} between types $A,B$ consists of:
  \begin{itemize}
  \item a functor $u : B\to A$,
  \item a functor $f : A \to B$, 
  \item a natural transformation $\eta : \nat AA(\idfunc A, uf)$, and
  \item a natural transformation $\epsilon : \nat BB(fu, \idfunc  B)$ together with
  \item a witness $\alpha : \nattwo BA(u, ufu, u, \eta u, u \epsilon, \idarr u)$ and
  \item a witness $ \beta : \nattwo AB(f,fuf,f, f\eta, \epsilon f, \idarr f)$.
  \end{itemize}
  Similarly, a \textbf{quasi-diagrammatic left adjoint} of a functor $u:B\to A$ consists of the last five data above, and dually.
\end{defn}

Note that if $A$ is Segal, then by \cref{thm:comp-htpy}, the last two data may be presented equally as homotopies $\alpha : u\epsilon\circ\eta u = \idarr u$ and $\beta : \epsilon f \circ f\eta = \idarr f$.
We will frequently pass back and forth between these two points of view in \cref{sec:adj-segal,sec:adj-rezk}.
We have phrased the definition using higher simplices because it makes the connection to the theory of~\cite{RVadj} clearer; the incoherence of a quasi-diagrammatic adjunction, for instance, corresponds to the fact that Example 4.2.4 of~\cite{RVadj} is not a ``parental subcomputad''.

One of the main results of~\cite{RVadj} is that while a fully coherent adjunction requires infinitely much data, that data is determined up to a contractible space of choices by various finite subcollections; these are the \emph{parental subcomputads} of the generic adjunction $\underline{\smash[b]{\mathrm{Adj}}}$.
The formal framework of~\cite{RVadj} uses simplicially enriched categories to represent $(\infty,2)$-categories, with their hom-spaces regarded as presenting quasi-categories.
This can quite easily be translated into our setting, because our function-types, being types, have simplicial structure, and composition of functions in type theory is even strictly associative and unital.

As described in Examples 4.2.3 and 4.2.5 of~\cite{RVadj}, the four simplest parental subcomputads correspond in our framework to the following data:
\begin{enumerate}
\item A functor $f:A\to B$ only.
\item Functors $f:A\to B$ and $u:B\to A$ and a transformation $\epsilon : \nat BB(fu, \idfunc  B)$.
\item Functors $f$ and $u$, transformations $\eta$ and $\epsilon$, and the 2-simplex $\beta$.
\item Functors $f$ and $u$, transformations $\eta$ and $\epsilon$, 2-simplices $\alpha$ and $\beta$, and 3-simplices $\omega$ and $\tau$ with one new common 2-simplex face $\mu$.
\end{enumerate}

Of these, the last is the first one that includes at least the data of a quasi-diagrammatic adjunction, so that it suffices to determine some kind of adjunction without further hypotheses.
We name the corresponding structure in our setting by analogy to the ``half-adjoint equivalences'' of~\cite[\S4.2]{hottbook}.
The simplices $\mu,\omega,\tau$ are (like $\alpha$ and $\beta$) named as in~\cite[\S\S1.1 and 4.2]{RVadj}.

\begin{defn}\label{defn:hadadj}
  A \textbf{half-adjoint diagrammatic adjunction} between types consists of a quasi-diagrammatic adjunction together with:
  \begin{itemize}
  \item A 2-simplex $\mu : \nattwo BB (fu,fufu,\idfunc A,f\eta u,\epsilon\ast\epsilon,\epsilon)$, where $\epsilon\ast\epsilon$ is the horizontal composite from \cref{sec:horiz}.
  \item Two 3-simplices $\omega$ and $\tau$ with the boundaries shown in \cref{fig:hadadj}, where the 2-simplex denoted $\epsilon$ is degenerate, and the 2-simplices $\mathsf{nat}_\epsilon^1$ and $\mathsf{nat}_\epsilon^2$ are the two halves of the Gray interchanger $\extfn\epsilon(\epsilon)$ from \cref{sec:horiz}.
  \end{itemize}
\end{defn}

\begin{figure}
  \centering
  \[
  \begin{tikzcd}
    & fufu\ar[dr,"\epsilon\ast\epsilon"] \ar[dd,"fu\epsilon" description] &&&&
    fufu \ar[dr,"\epsilon\ast\epsilon"] \ar[d,phantom,"\scriptstyle \mu"]\\
    fu\ar[ur,"f\eta u"] \ar[dr,equals] \ar[r,phantom,"\scriptstyle f\alpha"] &
    ~ \ar[r,phantom,"\scriptstyle \mathsf{nat}_\epsilon^1"] & \idfunc B
    & \xRightarrow{\omega} &
    fu\ar[ur,"f\eta u"] \ar[dr,equals] \ar[rr,"\epsilon" description] & ~ \ar[d,phantom,"\scriptstyle \epsilon"] & \idfunc B\\
    & fu\ar[ur,"\epsilon"'] &&&& fu\ar[ur,"\epsilon"']
  \end{tikzcd}
  \]
  \[
  \begin{tikzcd}
    & fufu\ar[dr,"\epsilon\ast\epsilon"] \ar[dd,"\epsilon fu" description] &&&&
    fufu \ar[dr,"\epsilon\ast\epsilon"] \ar[d,phantom,"\scriptstyle \mu"]\\
    fu\ar[ur,"f\eta u"] \ar[dr,equals] \ar[r,phantom,"\scriptstyle \beta u"] &
    ~ \ar[r,phantom,"\scriptstyle \mathsf{nat}_\epsilon^2"] & \idfunc B
    & \xRightarrow{\tau} &
    fu\ar[ur,"f\eta u"] \ar[dr,equals] \ar[rr,"\epsilon" description] & ~ \ar[d,phantom,"\scriptstyle \epsilon"] & \idfunc B\\
    & fu\ar[ur,"\epsilon"'] &&&& fu\ar[ur,"\epsilon"']
  \end{tikzcd}
  \]
  \caption{The 3-simplices in a half-adjoint diagrammatic adjunction}
  \label{fig:hadadj}
\end{figure}

If $A$ is Segal, then by \cref{thm:32horn-is-concat} the 3-simplices $\omega$ and $\tau$ can equivalently be regarded as equalities relating the following two concatenated equalities to the homotopy $(\epsilon\ast\epsilon) \circ f\eta u = \epsilon$ corresponding to $\mu$:
\begin{gather}
  (\epsilon\ast\epsilon) \circ f\eta u
  = (\epsilon \circ fu\epsilon) \circ f\eta u
  = \epsilon \circ (fu\epsilon \circ f\eta u)
  \overset{f\alpha}= \epsilon \circ \idarr{fu}
  = \epsilon \label{eq:omega}\\
  (\epsilon\ast\epsilon) \circ f\eta u
  = (\epsilon \circ \epsilon fu)\circ f\eta u
  = \epsilon \circ (\epsilon fu\circ f\eta u)
  \overset{\beta u}= \epsilon \circ \idarr{fu}
  = \epsilon.\label{eq:tau}
\end{gather}
That is, in a Segal type, the type of half-adjoint diagrammatic adjunctions extending a given quasi-diagrammatic adjunction is equivalent to
\[ \sum_{\mu:(\epsilon\ast\epsilon) \circ f\eta u = \epsilon} (\eqref{eq:omega}=\mu) \times (\eqref{eq:tau}=\mu). \]
By contracting a based path space, this is equivalent to simply $(\eqref{eq:omega}=\eqref{eq:tau})$, i.e.\ the type of 2-homotopies filling the following diagram:
\[
\begin{tikzcd}[row sep=small]
  & (\epsilon \circ fu\epsilon) \circ f\eta u \ar[r,-] &
  \epsilon \circ (fu\epsilon \circ f\eta u) \ar[r,-,"f\alpha"] &
  \epsilon \circ \idarr{fu} \ar[dr,-]\\
  (\epsilon\ast\epsilon) \circ f\eta u \ar[ur,-] \ar[dr,-] &&&& \epsilon\\
  & (\epsilon \circ \epsilon fu)\circ f\eta u \ar[r,-] &
  \epsilon \circ (\epsilon fu\circ f\eta u) \ar[r,-,"\beta u"'] &
  \epsilon \circ \idarr{fu} \ar[ur,-]
\end{tikzcd}
\]
Since the concatenation $\epsilon \circ fu\epsilon = \epsilon\ast\epsilon = \epsilon \circ \epsilon fu$ consists of the naturality squares for $\epsilon$ at the components of $\epsilon$, if we disregard the associativity and unit coherences we can write this as
\[
\begin{tikzcd}     \epsilon \circ fu\epsilon \circ f\eta u \ar[rr,-,"{\mathsf{nat}_\epsilon}"]  \ar[dr,-,"f\alpha"']  & &   \epsilon \circ \epsilon fu \circ f\eta u. \ar[dl,-,"\beta u"] \\ & \epsilon &
\end{tikzcd}
\]

We will see in \cref{sec:adj-segal} that half-adjoint diagrammatic adjunctions correspond to half-adjoint equivalences in the sense of~\cite[\S4.2]{hottbook}, justifying the name.
On the other hand, the notion of ``bi-invertible map'' from~\cite[\S4.3]{hottbook} suggests the following modification instead:

\begin{defn}
  A \textbf{bi-diagrammatic adjunction} between types $A,B$ consists of:
  \begin{itemize}
  \item a functor $u : B\to A$,
  \item a functor $f : A \to B$, 
  \item a natural transformation $\eta : \nat AA(\idfunc A, uf)$, and
  \item two natural transformations $\epsilon,\epsilon' : \nat BB(fu, \idfunc B)$ together with
  \item a witness $\alpha : \nattwo BA(u, ufu, u, \eta u, u \epsilon, \idarr u)$ and
  \item a witness $ \beta : \nattwo AB(f,fuf,f, f\eta, \epsilon' f, \idarr f)$.
  \end{itemize}
  Note that $\epsilon$ appears in $\alpha$ but $\epsilon'$ appears in $\beta$.
\end{defn}

Our goal in the rest of this section is to compare all these kinds of adjunction, for Segal and Rezk types.

\subsection{Adjunctions between Segal types}
\label{sec:adj-segal}

We begin by observing that quasi-diagrammatic adjunctions suffice to induce transposing adjunctions.
More precisely, we show that a quasi-diagrammatic adjunction corresponds exactly to the following.

\begin{defn}
  A \textbf{quasi-transposing adjunction} between types $A,B$ consists of functors $f:A\to B$ and $u:B\to A$ and a family of maps
  \[ \phi : \prod_{\substack{a:A\\ b:B}} \hom_B(fa,b) \to \hom_A(a,ub) \]
  equipped with quasi-inverses, i.e.\ a family of maps
  \[ \psi : \prod_{\substack{a:A\\ b:B}} \hom_A(a,ub) \to \hom_B(fa,b) \]
  and homotopies $\xi:\prod_{a,b,k} \phi_{a,b}(\psi_{a,b}(k)) = k$ and $\zeta:\prod_{a,b,\ell} \psi_{a,b}(\phi_{a,b}(\ell)) = \ell$.
\end{defn}

\begin{thm}\label{thm:qadj}
  Given Segal types $A,B$ and functors $f:A\to B$ and $u:B\to A$, the type of quasi-transposing adjunctions between $f$ and $u$ is equivalent to the type of quasi-diagrammatic adjunctions between $f$ and $u$.
\end{thm}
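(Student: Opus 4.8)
\emph{Plan.} The plan is to exhibit both kinds of adjunction as nested $\Sigma$-types over the same ambient data and to match them up layer by layer. Write a quasi-transposing adjunction between the given $f,u$ as a term of $\sum_{\phi}\sum_{\psi}(E_\xi\times E_\zeta)$, where $\phi:\tprod_{a,b}(\hom_B(fa,b)\to\hom_A(a,ub))$ and $\psi:\tprod_{a,b}(\hom_A(a,ub)\to\hom_B(fa,b))$ are the transposition families, and $E_\xi\defeq\tprod_{a,b,k}(\phi_{a,b}(\psi_{a,b}(k))=k)$, $E_\zeta\defeq\tprod_{a,b,\ell}(\psi_{a,b}(\phi_{a,b}(\ell))=\ell)$ are the two families of homotopies; and write a quasi-diagrammatic adjunction as a term of $\sum_{\eta}\sum_{\epsilon}(D_\alpha\times D_\beta)$ with $\eta:\nat AA(\idfunc A,uf)$, $\epsilon:\nat BB(fu,\idfunc B)$, and $D_\alpha,D_\beta$ the types of the two $2$-simplex witnesses. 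Since an equivalence on the base of an iterated $\Sigma$-type together with a compatible family of fiberwise equivalences induces an equivalence of total types, it suffices to produce: (i) an equivalence between the type of $\phi$'s and $\nat AA(\idfunc A,uf)$; (ii) dually, one between the type of $\psi$'s and $\nat BB(fu,\idfunc B)$; and (iii), for $\phi,\psi$ corresponding under (i)--(ii) to $\eta,\epsilon$, equivalences $E_\zeta\simeq D_\beta$ and $E_\xi\simeq D_\alpha$.

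For (i): for each fixed $a:A$, the family $\lam{b}\hom_A(a,ub):B\to\univtype$ is covariant, being the covariant representable $\hom_A(a,-)$ (\cref{prop:covar-rep}, using that $A$ is Segal) reindexed along $u$ (\cref{rmk:covariant-pullbacks}), while $\lam{b}\hom_B(fa,b)$ is the covariant representable on $fa:B$ (using that $B$ is Segal). The Yoneda lemma (\cref{thm:yoneda}) therefore gives an equivalence $\tprod_{b:B}(\hom_B(fa,b)\to\hom_A(a,ub))\simeq\hom_A(a,ufa)$, with $\evid$ sending $\phi_{a,-}$ to $\phi_{a,fa}(\idarr{fa})$ and $\yon$ sending $w:\hom_A(a,ufa)$ to $\lam{b}\lam{g}\covtr{g}{w}$; tracing \cref{ex:representable-transport} through the reindexing along $u$ shows $\covtr{g}{w}=\extfn u(g)\circ w$. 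Taking $\tprod_{a:A}$ and applying transformation extensionality (\cref{prop:transformation-extensionality}) yields (i), with the corresponding $\eta$ having component $\eta_a=\phi_{a,fa}(\idarr{fa})$ and $\phi_{a,b}(g)=ug\circ\eta_a$. Part (ii) is exactly dual (via \cref{rmk:duality}), now using the contravariant representables $\hom_B(-,b)$ reindexed along $f$ and $\hom_A(-,ub)$; the corresponding $\epsilon$ has $\epsilon_b=\psi_{ub,b}(\idarr{ub})$ and $\psi_{a,b}(h)=\epsilon_b\circ fh$.

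For (iii), fix $\phi,\psi$ corresponding to $\eta,\epsilon$. Then $\psi_{a,b}(\phi_{a,b}(\ell))=\epsilon_b\circ fu(\ell)\circ f\eta_a$; by naturality of $\epsilon$ (\cref{prop:nat-are-nat}), $\epsilon_b\circ fu(\ell)=\ell\circ\epsilon_{fa}$, so using \cref{prop:componentwise-composition} and associativity this equals $\ell\circ(\epsilon f\circ f\eta)_a$. Hence $E_\zeta$ is equivalent to $\tprod_{a}\tprod_{b}\tprod_{\ell:\hom_B(fa,b)}(\ell\circ(\epsilon f\circ f\eta)_a=\ell)$. For fixed $a$, by function extensionality the inner double product asserts that the fiberwise endomorphism $\lam{b}\lam{\ell}(\ell\circ(\epsilon f\circ f\eta)_a)$ of the covariant representable $\hom_B(fa,-)$ equals the identity; since this endomorphism is ``precompose with $(\epsilon f\circ f\eta)_a$'' and the Yoneda embedding is an equivalence (\cref{rmk:rep-nat-trans}), this is equivalent to $(\epsilon f\circ f\eta)_a=\idarr{fa}$. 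Taking $\tprod_a$, then transformation extensionality and \cref{prop:componentwise-composition}, this becomes $\epsilon f\circ f\eta=\idarr f$ in the Segal type $A\to B$ (Segal by \cref{thm:segal-expidl}), which by \cref{thm:comp-htpy} is the type $D_\beta$ of witnesses $\nattwo AB(f,fuf,f,f\eta,\epsilon f,\idarr f)$. Dually, $\phi_{a,b}(\psi_{a,b}(k))=u\epsilon_b\circ uf(k)\circ\eta_a=(u\epsilon\circ\eta u)_b\circ k$ by naturality of $\eta$, and the same chain of reductions — now with the contravariant representable $\hom_A(-,ub)$ and the dual of \cref{rmk:rep-nat-trans} — gives $E_\xi\simeq D_\alpha$. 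Reassembling the equivalences through the $\Sigma$-types proves the theorem.

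The main obstacle is the bookkeeping in step (iii): one must check that each reduction of the form ``$\tprod_\ell(\Theta(\ell)=\ell)\simeq(\text{a triangle identity})$'' is realized by an honest chain of equivalences rather than a mere bi-implication, and that the associativity and unit coherences of \cref{sec:Segal-types} are threaded through uniformly. Recognizing each such $\prod$-type as the assertion that an endo-natural-transformation of a covariant or contravariant representable equals the identity — so that it is dispatched by the Yoneda lemma together with \cref{rmk:rep-nat-trans} — is the organizing principle that keeps the argument manageable.
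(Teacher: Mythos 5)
Your proof is correct, and steps (i) and (ii) are exactly the paper's: both apply the Yoneda lemma (\cref{thm:yoneda}) to the reindexed covariant representables $\lam{b}\hom_A(a,ub)$ and $\lam{b}\hom_B(fa,b)$, and to their contravariant duals, recovering the standard formulas $\phi^\eta_{a,b}(g)=ug\circ\eta_a$ and $\psi^\epsilon_{a,b}(h)=\epsilon_b\circ fh$.

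Step (iii) is where you diverge from the paper, in a way that is genuine and worth flagging. The paper observes that for fixed $b$ the identity-type family $\lam{a}\lam{k}(\phi^\epsilon_{a,b}(\psi^\eta_{a,b}(k))=k)$ is contravariant (via \cref{thm:eq-covar} and \cref{rmk:covariant-pullbacks}), applies the \emph{dependent} Yoneda lemma (\cref{thm:dep-yoneda}) to collapse the double product to its value at $(ub,\idarr{ub})$, and then computes that value by unfolding the definitions. You instead first rewrite $\psi(\phi(\ell))$ via naturality of $\epsilon$ (\cref{prop:nat-are-nat}) to factor out $\ell$, then use function extensionality to recast $\tprod_{b,\ell}\bigl(\ell\circ(\epsilon f\circ f\eta)_a=\ell\bigr)$ as an equality of natural endo-transformations of the covariant representable $\hom_B(fa,-)$, and finally invoke the fact that the \emph{non-dependent} Yoneda embedding is an equivalence, so that equality of its outputs is equivalent to equality of its inputs. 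Both routes work. Yours is closer to the classical proof and makes the identification with the triangle identities very explicit; but it pays for that by needing the naturality rewrite as an extra step, and by having to track a longer chain of propositional equalities (naturality, associativity, unit laws) through the identity types, which is exactly the ``honest chain of equivalences'' bookkeeping you correctly flag at the end. The paper's dependent-Yoneda route is more uniform: no naturality rewrite is needed because the dependence on $k$ (or $\ell$) is absorbed wholesale into the contravariance of the identity-type family, and the only unfolding required is a short judgmental/propositional computation of $\phi^\epsilon_{ub,b}(\psi^\eta_{ub,b}(\idarr{ub}))$.
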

\begin{proof}
  Each of the two types is a dependent sum type with four components.
  We will show that each of the four components is equivalent to a corresponding one on the other side.

  By \cref{prop:covar-rep,rmk:covariant-pullbacks}, for any $a:A$ the type family $\hom_A(a,u-) :B \to\univtype$ is covariant.
  Thus, by the Yoneda lemma, the map
  \[ \yon^{\hom_A(a,u-)}_{fa} : \hom_A(a,ufa) \to \prod_{b:B} (\hom_B(fa,b) \to \hom_A(a,ub)) \]
  is an equivalence, and hence so is the induced map
  \begin{equation}
    \nat AA(\idfunc A,uf) \simeq \Parens{\prod_{a:A} \hom_A(a,ufa)} \to \prod_{\substack{a:A\\b:B}} (\hom_B(fa,b) \to \hom_A(a,ub))\label{eq:qadj-eta}
  \end{equation}
  which sends $\eta$ to $\phi^\eta$ defined by $\phi^\eta_{a,b}(k) \defeq u k \circ \eta_a$.
  Similarly, the map
  \[ \nat BB(fu,\idfunc B) \simeq \Parens{\prod_{b:B} \hom_B(fub,b)} \to \prod_{\substack{a:A\\b:B}} (\hom_A(a,ub) \to \hom_B(fa,b)), \]
  sending $\epsilon$ to $\psi^\epsilon$ defined by $\psi^\epsilon_{a,b}(\ell)\defeq \epsilon_b \circ f\ell$, is an equivalence.

  It remains, therefore, to show that if we fix $\eta$ and $\epsilon$, then we have equivalences
  \[ \nattwo BA(u, ufu, u, \eta u, u \epsilon, \idarr u) \simeq \prod_{a,b,k} (\psi^\epsilon_{a,b}(\phi^\eta_{a,b}(k)) = k) \]
  and dually.
  First note that since $A$ is Segal,
  \begin{align*}
    \nattwo BA(u, ufu, u, \eta u, u \epsilon, \idarr u)
    &\simeq (u\epsilon \circ \eta u = \idarr u)\\
    &\simeq \prod_{b:B} (u \epsilon_b \circ \eta_{ub} = \idarr{ub}).
  \end{align*}
  Thus, it suffices to construct an equivalence
  \begin{equation}\label{eq:alpha-goal}
    (u \epsilon_b \circ \eta_{ub} = \idarr{ub}) \simeq \prod_{a:A} \prod_{k:\hom_A(a,ub)} (\phi^\epsilon_{a,b}(\psi^\eta_{a,b}(k)) = k)
  \end{equation}
  for any $b:B$.

  Now by \cref{thm:eq-covar,rmk:covariant-pullbacks}, the family $\lam{a}\lam{k} (\phi^\epsilon_{a,b}(\psi^\eta_{a,b}(k)) = k)$ is contravariant.
  Thus, by the contravariant form of the dependent Yoneda lemma (\cref{thm:dep-yoneda}), we have an equivalence
  \begin{equation}
    \Parens{\phi^\epsilon_{ub,b}(\psi^\eta_{ub,b}(\idarr{ub})) = \idarr{ub}}
    \simeq  \prod_{a:A} \prod_{k:\hom_A(a,ub)} (\psi^\epsilon_{a,b}(\phi^\eta_{a,b}(k)) = k).\label{eq:alpha-depyon}
  \end{equation}
  Moreover, we have
  \begin{align*}
    \phi^\epsilon_{ub,b}(\psi^\eta_{ub,b}(\idarr{ub}))
    &\jdeq \phi^\epsilon_{ub,b}(\epsilon_b \circ f\idarr{ub})\\
    &\jdeq u(\epsilon_b \circ f\idarr{ub}) \circ \eta_{ub}\\
    &\jdeq u(\epsilon_b \circ \idarr{fub}) \circ \eta_{ub}\\
    &= u\epsilon_b \circ \eta_{ub}.
  \end{align*}
  Concatenating with this identity yields an equivalence
  \[ \Parens{u \epsilon_b \circ \eta_{ub} = \idarr{ub}} \simeq \Parens{\phi^\epsilon_{ub,b}(\psi^\eta_{ub,b}(\idarr{ub})) = \idarr{ub}} \]
  which in combination with~\eqref{eq:alpha-depyon} gives~\eqref{eq:alpha-goal}.

  The second equivalence, involving $\beta$ and $\zeta$, is defined similarly, by combining a dependent Yoneda equivalence
  \begin{equation*}
    \Parens{\psi^\epsilon_{a,fa}(\phi^\eta_{a,fa}(\idarr{fa})) = \idarr{fa}}
    \simeq  \prod_{b:B} \prod_{\ell:\hom_B(fa,b)} (\phi^\epsilon_{a,b}(\psi^\eta_{a,b}(\ell)) = \ell).\label{eq:beta-depyon}
  \end{equation*}
  and concatenation with the equality
  \begin{align*}
    \psi^\epsilon_{a,fa}(\phi^\eta_{a,fa}(\idarr{fa}))
    &\jdeq \epsilon_{fa} \circ f(u\idarr{fa} \circ \eta_a)\\
    &= \epsilon_{fa} \circ f\eta_a.\qedhere
  \end{align*}
\end{proof}

\begin{cor}\label{thm:qadj-transp}
  Any quasi-diagrammatic adjunction between Segal types induces a transposing adjunction.
\end{cor}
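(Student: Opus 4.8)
The plan is to combine \cref{thm:qadj} with the construction of the families $\phi^\eta$ and $\psi^\epsilon$ from its proof, and then to note that a quasi-transposing adjunction is in particular an equivalence of hom-types, hence a transposing adjunction. Concretely, suppose given a quasi-diagrammatic adjunction between Segal types $A$ and $B$, consisting of $f:A\to B$, $u:B\to A$, $\eta$, $\epsilon$, $\alpha$, and $\beta$. By \cref{thm:qadj} this is equivalent to a quasi-transposing adjunction between $f$ and $u$, i.e.\ families of maps
\[ \phi : \tprod_{a:A,\,b:B} \hom_B(fa,b) \to \hom_A(a,ub) \qquad\text{and}\qquad \psi : \tprod_{a:A,\,b:B} \hom_A(a,ub) \to \hom_B(fa,b) \]
together with homotopies $\xi:\tprod_{a,b,k} \phi_{a,b}(\psi_{a,b}(k)) = k$ and $\zeta:\tprod_{a,b,\ell} \psi_{a,b}(\phi_{a,b}(\ell)) = \ell$.

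The remaining step is to upgrade the quasi-inverse pair $(\phi,\psi,\xi,\zeta)$ to a fiberwise \emph{equivalence}. For each $a:A$ and $b:B$, the data $(\phi_{a,b},\psi_{a,b},\xi_{a,b},\zeta_{a,b})$ exhibit $\phi_{a,b}$ as having a quasi-inverse in the sense of \cite[Chapter 4]{hottbook}, hence $\phi_{a,b}$ is an equivalence by \cite[Theorem 4.1.3]{hottbook} (or simply by the fact that having a quasi-inverse implies being an equivalence, \cite[\S4.1]{hottbook}). Thus $\phi : \tprod_{a:A,\,b:B} \hom_B(fa,b) \simeq \hom_A(a,ub)$ is exactly the data of a transposing adjunction between $f$ and $u$ in the sense of \cref{defn:transposing}. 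This produces the required transposing adjunction from the given quasi-diagrammatic one.

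There is no real obstacle here: the corollary is a near-immediate consequence of \cref{thm:qadj} together with the standard fact from homotopy type theory that a map equipped with a quasi-inverse is an equivalence. The only point requiring a little care is that \cref{thm:qadj} gives an equivalence of \emph{types} of adjunction data, so to extract an actual transposing adjunction one applies the forward direction of that equivalence to the given quasi-diagrammatic adjunction, obtaining a quasi-transposing adjunction, and then forgets the proof-relevant witness that it is an equivalence down to the mere existence of a fiberwise equivalence. (Strictly, one could also bypass \cref{thm:qadj} and directly define $\phi^\eta_{a,b}(k) \defeq uk\circ\eta_a$ and check invertibility using $\alpha$ and $\beta$ as in the proof of \cref{thm:qadj}, but invoking \cref{thm:qadj} is cleaner.)
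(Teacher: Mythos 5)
Your proof is correct and is essentially the paper's own argument: both invoke \cref{thm:qadj} to pass from the given quasi-diagrammatic adjunction to a quasi-transposing adjunction, then upgrade the quasi-inverse data $(\phi,\psi,\xi,\zeta)$ to a genuine fiberwise equivalence using the standard fact that any map with a quasi-inverse is an equivalence. (One trivial note: there is no label \texttt{defn:transposing} in the paper, so that reference as written would not resolve.)
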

\begin{proof}
  This follows directly from \cref{thm:qadj} and the fact that any quasi-inverse can be improved to a coherent equivalence,~\cite[Theorem 4.2.3]{hottbook}.
\end{proof}

Our next goal is to show that the two improved versions of diagrammatic adjunctions really are suitably ``coherent''.
One of them is very straightforward.

\begin{thm}\label{thm:bda}
  Given Segal types $A,B$ and functors $f:A\to B$ and $u:B\to A$, the type of bi-diagrammatic adjunctions between them is equivalent to the type of transposing adjunctions.
\end{thm}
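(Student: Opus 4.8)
The plan is to reuse the component equivalences established inside the proof of \cref{thm:qadj}, and to recognize a bi-diagrammatic adjunction as precisely the data of a fiberwise \emph{bi-invertible} map in the sense of \cite[\S4.3]{hottbook}. Fix $f:A\to B$ and $u:B\to A$; both the type of bi-diagrammatic adjunctions and the type of transposing adjunctions are $\Sigma$-types over this data, so it suffices to match their remaining components for fixed $f,u$.

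First, exactly as in the proof of \cref{thm:qadj}: by \cref{prop:covar-rep,rmk:covariant-pullbacks} the family $\lam{a}\hom_A(a,u-)$ is covariant, so the Yoneda lemma makes $\eta\mapsto\phi^\eta$, with $\phi^\eta_{a,b}(k)\defeq uk\circ\eta_a$, an equivalence $\nat AA(\idfunc A,uf)\simeq\tprod_{a:A}\tprod_{b:B}(\hom_B(fa,b)\to\hom_A(a,ub))$; symmetrically $\epsilon\mapsto\psi^\epsilon$, with $\psi^\epsilon_{a,b}(\ell)\defeq\epsilon_b\circ f\ell$, is an equivalence onto $\tprod_{a:A}\tprod_{b:B}(\hom_A(a,ub)\to\hom_B(fa,b))$, and likewise for $\epsilon'\mapsto\psi^{\epsilon'}$. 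Then, fixing $\eta$ (hence $\phi\defeq\phi^\eta$), $\epsilon$ (hence $\psi\defeq\psi^\epsilon$) and $\epsilon'$ (hence $\psi'\defeq\psi^{\epsilon'}$), the dependent-Yoneda computation of \cref{thm:qadj} --- using \cref{thm:eq-covar,rmk:covariant-pullbacks}, the contravariant form of \cref{thm:dep-yoneda}, and the definitional simplifications of $\phi^\eta(\psi^\epsilon(\idarr{ub}))$ and $\psi^{\epsilon'}(\phi^\eta(\idarr{fa}))$ --- identifies the type of witnesses $\alpha:\nattwo BA(u,ufu,u,\eta u,u\epsilon,\idarr u)$ with $\tprod_{a,b}\tprod_{k:\hom_A(a,ub)}(\phi_{a,b}(\psi_{a,b}(k))=k)$, and the type of witnesses $\beta:\nattwo AB(f,fuf,f,f\eta,\epsilon' f,\idarr f)$ with $\tprod_{a,b}\tprod_{\ell:\hom_B(fa,b)}(\psi'_{a,b}(\phi_{a,b}(\ell))=\ell)$. (We use that $A$ and $B$ are Segal throughout, in particular to apply \cref{thm:qadj}'s ingredients and to rewrite the 2-simplex witnesses $\alpha,\beta$ as these homotopies.)

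Assembling these equivalences, for fixed $f,u$ the type of bi-diagrammatic adjunctions becomes
\[ \tsum_{\phi:\tprod_{a:A}\tprod_{b:B}(\hom_B(fa,b)\to\hom_A(a,ub))} \Parens{\tsum_{\psi}\tprod_{a,b,k}\phi(\psi(k))=k}\times\Parens{\tsum_{\psi'}\tprod_{a,b,\ell}\psi'(\phi(\ell))=\ell}, \]
which, by function extensionality, is $\tsum_\phi(\text{bi-invertibility data for }\phi)$ with $\phi$ ranging over fiberwise maps between the families $(a,b)\mapsto\hom_B(fa,b)$ and $(a,b)\mapsto\hom_A(a,ub)$ over $A\times B$, composition and identities taken fiberwise. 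Commuting the fiberwise $\tsum$s past the $\tprod_{a:A}\tprod_{b:B}$ by the non-axiom of choice \cite[Theorem 2.15.7]{hottbook} and function extensionality, this type is equivalent to $\tprod_{a:A}\tprod_{b:B}\tsum_{\phi_{a,b}}(\text{bi-invertibility data for }\phi_{a,b})$; and by \cite[\S4.3]{hottbook} the type of bi-invertibility data for a single map is equivalent to its type of equivalence data, so $\tsum_{\phi_{a,b}}(\cdots)\simeq(\hom_B(fa,b)\simeq\hom_A(a,ub))$. Hence the whole type is equivalent to $\tprod_{a:A}\tprod_{b:B}(\hom_B(fa,b)\simeq\hom_A(a,ub))$, i.e.\ to a transposing adjunction between $f$ and $u$; taking $\tsum$ over $f$ and $u$ yields the statement.

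The main obstacle is bookkeeping rather than mathematical depth: one must reopen the proof of \cref{thm:qadj} to extract which of its component equivalences corresponds to $\eta$, to the pair $(\epsilon,\alpha)$, and to the pair $(\epsilon',\beta)$, and verify that the directions match --- the $\alpha$-witness (built from $\epsilon$) is the \emph{right}-invertibility $\phi\circ\psi=\idfunc{}$, while the $\beta$-witness (built from $\epsilon'$) is the \emph{left}-invertibility $\psi'\circ\phi=\idfunc{}$ --- so that together they assemble to exactly the bi-invertibility data of $\phi$. Once this correspondence is pinned down, the passage through the non-axiom of choice and the HoTT-book characterization of bi-invertible maps is routine.
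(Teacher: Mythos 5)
Your proposal is correct and takes essentially the same route as the paper: identify a bi-diagrammatic adjunction, after passing the Yoneda equivalences from \cref{thm:qadj}, with a family of maps $\phi$ equipped with a right inverse $\psi$ (from $\epsilon$, $\alpha$) and a left inverse $\psi'$ (from $\epsilon'$, $\beta$), i.e.\ bi-invertibility data. The only cosmetic difference is that the paper simply \emph{chooses} to read the word ``equivalence'' in the definition of transposing adjunction as ``bi-invertible map'' (licensed by the fact that all coherent definitions agree), so the $\Sigma/\Pi$ commutations and the matching with \cref{thm:qadj} finish things off immediately, whereas you start from the bi-diagrammatic side, commute via the non-axiom of choice, and then invoke $\mathsf{biinv} \simeq \mathsf{isequiv}$ explicitly; the computational content is identical. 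Your identification of $\alpha$ with the witness $\phi\circ\psi=\idfunc{}$ and $\beta$ with $\psi'\circ\phi=\idfunc{}$ is the correct bookkeeping.
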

\begin{proof}
  In the definition of transposing adjunction, as is usual in homotopy type theory, we did not specify exactly which coherent notion of ``equivalence'' was meant, since given function extensionality all of them are equivalent.
  For the purposes of this theorem, we take it to mean bi-invertible maps.
  We can then unwind the definition of transposing adjunction to consist of a family of maps
  \[ \phi : \prod_{\substack{a:A\\ b:B}} \hom_B(fa,b) \to \hom_A(a,ub) \]
  and \emph{two} families of maps
  \[ \psi,\psi' : \prod_{\substack{a:A\\ b:B}} \hom_A(a,ub) \to \hom_B(fa,b) \]
  together with homotopies $\prod_{a,b,k} \phi_{a,b}(\psi_{a,b}(k)) = k$ and $\prod_{a,b,\ell} \psi'_{a,b}(\phi_{a,b}(\ell)) = \ell$.
  The same arguments as in \cref{thm:qadj} then identify these data with those of a bi-diagrammatic adjunction.
\end{proof}

The other requires a bit more work.

\begin{thm}\label{thm:haa}
  Given Segal types $A,B$ and functors $f:A\to B$ and $u:B\to A$, the type of half-adjoint diagrammatic adjunctions between them is equivalent to the type of transposing adjunctions.
\end{thm}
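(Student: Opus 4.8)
The plan is to run the same strategy as in the proofs of \cref{thm:qadj} and \cref{thm:bda}: present both sides as iterated $\Sigma$-types and match the components, now using the reduction of the $(\mu,\omega,\tau)$-data carried out just before the statement. First I fix the meaning of ``equivalence'' in the definition of transposing adjunction to be \emph{half-adjoint equivalence} in the sense of~\cite[\S4.2]{hottbook}; as with \cref{thm:bda} this is harmless, since all the standard coherent notions of equivalence are equivalent under function extensionality. Unwinding this definition component-wise (as in the proof of \cref{thm:bda}), a transposing adjunction between fixed $f$ and $u$ consists of a quasi-transposing adjunction $(\phi,\psi,\xi,\zeta)$ together with one further datum: the $\mathsf{ishae}$-coherence, a family $\prod_{a,b}\prod_{\ell:\hom_B(fa,b)}\bigl(\ap_{\phi_{a,b}}(\zeta_{a,b}(\ell)) = \xi_{a,b}(\phi_{a,b}(\ell))\bigr)$. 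On the diagrammatic side, the discussion preceding the theorem shows that, since $A$ (and $B$) is Segal, a half-adjoint diagrammatic adjunction is a quasi-diagrammatic adjunction $(\eta,\epsilon,\alpha,\beta)$ together with one further datum, namely a $2$-homotopy filling the triangular diagram built from $\mathsf{nat}_\epsilon$, $f\alpha$ and $\beta u$ (equivalently, an element of $\bigl(\eqref{eq:omega}=\eqref{eq:tau}\bigr)$).

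Next I invoke \cref{thm:qadj}, which already identifies the quasi-diagrammatic data $(\eta,\epsilon,\alpha,\beta)$ with the quasi-transposing data $(\phi,\psi,\zeta,\xi)$ \emph{component-wise}, via the Yoneda equivalences: $\eta\mapsto\phi^\eta$, $\epsilon\mapsto\psi^\epsilon$, and then the coherence $\alpha$ corresponds to $\zeta$ (the ``$\psi\phi=\mathrm{id}$'' homotopy) and $\beta$ corresponds to $\xi$. It therefore remains only to check that, fibered over this identification of the first four components, the two ``fifth-component'' type families --- the $\mathsf{ishae}$-coherence type on the transposing side and the $2$-homotopy type on the diagrammatic side --- are fiberwise equivalent. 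Granting that, the theorem follows by assembling: transposing adjunction $\simeq$ quasi-transposing adjunction $+$ $\mathsf{ishae}$-coherence $\simeq$ (by \cref{thm:qadj} on the base and the fiberwise equivalence on coherences) quasi-diagrammatic adjunction $+$ $2$-homotopy $\simeq$ half-adjoint diagrammatic adjunction.

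The real work, and the step I expect to be the main obstacle, is this last fiberwise equivalence of coherence types. Here I would trace through the explicit equivalences used in the proof of \cref{thm:qadj}: the identities $\phi^\epsilon_{ub,b}(\psi^\eta_{ub,b}(\idarr{ub})) = u\epsilon_b\circ\eta_{ub}$ and its $f$-dual $\psi^\epsilon_{a,fa}(\phi^\eta_{a,fa}(\idarr{fa})) = \epsilon_{fa}\circ f\eta_a$, together with the dependent-Yoneda equivalences \eqref{eq:alpha-depyon} and its analogue, now applying $\ap$ to them. The point is that $\ap_{\phi^\eta}$ of the transpose of $\alpha$, once the (suppressed) associativity and unit coherences are accounted for, unwinds precisely to the whiskered composite of $f\alpha$ with the naturality squares $\mathsf{nat}_\epsilon$ of $\epsilon$, while $\xi$ unwinds to $\beta u$; so an equality between $\ap_{\phi}(\zeta)$ and $\xi\circ\phi$ is exactly the transpose of the $2$-homotopy $\mathsf{nat}_\epsilon\circ f\alpha = \beta u$. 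This is essentially bookkeeping of how the Yoneda equivalences of \cref{thm:qadj} interact with $\ap$ and with the coherence cells hidden in the informal diagrams, and I would do it by the same kind of path-induction-and-$\beta$-reduction arguments used throughout \cref{sec:adj-segal}, reducing to the cases where the relevant composites are judgmental and the maps $\rec_\lor$ and connection squares act transparently.
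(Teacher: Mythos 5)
Your high-level strategy is the same as the paper's: decompose both sides as $\Sigma$-types over the quasi-level data already matched by \cref{thm:qadj}, then show the fifth-component coherence types are fiberwise equivalent. This is the right framing, and you correctly identify that the discussion preceding the theorem reduces the $(\mu,\omega,\tau)$-data to a single $2$-homotopy filling a triangle of equalities. However, your proposal has a genuine gap: the step you call ``essentially bookkeeping'' and sketch via ``the same kind of path-induction-and-$\beta$-reduction arguments'' is in fact the entire content of the theorem, and it does not yield to such techniques alone. In the paper's proof, establishing the fiberwise equivalence of the two coherence types goes through (i) a reduction via the dependent Yoneda lemma to a family over $b:B$ only; (ii) the introduction of an auxiliary type family $D(a,b,\ell)$ equivalent to $C(a,b,\ell)$, whose elements are triples consisting of an arrow and a pair of $2$-simplices; (iii) an explicit computation of the covariant transport $\covtr{(\connmin \ell)}$ in $D$ using connection squares from \cref{prop:connections}; (iv) a reduction to the contractibility of a type of pairs of ``triangular prisms'' $\Delta^2\times\Delta^1\to A$ and $\Delta^2\times\Delta^1\to B$ with prescribed partial boundaries (\cref{fig:two-prisms}); and (v) applications of the inner-anodyne lemmas \cref{thm:pop-anodyne} and \cref{thm:3horn-anodyne} to fill fixed parts of those prisms canonically, so that the residual filling data is isolated and seen to coincide precisely with $\mu$, $\omega$, $\tau$. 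None of this geometric machinery appears in your proposal, and without it one cannot relate $\ap_{\psi^\epsilon}$ of the Yoneda-transposed $\alpha$-coherence to the whiskered composite appearing in the diagrammatic $2$-homotopy: the problem is that the Yoneda inverse $\yon$ is built from covariant transport along connection squares, not from a simple $\beta$-reduction, so a direct path-induction argument does not reach it.

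As a secondary point, you have the identification of components backwards: tracing the formula $\phi^\eta_{ub,b}(\psi^\epsilon_{ub,b}(\idarr{ub})) = u\epsilon_b\circ\eta_{ub}$ from \cref{thm:qadj} shows that $\alpha$ corresponds to the $\phi\psi=\mathrm{id}$ homotopy (the paper's $\xi$), while $\beta$ corresponds to $\psi\phi=\mathrm{id}$ (the paper's $\zeta$), not the reverse as you state. (The paper has superscript typos in the proof of \cref{thm:qadj} that may be what misled you, but the computation lines make the intended pairing unambiguous.) This is a labeling issue rather than a structural one, but it does change which half-adjoint coherence type you should be writing down, and where the Yoneda reduction points you.
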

\begin{proof}
Just as in \cref{thm:bda} we formulated transposing adjunctions using bi-invertible maps, here we formulate them using half-adjoint equivalences.
  Thus, it remains to show that the type of coherence data $\mu$ in a half-adjoint diagrammatic adjunction is equivalent, over the equivalences constructed in \cref{thm:qadj}, to the type of families of half-adjoint coherence data for a quasi-transposing adjunction $(\phi,\psi,\xi,\zeta)$:
  \begin{equation}
    \tprod_{b:B} \tprod_{a:A} \tprod_{\ell:\hom_A(a,ub)}
    \Parens{\zeta^\beta_{a,b}(\psi^{\epsilon}_{a,b}(\ell)) = \ap_{\psi^\epsilon_{a,b}}(\xi^{\alpha}_{a,b}(\ell))}\label{eq:tradj-mu}
  \end{equation}
  Applying \cref{thm:eq-covar,rmk:covariant-pullbacks} twice, we see that
  \[\lam{a}\lam{\ell} \Parens{\zeta^\beta_{a,b}(\psi^\epsilon_{a,b}(\ell)) = \ap_{\psi^\epsilon_{a,b}}(\xi^\alpha_{a,b}(\ell))}\]
  is contravariant.
  Thus, by the dependent Yoneda lemma,~\eqref{eq:tradj-mu} is equivalent to
  \begin{equation}
    \tprod_{b:B} \Parens{\zeta^\beta_{ub,b}(\psi^\epsilon_{ub,b}(\idarr{ub}))
      = \ap_{\psi^\epsilon_{ub,b}}(\xi^\alpha_{ub,b}(\idarr{ub}))}.\label{eq:tradj-mu2}
  \end{equation}

  Now we need to analyze $\xi$ and $\zeta$ more carefully in terms of $\alpha$ and $\beta$.
  By definition, $\lam{a}\xi^\alpha_{a,b}$ is the image of the concatenated equality
  \begin{equation}
  \phi^\epsilon_{ub,b}(\psi^\eta_{ub,b}(\idarr{ub})) \jdeq 
  u(\epsilon_b \circ f\idarr{ub}) \circ \eta_{ub}
  = u\epsilon_b \circ \eta_{ub} \overset\alpha= \idarr{ub}\label{eq:pre-xi}
  \end{equation}
  under the inverse dependent Yoneda map
  \[ \yon_{ub} :
  \Parens{\phi^\epsilon_{ub,b}(\psi^\eta_{ub,b}(\idarr{ub})) = \idarr{ub}}
  \to  \prod_{a:A} \prod_{k:\hom_A(a,ub)} (\psi^\epsilon_{a,b}(\phi^\eta_{a,b}(k)) = k).
  \]
  which implies that $\evid_{ub}(\lam{a}\xi^\alpha_{a,b})$, i.e.\ $\xi^\alpha_{ub,b}(\idarr{ub})$, is equal to~\eqref{eq:pre-xi}.
  Now $\psi^\epsilon_{a,b}(k) \jdeq \epsilon_b \circ fk$, so $\ap_{\psi^\epsilon_{ub,b}} = \ap_{(\epsilon_b\circ -)} \circ \ap_{\extfn f}$.
  Thus the right-hand side of~\eqref{eq:tradj-mu2} is equal to the concatenation
  \begin{align}
    \psi^\epsilon_{ub,b}(\phi^\epsilon_{ub,b}(\psi^\eta_{ub,b}(\idarr{ub})))
    &\jdeq \epsilon_b \circ f(u(\epsilon_b \circ \idarr{fub}) \circ \eta_{ub}) \notag\\
    &= \epsilon_b \circ f(u\epsilon_b \circ \eta_{ub}) \notag\\
    &\overset{\mathclap{\epsilon_b \circ f\alpha}}= \epsilon_b \circ f\idarr{ub} \label{eq:ep-f-alpha}\\
    &\jdeq \epsilon_b \circ \idarr{fub}\notag
  \end{align}
  in which the two non-judgmental equalities are obtained by $\ap_{\extfn f}$ followed by $\ap_{(\epsilon_b\circ -)}$ from those in~\eqref{eq:pre-xi}.
  There are other ways to define an equality meriting the name ``${\epsilon_b \circ f\alpha}$'', but they can all easily be shown to be equal.

  Similarly, $\zeta^\beta_{a,b}$ is the image of the concatenated equality
  \begin{equation}\label{eq:pre-zeta}
    \psi^\epsilon_{a,fa}(\phi^\eta_{a,fa}(\idarr{fa}))
    \jdeq \epsilon_{fa} \circ f(u\idarr{fa} \circ \eta_a)
    = \epsilon_{fa} \circ f\eta_a
    \overset{\beta}= \idarr{fa}
  \end{equation}
  under the inverse dependent Yoneda map
  \begin{equation*}
    \yon_{fa} : \Parens{\psi^\epsilon_{a,fa}(\phi^\eta_{a,fa}(\idarr{fa})) = \idarr{fa}}
    \to \prod_{b:B} \prod_{\ell:\hom_B(fa,b)} (\phi^\epsilon_{a,b}(\psi^\eta_{a,b}(\ell)) = \ell).
  \end{equation*}
  As we saw in \cref{sec:yoneda-lemma}, the latter can be defined by $\yon_{fa}(e,b,\ell) \defeq \covtr{(\connmin \ell)}(e)$.

  Now, for any $b:B$ and $\ell:\hom_B(fa,b)$, define
  \begin{align*}
    C(a,b,\ell)
    &\defeq\Parens{\phi^\epsilon_{a,b}(\psi^\eta_{a,b}(\ell)) = \ell}\\ 
    &\jdeq
    \Parens{\epsilon_{b} \circ f(u\ell\circ \eta_a) = \ell}
    \notag\\
    D(a,b,\ell)
    &\defeq
    \sum_{m:\hom_A(a,ub)}
    \homtwo{A} (a,ufa,ub,\eta_a,u\ell,m)\times
    \homtwo{B}(fa,fub,b,{fm},\epsilon_{b},\ell).
  \end{align*}
  Then we have $C(a,b,\ell)\simeq D(a,b,\ell)$, by contracting the first two components of $D(a,b,\ell)$, which are contractible since $A$ is Segal, and then applying \cref{thm:comp-htpy}.
  Specifically, the map $D(a,b,\ell)\to C(a,b,\ell)$ takes $(m,\gamma,\delta)$ to the concatenated equality
  \[\epsilon_{b} \circ f(u\ell\circ \eta_a) \overset {\epsilon_b \circ f\gamma}= \epsilon_b \circ fm \overset\delta= \ell.\]
  (This can be proven easily by assuming that $\gamma$ comes from an equality and doing path induction on it.)
  In particular, since $C$ is covariant by \cref{thm:eq-covar,rmk:covariant-pullbacks}, so is $D$.

  We are interested in two cases of this equivalence:
  \begin{itemize}
  \item For any $a:A$, we have $(\eta_a,s_0\eta_a,\beta_a):D(a,fa,\idarr{fa})$.
    The corresponding element of $C(a,fa,\idarr{fa})$ is~\eqref{eq:pre-zeta}.
    Thus, the element of $D(a,b,\ell)$ corresponding to $\yon_{fa}(e,b,\ell) \defeq \covtr{(\connmin \ell)}(e)$, when $e$ is~\eqref{eq:pre-zeta}, is the covariant transport $\covtr{(\connmin \ell)}(\eta_a,s_0\eta_a,\beta_a)$ in the type family $D(a,-,-)$.
  \item For any $b:B$, we have $(\idarr{ub},\alpha_b,s_1 \epsilon_b):D(ub,b,\epsilon_b)$.
    Transporting this along the equality $p : \epsilon_b = \epsilon_b \circ \idarr{fub} \jdeq \psi^\eta_{ub,b} (\idarr{ub})$ to obtain an element of $D(ub,b,\psi^\eta_{ub,b} (\idarr{ub}))$, and passing back into $C(ub,b,\psi^\eta_{ub,b} (\idarr{ub}))$, we obtain our computation~\eqref{eq:ep-f-alpha} of $\ap_{\psi^\epsilon_{ub,b}}(\xi^\alpha_{ub,b}(\idarr{ub}))$.
  \end{itemize}
  Our goal, therefore, is to identify the equality type 
  \[\covtr{(\connmin {\epsilon_b\circ \idarr{fub}})}(\eta_{ub},s_0\eta_{ub},\beta_{ub})
  = \transport{}{p}{(\idarr{ub},\alpha_b,s_1 \epsilon_b)}.
  \]
  or equivalently by \cref{thm:idtoiso-trans} the simpler
  \[\covtr{(\connmin {\epsilon_b})}(\eta_{ub},s_0\eta_{ub},\beta_{ub})
  = (\idarr{ub},\alpha_b,s_1 \epsilon_b).
  \]
  Now by \cref{thm:covtr-is-eq}, this is equivalent to
  \begin{equation}
    \hom_{D(ub)(\epsilon_b,\connmin{\epsilon_b})}((\eta_{ub},s_0\eta_{ub},\beta_{ub}), (\idarr{ub},\alpha_b,s_1 \epsilon_b)).\label{eq:two-prisms}
  \end{equation}
  The notation $D(ub)(\epsilon_b,\connmin{\epsilon_b})$ means we consider dependent arrows in the type family $D(ub,-,-)$ over the arrow
  \[ (\epsilon_b,\connmin{\epsilon_b}):\hom_{\sum_{b':B} \hom_B(fub,b')}((fub,\idarr{fub}),(b,\epsilon_b)).\]
  Compiling this out, and using the equivalences of \cref{sec:equiv-exten}, we see that an element of~\eqref{eq:two-prisms} consists of two ``triangular prisms'' $\Delta^2\times\Delta^1 \to A$ and $\Delta^2\times\Delta^1 \to B$, with some elements of their boundary fixed as shown in \cref{fig:two-prisms}.

  \begin{figure}
    \centering
    \[
  \begin{tikzcd}
    & ufub \ar[dr,"u\epsilon_b"] & & &     & ufub \ar[dr,"u\epsilon_b"] \ar[d,phantom,"\scriptstyle \alpha_b"]\\
    ub \ar[ur,"\eta_{ub}"] && ub & &     ub \ar[ur,"\eta_{ub}"] \ar[rr,equals] & ~ & ub\\
    & ufub \ar[dr,equals] \ar[uu,equals] \ar[ur,phantom,"\scriptstyle{u\connmin{\epsilon_b}}"] \ar[d,phantom,"\scriptstyle \eta_{ub}"]   & &  \Rightarrow  & & \\
    ub \ar[ur,"\eta_{ub}" description] \ar[rr,"\eta_{ub}"'] \ar[uu,equals] \ar[ruuu,phantom,"\scriptstyle\eta_{ub}"] &~& ufub\ar[uu,"u\epsilon_b"'] & & ub \ar[rr,"\eta_{ub}"'] \ar[uu,equals] \ar[rruu,phantom,"\scriptstyle \varrho"] && ufub\ar[uu,"u\epsilon_b"']\\    & fub \ar[dr,"\epsilon_b"] & & &     & fub \ar[dr,"\epsilon_b"] \ar[d,phantom,"\scriptstyle \epsilon_b"]\\
    fub \ar[ur,equals] && b & &     fub \ar[ur,equals] \ar[rr,"\epsilon_b" description] & ~ & b\\
    & fufub \ar[dr,"\epsilon_{fub}" description] \ar[uu,"fu\epsilon_b" description] \ar[ur,phantom,"\scriptstyle \mathsf{nat}_\epsilon" description] \ar[d,phantom,"\scriptstyle \beta_{ub}"] & & \Rightarrow & &\\
    fub \ar[ur,"f\eta_{ub}" description] \ar[rr,equals] \ar[uu,equals] \ar[ruuu,phantom,"\scriptstyle f\varrho"] &~& fub\ar[uu,"\epsilon_b"'] & & 
    fub \ar[rr,equals] \ar[uu,equals] \ar[rruu,phantom,"\scriptstyle \connmin{\epsilon_b}"] && fub\ar[uu,"\epsilon_b"']
  \end{tikzcd}
  \]
    \caption{Two prisms}
    \label{fig:two-prisms}
  \end{figure}
  The square $\varrho$ is not fixed, but must be the same in both prisms; and none of the interior simplices (not shown) are fixed.
  Squares or 2-simplices marked with the name of an arrow are constant/degenerate in the other direction, and the square denoted $\mathsf{nat}_\epsilon$ is the naturality square for $\epsilon$ at itself, as constructed in \cref{sec:naturality}.

  Now, the boundary data of the top prism (in $A$) that is fixed consists of a ``trough''
  \[(\Lambda^2_1 \times \Delta^1) \cup_{(\Lambda^2_1 \times \partial\Delta^1)} (\Delta^2 \times \partial\Delta^1) \to A. \]
  The inclusion of the trough into the prism $\Delta^2\times \Delta^1$ is the pushout product of $\Lambda^2_1 \to\Delta^2$ and $\partial\Delta^1 \to \Delta^1$.
  Thus, by \cref{thm:pop-anodyne}, the type of fillers (consisting of $\varrho$ and all the inner simplices in the top prism) is contractible.
  Thus, it does not affect the homotopy type of~\eqref{eq:two-prisms}, so in identifying the latter we are free to fix any particular such filler.
  We choose the following one:
  \[\begin{array}{ccl}
      \Delta^2 \times \Delta^1 &\to& A\\
      \sh{\pair{t_1,t_1}:\two\times\two}{t_2\le t_1} \times \sh{t_3:\two}{\top} &\to& A\\
      \pair{\pair{t_1,t_2},t_3} &\mapsto&
                                          \begin{cases}
                                            \alpha_{b}(t_2,t_1) &\quad t_2\le t_3\\
                                            \alpha_{b}(t_3,t_1) &\quad t_3\le t_2
                                          \end{cases}
    \end{array}
  \]
  It is straightforward to verify that this has the correct boundary.
  It determines $\varrho$ to be the following square:
  \[
  \begin{tikzcd}
    ub \ar[r,equals] & ub \\
    ub \ar[u,equals] \ar[ur,equals] \ar[r,"\eta_{ub}"'] & ufub \ar[u,"u\epsilon_b"'] \ar[ul,phantom,"\scriptstyle\alpha_b" very near start]
  \end{tikzcd}
  \]

  Now the second prism has its entire boundary fixed.
  As noted in \cref{sec:sub-simplices}, a prism consists of three 3-simplices glued along two common boundary 2-simplices.
  When the boundary of the prism is fixed, the ``upper'' of these 3-simplices has a 3-1-horn on its boundary fixed, the ``lower'' one has a 3-2-horn on its boundary fixed, while the ``middle'' one has only two faces of its boundary fixed.
  By \cref{thm:3horn-anodyne}, the types of 3-simplex fillers for 3-1-horns and 3-2-horns are contractible, so in determining the homotopy type of prisms we may assume a particular filler for the upper and/or lower horns.

  \begin{figure}
    \centering
\[
  \begin{tikzcd}[ampersand replacement=\&]
    \& fub \ar[dr,"\epsilon_b"] \& \& \&    \& fub \ar[dr,"\epsilon_b"] \ar[d,phantom,"\scriptstyle \epsilon_b"]\\
    fub \ar[ur,equals] \&\& b\& \&     fub \ar[ur,equals] \ar[rr,"\epsilon_b" description] \& ~ \& b\\
    \& \& \& \Rightarrow \&    \& \ar[ul,phantom,"\scriptstyle \epsilon_b"]\\
    fub \ar[uu,equals] \ar[ruuu,equals] \ar[rruu,"\epsilon_b"']\& \& \& \&    fub \ar[uu,equals] \ar[rruu,"\epsilon_b"']
\\
    \& fub \ar[dr,"\epsilon_b"]\& \& \&   \& fub \ar[dr,"\epsilon_b"] \ar[dd,phantom,"\scriptstyle \epsilon_b" near start] \& ~\\
    ~\& ~ \ar[r,phantom,"\scriptstyle \mathsf{nat}^1_{\epsilon}"] \& b \& \&      \& ~ \& b\\
    \& fufub \ar[ul,phantom,"\scriptstyle \alpha_b" near start] \ar[uu,"fu\epsilon_b" description] \ar[ur,"\epsilon\ast\epsilon"'] \& \& \xRightarrow{\omega} \&    \& fufub \ar[ur,"\epsilon\ast\epsilon"'] \ar[uur,phantom,"\scriptstyle \mu_b" pos=.2] \\
    fub \ar[ur,"f\eta_{ub}"']  \ar[ruuu,equals,bend left] \&~\& \& \&  fub \ar[uuur,equals,bend left] \ar[rruu,bend left,"\epsilon_b" description] \ar[ur,"f\eta_{ub}"']\\
    \&~\& b \& \&     \& \& b\\
    \& fufub \ar[dr,"\epsilon_{fub}" description] \ar[ur,"\epsilon\ast\epsilon" description] \ar[u,phantom,"\scriptstyle \mu_b" pos=.3]
    \ar[d,phantom,"\scriptstyle \beta_{ub}"]  \ar[r,phantom,"\scriptstyle \mathsf{nat}^2_{\epsilon}"] \& ~ \& \xRightarrow{\tau} \&    \& \ar[dr,phantom,"\scriptstyle \epsilon_b"]\& \\
    fub \ar[ur,"f\eta_{ub}" description] \ar[rr,equals] \ar[rruu,bend left,"\epsilon_b"] \&~\& fub\ar[uu,"\epsilon_b"']
\& \& 
    fub \ar[rr,equals] \ar[rruu,bend left, "\epsilon_b"] \&\& fub \ar[uu,"\epsilon_b"']
  \end{tikzcd}
\]
\caption{The three 3-simplices in a prism}
    \label{fig:three-3simplices}
  \end{figure}
  In our case, the upper 3-1-horn has an obvious filler given by a doubly degenerate 3-simplex on $\epsilon_b$.
  If we fill this in, the remaining two 3-simplices and their common boundary 2-simplex are exactly the data of $\omega$, $\tau$, and $\mu$ from \cref{defn:hadadj} (evaluated at $b:B$).
  \cref{fig:three-3simplices} shows all three 3-simplices roughly as they sit inside the prism.
  Thus, the type of such prisms is equivalent to the type of half-adjoint diagrammatic adjunctions.
\end{proof}

We can therefore conclude:

\begin{cor}\label{thm:segal-adj-prop}
  Given Segal types $A,B$ and functors $f:A\to B$ and $u:B\to A$ along with a natural transformation $\eta:\nat AA (\idfunc A,uf)$, the following types are equivalent propositions.
  \begin{enumerate}[label=(\roman*)]
  \item The type of witnesses that $\lam{k} u k \circ \eta_a : \hom_B(fa,b) \to \hom_A(a,ub) $ is an equivalence for all $a,b$.\label{item:sap1}
  \item The type of $(\epsilon,\epsilon',\alpha,\beta)$ extending $(f,u,\eta)$ to a bi-diagrammatic adjunction.\label{item:sap2}
  \item The type of $(\epsilon,\alpha,\beta,\mu,\omega,\tau)$ extending $(f,u,\eta)$ to a half-adjoint diagrammatic adjunction.\label{item:sap3}
  \item The propositional truncation of the type of $(\epsilon,\alpha,\beta)$ extending $(f,u,\eta)$ to a quasi-diagrammatic adjunction.\label{item:sap4}
  \end{enumerate}
\end{cor}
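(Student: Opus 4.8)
Throughout, fix the functors $f:A\to B$, $u:B\to A$ and the transformation $\eta:\nat AA(\idfunc A,uf)$. The plan is to observe that the map appearing in \ref{item:sap1}, namely $\lam{k} uk\circ\eta_a:\hom_B(fa,b)\to\hom_A(a,ub)$, is \emph{definitionally} the transposition map $\phi^\eta_{a,b}$ constructed from $\eta$ in the proof of \cref{thm:qadj} (there one has $\phi^\eta_{a,b}(k)\defeq uk\circ\eta_a$), and then to recognize each of \ref{item:sap2}, \ref{item:sap3}, and \ref{item:sap4} as (a propositional truncation of) one of the standard notions of ``coherence data exhibiting $\phi^\eta$ as an equivalence'' from \cite[Chapter 4]{hottbook}, obtained by restricting the relevant large equivalence to its fiber over $\eta$.

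In more detail: the equivalences of \cref{thm:qadj}, \cref{thm:bda}, and \cref{thm:haa} each match up their two sides component by component, and in all three cases the first pair of matched components is precisely the Yoneda equivalence \eqref{eq:qadj-eta}, which sends $\eta$ to $\phi^\eta$. Restricting each of these equivalences to the fiber over this fixed $\eta$ (equivalently, over $\phi^\eta$) therefore shows, respectively: that the type of $(\epsilon,\alpha,\beta)$ extending $(f,u,\eta)$ to a quasi-diagrammatic adjunction is equivalent to the type of fiberwise quasi-inverses $(\psi,\xi,\zeta)$ of $\phi^\eta$; that \ref{item:sap2} is equivalent to the type of fiberwise bi-invertibility data for $\phi^\eta$, using the bi-invertible-map formulation of transposing adjunction from the proof of \cref{thm:bda}; and that \ref{item:sap3} is equivalent to the type of fiberwise half-adjoint-equivalence data for $\phi^\eta$, using the half-adjoint-equivalence formulation from the proof of \cref{thm:haa}. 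By \cite[Chapter 4]{hottbook}, for each $a,b$ the assertions that $\phi^\eta_{a,b}$ is an equivalence, is bi-invertible, and is a half-adjoint equivalence are mere propositions and are pairwise equivalent; and a dependent product of mere propositions is again a mere proposition by (relative) function extensionality. Hence \ref{item:sap1}, \ref{item:sap2}, and \ref{item:sap3} are equivalent mere propositions. For \ref{item:sap4}: the type of fiberwise quasi-inverses of $\phi^\eta$ maps into \ref{item:sap1} (a quasi-inverse yields an equivalence), so, since \ref{item:sap1} is a mere proposition, its propositional truncation also maps into \ref{item:sap1}; conversely \ref{item:sap1} produces a fiberwise quasi-inverse and hence a point of that truncation. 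Two mere propositions with maps in both directions are equivalent, so \ref{item:sap4} is also equivalent to \ref{item:sap1}.

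The only real content here is the observation underlying the second paragraph: that each of the three large equivalences genuinely restricts to \eqref{eq:qadj-eta} on the $\eta$/$\phi$ component, so that ``extending the fixed $(f,u,\eta)$'' on the diagrammatic side corresponds exactly to ``equipping the fixed map $\phi^\eta$ with coherence data'' on the transposing side (using that an equivalence together with a commuting triangle induces an equivalence on fibers). This is visible directly from the proofs of \cref{thm:qadj}, \cref{thm:bda}, and \cref{thm:haa}, in which the first of the four matched-up components is precisely \eqref{eq:qadj-eta}. I expect no substantial obstacle beyond this bookkeeping, together with the immediate identification of $\phi^\eta_{a,b}$ with the map displayed in \ref{item:sap1}; everything else is an application of \cite[Chapter 4]{hottbook} and the closure of mere propositions under dependent products.
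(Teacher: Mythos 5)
Your proposal is correct and takes essentially the same approach as the paper's proof, which is considerably terser but relies on exactly the same observations: the equivalences of \cref{thm:qadj}, \cref{thm:bda}, and \cref{thm:haa} are built component by component with the first component being the Yoneda equivalence \eqref{eq:qadj-eta} (so they restrict to fibers over a fixed $\eta$), the map in \ref{item:sap1} is definitionally $\phi^\eta$, and the pairwise equivalence of the various proposition-valued characterizations of equivalence from \cite[Chapter 4]{hottbook} does the rest. The only cosmetic difference is that the paper routes \ref{item:sap4} through \ref{item:sap2} and \ref{item:sap3} directly (``is a proposition, implies \ref{item:sap2}, and is implied by \ref{item:sap3}'') whereas you route it through \ref{item:sap1}; both are valid once the other three are known equivalent.
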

\begin{proof}
  We have seen that when~\ref{item:sap1} is expressed using bi-invertibility it is equivalent to~\ref{item:sap2}, and that when it is expressed using half-adjoint equivalences it is equivalent to~\ref{item:sap3}.
  But~\ref{item:sap1} is always a proposition, however expressed.
  Finally,~\ref{item:sap4} is a proposition by definition, which implies~\ref{item:sap2} and is implied by~\ref{item:sap3}.
\end{proof}

In other words, if a given transformation $\eta$ is the unit of an adjunction, then that adjunction is uniquely determined up to a contractible space of choices.
This corresponds to the dual of the fact mentioned in \cref{sec:notions-adjunction} that $(f,u,\epsilon)$ is a parental subcomputad of $\underline{\smash[b]{\mathrm{Adj}}}$.
Similarly, the fact that $(f,u,\eta,\epsilon,\beta)$ is a parental subcomputad corresponds to the dual of the following:

\begin{cor}\label{thm:alpha-parental}
  Given data $(f,u,\eta,\epsilon,\alpha)$ as in a quasi-diagrammatic adjunction, the following types are equivalent propositions:
  \begin{enumerate}[label=(\roman*)]
  \item The type of $(\epsilon',\beta)$ extending $(f,u,\eta,\epsilon,\alpha)$ to a bi-diagrammatic adjunction.\label{item:ap1}
  \item The type of $(\beta,\mu,\omega,\tau)$ extending $(f,u,\eta,\epsilon,\alpha)$ to a half-adjoint diagrammatic adjunction.\label{item:ap2}
  \item The propositional truncation of the type of $\beta$ extending $(f,u,\eta,\epsilon,\alpha)$ to a quasi-diagrammatic adjunction.\label{item:ap3}
  \end{enumerate}
\end{cor}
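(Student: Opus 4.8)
The plan is to run the argument of \cref{thm:segal-adj-prop} one notch further along: whereas that proof held only $(f,u,\eta)$ fixed, here we hold the strictly larger package $(f,u,\eta,\epsilon,\alpha)$ fixed, and the only effect of the extra data will be that the transposing side already comes equipped with a one-sided inverse. Concretely, under the component-by-component equivalences of \cref{thm:qadj}, the fixed data correspond on the transposing side to the map $\phi \defeq \lam{a}\lam{b}\lam{k} u k\circ\eta_a$, a candidate inverse $\psi \defeq \lam{\ell}\epsilon_b\circ f\ell$, and \emph{one} of the two round-trip homotopies, namely the family $\xi$ extracted from $\alpha$; in the language of \cite[\S4.2]{hottbook}, $(\psi,\xi)$ is an element of $\mathsf{rinv}(\phi)$. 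Our task, then, is to classify the remaining coherence data in each of the three notions of adjunction, \emph{relative} to this fixed partial inverse.

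Because each of \cref{thm:qadj}, \cref{thm:bda} and \cref{thm:haa} is proved by matching the successive $\Sigma$-components of the two sides, with \cref{thm:bda} and \cref{thm:haa} built explicitly \emph{over} the equivalences of \cref{thm:qadj}, each of them descends to an equivalence on the fibres over the projection to $(f,u,\eta,\epsilon,\alpha)$. Reading off those fibres: the type in \ref{item:ap1} (the data $(\epsilon',\beta)$) is equivalent to $\mathsf{linv}(\phi)$; the underlying type in \ref{item:ap3} (the datum $\beta$ alone) is equivalent to the fibre of $\mathsf{linv}(\phi)$ over the already-chosen $\psi$, i.e.\ to the type of homotopies $\psi\circ\phi\sim\mathrm{id}$; and the type in \ref{item:ap2} (the data $(\beta,\mu,\omega,\tau)$) is equivalent to the type of extensions of $(\psi,\xi)\in\mathsf{rinv}(\phi)$ to a full half-adjoint-equivalence witness, i.e.\ to the fibre over $(\psi,\xi)$ of the forgetful map from the type of half-adjoint-equivalence data down to $\mathsf{rinv}(\phi)$.

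The remainder is the bookkeeping of \cite[\S4]{hottbook}. Since $\mathsf{rinv}(\phi)$ is inhabited, each of the three types above can be inhabited only when $\phi$ is an equivalence: a map with both a left and a right inverse is an equivalence \cite[\S4.3]{hottbook}, a quasi-inverse improves to an equivalence \cite[Theorem 4.2.3]{hottbook}, and $\mathsf{ishae}$ is a proposition \cite[Theorem 4.2.13]{hottbook}. Conversely, when $\phi$ is an equivalence its right inverse $\psi$ is automatically a two-sided inverse \cite[\S4.2]{hottbook}, so all three types are inhabited; and then $\mathsf{linv}(\phi)$, $\mathsf{rinv}(\phi)$ and the half-adjoint-equivalence types are all contractible, hence so are the three fibres. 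Thus each of the three types is empty-or-contractible — a proposition — and all three are logically, hence (being propositions) genuinely, equivalent to $\isequiv{\phi}$, and therefore to one another; the propositional truncation appearing in \ref{item:ap3} changes nothing, since that type is already a proposition.

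I expect the main obstacle to be exactly this bookkeeping rather than anything conceptual: confirming that the equivalences of \cref{thm:qadj}--\cref{thm:haa} are genuinely fibred over the projection to $(f,u,\eta,\epsilon,\alpha)$ with the fibres claimed (which is visible from how those proofs are organized), and getting the variances of $\mathsf{linv}$, $\mathsf{rinv}$ and the half-adjoint-equivalence data straight, so that the homotopy coming from $\alpha$ is recognized as a \emph{right} inverse of $\phi$ rather than a left inverse. Dualizing this last point — interchanging the roles of $\eta$ with $\epsilon$ and of $\alpha$ with $\beta$ — yields the corresponding classification relative to $(f,u,\eta,\epsilon,\beta)$, as advertised in \cref{sec:notions-adjunction}. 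No new type-theoretic input is needed beyond these ingredients and \cite[\S4]{hottbook}.
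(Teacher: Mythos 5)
Your proof is correct and takes essentially the same approach as the paper's: both reduce, via the equivalences of \cref{thm:qadj}, \cref{thm:bda} and \cref{thm:haa}, to the HoTT Book's analysis of one-sided inverses and half-adjoint equivalence witnesses for $\phi^\eta$, and then appeal to the standard contractibility facts. The paper's version is terser --- it proves \ref{item:ap2}$\simeq$\ref{item:ap1} by noting the obvious map becomes an equivalence once summed over $(\epsilon,\alpha)$, and invokes \cite[Lemma 4.2.9]{hottbook} once to get that \ref{item:ap1} is a proposition --- whereas you unpack the case split on $\isequiv{\phi^\eta}$; both routes are valid.

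One detail should be struck. You claim the propositional truncation in \ref{item:ap3} ``changes nothing, since that type is already a proposition.'' It does not: the un-truncated type, which by your own identification is the fibre of $\mathsf{linv}(\phi^\eta)$ over the already-chosen $\psi$ (i.e.\ the type of homotopies $\psi\circ\phi^\eta\sim\mathrm{id}$ with $\psi$ fixed), is \emph{not} a proposition in general. Contractibility of the total space $\mathsf{linv}(\phi^\eta)$ does not descend to its fibre over a fixed $\psi$: two witnesses in the fibre become equal in $\mathsf{linv}(\phi^\eta)$ only via a path that may loop $\psi$ nontrivially. For instance, for the identity adjunction on a discrete type with nontrivial $\pi_2$, this fibre is a nontrivial loop space. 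That is exactly why \ref{item:ap3} carries a truncation while \ref{item:ap1} and \ref{item:ap2} do not, and the same slip appears in your assertion that all ``three fibres'' are contractible when $\phi^\eta$ is an equivalence --- the third is merely inhabited, its \emph{truncation} contractible. Your overall argument survives, since \ref{item:ap3} is a proposition anyway by definition and you correctly show it is inhabited iff $\phi^\eta$ is an equivalence, but the parenthetical ``changes nothing'' observation is false and should be removed.

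Finally, your care over variances is warranted and your assignment is the right one: $(\epsilon,\alpha)$ packages into $(\psi^\epsilon,\xi^\alpha)\in\mathsf{rinv}(\phi^\eta)$, since the triangle identity evaluates at $b$ to $\phi^\eta_{ub,b}(\psi^\epsilon_{ub,b}(\idarr{ub}))=\idarr{ub}$, a homotopy of the form $\phi^\eta\circ\psi^\epsilon\sim\mathrm{id}$. (The paper's own proof happens to state these labels the other way around; by the left/right symmetry of \cite[Lemma 4.2.9]{hottbook} this is harmless, but your version is the one consistent with the computation.)
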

\begin{proof}
  Since there is a map from~\ref{item:ap2} to~\ref{item:ap1} (take $\epsilon'\defeq\epsilon$) that becomes an equivalence when summed over $\epsilon$ and $\alpha$, it is already an equivalence.
  Moreover, the proof of \cref{thm:bda} actually shows that given $(f,u,\eta)$, the types of $(\epsilon,\alpha)$ and $(\epsilon',\beta)$ are equivalent to the types $\mathsf{linv}(\phi^\eta)$ and $\mathsf{rinv}(\phi^\eta)$ of left and right inverses to $\phi^\eta$ respectively (see~\cite[Definition 4.2.7]{hottbook}).
  Since these types are both contractible as soon as they are both inhabited~\cite[Lemma 4.2.9]{hottbook}, it follows that~\ref{item:ap1} is a proposition, hence so is~\ref{item:ap2}.
  Finally,~\ref{item:ap3} is a proposition by definition, which implies~\ref{item:ap1} and is implied by~\ref{item:ap2}.
\end{proof}

We would also like to know that if a given functor $u$ has a left adjoint, then the entire adjunction is likewise uniquely determined, corresponding to the dual of the fact that $(f)$ itself is already a parental subcomputad.
However, since uniqueness of a functor $f:A\to B$ involves equalities in $B$, for this we need to assume our types are not just Segal but Rezk.

\subsection{Adjunctions between Rezk types}
\label{sec:adj-rezk}

Under the additional hypothesis that the domain of a functor is Rezk and not just Segal, we can prove:

\begin{thm}\label{thm:rezk-adj-prop}
  Given a Segal type $A$ and a Rezk type $B$, and a functor $u:B\to A$, the following types are equivalent propositions.
  \begin{enumerate}[label=(\roman*)]
  \item The type of transposing left adjoints of $u$.\label{item:rap1}
  \item The type of functors $f:A\to B$ and transformations $\eta:\nat AA (\idfunc A,uf)$ such that $\lam{k} u k \circ \eta_a : \hom_B(fa,b) \to \hom_A(a,ub) $ is an equivalence for all $a,b$.\label{item:rap1a}
  \item The type of half-adjoint diagrammatic left adjoints of $u$.\label{item:rap2}
  \item The type of bi-diagrammatic left adjoints of $u$.\label{item:rap3}
  \item The propositional truncation of the type of quasi-diagrammatic left adjoints of $u$.\label{item:rap4}
  \end{enumerate}
\end{thm}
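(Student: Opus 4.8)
The plan is to derive all five equivalences from the single new fact that type~\ref{item:rap1a} is a proposition, since the mutual comparisons among the diagrammatic notions have essentially been made in \cref{thm:segal-adj-prop}. A half-adjoint diagrammatic left adjoint of $u$ is a $\Sigma$-type over $f:A\to B$ and $\eta:\nat AA(\idfunc A,uf)$ whose fibre over $(f,\eta)$ is exactly the type in part~(iii) of \cref{thm:segal-adj-prop}; likewise \ref{item:rap3} fibres over $(f,\eta)$ with fibre part~(ii) of \cref{thm:segal-adj-prop}, and \ref{item:rap4} is the propositional truncation of the $\Sigma$-type over $(f,\eta)$ of the untruncated quasi-diagrammatic extension data. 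Now \cref{thm:segal-adj-prop} supplies, for each $(f,\eta)$, a fibrewise equivalence identifying all of these inner types with part~(i) of \cref{thm:segal-adj-prop}, which is precisely the condition appearing in~\ref{item:rap1a}; summing over $(f,\eta)$ identifies \ref{item:rap2} and \ref{item:rap3} with~\ref{item:rap1a}. For \ref{item:rap4}, once we know \ref{item:rap1a} is a proposition, the two maps between \ref{item:rap4} and \ref{item:rap1a} (one by functoriality of truncation, using part~(iv) of \cref{thm:segal-adj-prop}; the other by noting that an untruncated quasi-diagrammatic left adjoint maps into part~(iv), hence part~(i), of \cref{thm:segal-adj-prop}, and then factoring through the truncation since~\ref{item:rap1a} is a proposition) exhibit them as equivalent. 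Finally, \ref{item:rap1} is identified with~\ref{item:rap1a} by the Yoneda lemma: for fixed $a$ the map $\lam{k} uk\circ\eta_a$ is $\yon^{\hom_A(a,u-)}_{fa}(\eta_a)$ evaluated fibrewise (as in the proof of \cref{thm:qadj}), so using the exchange $\sum_{f:A\to B}\prod_{a:A}Q(a,fa)\simeq\prod_{a:A}\sum_{c:B}Q(a,c)$ and \cref{thm:yoneda}, both \ref{item:rap1} and \ref{item:rap1a} unwind to $\prod_{a:A}\bigl(\lam{b}\hom_A(a,ub)\text{ is representable}\bigr)$.

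So the crux is to prove that, for a Segal type $A$ and a Rezk type $B$, the type ``$\lam{b}\hom_A(a,ub)$ is representable'' is a proposition for each $a:A$; abbreviate this family by $C\defeq\lam{b}\hom_A(a,ub):B\to\univtype$, which is covariant by \cref{prop:covar-rep,rmk:covariant-pullbacks}. As in the proof of \cref{thm:representability}, the type of representing data for $C$ is, via \cref{thm:yoneda}, equivalent to $\sum_{(c,w):\tilde C}\prod_{x:B}\isequiv{\yon^C_c(w)_x}$, and by \cref{thm:representability} the fibre over $(c,w):\tilde C$ is a proposition logically equivalent — hence equivalent — to the proposition that $(c,w)$ is initial in $\tilde C$. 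Thus ``$C$ is representable'' is equivalent to ``$\tilde C$ has an initial object'', and it remains to show this is a proposition.

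This is the only place Rezk-completeness of $B$ enters; everything above needs only that $A$ and $B$ are Segal. Since being initial is a proposition, it suffices to show that the total space of a covariant family over a Rezk type has at most one initial object up to identity. Given initial $p,q:\tilde C$, the hom-types $\hom_{\tilde C}(p,q)$ and $\hom_{\tilde C}(q,p)$ are contractible, so their unique elements compose to the identities and $p,q$ are joined by an isomorphism $\phi$. The projection $\pi:\tilde C\to B$ is a functor between Segal types (\cref{thm:covariant-segal,prop:segal-functors}), so preserves isomorphisms, giving $\pi p\cong_B\pi q$ and hence, by Rezk-completeness, an equality $e:\pi p=\pi q$. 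Writing $p\jdeq(b,v)$ and $q\jdeq(b',v')$, what remains is an equality $\transport{C}{e}{v}=v'$; but the component of $\phi$ lying over $\idtoiso(e)$ is (by \cref{thm:exten-nonac}) a dependent arrow in $\hom_{C(\idtoiso(e))}(v,v')$, which by \cref{thm:covtr-is-eq,thm:idtoiso-trans} is exactly such an equality. Hence $p=q$, and since being initial is a proposition the two witnesses of initiality agree, so ``$\tilde C$ has an initial object'' is a proposition. Feeding this back through the reductions of the first two paragraphs establishes the theorem.

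The main obstacle I anticipate is not any single step but the bookkeeping that lets one pass cleanly between the ``functor-valued'' formulations (over $f:A\to B$) and the ``pointwise'' ones (over $a:A$): checking that the $\prod$/$\sum$ exchanges, the Yoneda identifications, and the truncation manipulations all cohere, and verifying the compatibility results (\cref{thm:covtr-is-eq,thm:idtoiso-trans}) in the precise form needed to lift the base equality $e$ to an identity in $\tilde C$.
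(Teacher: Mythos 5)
Your proof is correct, and it follows the same overall strategy as the paper: reduce everything to showing that~\ref{item:rap1a} is a proposition, unwind via a $\Pi/\Sigma$ exchange to a pointwise statement about universal arrows from $a$ to $u$, and then prove uniqueness of that data using Rezk-completeness of $B$ to lift an isomorphism in $B$ to an identity. The only genuine difference is in how the pointwise uniqueness is organized. The paper works directly with two universal arrows $(f_a,\eta_a)$ and $(f_a',\eta_a')$, produces mutually inverse $m,n$ from the two transposition equivalences, applies $\idtoiso^{-1}$, and checks the transport condition by hand. You instead invoke \cref{thm:representability} to re-express ``$(f_a,\eta_a)$ is a universal arrow'' as ``$(f_a,\eta_a)$ is initial in $\tilde C = \sum_{b:B}\hom_A(a,ub)$'', and then prove the general fact that the total space of a covariant family over a Rezk base has at most one initial object; the isomorphism between the two initial objects comes for free from contractibility of the two hom-types, and the lift of the resulting base equality uses \cref{thm:exten-nonac,thm:covtr-is-eq,thm:idtoiso-trans}, just as the paper does. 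This is a clean reorganization that isolates a reusable lemma (uniqueness of initial objects in covariant total spaces over a Rezk type) and makes the flow of ``iso $\Rightarrow$ equality $\Rightarrow$ identification of fibre data'' a bit more transparent, at the cost of routing through \cref{thm:representability}; the paper's version is slightly more self-contained but blends the same three steps together. One small point worth making explicit: when you pass from ``$\pi\phi$ is an isomorphism'' to ``$e:\pi p=\pi q$ with $\idtoiso(e)$ having underlying arrow $\pi\phi$'', you should note that $\pi$ preserves isomorphisms because it is a functor between Segal types ($\tilde C$ is Segal by \cref{thm:covariant-segal}, using that $B$ is Rezk hence Segal), which is implicit in your appeal to \cref{prop:segal-functors}.
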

\begin{proof}
  The equivalence between~\ref{item:rap1} and~\ref{item:rap1a} follows by passing across the single Yoneda equivalence~\eqref{eq:qadj-eta}, while \cref{thm:segal-adj-prop} implies that~\ref{item:rap1a}, \ref{item:rap2}, and~\ref{item:rap3} are equivalent.
  And~\ref{item:rap4} is a proposition that is implied by~\ref{item:rap2}, and will imply~\ref{item:rap3} as soon the latter is a proposition.
  
  Thus, it suffices to show that~\ref{item:rap1a} is a proposition.
  But this is equivalent to
  \[ \tsum_{f:A\to B} \tsum_{\eta:\prod_{a:A} \hom_A(a,ufa)} \tprod_{a:A} \tprod_{b:B} \isequiv{\lam{k} u k \circ \eta_a}\]
  and this is equivalent to
  \[ \tprod_{a:A} \tsum_{f_a:B} \tsum_{\eta_a:\hom_A(a,uf_a)} \tprod_{b:B} \isequiv{\lam{k} u k \circ \eta_a}.\]
  Thus, since a product of propositions is a proposition, it suffices to prove that 
  \[ \tsum_{f_a:B} \tsum_{\eta_a:\hom_A(a,uf_a)} \tprod_{b:B} \isequiv{\lam{k} u k \circ \eta_a}\]
  is a proposition for all $a:A$.

  Note that this is the type of ``universal arrows'' from $a$ to the functor $u$; thus we are now reduced to essentially the usual proof of uniqueness of such universal arrows.
  Let $(f_a,\eta_a,\omega)$ and $(f_a',\eta_a',\omega')$ be two elements of this type.
  Since $\omega$ and $\omega'$ belong to propositions, we can ignore them for purposes of proving equality; what they give is us that the maps
  \begin{align*}
    \lam{k} u k \circ \eta_a &: \hom_B(f_a,b) \to \hom_A(a,ub)\\
    \lam{k} u k \circ \eta_a' &: \hom_B(f_a',b) \to \hom_A(a,ub)
  \end{align*}
  are equivalences for any $b:B$.
  Taking $b\defeq f_a'$ in the first equivalence, and applying its inverse to $\eta_a'$, we obtain $m:\hom_B(f_a,f_a')$ such that $um\circ\eta_a = \eta_a'$.
  Then taking $b\defeq f_a$ in the second equivalence, and applying its inverse to $\eta_a$, we obtain $n:\hom_B(f_a',f_a)$ such that $un\circ\eta_a'= \eta_a$.
  Thus, $u(m\circ n) \circ\eta_a' = \eta_a'$ and $u(n\circ m) \circ \eta_a = \eta_a$, so by the injectivity of equivalences, $m$ and $n$ are inverse isomorphisms in $B$.

  Now since $B$ is Rezk, we have $e:f_a=f_a'$ such that $\idtoiso(e) = m$.
  By the characterization of equalities in $\Sigma$-types, it suffices to show that
  \[\transport{\lam{b} \hom_A(a,ub)}{e}{\eta_a} = \eta_a'.\]
  But using \cref{thm:idtoiso-trans,thm:idtoiso-ap}, we have
  \begin{align*}
    \transport{\lam{b} \hom_A(a,ub)}{e}{\eta_a}
    &= \transport{\lam{x} \hom_A(a,x)}{\ap_u(e)}{\eta_a}\\
    &= \idtoiso(\ap_u(e)) \circ \eta_a\\
    &= u(\idtoiso(e)) \circ \eta_a\\
    &= um \circ \eta_a\\
    &= \eta_a'.\qedhere
  \end{align*}
\end{proof}

In other words, for Rezk types (regarded as synthetic $(\infty,1)$-categories), adjoints are literally unique, not just ``unique up to isomorphism''.
This should be compared with~\cite[Lemma 9.3.2]{hottbook}, which proves an analogous fact for 1-categories \emph{defined} internally to ordinary homotopy type theory (rather than axiomatized synthetically) and satisfying a similar Rezk-completeness condition.

\appendix

\section{Semantics of simplicial type theory}
\label{sec:semantics}

In this section we review the model of homotopy type theory in the category of Reedy fibrant bisimplicial sets from \cite{elreedy} and describe how this category also models the simplicial type theory of cubes, topes, and shapes.
We will not give a complete proof, but only sketch the main ideas.
We then prove that Segal types correspond exactly to the \emph{Segal spaces} in this model, while Rezk types correspond to the complete Segal spaces~\cite{css}, which are also called \emph{Rezk spaces}.

\subsection{Reedy fibrations of bisimplicial sets}

The category $\sSet \defeq\Set^{\DDelta^\op}$ of simplicial sets embeds in two ``orthogonal'' ways into the category $\ssSet \defeq \Set^{\DDelta^\op \times \DDelta^\op}$ of bisimplicial sets. Via the isomorphism $\ssSet \cong \sSet^{\DDelta^\op}$ that expresses a bisimplicial set $X$ as a simplicial space, we regard $X_{m,n}$ as the set of $n$-simplices in the $m$th space of the simplicial object $X \colon \DDelta^\op \to \sSet$. 

To define these two embeddings use the \textbf{external product bifunctor}
\[ \sSet \times \sSet \xrightarrow{\square} \ssSet \qquad \qquad (A \square B)_{m,n} \defeq A_m \times B_n.\]
Note that $\Delta^m \square \Delta^n$ is the functor represented by the object $(m,n) \in \DDelta \times \DDelta$. In particular, using exponential notation for the internal hom in $\ssSet$, we have \[ (Y^X)_{m,n} = \ssSet(X \times (\Delta^m \square \Delta^n), Y).\]

\begin{defn}[the discrete and constant embeddings]
Fixing one variable to be the point, we obtain embeddings
\[ \mathrm{disc} \colon \sSet \xrightarrow{-\square \Delta^0} \ssSet \qquad \qquad \mathrm{const} \colon \sSet \xrightarrow{\Delta^0\square-} \ssSet\] of simplicial sets as \textbf{discrete} and \textbf{constant} bisimplicial sets, respectively. The discrete simplicial spaces have the form of functors $\DDelta^\op \to \Set \hookrightarrow \sSet$, while the constant simplicial spaces have the form of functors $\DDelta^\op \to \mathbbe{1} \to \sSet$. The discrete embedding positions the data of a simplicial set in the ``categorical'' direction, while the constant embedding positions the data in the ``spacial'' direction.
\end{defn}

The bifunctor $-\square -$ is biclosed. Under the identification $\ssSet \cong \sSet^{\DDelta^\op}$ described above, the left closure
\[ A \square B \to X \qquad \leftrightsquigarrow \qquad B \to \{A, X\}\] is the limit of $X$ weighted by the simplicial set $A \in \Set^{\DDelta^\op}$; in particular, $\{\Delta^m,X\} \cong X_m$, the $m$th column of $X$.

\begin{defn} A bisimplicial set $X \to Y$ is a \textbf{Reedy fibration} if and only if for all $m \geq 0$ the induced map
\[ \{ \Delta^m,X\} \to \{\partial\Delta^m,X\} \times_{\{\partial\Delta^m,Y\}} \{\Delta^m , Y\}\] on weighted limits is a Kan fibration in $\sSet$. A bisimplicial set $X$ is \textbf{Reedy fibrant} just when the unique map $X \to 1$ is a Reedy fibration, which is the case when
\[ \{ \Delta^m, X\} \to \{\partial\Delta^m,X\}\] is a Kan fibration.
\end{defn}

Any bifunctor, such as $\square$, whose codomain has pushouts has an associated pushout product; in our case this defines a biclosed bifunctor
\[  \sSet^\two \times \sSet^\two \xrightarrow{\widehat{\square}} \ssSet^\two.\] The set of maps 
\[ \{ (\partial\Delta^m\hookrightarrow\Delta^m)\mathbin{\widehat{\square}}(\partial\Delta^n\hookrightarrow\Delta^n)\}_{m,n \geq 0}\] defines a set of generating Reedy cofibrations for $\ssSet$. A map of bisimplicial sets is a \textbf{Reedy trivial fibration} if and only if it has the right lifting property with respect to this set of maps.

\begin{thm}[{Shulman \cite{elreedy}}] The Reedy model structure on bisimplicial sets defined relative to the Quillen model structure on simplicial sets models intensional type theory with dependent sums, dependent products, identity types, and as many univalent universes as there are inaccessible cardinals greater than $\aleph_0$.
\end{thm}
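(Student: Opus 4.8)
The plan is to deduce this directly from the general machinery of \cite{elreedy} rather than to reconstruct the model by hand. The first step is to record the input data: by the simplicial model of homotopy type theory \cite{simplicial-model}, the category $\sSet$ with the Quillen model structure is a \emph{type-theoretic model category}, carrying the structure needed to interpret $\Sigma$-types, $\Pi$-types with function extensionality, and identity types, and moreover---by Voevodsky's construction---it admits, for each inaccessible cardinal $\kappa > \aleph_0$, a univalent universe classifying the fibrations with $\kappa$-small fibers. Thus the substance of the theorem is that this entire package of structure is inherited by the Reedy model structure on $\ssSet \cong \sSet^{\DDelta^\op}$.

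The second step is to observe that $\DDelta$ is an \emph{elegant} Reedy category: every simplicial set has a well-behaved notion of non-degenerate simplex, so that for Reedy-cofibrant diagrams the latching maps are monomorphisms and the Reedy-fibration condition reduces to the criterion stated earlier in this section (testing only against the boundary inclusions $\partial\Delta^m \hookrightarrow \Delta^m$). One then invokes the main theorem of \cite{elreedy}: for $R$ an elegant Reedy category and $\mathcal{M}$ a type-theoretic model category, the Reedy model structure on $\mathcal{M}^{R^{\op}}$ is again type-theoretic, and the univalent universes of $\mathcal{M}$ induce, after Reedy fibrant replacement, univalent universes in $\mathcal{M}^{R^{\op}}$. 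Applying this with $R = \DDelta$ and $\mathcal{M} = \sSet$ yields exactly the statement, since $\sSet^{\DDelta^{\op}}$ with the Reedy model structure relative to Quillen is the category of bisimplicial sets with the Reedy model structure described here.

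The step I expect to be the real obstacle---and indeed the technical heart of \cite{elreedy}---is the closure of Reedy fibrations under dependent products (pushforward) and the univalence of the Reedy-fibrant replacement of the universe. Pushforward along a Reedy fibration is not manifestly a Reedy fibration, because the matching-object conditions interact non-trivially with exponentiation; the argument proceeds by induction on Reedy degree, at each stage using elegance of $\DDelta$ to reduce the matching condition at degree $m$ to a lifting problem solvable from the structure already established at lower degrees together with the right properness and pushforward closure of $\sSet$ itself. Univalence is then recovered because it can be expressed as a levelwise and fiberwise property of the universal fibration that is preserved by (Reedy) fibrant replacement, again leaning on elegance. In keeping with the promise to sketch only the main ideas, I would cite \cite{elreedy} for these arguments rather than reproduce them.
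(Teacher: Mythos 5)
The paper states this theorem as a black-box citation to Shulman's earlier work \cite{elreedy} and provides no proof of its own, so there is no argument in the paper against which to compare. Your proposal does the same thing at heart --- it defers the substantive work to \cite{elreedy} and gives a summary of what that paper establishes --- and the high-level description (type-theoretic model structure on $\sSet$ from \cite{simplicial-model}, $\DDelta$ elegant, main theorem of \cite{elreedy} transferring the structure) is consistent with the intended reading.

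One small imprecision worth flagging: you write that elegance is what makes ``the Reedy-fibration condition reduce to the criterion stated earlier in this section.'' That criterion (relative matching maps into $\{\partial\Delta^m,X\}\times_{\{\partial\Delta^m,Y\}}\{\Delta^m,Y\}$ being fibrations) is the standard characterization of Reedy fibrations for \emph{any} Reedy category and owes nothing to elegance. What elegance actually buys is on the cofibration side: it identifies Reedy cofibrations with levelwise monomorphisms, so that Reedy and injective model structures coincide (as the paper notes in a footnote), and --- more to the point --- it is the key hypothesis making the Reedy universe construction and its univalence go through in \cite{elreedy}. Similarly, ``after Reedy fibrant replacement'' slightly misdescribes Shulman's universe: the universe is built degree by degree from the universes in $\sSet$ rather than obtained as a fibrant replacement of a pre-existing object. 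Neither issue affects the correctness of the deduction, since both your proposal and the paper ultimately rest on citing \cite{elreedy}.
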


In the bisimplicial sets model, a dependent type family $C \colon A \to \univtype$ is modeled by a Reedy fibration $C \twoheadrightarrow A$, which we denote using an arrow ``$\twoheadrightarrow$'' for emphasis. 

The Reedy fibrations enjoy the following important ``Leibniz closure'' property.

\begin{lem}\label{rmk:leibniz-reedy} If $i\colon U \rightarrow V$ is a monomorphism (equivalently, a cofibration) of bisimplicial sets and $p \colon X \twoheadrightarrow Y$ is a Reedy fibration then the map
  \[ \pair{X^i, p^V}\colon X^V \to X^U \times_{Y^U} Y^V,\] which we denote by $\widehat{\{i,p\}}$, is a Reedy fibration, whose domain and codomain are Reedy fibrant if $X$ and $Y$ are, and which is a weak equivalence if $p$ is.
\end{lem}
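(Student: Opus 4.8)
The plan is to reduce the statement to standard facts about the Reedy model structure on $\ssSet$ built from the Quillen model structure on $\sSet$, in particular the fact that this model structure is cartesian (monoidal). The key point is to recognize the map $\widehat{\{i,p\}}$ as the \emph{Leibniz} (pullback-corner) construction applied to the internal hom, which takes a cofibration in the first slot and a fibration in the second slot to a fibration, and moreover sends a trivial cofibration or a trivial fibration in either slot to a trivial fibration.

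First I would recall the general categorical yoga: in any model category $\mathcal{M}$ that is cartesian closed and in which the pushout-product of two cofibrations is a cofibration (acyclic if either is), the adjoint statement is that for a cofibration $i$ and a fibration $p$, the Leibniz hom $\widehat{\{i,p\}} = \langle X^i, p^V\rangle \colon X^V \to X^U\times_{Y^U} Y^V$ is a fibration, acyclic if either $i$ or $p$ is. This is the standard ``two out of three slots'' lifting argument: a lifting problem of $\widehat{\{i,p\}}$ against a (acyclic) cofibration $j$ transposes to a lifting problem of $p$ against the pushout-product $i\mathbin{\widehat\times} j$, which is a (acyclic) cofibration by the pushout-product axiom. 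So the core of the proof is simply to invoke that the Reedy model structure on $\ssSet$ over $\sSet_{\mathrm{Quillen}}$ is cartesian, i.e.\ satisfies the pushout-product axiom. This cartesianness is exactly the property recorded just before \cref{rmk:leibniz-reedy} via the generating Reedy cofibrations $(\partial\Delta^m\hookrightarrow\Delta^m)\mathbin{\widehat\square}(\partial\Delta^n\hookrightarrow\Delta^n)$, together with the fact that the Quillen model structure on $\sSet$ and the external product $\square$ behave compatibly; alternatively one cites \cite{elreedy} or \cite{RSS} (Reedy model structures are cartesian when the base is), but since the paper is only sketching the main ideas here it suffices to state the pushout-product axiom and deduce the Leibniz property formally.

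Next I would address the two auxiliary clauses. For ``domain and codomain Reedy fibrant if $X,Y$ are'': $X^V \to 1$ factors as $X^V \to X^U\times_{Y^U}Y^V \to \ast$ (using $X^i$, then project and use fibrancy of the constituent exponentials), and $X^U, Y^U, Y^V$ are Reedy fibrant because $(-)^W$ preserves fibrant objects for any $W$ (again by cartesianness, applied to $\emptyset\hookrightarrow W$ and $X\twoheadrightarrow 1$), and the pullback of Reedy fibrations is a Reedy fibration. For ``weak equivalence if $p$ is'': apply the Leibniz property with $i$ the cofibration and $p$ an acyclic fibration; the conclusion ``$\widehat{\{i,p\}}$ is an acyclic fibration'' is the instance of the pushout-product/Leibniz axiom with the acyclicity in the fibration slot. (If one only wants ``weak equivalence'' rather than ``acyclic fibration'', that is a fortiori; and the fibrancy part already gives the rest.)

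The main obstacle is not any deep calculation but making sure the cartesianness of the Reedy model structure is properly justified in the bisimplicial setting — that is, that the pushout-product of Reedy cofibrations is a Reedy cofibration (acyclic if either factor is), which is what powers everything. Here I would lean on the fact that the generating (acyclic) Reedy cofibrations can be taken to be pushout-products $(\partial\Delta^m\hookrightarrow\Delta^m)\mathbin{\widehat\square} j$ with $j$ a generating (acyclic) cofibration of $\sSet$, and that $\mathbin{\widehat\square}$ of pushout-products is again a pushout-product with the appropriate acyclicity, reducing to the corresponding fact in $\sSet_{\mathrm{Quillen}}$, which is classical. Since the paper explicitly says it will ``not give a complete proof, but only sketch the main ideas'', I would present \cref{rmk:leibniz-reedy} with a short proof of exactly this shape: state that the Reedy model structure over the Quillen model structure is cartesian, cite \cite{elreedy} (and \cite{RSS} for the general Reedy-cartesianness principle), and then deduce all three clauses from the Leibniz form of the pushout-product axiom in two lines.
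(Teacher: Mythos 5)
Your high-level reduction is exactly the paper's: by the exponential adjunction, $\widehat{\{i,p\}}$ having the RLP against a class of maps $j$ transposes to $p$ having the RLP against $i\mathbin{\widehat\times}j$, so all three clauses collapse to a single statement about the pushout-product of cofibrations. Where you diverge is in how that pushout-product statement is established. You propose either to cite the cartesianness of the Reedy model structure (e.g.\ \cite{elreedy}, or a general ``Reedy over a cartesian base is cartesian'' principle) or to reduce to generating cofibrations of the form $(\partial\Delta^m\hookrightarrow\Delta^m)\mathbin{\widehat\square}j$ and push everything through $\mathbin{\widehat\square}$ to classical facts in $\sSet_{\mathrm{Quillen}}$. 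The paper instead proves the needed half of the pushout-product axiom directly and very cheaply: (a) the pushout-product of two monomorphisms is a monomorphism because the outer square of the pushout-product diagram is a pullback, so the comparison map is a union of subobjects; (b) if $j$ is acyclic, then since products of simplicial sets preserve weak equivalences levelwise, $1_U\times j$ and $1_V\times j$ are weak equivalences, so the pushout $k$ of the acyclic cofibration $1_U\times j$ is a weak equivalence, and $2$-out-of-$3$ gives acyclicity of $i\mathbin{\widehat\times}j$. This buys a short self-contained argument with no appeal to generating sets or outside citations, and it makes explicit that only the ``acyclic if $j$ is'' half of the pushout-product axiom is needed here. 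Your route is correct but relies on a fact the paper prefers to verify by hand; also note the bibliography does not contain the reference you label ``RSS,'' so if you take the citation route you would need to supply it.
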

\begin{proof}
  By usual adjunction arguments, it suffices to prove that if $i\colon U \rightarrow V$ and $j:A\to B$ are cofibrations of bisimplicial sets, then the pushout product map $i\mathbin{\widehat{\times}}j$ is a cofibration that is acyclic (i.e.\ a levelwise weak equivalence) if $j$ is.
  \begin{equation*}
    \begin{tikzcd}
      U\times A \ar[r,"i\times 1_A"] \ar[d,"1_U\times j"] & V\times A \ar[ddr,bend left,"1_V\times j"] \ar[d,"k"] \\
      U\times B \ar[drr,bend right,"i\times 1_B"] \ar[r] & \bullet \ar[dr,dashed,"i\mathbin{\widehat{\times}}j"]\\
      && V\times B
    \end{tikzcd}
  \end{equation*}
  All the solid arrows in this diagram are monomorphisms and the outer square is a pullback; thus so is the dashed arrow, being a ``union of subobjects'' of $V\times B$.

  If $j$ is acyclic, then since products of simplicial sets preserve weak equivalences, so do products of bisimplicial sets; hence $1_U\times j$ and $1_V\times j$ are weak equivalences. {Thus the map $1_U \times j$ is an acyclic cofibration so its pushout, the map denoted $k$ in the diagram, is again a weak equivalence.}
  Thus, by the 2-out-of-3 property, $i\mathbin{\widehat{\times}}j$ is a weak equivalence as well.
\end{proof}

\subsection{Modeling type theory with shapes}
\label{sec:models-type-theory}

The usual approach to modeling dependent type theory in a category $\C$ is to exhibit a \emph{comprehension category} over $\C$, which is a Grothendieck fibration $\T\to\C$ equipped with a functor over $\C$:
\[
\begin{tikzcd}
  \T \ar[rr] \ar[dr] && \C^\two \ar[dl,"\mathrm{cod}"] \\ &\C
\end{tikzcd}
\]
that preserves cartesian arrows.
In the homotopy-theoretic context, $\T \hookrightarrow \C^\two$ is the subcategory of fibrations.
The categorical structure of $\C$ of interest induces similar structure on $\T$, to which one applies a \emph{coherence theorem} such as~\cite{LW} to obtain ``strictly stable'' structure on a split comprehension category equivalent to $\T$.
Finally, one constructs a similar split comprehension category with strictly stable structure out of the syntax of type theory, taking the base category $\C$ to be the contexts and the total category $\T$ to be the types-in-context, and proves an ``initiality theorem'' that it is the initial such, and hence maps uniquely into the one constructed from the desired model $\C$.

Of these steps, the initiality theorem is commonly neglected; the proofs in known cases are universally expected to generalize to all other cases, but there is as yet no general theorem.
Similarly, the coherence method of~\cite{LW} is not yet a general theorem but has to be proven separately for each kind of type-theoretic structure.
As our goal here is only to give a sketch of the semantics, we will omit both of these proofs; we confine ourselves to describing informally the relevant comprehension categories and explaining how both the syntax and the semantics yield examples.

To start with, since our type theory has three layers, our comprehension categories must also have three layers.
The cube and tope layers have no ``intra-layer dependencies'', so they do not require a full comprehension category structure individually; instead we can make do with a simple category with products.
This does involve blurring the line between context extension and cartesian product of types (i.e.\ we identify $t:I,s:J$ with $\pair{t,s}:I\times J$), but it is common and unproblematic.\footnote{Otherwise we could talk about ``cartesian multicategories''.}
The dependency between levels is encoded with fibrations as in~\cite{J}.
This leads to:

\begin{defn}
  A \textbf{comprehension category with shapes} is a tower of fibrations
  \[
  \begin{tikzcd}
    \T \ar[rr] \ar[dr] && (\C_2)^\two_{\C_1} \ar[dl,"\mathrm{cod}"] \\ &\C_2 \ar[d, "\pi_2"]\\ & \C_1 \ar[d,"\pi_1"] \\ & \C_0
  \end{tikzcd}
  \]
  in which $\C_0$ has finite products, $\C_1$ has fiberwise finite products (i.e.\ its fibers have finite products preserved by reindexing), $(\C_2)^\two_{\C_1}$ denotes the category of arrows in $\C_2$ that map to identities in $\C_1$ (and all commutative squares between them), and $\T\to(\C_2)^\two_{\C_1}$ preserves cartesian arrows.
\end{defn}

Our type theory with shapes as described in \cref{sec:shape-type-theory} yields a comprehension category with shapes in which:
\begin{itemize}
\item The objects of $\C_0$ are the contexts of cubes, and the morphisms are tuples of terms modulo the equivalence relation of derivable equality in tope logic, i.e.\ if $\Xi\types (t\jdeq s)$ then $t$ and $s$ represent the same morphism in $\C_0$.
\item The objects of $\C_1$ are contexts of topes-in-context, i.e.\ lists $\phi_1,\dots,\phi_n$ where $\Xi\types \phi_i\tope$ for each $i$, with reindexing by substitution along cube-morphisms.
  The morphisms in each fiber are entailments $\Xi\mid\Phi\types\psi$.
\item The objects of $\C_2$ are contexts of types in context, i.e.\ the judgment that we wrote as $\Xi\mid\Phi\types \Gamma\ctx$, and its morphisms are tuples of terms in the type theory modulo judgmental equality.
\item The objects of $\T$ are types-in-context, i.e.\ judgments $\Xi\mid\Phi\mid\Gamma\types A\type$, and its morphisms are terms.
  The functor $\T \to (\C_2)^\two_{\C_1}$ extends a context by a type.
\end{itemize}

On the other hand, the bisimplicial set model yields a comprehension category with shapes in which:
\begin{itemize}
\item $\C_0$ is the category of simplicial \emph{sets} of the form $(\Delta^1)^n$, regarded as spatially-discrete bisimplicial sets.
\item $\C_1$ is the category of monomorphisms of simplicial sets (regarded as spatially-discrete bisimplicial sets) whose codomain is of the form $(\Delta^1)^n$, with the projection $\C_1\to\C_0$ the codomain functor.
\item $\C_2$ is the category of diagrams $\Gamma \twoheadrightarrow \Phi\rightarrowtail I$ where $\Phi\rightarrowtail I$ is an object of $\C_1$ and $\Gamma\twoheadrightarrow \Phi$ is any Reedy fibration of bisimplicial sets.
\item $\T$ is the category of diagrams $A \twoheadrightarrow \Gamma \twoheadrightarrow \Phi\rightarrowtail I$, where $\Gamma \twoheadrightarrow \Phi\rightarrowtail I$ is as in $\C_2$ and $A \twoheadrightarrow \Gamma$ is a Reedy fibration.
\end{itemize}

The discrete embedding of sets in simplicial sets admits both adjoints, providing left and right adjoints to the inclusion of discrete simplicial spaces in bisimplicial sets:
\[
\begin{tikzcd}[row sep=large]
\Set \arrow[r, hook, "\perp", "\perp"'] & \sSet \arrow[l, bend right, "\pi_0"'] \arrow[l, bend left, "\mathrm{ev}_0"] & \rightsquigarrow & \Set^{\DDelta^\op} \arrow[r, hook, "{\mathrm{disc}}" description] & \sSet^{\DDelta^\op} \arrow[l, bend right, "(\pi_0)_*"', "\perp"] \arrow[l, bend left, "\mathrm{ev}_{-,0}", "\perp"'] 
\end{tikzcd}
\]
Hence the subcategory of discrete simplicial spaces is closed under all limits and colimits, which tells us that all the cubes, simplices, and more general shapes are discrete simplicial spaces. In particular, the conclusion of the following lemma applies to all of the shapes in the simplicial type theory and the functions between them.

\begin{lem}\label{lem:discrete-reedy} Any map of discrete bisimplicial sets is a Reedy fibration. In particular, any discrete simplicial space is Reedy fibrant.
\end{lem}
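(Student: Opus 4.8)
The statement to prove is \cref{lem:discrete-reedy}: any map of discrete bisimplicial sets is a Reedy fibration, and in particular any discrete simplicial space is Reedy fibrant. The plan is to unwind the definition of Reedy fibration in this special case and reduce it to a triviality about Kan fibrations between discrete simplicial sets. Recall that $f\colon X\to Y$ is a Reedy fibration iff for every $m\ge 0$ the matching map
\[ \{\Delta^m,X\} \longrightarrow \{\partial\Delta^m,X\}\times_{\{\partial\Delta^m,Y\}}\{\Delta^m,Y\} \]
is a Kan fibration of simplicial sets. So the first step is to compute the weighted limits $\{A,X\}$ when $X$ is discrete.

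\emph{Step 1: weighted limits into discrete simplicial spaces are discrete.} Under the identification $\ssSet\cong\sSet^{\DDelta^\op}$, a discrete simplicial space $X$ is (the image under $\mathrm{disc}$ of) a functor $\DDelta^\op\to\Set\hookrightarrow\sSet$, i.e.\ each column $X_m = \{\Delta^m,X\}$ is a \emph{discrete} simplicial set. More generally, for any simplicial set $A$, the weighted limit $\{A,X\}$ is computed columnwise, and since the $\mathrm{disc}$-image is closed under limits (it is both reflective and coreflective, being closed under all limits and colimits as noted just before the lemma, via the adjunctions displayed there), $\{A,X\}$ is again a discrete simplicial set. Concretely $\{A,X\}_n = \mathrm{disc}^{-1}(\{A,X\})_n$ is independent of $n$; one can also just observe $\{A,X\}$ is a limit of copies of columns of $X$, all of which are discrete, and discrete simplicial sets are closed under limits in $\sSet$.

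\emph{Step 2: the matching map is a map of discrete simplicial sets, hence a Kan fibration.} By Step 1, all four simplicial sets appearing in the matching square for $X\to Y$ are discrete, and a pullback of discrete simplicial sets is discrete, so the matching map $\{\Delta^m,X\}\to\{\partial\Delta^m,X\}\times_{\{\partial\Delta^m,Y\}}\{\Delta^m,Y\}$ is a map of discrete simplicial sets. But a discrete simplicial set is the constant-image of a set, and any map of sets, viewed as a map of discrete simplicial sets, is a Kan fibration: the only horns $\Lambda^n_k\to\Delta^n$ with $n\ge 1$ admit essentially unique maps to a discrete simplicial set (any such map is constant), and a lift $\Delta^n\to Z$ along a constant map always exists and is forced, since $Z$ discrete means $Z_n=Z_0$ compatibly. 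Equivalently: discrete simplicial sets are exactly the $\infty$-groupoids that are "spaces of points" with no higher structure, and the class of Kan fibrations contains all maps between them (a map of discrete simplicial sets is even a trivial fibration when surjective, and in general is a Kan fibration because the Quillen model structure restricted to discrete simplicial sets is just the trivial model structure on $\Set$). Therefore every matching map is a Kan fibration, so $X\to Y$ is a Reedy fibration. The "in particular" follows by taking $Y = 1$ (which is discrete).

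\emph{Main obstacle.} There is no serious obstacle; the only point requiring a moment's care is the closure of discrete simplicial spaces under the relevant weighted limits (Step 1), which is exactly what the adjunction $\mathrm{disc}\dashv (\pi_0)_*$, $\mathrm{ev}_{-,0}\dashv\mathrm{disc}$ recorded immediately before the lemma provides: being coreflective \emph{and} reflective, the subcategory is closed under all limits and colimits, so in particular under $\{A,-\}$ and under pullback. Given that, the reduction to "a map of sets is a Kan fibration" is immediate. I would present the argument in roughly the order above, keeping Step 1 to a sentence citing the displayed adjunctions and spending the bulk of the (very short) proof on noting that the matching maps land among discrete simplicial sets and are therefore Kan fibrations.
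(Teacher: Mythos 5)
Your proof is correct and follows essentially the same route as the paper's: first observe that the discretely embedded subcategory is reflective and coreflective, hence closed under the relevant weighted limits, so all matching maps are maps of discrete simplicial sets; then note that any map of discrete simplicial sets is a Kan fibration because horn and simplex inclusions become isomorphisms after applying $\pi_0$. The paper phrases the second point as a transpose across the $\pi_0 \dashv \mathrm{disc}$ adjunction, which is a bit crisper than your "forced lift" argument, but the content is identical.
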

\begin{proof}
First note that any map of discrete simplicial sets is a Kan fibration, for if $S \to T$ is a map of discrete simplicial sets then the displayed lifting problems are transposes:
\[
\begin{tikzcd} \Lambda^n_k \arrow[r] \arrow[d] & S \arrow[d]  & \arrow[d, phantom, "{\leftrightsquigarrow}" description] & \pi_0 \Lambda^n_k \arrow[d, equals] \arrow[r] & S \arrow[d] \\ \Delta^n \arrow[ur, dashed] \arrow[r] & T & ~ & \pi_0 \Delta^n \arrow[r] \arrow[ur, dashed, "{\exists !}"'] & T
\end{tikzcd}
\]

Now the discretely embedded subcategory $\Set^{\DDelta^\op}\hookrightarrow \sSet^{\DDelta^\op}$ is reflective and coreflective and thus closed under weighted limits with any weight $W \in \Set^{\DDelta^\op}$. If $X \to Y$ is a map of discrete bisimplicial sets, then
\[ \{ \Delta^m,X\} \to \{\partial\Delta^m,X\} \times_{\{\partial\Delta^m,Y\}} \{\Delta^m , Y\}\] is a map of discrete simplicial sets, and thus is a Kan fibration.
\end{proof}

In particular, the objects $I,\Phi,\Gamma,A$ in the semantic model are Reedy fibrant objects.
This is not necessary for us here, but in other situations it can be useful to know.

We now describe the structure on a comprehension category with shapes that corresponds to our type theory with shapes from \cref{sec:shape-type-theory}.
No additional structure is required on $\C_0$; the finite products that encode context extension are also sufficient to model product cubes.
On $\C_1$ we require:

\begin{defn}\label{defn:cc-topelogic}
  We say that a comprehension category with shapes has \textbf{pseudo-stable coherent tope logic} if
  \begin{itemize}
  \item the fibers of $\C_1$ are preorders that are equivalent to distributive lattices, 
  \item with meets and joins preserved up to isomorphism by reindexing, 
  \item in which reindexings along diagonal maps in $\C_0$ have left adjoints satisfying the Beck-Chevalley condition, and 
  \item moreover the analogue of~\eqref{eq:tope-eq-jdeq} holds.\footnote{We will not be precise about what~\eqref{eq:tope-eq-jdeq} means categorically, since it holds in the syntactic model by definition, while it holds in the semantic model since there $\jdeq$ means literal equality of morphisms.}
  \end{itemize}
\end{defn}

The rules in \cref{fig:topes} ensure that \cref{defn:cc-topelogic} holds for the syntactic model; the connectives $\top,\land,\bot,\lor$ give the distributive lattice structure.
The left adjoint to reindexing along $I\to I\times I$ takes $t:I \types \phi\tope$ to $t:I,s:I \types (t\jdeq s)\land \phi \tope$; that this corresponds to the usual rules of equality is an observation of Lawvere~\cite{lawvere}; see also~\cite{J}.

In the semantic model, since the category of simplicial sets is coherent {and is closed in bisimplicial sets under all conical limits and colimits,} its subobject posets are distributive lattices with meets and joins preserved by pullback, and left adjoints to pullback of monomorphisms along any monomorphism (such as a diagonal map) are given by composition.

\begin{rmk}
\cref{defn:cc-topelogic} is called ``pseudo-stable'' because the meets and joins in fibers of $\C_1$ are preserved up to isomorphism by reindexing.
This accords with the terminology of~\cite{LW}, although they consider mainly type constructors whose rules do not suffice to determine them uniquely up to isomorphism, so that such pseudo-stability has to be asserted as a structure.
The method of~\cite{LW} (which we will not describe in detail here) then applies to make such structure \emph{strictly} preserved by reindexing, as needed to model type theory.\footnote{We will not need to consider the more generally ``weakly stable'' structure of~\cite{LW}, since all the additional operations of our type theory correspond categorically to objects with a universal property that determines them up to isomorphism.}
\end{rmk}

The structure on $\C_2$ consists, firstly, of analogues of the usual structure for modeling dependent type theory with $\Sigma$, $\Pi$, identity types, and so on, as described in~\cite{LW}.
This exists in both the syntactic and the semantic model for the usual reasons in each case.

Secondly, we have compatibility with the coherent logic.
As usual in a comprehension category, we write $\T(\Gamma)$ for the fiber of $\T$ over $\Gamma\in\C_2$, and $\Gamma.A\to\Gamma$ for the image of such an object in $(\C_2)^\two_{\C_1}$.

\begin{defn}\label{defn:cc-reclor}
  A comprehension category with shapes and pseudo-stable coherent tope logic has \textbf{type eliminations for tope disjunction} if the following hold:
  \begin{itemize}
  \item If $\pi_2(\Gamma)$ is the bottom element of its fiber in $\C_1$, then $\Gamma$ is an initial object of $\C_2$.
  \item If $\pi_2(\Gamma) = \phi\lor\psi$ in a fiber of $\C_1$, with injections $i:\phi\to\phi\lor\psi$ and $j:\psi\to\phi\lor\psi$ and $k:\phi\land\psi\to\phi\lor\psi$, then the following square of reindexings is a pushout in $\C_2$:
    \[
    \begin{tikzcd}
      k^*\Gamma \ar[r] \ar[d] & i^*\Gamma \ar[d] \\ j^*\Gamma \ar[r] & \Gamma
    \end{tikzcd}
    \]
  \end{itemize}
\end{defn}

This appears somewhat different from the rules of \cref{fig:tope-or}, which talk about terms $\Gamma\types a:A$, hence sections of a comprehension $\Gamma.A\to \Gamma$.
But if \cref{defn:cc-reclor} holds then we can define such sections using the universal property of a pushout as in the following diagram.\footnote{The vertical dotted arrows denote the action of $\pi_2 : \C_2\to \C_1$, rather than an actual morphism in a category, although in the semantic model there is such a morphism.}
\[
\begin{tikzcd}[row sep=small,column sep=small]
& & & \Gamma.A \arrow[dd, two heads] \\ 
k^*\Gamma \arrow[rr] \arrow[dr] \arrow[dd, dotted] & & j^*\Gamma \arrow[dr] \arrow[ur, dashed, "a_\psi"'] \arrow[dd,dotted] \\ & i^*\Gamma \arrow[rr, crossing over] \arrow[uurr, bend left, crossing over, dashed, "a_\phi"] & & \Gamma \arrow[dd,dotted] \arrow[uu, bend right, dotted, "{\rec_\lor^{\phi,\psi}(a_\phi,a_\psi)}"'] \\ 
{\phi\land \psi} \arrow[rr, tail] \arrow[dr, tail] \arrow[drrr, tail, "k"] & & {\psi} \arrow[dr, tail, "j"]  \\ 
& {\phi} \arrow[rr, tail, "i"'] \arrow[from=uu, dotted, crossing over] & & {\phi \lor \psi}
\end{tikzcd}
\]
Conversely, \cref{defn:cc-reclor} holds in the syntactic model since morphisms of contexts are tuples of sections of dependent types, so a universal property relating to the latter implies one relating to the former.

For the semantic model, the first condition in \cref{defn:cc-reclor} is easy since bottom elements of subobject lattices are initial objects, and initial objects in $\ssSet$ are strict (i.e.\ any map with initial codomain has initial domain).
The second condition similarly follows from the facts that in a coherent category, unions of subobjects are pushouts under their intersections, and such pushouts are preserved by pullback, and the inclusion of discrete bisimplicial sets preserves colimits.

Finally, there are the extension types.

\begin{defn}\label{defn:cc-exten}
  A comprehension category with shapes has \textbf{pseudo-stable extension types} if whenever we have the following data:
  \begin{equation}\label{eq:ccexten}
  \begin{tikzcd}
    & (\psi^*\Gamma).A \ar[d,->>]\\
    \phi^*\Gamma \ar[r] \ar[d,dotted] \ar[ur,"a"] & \psi^*\Gamma \ar[r] \ar[d,dotted] & \Gamma \ar[d,dotted] & \in \C_2 \ar[d,"\pi_2",shift left]\\
    \Phi\times \phi \ar[r] \ar[dr,dotted] &\Phi \times \psi \ar[r] \ar[d,dotted] & \Phi \ar[d,dotted] & \in \C_1 \ar[d,"\pi_1",shift left]\\
    &\Xi\times I \ar[r] & \Xi & \in\C_0
  \end{tikzcd}
  \end{equation}
  with $A\in\T(\psi^*\Gamma)$ there exists an object $\ccexten{\psi}{A}{\phi}{a} \in \T(\Gamma)$ whose comprehension
  $\Gamma.\ccexten{\psi}{A}{\phi}{a} \to \Gamma$
represents the functor $(\C_2/\Gamma)^\op\to\Set$ that sends $\sigma:\Theta\to\Gamma$ to the set of sections $b$ of $\sigma^*A$ that extend $\sigma^*a$, i.e.\ liftings in the following square:
  \begin{equation}
  \begin{tikzcd}
    \phi^*\Theta \ar[r,"\sigma^*a"] \ar[d] & (\psi^*\Theta).(\sigma^*A) \ar[d,->>]\\
    \psi^*\Theta \ar[r,equals] \ar[ur,dashed,"b"] & \psi^*\Theta
  \end{tikzcd}\label{eq:ccexten-lift}
  \end{equation}
\end{defn}

Comparing this to \cref{fig:exten}, the above diagram of data corresponds exactly to the premises of the first (formation) rule.
The second (introduction) rule says that given any $b$ as in \cref{defn:cc-exten} there is an induced map to $\ccexten{\psi}{A}{\phi}{a}$.
The third and fourth (elimination) rules say that $\ccexten{\psi}{A}{\phi}{a}$ comes with a universal such $b$, and the fifth and sixth ($\beta$-reduction and $\eta$-conversion) rules say that any $b$ is induced by the universal one and that the corresponding map is uniquely determined.
In particular, the syntactic model has pseudo-stable extension types.

The fact that the semantic model also has pseudo-stable extension types is the least trivial part of the semantics.
Although our primary interest is in the bisimplicial sets model, it is hardly any more work to prove a more general theorem.

Let $\fT$ be a propositional coherent theory, i.e.\ a set of axiomatic cubes, cube terms, topes, and tope entailments in the first two layers of our type theory from \cref{sec:shape-type-theory}, such as the simplicial type theory of \cref{sec:strict-interval} or the cubical type theory mentioned in \cref{rmk:cubical}.
This gives rise to a syntactic fibration $\fT_1 \to \fT_0$ as in the first two layers of a comprehension category with shapes, which has coherent tope logic.
A \emph{model} of $\fT$ in a topos (or more generally a coherent category) $\V$ is a morphism of fibrations from this syntactic one to the fibration $\mathrm{Mono}(\V) \to \V$ which preserves finite products in the base and the lattice structure in the fibers.

\begin{defn}\label{defn:model-shapes}
  A \textbf{model category with $\fT$-shapes} consists of:
  \begin{itemize}
  \item A right proper Cisinski model category $\M$, i.e.\ a right proper cofibrantly generated model structure on a Grothendieck topos whose cofibrations are the monomorphisms;
  \item A model of $\fT$ in a coherent category $\V$;
  \item A  coherent functor $\varpi : \V\to\M$;
  \item Such that for any object $U\in \V$, the functor $(\varpi U \times -):\M\to\M$ preserves acyclicity of cofibrations.
  \end{itemize}
\end{defn}

Note that since $\varpi$ preserves finite limits, it preserves monomorphisms, i.e.\ it takes them to cofibrations in $\M$.
Thus, essentially the same proof of \cref{rmk:leibniz-reedy} implies that for any monomorphism $i:U\to V$ in $\V$ and any fibration $p:X\twoheadrightarrow Y$ in $\M$, the induced map $\widehat{\{i,p\}} \colon X^{\varpi V} \to X^{\varpi U} \times_{Y^{\varpi U}} Y^{\varpi V}$ is a fibration, which is acyclic if $p$ is.

Our primary class of examples is the following.

\begin{ex}\label{thm:model-shapes}
  Let $\V = \Set^{I^\op}$ be a presheaf topos containing a model of $\fT$.
  For instance, it might be the classifying topos of $\fT$, if that happens to be a presheaf topos.
  Let $\N$ be a right proper Cisinski model category, and give $\M = \N^{I^\op}$ with the injective model structure, with cofibrations and weak equivalences levelwise; note $\M$ is again a right proper Cisinski model category.
 Let $\varpi : \V = \Set^{I^\op} \to \N^{I^\op}= \M$ be induced by the unique cocontinuous functor $\Set\to\N$, which takes a set $U$ to the coproduct $\coprod_U 1$ of that many copies of the terminal object.
    This is a coherent functor since it is the inverse image of a geometric morphism $\M\to\V$.
    The final condition follows since the cartesian product in $\M = \N^{I^\op}$ is levelwise, as are its acyclic cofibrations, and acyclic cofibrations are closed under $\Set$-indexed copowers in any model category.

  In particular, taking $I= \DDelta$ with the universal strict interval in $\Set^{I^\op} = \sSet$, and $\N = \sSet$ with the Quillen model structure, we recover the bisimplicial sets model considered above.\footnote{Since $\DDelta$ is an elegant Reedy category in the sense of~\cite{BR}, the Reedy model structure on $\N^{\DDelta^\op}$ coincides with the injective model structure.}
  More generally, we can take $I= \DDelta$ with the same universal strict interval, but $\N$ any right proper Cisinski model category; this yields a synthetic theory of ``internal $(\infty,1)$-categories in $\N$''.
\end{ex}

However, the following class of examples is also somewhat interesting.

\begin{ex}\label{thm:model-discrete}
  Let $\V = \sSet$ with the universal strict interval, and let $\M = \sSet^{J^\op}$ be a topos of simplicial presheaves, with some left Bousfield localization of the injective model structure associated to the Quillen model structure, and assume that $\M$ is right proper and a simplicial model category.
  Any locally cartesian closed locally presentable $(\infty,1)$-category, such as a Grothendieck $(\infty,1)$-topos, can be presented by such a model category $\M$.
 Let $\varpi$ be restriction along the projection $\DDelta \times J \to \DDelta$.
  Then for $U\in\V$ and $X\in \M$, the product ${\varpi U}\times X$ is equivalently the simplicial copower of the simplicial enrichment of $\M$, so the final condition in \cref{defn:model-shapes} follows from the axioms of a simplicial model category.

  In the resulting model of simplicial type theory, all types are ``discrete'' in the sense of \cref{sec:discrete-types}, since $A^\two$ is just the simplicial path-object.
  However, this is not completely pointless, since compared to the identity types of ordinary homotopy type theory, the hom-types $\hom_A(x,y)$ of simplicial type theory have strictly functorial behavior, yielding some (but not all) of the advantages of cubical type theory.
\end{ex}

From any model category with $\fT$-shapes, we construct a comprehension category with shapes as follows.
\begin{itemize}
\item $\C_0 = \V$ and $\C_1 = \mathrm{Mono}(\V)$.\footnote{In the bisimplicial sets model, we restricted to the subcategory of $\V$ consisting of cubes, but this is immaterial since the image of the interpretation functor from the syntactic model will automatically land in that subcategory anyway.}
\item $\C_2$ is the category of diagrams $\Gamma \twoheadrightarrow \varpi\Phi\rightarrowtail \varpi I$ where $\Phi\rightarrowtail I$ is an object of $\C_1$ and $\Gamma\twoheadrightarrow \varpi\Phi$ is any fibration in $\M$.
\item $\T$ is the category of diagrams $A \twoheadrightarrow \Gamma \twoheadrightarrow \varpi\Phi\rightarrowtail \varpi I$, where $\Gamma \twoheadrightarrow \varpi\Phi\rightarrowtail \varpi I$ is as in $\C_2$ and $A \twoheadrightarrow \Gamma$ is a fibration in $\M$.
\end{itemize}
For the same reasons described above for the bisimplicial set model, this comprehension category has pseudo-stable coherent tope logic with type eliminations for tope disjunction.
The latter uses the fact that $\varpi$ is a coherent functor.
It remains to prove:

\begin{thm}
  For any model category with $\fT$-shapes, the above comprehension category with shapes has pseudo-stable extension types.
\end{thm}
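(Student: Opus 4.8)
The plan is to construct the object $\ccexten{\psi}{A}{\phi}{a}$ directly as a pullback-corner fibration, using the Leibniz closure property established in \cref{rmk:leibniz-reedy} (or rather its generalization to an arbitrary model category with $\fT$-shapes, stated immediately after \cref{defn:model-shapes}). First I would set up the relevant data: given the diagram \eqref{eq:ccexten}, the monomorphism $i \colon \Phi\times\phi \rightarrowtail \Phi\times\psi$ in $\V$ is a monomorphism over $I$ (indeed over $\Phi$), and $\varpi$ sends it to a cofibration $\varpi i \colon \varpi(\Phi\times\phi) \to \varpi(\Phi\times\psi)$ in $\M$. Meanwhile the comprehension $(\psi^*\Gamma).A \to \psi^*\Gamma$ is a fibration in $\M$. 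I want to form the "relative exponential" of these along the section $a$. The cleanest way to do this is to work in the slice over $\varpi(\Phi\times\psi)$, or more precisely to exponentiate $(\psi^*\Gamma).A$ by the cofibration $\varpi i$ fiberwise over $\varpi\Phi$, then pull back along the map picking out $a$.

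Concretely, the key steps are: (1) Observe that $a \colon \phi^*\Gamma \to (\psi^*\Gamma).A$ together with the structure maps gives a map $\phi^*\Gamma \to (\psi^*\Gamma).A$ over $\psi^*\Gamma$ (after reindexing), i.e.\ a partial section defined on the $\varpi i$-part. (2) Apply the generalized Leibniz closure: for the cofibration $\varpi i$ and the fibration $(\psi^*\Gamma).A \twoheadrightarrow \psi^*\Gamma$, working in the slice $\M/\psi^*\Gamma$ (noting $\psi^*\Gamma \twoheadrightarrow \varpi\Phi$ and everything is nicely fibered), the pullback-corner map is a fibration. The object representing "extensions of $a$" is then obtained by pulling this fibration back along the map $\psi^*\Gamma \to (\phi^*\Gamma)$-exponential determined by $a$ — that is, $\Gamma.\ccexten{\psi}{A}{\phi}{a}$ is the fiber, over the point named by $a$, of the map $((\psi^*\Gamma).A)^{\varpi i} \to ((\psi^*\Gamma).A)^{\varpi(\Phi\times\phi)}$ formed in the appropriate slice. (3) Check that this is a fibration over $\Gamma$: it is a pullback of a fibration, hence a fibration; so the resulting object lies in $\T(\Gamma)$. (4) Verify the representability statement \eqref{eq:ccexten-lift}: a map $\sigma \colon \Theta \to \Gamma$ factors through $\Gamma.\ccexten{\psi}{A}{\phi}{a}$ iff it lifts to a section of $\sigma^*A$ extending $\sigma^*a$ — this is exactly the universal property of the exponential/pullback-corner construction, transported through the slice, and it is stable under reindexing because exponentials and pullbacks are (pseudo-)stable. (5) Note that the whole construction is natural in $\sigma$, giving pseudo-stability; the coherence theorem of \cite{LW} is then invoked (but not proven here) to strictify.

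I expect the main obstacle to be bookkeeping: the construction must be carried out fiberwise over $\varpi\Phi$ (and ultimately over $\varpi I$), because the cofibration $\varpi i$ lives over $\varpi\Phi$ and we only have acyclicity of $(\varpi U \times -)$ for $U$ coming from $\V$, not for arbitrary base changes in $\M$. So one has to be careful that the exponential $((\psi^*\Gamma).A)^{\varpi i}$ is formed in $\M/\varpi\Phi$ (where $\varpi i$ is a morphism of the two objects $\varpi(\Phi\times\phi)$ and $\varpi(\Phi\times\psi)$ over $\varpi\Phi$), and that this slice exponential interacts correctly with the further structure map $\psi^*\Gamma \twoheadrightarrow \varpi\Phi$ when we exponentiate the fibration $(\psi^*\Gamma).A \twoheadrightarrow \psi^*\Gamma$ — which itself is not over $\varpi\Phi$ in the naive sense, but factors through it. The right framework is to use the fact that $\M/\varpi\Phi$ is again a model category with $\fT$-shapes (or at least that the Leibniz property descends to such slices because $\varpi\Phi$ is $\varpi$ of something), and then apply the generalized \cref{rmk:leibniz-reedy} there. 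Once the slicing is organized correctly, the representability and pseudo-stability claims are formal consequences of the universal properties of pullbacks and exponentials, so I would not belabor them; the proof will say "by the usual adjunction arguments" and refer to the discussion following \cref{defn:model-shapes}.

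\begin{proof}[Proof sketch]
  Given the data \eqref{eq:ccexten}, write $i \colon \Phi\times\phi \rightarrowtail \Phi\times\psi$ for the indicated monomorphism of $\V$, regarded as a morphism in $\V/\Phi$, and let $p \colon (\psi^*\Gamma).A \twoheadrightarrow \psi^*\Gamma$ be the comprehension of $A$, a fibration of $\M$.
  Applying $\varpi$ to $i$ yields a cofibration $\varpi i$ in $\M/\varpi\Phi$.
  Working in the slice $\M/\psi^*\Gamma$ and then pushing down along $\psi^*\Gamma \twoheadrightarrow \varpi\Phi \rightarrowtail \varpi I$, we form the pullback-corner object: the relative exponential $((\psi^*\Gamma).A)^{\varpi i}$ over $(\psi^*\Gamma).(\varpi i)$, equipped with its canonical fibration to $((\psi^*\Gamma).A)^{\varpi(\Phi\times\phi)}$ over $\psi^*\Gamma$.
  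By the generalization of \cref{rmk:leibniz-reedy} noted after \cref{defn:model-shapes} (valid since $\varpi i$ is $\varpi$ of a monomorphism and the relevant slice of $\M$ inherits the structure of a model category with $\fT$-shapes), this pullback-corner map is a fibration in $\M$, and is acyclic if $p$ is.

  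The section $a \colon \phi^*\Gamma \to (\psi^*\Gamma).A$ determines, by transposition, a map $\psi^*\Gamma \to ((\psi^*\Gamma).A)^{\varpi(\Phi\times\phi)}$ over $\psi^*\Gamma$, and we define $\Gamma.\ccexten{\psi}{A}{\phi}{a}$ to be the pullback of the pullback-corner fibration along this map.
  As a pullback of a fibration it is a fibration over $\psi^*\Gamma$, hence over $\Gamma$, so it defines an object $\ccexten{\psi}{A}{\phi}{a} \in \T(\Gamma)$.
  For representability, a morphism $\sigma \colon \Theta \to \Gamma$ in $\C_2$ lifts to $\Gamma.\ccexten{\psi}{A}{\phi}{a}$ precisely when it lifts, after transposition across the exponential adjunction, to a map $\psi^*\Theta \to (\psi^*\Theta).(\sigma^*A)$ over $\psi^*\Theta$ restricting to $\sigma^*a$ along $\varpi i$; that is, to a lift in \eqref{eq:ccexten-lift}.
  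This is exactly the desired universal property.

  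Finally, every step --- the relative exponential, the pullback-corner construction, and the pullback along the map named by $a$ --- is stable under reindexing up to canonical isomorphism, since exponentials and pullbacks in a topos (resp.\ the Leibniz construction in a model category) are pseudo-stable; hence so is $\ccexten{\psi}{A}{\phi}{a}$.
  Applying the coherence method of~\cite{LW} then rectifies this pseudo-stable structure to a strictly stable one, as required to model the rules of \cref{fig:exten}.
\end{proof}
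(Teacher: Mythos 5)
Your overall strategy is the right one — construct $\ccexten{\psi}{A}{\phi}{a}$ by pulling back the Leibniz cotensor $\widehat{\{i,p\}}$ for the cofibration $\varpi i$ and the fibration $p\colon (\psi^*\Gamma).A\twoheadrightarrow\psi^*\Gamma$ — and this matches the paper's plan. But there are two genuine problems with the execution.

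First, the detour through slice categories is both unnecessary and dangerous. The paper explicitly warns, in the paragraph immediately after this theorem, that ``\cref{rmk:leibniz-reedy} and its generalizations do \emph{not} extend to exponentials in slice categories''; this is precisely why the shape inclusion in \cref{fig:exten} is required to live in the empty context. Your hedge that $\M/\varpi\Phi$ inherits the structure of a model category with $\fT$-shapes ``because $\varpi\Phi$ is $\varpi$ of something'' is the kind of claim the paper is guarding against: it is plausible only because the cofibration in question is of the constant form $\varpi\Phi\times\varpi i$, in which case slice products against it reduce to absolute products against $\varpi i$ — but once you've made that observation, you might as well work in $\M$ absolutely from the start, which is what the paper does. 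The paper's exponentials $A^{\varpi\psi}$, $A^{\varpi\phi}$, $(\Gamma\times\varpi\psi)^{\varpi\psi}$, $(\Gamma\times\varpi\psi)^{\varpi\phi}$ are all formed in $\M$, not in any slice.

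Second, and more seriously, your final pullback does not produce an object of $\T(\Gamma)$. You transpose $a$ to obtain a map out of $\psi^*\Gamma$ and pull back the Leibniz fibration along it, yielding a fibration whose codomain is $\psi^*\Gamma$, not $\Gamma$. You then assert it is ``a fibration over $\psi^*\Gamma$, hence over $\Gamma$''; this does not follow. The projection $\psi^*\Gamma\simeq\Gamma\times\varpi\psi\to\Gamma$ is not a fibration in a general model category with $\fT$-shapes (it would be if $\varpi\psi$ were fibrant, which is not assumed), and even if it were, the composite would compute the \emph{total space} of a $\psi$-indexed family rather than the intended space of sections. The paper instead transposes $a\colon\Gamma\times\varpi\phi\to A$ to a map $a\colon\Gamma\to A^{\varpi\phi}$ with domain $\Gamma$, pairs it with the transpose $\eta\colon\Gamma\to(\Gamma\times\varpi\psi)^{\varpi\psi}$ of the identity, and pulls back $\widehat{\{i,p\}}\colon A^{\varpi\psi}\to A^{\varpi\phi}\times_{(\Gamma\times\varpi\psi)^{\varpi\phi}}(\Gamma\times\varpi\psi)^{\varpi\psi}$ along $(a,\eta)$; since that pullback is taken over $\Gamma$, the result is a fibration over $\Gamma$ directly, and the representability claim \eqref{eq:ccexten-lift} is then a routine transposition exercise. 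If you repair your proof by forming the absolute exponentials and the map $(a,\eta)$ in this way, you recover the paper's argument.
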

\begin{proof}
A shape inclusion $t:I \mid \phi \types \psi$ is modeled by a monomorphism $i \colon \phi \rightarrowtail \psi$ in $\V$. In the case where $A$ and $a$ are defined in the empty context, then the extension type $\ccexten{\psi}{A}{\phi}{a}$ is constructed simply by the pullback:
 \[
 \begin{tikzcd} {\ccexten{\psi}{A}{\phi}{a}} \arrow[r] \arrow[d, two heads] \arrow[dr, phantom, "\lrcorner" very near start] & A^{\varpi\psi} \arrow[d, two heads, "A^{\varpi i}"] \\ 1 \arrow[r, "a"] & A^{\varpi\phi}
 \end{tikzcd}
 \]
where $A^{\varpi i}$ is a fibration by the observation after \cref{defn:model-shapes}.
In the general case, extension types are again constructed by a similar pullback, though there is some delicacy in expressing the context dependence correctly.

Recall that the context $\Xi\mid\Phi \types \Gamma \ctx$ is modeled by a fibration $\Gamma \twoheadrightarrow \varpi\Phi$, where $\Phi \rightarrowtail \Xi$ is another monomorphism in $\V$.  The type $A$ is then a further fibration $p \colon A \twoheadrightarrow \Gamma \times \varpi\psi$, and the dependent term $a : A$ is a section as shown in the diagram below, most of which is just~\eqref{eq:ccexten} specialized to the model in question.
For brevity we omit $\varpi$ from the notation, identifying objects and morphisms in $\V$ with their images in $\M$.
Importantly, note that the exponentials are in $\M$ and not in any slice category thereof.
\[
\begin{tikzcd}
& A \arrow[d, two heads, "p"] & {\ccexten{\psi}{A}{\phi}{a}} \arrow[d, two heads] \arrow[dr, phantom, "\lrcorner" very near start] \arrow[r] & A^{\psi} \arrow[d, two heads, "{\widehat{\{ i,p\}}}"] \\ 
\Gamma \times \phi \arrow[d, two heads] \arrow[dr, phantom, "\lrcorner", very near start] \arrow[r, "\Gamma \times  i"]  \arrow[ur, dashed, "a"] & \Gamma \times \psi \arrow[d, two heads] \arrow[dr, phantom, "\lrcorner" very near start] \arrow[r, "\pi"] & \Gamma \arrow[d, two heads] \arrow[r, "{(a, \eta)}"] & {A^\phi \times_{(\Gamma \times \psi)^{\phi}} (\Gamma \times \psi)^{\psi}} \\ (\Phi \times \phi) \arrow[r, tail, "(\Phi \times i)"] \ar[dr,tail,two heads] & (\Phi \times \psi) \arrow[r, "\pi"] \arrow[d, tail,two heads] & \Phi \arrow[d, tail,two heads] \\ & (\Xi\times I) \ar[r] & \Xi
\end{tikzcd}
\]
The extension type $\ccexten{\psi}{A}{\phi}{a}$ is constructed using the pullback square in the upper right of this diagram.
By the observation after \cref{defn:model-shapes}, the map $\widehat{\{i,p\}}$ is a fibration.
The map $a \colon \Gamma \to A^\phi$ is a transpose of the partial section $a$ of $p$ displayed on the left, while the map $\eta \colon \Gamma \to (\Gamma \times \psi)^\psi$ is the transpose of the identity.
Since $p \circ a = \Gamma \times i$, this pair indeed defines a cone over the pullback ${A^\phi \times_{(\Gamma \times \psi)^{\phi}} (\Gamma \times \psi)^\psi}$; thus the pullback is well-formed.
Of course, a pullback of a fibration is a fibration, so $\ccexten{\psi}{A}{\phi}{a}$ defines an element of $\T(\Gamma)$.
Thus, it remains to argue that it has the correct universal property.

The universal property of ${A^\phi \times_{(\Gamma \times \psi)^{\phi}} (\Gamma \times \psi)^\psi}$ is that it classifies commutative squares from $i \colon \phi \rightarrowtail\psi$ to $p \colon A \twoheadrightarrow \Gamma \times \psi$, and $\widehat{\{i,p\}}$ classifies lifts in such squares.
That is, maps from $\Theta$ into this pullback correspond to commutative squares of the form
\[
\begin{tikzcd}
  {\Theta \times \phi} \arrow[d, tail, "{\Theta \times i}"'] \arrow[r] & A \arrow[d, two heads, "p"] \\ {\Theta \times \psi} \arrow[r] \ar[ur,dashed] & {\Gamma \times \psi}
\end{tikzcd}
\]
(i.e.\ pairs of maps $\Theta\times\phi\to A$ and $\Theta\times\psi\to\Gamma\times\psi$ making the square commute), while lifts of such maps along $\widehat{\{i,p\}}$ correspond to diagonal fillers in such squares.
The classifying map of such a commutative square factors through $(a, \eta)$ just when the bottom map is of the form $\sigma\times 1_\psi$ and the top of the form $a\circ (\sigma\times 1_\phi)$ for some $\sigma:\Theta\to\Gamma$.
Thus, lifts of a given $\sigma:\Theta\to\Gamma$ to $\ccexten{\psi}{A}{\phi}{a}$ classify lifts in the square
\[
\begin{tikzcd} {\Theta \times \phi} \arrow[d, tail, "{\Theta \times i}"'] \arrow[r, "a\circ (\sigma\times 1_\phi)"] & A \arrow[d, two heads, "p"] \\ {\Theta \times \psi} \arrow[r, "\sigma\times 1_\psi"'] \arrow[ur, dashed] & {\Gamma \times \psi}
\end{tikzcd}
\]
Factoring such a square through the pullback of $p$ along $\sigma\times 1_\psi$, we obtain exactly~\eqref{eq:ccexten-lift}.
\end{proof}

This proof is the semantic reason for requiring the shape inclusion $i:I \mid \phi\types\psi$ in \cref{fig:exten} to be defined in the empty context rather than allowed to depend on $\Xi$ and $\Phi$ as well.
Specifically, \cref{rmk:leibniz-reedy} and its generalizations do \emph{not} extend to exponentials in slice categories, so if we allowed such dependence then the analogue of $\widehat{\{i,p\}}$ would not necessarily be a fibration.

We also require:

\begin{thm}
  The pseudo-stable extension types in any model category with $\fT$-shapes satisfy relative function extensionality.
\end{thm}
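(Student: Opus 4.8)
The plan is to unwind the semantic content of relative function extensionality, \cref{ax:extfunext}, and then observe that it follows immediately from the Leibniz closure property of fibrations together with pullback‑stability of acyclic fibrations. Recall the standard dictionary for the semantics of homotopy type theory: a type $B$ in a context $\Delta$ is contractible precisely when its display map $B\twoheadrightarrow\Delta$ is an acyclic (trivial) fibration; this is a routine consequence of the coherence machinery of~\cite{LW} and the usual characterization of trivial fibrations. Under this dictionary, and following the notation (and the convention of suppressing $\varpi$) of the construction of extension types given just above, the hypothesis of \cref{ax:extfunext} that each $A(t)$ is contractible says exactly that the fibration $p\colon A\twoheadrightarrow\Gamma\times\psi$ modelling $A$ is acyclic; the shape inclusion $t:I\mid\phi\types\psi$ is modelled by a monomorphism $i\colon\phi\rightarrowtail\psi$ in $\V$, the term $a$ by a partial section of $p$ over $\Gamma\times\phi$, and the conclusion that $\exten{t:I\mid\psi}{A(t)}{\phi}{a}$ is contractible says exactly that the display map $\Gamma.\ccexten{\psi}{A}{\phi}{a}\twoheadrightarrow\Gamma$ of the extension type is acyclic. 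So it suffices to show: when $p$ is acyclic, so is this display map.

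Recall from the construction above that $\Gamma.\ccexten{\psi}{A}{\phi}{a}$ is obtained as a pullback of the Leibniz hom $\widehat{\{i,p\}}\colon A^{\psi}\to A^{\phi}\times_{(\Gamma\times\psi)^{\phi}}(\Gamma\times\psi)^{\psi}$ along the map $(a,\eta)\colon\Gamma\to A^{\phi}\times_{(\Gamma\times\psi)^{\phi}}(\Gamma\times\psi)^{\psi}$ sitting in the upper‑right pullback square of the big diagram in that proof. Now invoke the generalization of \cref{rmk:leibniz-reedy} recorded just after \cref{defn:model-shapes}: since $i$ is a monomorphism in $\V$ and $p\colon A\twoheadrightarrow\Gamma\times\psi$ is a fibration in $\M$, the map $\widehat{\{i,p\}}$ is a fibration, and it is moreover a weak equivalence --- hence an acyclic fibration --- as soon as $p$ is. Acyclic fibrations are stable under pullback in any model category, so the display map of $\ccexten{\psi}{A}{\phi}{a}$, being such a pullback, is acyclic; this is the desired conclusion. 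Because the extension‑type construction is pseudo‑stable, the same argument reindexes uniformly, so relative function extensionality holds over every context.

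The argument is short because all the real work was done in establishing pseudo‑stable extension types and the Leibniz closure lemma. The only point that takes any care is the translation in the first paragraph between the internal predicate ``contractible'' and the external property ``acyclic fibration'' --- applied both to the hypothesis on $A$ and to the conclusion about the extension type --- and checking that this translation interacts correctly with the pseudo‑stability, so that the single model‑categorical fact really does yield the axiom in every context. I do not expect any genuinely new obstacle to arise.
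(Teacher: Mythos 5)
Your proof is correct and takes essentially the same route as the paper: both translate contractibility of the fibers of $A$ and of the extension type into acyclicity of the corresponding fibrations, and then conclude from the fact (recorded just after \cref{defn:model-shapes}) that $\widehat{\{i,p\}}$ is acyclic whenever $p$ is, combined with the display map of $\ccexten{\psi}{A}{\phi}{a}$ being a pullback of $\widehat{\{i,p\}}$. The paper states this more tersely, but the underlying argument is identical.
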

\begin{proof}
  A fibration has ``contractible fibers'' in type theory just when it is an acyclic fibration model-categorically.
  Thus, \cref{ax:extfunext} holds for the same reason that $\ccexten{\psi}{A}{\phi}{a}\to\Gamma$ is a fibration, since $\widehat{\{i,p\}}$ is acyclic if $p$ is.
\end{proof}

In conclusion, we have:

\begin{thm}
  The comprehension category with shapes constructed from any model category with $\fT$-shapes has pseudo-stable coherent tope logic with type eliminations for tope disjunction, and also pseudo-stable extension types satisfying relative function extensionality.
\end{thm}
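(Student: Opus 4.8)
The plan is to observe that this theorem is essentially a collation of the results established in this subsection together with a handful of standard facts about coherent categories, and to carry out the four verifications in order. First I would check the pseudo-stable coherent tope logic of \cref{defn:cc-topelogic}: since $\C_0 = \V$ has finite products and $\C_1 = \mathrm{Mono}(\V)$, the fibers of $\C_1$ are the subobject posets of $\V$, which are distributive lattices because $\V$ is coherent; meets and joins are pullback-stable for the same reason; the left adjoints to reindexing along diagonals $U\to U\times U$ are given by postcomposition with the (monic) diagonal, and the Beck--Chevalley condition holds because pullback squares of monomorphisms paste. The analogue of \eqref{eq:tope-eq-jdeq} holds trivially because $\jdeq$ is interpreted as literal equality of morphisms in the semantic model.

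Next I would check the type eliminations for tope disjunction of \cref{defn:cc-reclor}. The bottom element of a subobject lattice in $\V$ is an initial object, which $\varpi$ preserves (being coherent, hence preserving finite colimits), and initial objects in the Grothendieck topos underlying $\M$ are strict; hence any $\Gamma$ lying over the image of $\bot$ is initial in $\C_2$. For disjunction, unions of subobjects in the coherent category $\V$ are pushouts under their intersections, these pushouts are stable under pullback, and $\varpi$ preserves them; unwinding the definition of $\C_2$ then yields the required pushout square of reindexings. Both of these facts were already noted in the text for the bisimplicial-set model and the argument is identical.

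The remaining two assertions are exactly the content of the two theorems immediately preceding: pseudo-stable extension types are constructed by the pullback diagram there, using that $\widehat{\{i,p\}}$ is a fibration (the observation following \cref{defn:model-shapes}, which is the evident extension of \cref{rmk:leibniz-reedy} to $\varpi$), and relative function extensionality (\cref{ax:extfunext}) holds because that same map is moreover acyclic whenever $p$ is. Assembling these four verifications completes the proof.

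Since the statement is a bundling of already-established facts, there is no single hard step; the only place any care is needed is in confirming that the coherent structure of $\V$ is faithfully transported into $\M$ by $\varpi$ — which is where coherence of $\varpi$ (in the examples, as the inverse image of a geometric morphism) is essential — and in recalling that the genuinely substantive ingredient, namely the extension-type construction with its delicate handling of context dependence, has already been carried out in the preceding theorem.
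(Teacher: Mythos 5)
Your proposal is correct and takes essentially the same route as the paper, which in fact leaves this theorem's proof implicit: the coherent tope logic and the $\lor$-eliminations are asserted in the preceding prose (``for the same reasons described above for the bisimplicial set model,\ldots using the fact that $\varpi$ is a coherent functor''), and the extension-type and relative function extensionality claims are exactly the two theorems immediately preceding. You have simply written out the collation the paper intends, including the right appeals to coherence of $\V$, strictness of initial objects, coherence of $\varpi$, and the Leibniz fibration property of $\widehat{\{i,p\}}$.
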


Thus, by applying the coherence methods of~\cite{LW}, we can construct from it a strict comprehension category with shapes having strictly stable coherent tope logic with type eliminations for tope disjunction and strictly stable extension types satisfying relative function extensionality.
An initiality theorem will then imply that the syntactic model maps into it uniquely, thereby interpreting our type theory with shapes into any model category with shapes, and in particular into bisimplicial sets.

\begin{rmk}
  A comprehension category with shapes does \emph{not} necessarily have ``universe types'', and in a general model category with shapes there is no obvious way to construct these.
  In~\cite{elreedy} it was shown that the Reedy model structure on bisimplicial sets \emph{does} have the requisite structure to model universe types, which moreover satisfy the univalence axiom, and those universes carry over to our type theory with shapes.
  So in the case of the primary motivating model there is no additional difficulty here, but in the cases of \cref{thm:model-shapes,thm:model-discrete} there may not be universes.
  However, as noted in \cref{sec:notation}, in this paper we did not really use the universe in any essential way; so at the expense of a bit more cumbersome notation our results apply just as well to these examples.
\end{rmk}

In the general case, the axioms of the theory $\fT$ are satisfied by assumption.
For the theory of simplices and the bisimplicial set model, by construction we have $\two \defeq \Delta^1 \square \Delta^0$ as the categorically-embedded 1-simplex, with $0,1 : \two$ the elements of $\two_{0,0}$ corresponding, as usual, to the 1st and 0th face maps.
The discretely embedded inclusion $\Delta^2 \to \Delta^1\times \Delta^1$ models the inequality tope $t :\two,s:\two \types (t \leq s) \tope$.
The fact that this satisfies the theory of a strict interval is part of the theorem, mentioned in \cref{sec:simplices}, that simplicial sets are the classifying topos of that theory.

\subsection{Segal spaces and Rezk spaces}

In this section we show that the Segal types of \cref{sec:Segal-types} correspond exactly to the Segal spaces in the bisimplicial sets model. A similar argument proves that the Rezk types of \cref{sec:Rezk-types} also correspond to the Rezk spaces. 

\begin{defn} A Reedy fibrant bisimplicial set $X$ is a \textbf{Segal space} if and only if for all $m \geq 2$ and $0 < i < m$ the induced map
\[ \{ \Delta^m,X\} \to \{\Lambda^m_i,X\}\] on weighted limits is a trivial fibration in $\sSet$.
\end{defn}

\begin{prop}\label{prop:joyal} A Reedy fibrant bisimplicial set $X$ is a Segal space if and only if the induced map
\begin{equation}\label{eq:segal} X^{\Delta^2 \square \Delta^0} \to X^{\Lambda^2_1 \square \Delta^0}\end{equation} is a Reedy trivial fibration.
\end{prop}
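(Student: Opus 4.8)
The plan is to re-express both sides as lifting properties and reduce the statement to a known fact about inner anodyne maps, using the ``Leibniz calculus'' for the three relevant tensors --- the cartesian product of $\ssSet$, the external product $\square\colon\sSet\times\sSet\to\ssSet$, and the weighted-limit functor $\{-,-\}$. Write $\mathrm{disc}$ for the embedding $-\square\Delta^0$, write $\iota$ for the inner horn inclusion $\Lambda^2_1\hookrightarrow\Delta^2$, and write $\delta_n\colon\partial\Delta^n\hookrightarrow\Delta^n$, so that \eqref{eq:segal} is the map $X^{\mathrm{disc}\iota}$ (which, by \cref{rmk:leibniz-reedy} applied to $p\colon X\to 1$, is automatically a Reedy fibration between Reedy fibrant objects, so ``Reedy trivial fibration'' is equivalent to ``weak equivalence''). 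First I would unwind ``$X^{\mathrm{disc}\iota}$ is a Reedy trivial fibration'': by definition this means $X^{\mathrm{disc}\iota}$ has the right lifting property against every generating Reedy cofibration $\delta_k\mathbin{\widehat{\square}}\delta_\ell$, which by the two-variable exponential adjunction in the cartesian closed category $\ssSet$ is equivalent to $X$ having the right lifting property against $\mathrm{disc}\iota\mathbin{\widehat{\times}}(\delta_k\mathbin{\widehat{\square}}\delta_\ell)$ for all $k,\ell\ge 0$. Using $\mathrm{disc}\iota=\iota\mathbin{\widehat{\square}}(\emptyset\hookrightarrow\Delta^0)$, the interchange identity $(a\mathbin{\widehat{\square}}b)\mathbin{\widehat{\times}}(a'\mathbin{\widehat{\square}}b')=(a\mathbin{\widehat{\times}}a')\mathbin{\widehat{\square}}(b\mathbin{\widehat{\times}}b')$ (routine from $(A\square B)\times(A'\square B')=(A\times A')\square(B\times B')$), and $(\emptyset\hookrightarrow\Delta^0)\mathbin{\widehat{\times}}\delta_\ell=\delta_\ell$, this map rewrites as $(\iota\mathbin{\widehat{\times}}\delta_k)\mathbin{\widehat{\square}}\delta_\ell$. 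Finally, for any monomorphism $c$ of simplicial sets, the adjunction $\ssSet(A\square B,X)\cong\sSet(B,\{A,X\})$ shows that $X$ has the right lifting property against $c\mathbin{\widehat{\square}}\delta_\ell$ for all $\ell$ if and only if $\{c,X\}$ is a trivial fibration of simplicial sets. So $X^{\mathrm{disc}\iota}$ is a Reedy trivial fibration if and only if $\{\iota\mathbin{\widehat{\times}}\delta_k,X\}$ is a trivial fibration in $\sSet$ for every $k\ge 0$.

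On the other side, by definition $X$ is a Segal space precisely when $\{\Lambda^m_i\hookrightarrow\Delta^m,X\}$ is a trivial fibration in $\sSet$ for all $m\ge 2$ and $0<i<m$. To match the two conditions, observe that the class $\mathcal S$ of monomorphisms $c$ of $\sSet$ with $\{c,X\}$ a trivial fibration is weakly saturated --- closed under pushout, transfinite composition, and retract --- because $\{-,X\}$ sends colimits of simplicial sets to limits, and trivial fibrations of $\sSet$ are closed under pullback, limits of towers, and retracts. Hence $\mathcal S$ contains every inner horn inclusion $\Lambda^m_i\hookrightarrow\Delta^m$ if and only if it contains every $\iota\mathbin{\widehat{\times}}\delta_k$, \emph{provided} the two families generate the same weakly saturated class; granting that, combining with the previous paragraph proves the proposition.

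The main obstacle is exactly this last input: the weakly saturated class generated by $\{\Lambda^m_i\hookrightarrow\Delta^m:0<i<m\}$ coincides with that generated by $\{\iota\mathbin{\widehat{\times}}\delta_k:k\ge 0\}$, both being the class of inner anodyne maps. That every $\iota\mathbin{\widehat{\times}}\delta_k$ is inner anodyne is Joyal's lemma, already invoked in the proof of \cref{thm:3horn-anodyne}; the reverse inclusion, that each inner horn inclusion lies in the saturation of the pushout-product family, is the genuinely combinatorial point, for which I would cite Lurie \cite[\S2.3.2]{HTT}. Everything else is formal manipulation of lifting properties, the only real care being to keep straight in which direction each of $\times$, $\square$, and $\{-,-\}$ is acting. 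The same argument applied to the families of inner horn inclusions together with the ``isomorphism completeness'' condition will handle the Rezk case.
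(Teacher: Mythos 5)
Your proposal is correct and follows essentially the same route as the paper's own proof: transpose the Reedy trivial fibration condition across the exponential adjunction, reindex the pushout products via the interchange identity for $\widehat{\times}$ and $\widehat{\square}$, pass through the weighted limit adjunction to reduce to a statement about simplicial sets, and close with Joyal's Lemma~\cite[2.3.2.1]{HTT}. The only difference is cosmetic: you make the weak saturation of the class $\{c : \{c,X\}\ \text{is a trivial fibration}\}$ explicit, a step the paper's proof leaves implicit in its appeal to the combinatorial lemma.
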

\begin{proof}
Transposing across the adjunction between the cartesian product and internal hom for bisimplicial sets, \eqref{eq:segal} is a Reedy trivial fibration if and only if $X$ has the right lifting property with respect to the set of maps
\[ \{ ( (\partial\Delta^m\hookrightarrow\Delta^m)\widehat{\square}(\partial\Delta^n\hookrightarrow\Delta^n)) \widehat{\times} (\Lambda^2_1 \square \Delta^0 \hookrightarrow \Delta^2 \square \Delta^0) \}_{m,n \geq 0}.\]
This set is isomorphic to
\[ \{ ( (\partial\Delta^m\hookrightarrow\Delta^m) \widehat{\times}(\Lambda^2_1\hookrightarrow\Delta^2))\widehat{\square}(\partial\Delta^n\hookrightarrow\Delta^n)  \}_{m,n \geq 0},\] where the left-hand product is now the cartesian product on $\sSet$. Transposing across the weighted limit adjunction, we see that \eqref{eq:segal} is a Reedy trivial fibration if and only if the induced map on weighted limits
\[ \{ \Delta^m \times \Delta^2, X\} \to \{ \Delta^m \times \Lambda^2_1 \bigcup\limits_{\partial\Delta^m\times \Lambda^2_1} \partial\Delta^m \times\Delta^2, X\}.\] is a trivial fibration of simplicial sets. By the following combinatorial lemma of Joyal, this precisely characterizes the Segal spaces.
\end{proof}

\begin{lem}[{Joyal \cite[2.3.2.1]{HTT}}]\label{lem:2.3.2.1} The following sets generate the same class of morphisms of simplicial sets under coproduct, pushout, retract, and sequential composition:
\begin{enumerate}
\item The inner horn inclusions $\Lambda^m_i \hookrightarrow\Delta^m$ for $m \geq 2$, $0< i < m$.
\item The collection of all inclusions
\[ \{ \Delta^m \times \Lambda^2_1 \bigcup\limits_{\partial\Delta^m\times \Lambda^2_1} \partial\Delta^m \times\Delta^2 \hookrightarrow \Delta^m \times \Delta^2\}_{m \geq 0}.\]
\end{enumerate}
\end{lem}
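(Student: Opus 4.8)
This is Joyal's lemma; we sketch the argument, following \cite{HTT}. Let $\mathcal{A}$ be the smallest class of monomorphisms of simplicial sets closed under coproduct, pushout, retract, and transfinite composition that contains all the inner horn inclusions $\Lambda^m_i\hookrightarrow\Delta^m$ ($m\geq 2$, $0<i<m$), and let $\mathcal{B}$ be the smallest such class containing the maps
\[ G_m \defeq \Bigl(\Delta^m\times\Lambda^2_1\;\textstyle\bigcup_{\partial\Delta^m\times\Lambda^2_1}\;\partial\Delta^m\times\Delta^2\;\hookrightarrow\;\Delta^m\times\Delta^2\Bigr),\qquad m\geq 0, \]
each of which is the pushout-product $(\partial\Delta^m\hookrightarrow\Delta^m)\mathbin{\widehat\times}(\Lambda^2_1\hookrightarrow\Delta^2)$. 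The lemma is the assertion $\mathcal{A}=\mathcal{B}$.

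For $\mathcal{B}\subseteq\mathcal{A}$ it suffices to show that each $G_m$ is inner anodyne, i.e.\ lies in $\mathcal{A}$. This one reads off the shuffle decomposition of the prism $\Delta^m\times\Delta^2$: its nondegenerate $(m{+}2)$-simplices can be adjoined one at a time in a suitable order, and each one that is not already contained in $\Delta^m\times\Lambda^2_1\cup\partial\Delta^m\times\Delta^2$ is attached along an inner horn. Hence $G_m$ is a finite composite of pushouts of inner horn inclusions, so $G_m\in\mathcal{A}$.

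For $\mathcal{A}\subseteq\mathcal{B}$ the plan has three steps. First, the skeletal filtration of $\Delta^m$ relative to the subobject $\Lambda^m_i$ exhibits the monomorphism $\Lambda^m_i\hookrightarrow\Delta^m$ as a finite composite of pushouts of boundary inclusions $\partial\Delta^k\hookrightarrow\Delta^k$. Second, since the functor $-\mathbin{\widehat\times}(\Lambda^2_1\hookrightarrow\Delta^2)$ preserves pushouts and composites in its first variable, the pushout-product $(\Lambda^m_i\hookrightarrow\Delta^m)\mathbin{\widehat\times}(\Lambda^2_1\hookrightarrow\Delta^2)$ is a finite composite of pushouts of the maps $G_k$, and so lies in $\mathcal{B}$. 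Third, and this is the heart, one constructs a retract diagram in the arrow category exhibiting $\Lambda^m_i\hookrightarrow\Delta^m$ as a retract of $(\Lambda^m_i\hookrightarrow\Delta^m)\mathbin{\widehat\times}(\Lambda^2_1\hookrightarrow\Delta^2)$: a section $\Delta^m\to\Delta^m\times\Delta^2$ picking out a ``middle'' copy of $\Delta^m$ inside the prism, together with a retraction $\Delta^m\times\Delta^2\to\Delta^m$ assembled from the $\min$ and $\max$ operations on the linearly ordered sets $[m]$ and $[2]$, arranged so that the section carries $\Lambda^m_i$ into the domain $\Lambda^m_i\times\Delta^2\cup\Delta^m\times\Lambda^2_1$ of the pushout-product and the retraction carries that domain back into $\Lambda^m_i$. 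Since $\mathcal{B}$ is closed under retracts, $\Lambda^m_i\hookrightarrow\Delta^m\in\mathcal{B}$, completing the proof.

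The main obstacle is the third step: writing down the section and retraction uniformly in $m$ and $i$ and verifying compatibility with every face. Its two-dimensional shadow is precisely the construction carried out by hand in \cref{thm:3horn-anodyne} for the $3$-$2$-horn (there requiring a six-way case split, since the type theory deliberately lacks the lattice operations); for general $m$ one must organize the bookkeeping through $\min$ and $\max$ rather than by enumeration, which is Joyal's original device. Everything else is formal manipulation of weakly saturated classes and pushout-products.
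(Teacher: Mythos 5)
The paper cites this as Joyal's lemma (HTT 2.3.2.1) without supplying its own proof, so there is no internal argument to compare against; your sketch correctly reproduces the structure of the cited argument, with both inclusions handled as in the reference. In particular, the retract you describe exhibiting $\Lambda^m_i\hookrightarrow\Delta^m$ as a retract of $(\Lambda^m_i\hookrightarrow\Delta^m)\mathbin{\widehat\times}(\Lambda^2_1\hookrightarrow\Delta^2)$ --- section $\langle\mathrm{id},\sigma\rangle$ with $\sigma$ constant at $0$, $1$, $2$ below, at, and above vertex $i$, and retraction $[m]\times[2]\to[m]$ given by $\min(-,i)$, $i$, $\max(-,i)$ on the three rows --- is precisely Joyal's device, whose $(3,2)$ instance is exactly what the paper internalizes by hand in \cref{thm:3horn-anodyne} (there requiring a case split because the type theory deliberately omits the lattice operations).
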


Let $E$ denote the simplicial set defined as the colimit of the diagram
\[
\begin{tikzcd} & \Delta^1 \arrow[dl] \arrow[dr, "d^1"] & & \Delta^1 \arrow[dl, "d^0"'] \arrow[dr, "d^2"]  & & \Delta^1 \arrow[dl, "d^1"] \arrow[dr] \\ \Delta^0 & & \Delta^2 & & \Delta^2 & & \Delta^0
\end{tikzcd}
\]

The simplicial set $E$ together with its ``middle'' 1-simplex may be regarded as the ``free-living bi-invertible map'', equipped with left and right inverses.

\begin{defn}\label{defn:rezk-space} A Segal space $X$ is a \textbf{Rezk space} if and only if the map
\[ \{E,X\} \to \{\Delta^0,X\}\cong X_0\] on weighted limits induced by either vertex map $\Delta^0 \to E$ is a trivial fibration in $\sSet$.
\end{defn}

Our first task is to re-express the Rezk-completeness condition in the internal language of bisimplicial sets.

\begin{prop}\label{prop:rezk-space} A Segal space $X$ is a Rezk space if and only if the induced map
\begin{equation}\label{eq:rezk} X^{E \square \Delta^0} \to X^{\Delta^0 \square \Delta^0} \cong X
\end{equation}
is a Reedy trivial fibration.
\end{prop}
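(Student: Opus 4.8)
The plan is to mimic the proof of \cref{prop:joyal} (and its internal reformulation of the Segal condition as a lifting property against bisimplicial sets) but now for the map $X^{E\square\Delta^0}\to X^{\Delta^0\square\Delta^0}$. First I would transpose \eqref{eq:rezk} across the adjunction between cartesian product and internal hom in $\ssSet$: the map \eqref{eq:rezk} is a Reedy trivial fibration if and only if $X$ has the right lifting property against the set of pushout products
\[ \{\,((\partial\Delta^m\hookrightarrow\Delta^m)\widehat{\square}(\partial\Delta^n\hookrightarrow\Delta^n))\,\widehat{\times}\,(\Delta^0\square\Delta^0\hookrightarrow E\square\Delta^0)\,\}_{m,n\geq 0}. \]
Since the inclusion $\Delta^0\square\Delta^0\hookrightarrow E\square\Delta^0$ is a monomorphism of discrete bisimplicial sets (obtained by applying $\mathrm{disc}=-\square\Delta^0$ to the monomorphism $\Delta^0\hookrightarrow E$ in $\sSet$), the usual manipulation moving the $\widehat{\square}$ past the $\widehat{\times}$ rewrites this set as
\[ \{\,((\partial\Delta^m\hookrightarrow\Delta^m)\,\widehat{\times}\,(\Delta^0\hookrightarrow E))\widehat{\square}(\partial\Delta^n\hookrightarrow\Delta^n)\,\}_{m,n\geq 0}, \]
with the left-hand product now the cartesian product on $\sSet$; and transposing once more across the weighted-limit adjunction $\{-,X\}$, the map \eqref{eq:rezk} is a Reedy trivial fibration if and only if for all $m\geq 0$ the induced map on weighted limits
\[ \{\Delta^m\times E,X\} \to \Bigl\{\,\Delta^m\times\Delta^0 \bigcup\limits_{\partial\Delta^m\times\Delta^0} \partial\Delta^m\times E,\ X\,\Bigr\} \]
is a trivial fibration of simplicial sets.

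Next I would compare this to \cref{defn:rezk-space}, which asks exactly that $\{E,X\}\to\{\Delta^0,X\}$ be a trivial fibration, i.e.\ (transposing back) that $X$ have the right lifting property against $\{(\Delta^0\hookrightarrow E)\widehat{\square}(\partial\Delta^n\hookrightarrow\Delta^n)\}_{n\geq 0}$, equivalently that $\{\partial\Delta^m\times E\cup_{\partial\Delta^m\times\Delta^0}\Delta^m\times\Delta^0\hookrightarrow \Delta^m\times E,X\}$ -- with $m$ ranging only over $0$ -- be a trivial fibration. So what remains is a Joyal-style combinatorial lemma: the inclusion $\Delta^0\hookrightarrow E$ (one endpoint of the ``free bi-invertible arrow'') and the family $\{\Delta^m\times\Delta^0\cup_{\partial\Delta^m\times\Delta^0}\partial\Delta^m\times E\hookrightarrow\Delta^m\times E\}_{m\geq 0}$ generate the same saturated class of monomorphisms of simplicial sets. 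This is the analogue for $\Delta^0\hookrightarrow E$ of \cref{lem:2.3.2.1}, and it is presumably already known (it is the statement that these maps are among the ``special'' anodyne maps for the Rezk model structure, cf.\ the treatment in \cite{css}); alternatively one can cite that $\Delta^0\hookrightarrow E$ is a cofibration between cofibrant objects and that the pushout-product family is a reformulation of ``being a local object'' -- but since the inclusion has a section and $E$ deformation retracts onto $\Delta^0$, the two saturations agree by a straightforward cofibration argument. I would also invoke \cref{prop:joyal} to know that $X$, being a Segal space, already has the right lifting property against the inner-horn inclusions, which is needed to even make sense of the comparison in the presence of the Segal hypothesis.

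Once that combinatorial fact is in hand, chaining the equivalences gives: \eqref{eq:rezk} is a Reedy trivial fibration $\iff$ $X$ has the RLP against the big pushout-product family $\iff$ $X$ has the RLP against $\{\Delta^0\hookrightarrow E\}$ (because the two classes have the same saturation and $X$ already lifts against the cofibrations that differ, these being built from inner horns by the Segal hypothesis and from the retraction) $\iff$ $\{E,X\}\to\{\Delta^0,X\}$ is a trivial fibration $\iff$ $X$ is a Rezk space. I expect the main obstacle to be exactly the combinatorial lemma: one must check that adding the families indexed by $m\geq 1$ does not change the generated saturated class, which requires either citing the existing literature on the complete Segal space model structure or reproving a skeletal-filtration argument on $\Delta^m\times E$ analogous to Joyal's argument for $\Delta^m\times\Delta^2$ in \cref{lem:2.3.2.1}. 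Everything else -- the transpositions across the three adjunctions (product/internal-hom, $\widehat\square$ versus $\widehat\times$ interchange, and the weighted-limit adjunction), and the bookkeeping of which cofibrations $X$ already lifts against -- is routine and parallel to the proof of \cref{prop:joyal}.
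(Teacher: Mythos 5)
Your transposition across the three adjunctions and the observation that the $m=0$ case recovers Definition~\ref{defn:rezk-space} exactly match the paper's proof, and that part is correct: it gives the direction ``\eqref{eq:rezk} a Reedy trivial fibration $\Rightarrow$ $X$ Rezk.''

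The gap is in the converse. Your proposed combinatorial lemma --- that $\Delta^0\hookrightarrow E$ alone and the pushout-product family $\{(\partial\Delta^m\hookrightarrow\Delta^m)\widehat{\times}(\Delta^0\hookrightarrow E)\}_{m\ge 0}$ generate the same saturated class of monomorphisms --- is not the right analogue of Lemma~\ref{lem:2.3.2.1} and is almost certainly false as stated. In Joyal's lemma the left-hand side is the \emph{entire} family of inner horns, not a single map, and closure of a saturated class under pushout products with cofibrations is precisely the extra structure one gets from having a model category, not something one obtains from a single generating map by ``coproduct, pushout, retract, and composition.'' The ``straightforward cofibration argument'' from the section and deformation retraction only shows that $\Delta^0\hookrightarrow E$ is an acyclic cofibration (e.g.\ in the Kan or Joyal model structure); from that, one concludes that all the pushout products are acyclic cofibrations there --- a statement about the saturated class generated by \emph{all} anodyne maps, not the one generated by $\Delta^0\hookrightarrow E$. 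Nor can you simply fold in the Segal hypothesis, since your claim is stated as an unconditional fact about saturated classes.

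The paper avoids the combinatorics entirely: $\Delta^0\hookrightarrow E$ is an acyclic cofibration in the Joyal model structure, $-\square\Delta^0$ is a left Quillen equivalence from Joyal to the Rezk model structure by \cite[4.11]{JT}, so $\Delta^0\square\Delta^0\hookrightarrow E\square\Delta^0$ is an acyclic cofibration in the Rezk model structure; then cartesian monoidality plus fibrancy of a Rezk space gives that \eqref{eq:rezk} is an acyclic fibration there, and Rezk and Reedy have the same trivial fibrations since the former is a left Bousfield localization of the latter. If you want to salvage your route, the honest move is the one you already flag in passing: cite Rezk's construction of the complete Segal space model structure in \cite{css}, which is exactly where the required combinatorics lives.
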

\begin{proof}
As in the proof of Proposition \ref{prop:joyal}, the condition that \eqref{eq:rezk} is a Reedy trivial fibration
transposes to the condition that  the induced map on weighted limits
\[ \{ \Delta^m \times E, X\} \to \{ \Delta^m \times \Delta^0 \bigcup\limits_{\partial\Delta^m\times \Delta^0} \partial\Delta^m \times E, X\}\] is a trivial fibration of simplicial sets for all $m \geq 0$. In the case $m=0$ this is the condition of Definition \ref{defn:rezk-space} so we see that the lifting property \eqref{eq:rezk} implies the completeness condition.

For the converse we appeal to known model categorical results to avoid having to prove a combinatorial lemma. The inclusion $\Delta^0 \to E$ is a trivial cofibration in the Joyal model structure and the discrete embedding $-\square\Delta^0 \colon \sSet \to \ssSet$ of simplicial sets into bisimplicial sets is a left Quillen equivalence from the Joyal model structure to the Rezk model structure \cite[4.11]{JT}. As the Rezk model structure is cartesian monoidal with the Rezk spaces as its fibrant objects, it follows that if $X$ is a Rezk space, then the map \eqref{eq:rezk} is a trivial fibration.
\end{proof}

\cref{prop:rezk-space} is equivalent to the condition that the map $X \to X^{E \square\Delta^0}$ is an equivalence in the Rezk model structure or equivalently in the Reedy model structure. Here $X^{E\square\Delta^0}$ is a model for the total space of the type of isomorphisms introduced in \cref{sec:isos}. Thus, Proposition \ref{prop:rezk-space} corresponds to the Rezk-completeness condition of Definition \ref{defn:rezk-complete}.

\begin{rmk}\label{rmk:boavida}
  There is also a similar characterization of our covariant fibrations.
 By \cref{thm:covariance-as-pullback}, a Reedy fibration $\pi \colon C \to A$ of bisimplicial sets is a covariant fibration in our sense if and only if the square
  \begin{equation}
  \begin{tikzcd}
    C^\two \arrow[r, "{\pi^\two}"] \arrow[d, "{\mathsf{ev}_0}"'] &
    A^\two \arrow[d, "{\mathsf{ev}_0}"] \\
    C \arrow[r, "\pi"'] &
    A 
  \end{tikzcd}\label{eq:htpy-pullback-untest}
  \end{equation}
  is a homotopy pullback.
  If $A$ and $C$ are Rezk, then this corresponds exactly to the characterization of ``groupoidal cartesian fibrations'' in~\cite[Proposition 4.2.7]{RV4}, specialized to the $\infty$-cosmos of Rezk spaces.

  On the other hand, for arbitrary $A$ and $C$, \cref{rmk:leibniz-reedy} guarantees that the induced map to the pullback
  \begin{equation}\label{eq:htpy-pullback-test} \pair{ C^{i_0}, \pi^\two}  \colon C^\two \to C \times_A A^\two\end{equation}
  is a Reedy fibration; thus~\eqref{eq:htpy-pullback-untest} is a homotopy pullback square if and only if~\eqref{eq:htpy-pullback-test} is a Reedy trivial fibration.
  By~\cite[Lemma 2.1.3]{kv:yoneda-css}, this happens if and only if for each $n\ge 1$ the square of \emph{simplicial sets}
  \[ \begin{tikzcd} C_n \arrow[r, "\pi"] \arrow[d, "{\mathsf{ev}_0}"'] &
    A_n \arrow[d, "{\mathsf{ev}_0}"] \\
    C_0 \arrow[r, "\pi"'] &
    A_0
    \end{tikzcd}\]
  is a homotopy pullback; such maps are called \emph{left fibrations} in~\cite{kv:yoneda-css}.
  By~\cite[Proposition 1.7]{boavida:segr} and~\cite[Lemma 3.9]{rasekh:yoneda-ss}, if $A$ and $C$ are Segal spaces then it suffices to assert this for $n=1$.
  Moreover, by~\cite[Proposition 1.10]{boavida:segr}, left fibrations over a Segal space $A$ are the fibrant objects in a model structure that is Quillen equivalent to left fibrations over quasi-categories, and by~\cite[Theorem 4.8]{rasekh:yoneda-ss} this remains true for arbitrary $A$.
  Thus, just as our Rezk types coincide with Rezk spaces and hence model $(\infty,1)$-categories, our covariant fibrations model $\infty$-groupoid-valued $(\infty,1)$-functors.
\end{rmk}


\bibliographystyle{alpha}
\bibliography{stt}

\end{document}